%
%
%
%
%
\documentclass[runningheads]{svjour2}
\smartqed  
\usepackage{graphicx}
%
%
\usepackage{amsmath}
\usepackage{amssymb}
\usepackage{epsfig}
\usepackage{hyperref}
\usepackage{verbatim}
\allowdisplaybreaks
%
\newtheorem{Proposition}{Proposition}[section]
  \newtheorem{Remark}[Proposition]{Remark}
  \newtheorem{Corollary}[Proposition]{Corollary}
  \newtheorem{Lemma}[Proposition]{Lemma}
  
  \newtheorem{Theorem}{Theorem}[section]
 
 \newtheorem{Definition}[Proposition]{Definition}

\journalname{Archive for Rational Mechanics and Analysis}
\begin{document}

\title{Existence, Uniqueness, Analyticity, and Borel Summability of 
Boussinesq and Magnetic B$\acute{\bf{e}}$nard Equations
}

\titlerunning{Borel Summability of Boussinesq and MHD Equations}        

\author{H. Rosenblatt          \and
        S. Tanveer 
}


\institute{H. Rosenblatt  \at
              Department of Mathematics: The Ohio State University \\
              222 Math Tower 231 West 18th Avenue Columbus, OH 43210-1174\\
              Tel.: 614-292-7648\\
              Fax: 614-292-1479\\
              \email{rosenblatt@math.ohio-state.edu}           
           \and
           S. Tanveer \at
              402 Math Tower 231 West 18th Avenue Columbus, OH 43210-1174\\
              Tel.: 614-292-5710\\
              \email{tanveer@math.ohio-state.edu}
}

\date{\notused}

\maketitle

\begin{abstract}
Through Borel summation methods, 
we analyze two different variations of the Navier-Stokes equation
--the Boussinesq
equation for fluid motion and temperature field and the
the magnetic B$\acute{\textnormal{e}}$nard equation which approximates electro-magnetic
effects on fluid flow under some simplifying assumptions. In the Boussinesq equation,
\begin{align}\label{star}
u_t-\nu\Delta u&=-P[ u\cdot \nabla u - a e_2\Theta] +f\\ \nonumber
\Theta_t-\mu \Delta \Theta&= -u\cdot \nabla \Theta,
\end{align}
where $d=2$ or $3$ is the dimension, $u:\mathbb{R}^d\times\mathbb{R}^+\rightarrow \mathbb{R}^d$, and $\Theta:\mathbb{R}^d\times\mathbb{R}^+\rightarrow \mathbb{R}$. For 
the magnetic B$\acute{\textnormal{e}}$nard equation,
\begin{align}\label{star2}
v_t-\nu\Delta v&=-P[ v\cdot \nabla v - \frac{1}{\mu \rho}B\cdot \nabla B] +f\\ \nonumber
B_t-\frac{1}{\mu\sigma} \Delta B&=-P[v\cdot \nabla B-B\cdot \nabla v],
\end{align}
where $v,B:\mathbb{R}^d\times\mathbb{R}^+\rightarrow \mathbb{R}^d$.

This method has previously been applied 
to the Navier-Stokes equation in \cite{smalltime}, \cite{scripta},  
and \cite{longtime}. We show that this approach 
can be used to show local existence for the Boussinesq and 
magnetic B$\acute{\textnormal{e}}$nard equation, either for $d=2$ or $d=3$.
We prove that 
an equivalent system of integral equations in each case
has a unique solution, which is exponentially bounded for 
$p\in \mathbb{R}^{+}$, $p$ being the Laplace dual variable of $1/t$. 
This implies the local 
existence of a classical solution to (\ref{star}) and (\ref{star2}) in
a complex $t$-region that includes a real positive time ($t$)-axis segment.
Further, it is shown that within this real time
interval,  for analytic initial data and forcing,
the solution remains analytic and has the same analyticity strip width. Further,
under these conditions, the solution is 
Borel summable, implying that
that formal series in time is Gevrey-1 asymptotic for small $t$. We also determine conditions
on the integral equation solution in each case over a finite interval $[0, p_0]$
that result in a better estimate for existence time of the PDE solution.
\end{abstract}

\section{Introduction}
\label{intro}
We consider two variations of the incompressible Navier-Stokes equation.
In the first case, we consider the coupling of temperature field with
fluid flow under the assumption that the temperature induced changes in density have
negligible effects on momentum, but
cause a significant buoyant force. The corresponding Boussinesq equation
for $u:\mathbb{R}^d\times\mathbb{R}^+\rightarrow \mathbb{R}^d$ and $\Theta:\mathbb{R}^d\times\mathbb{R}^+\rightarrow \mathbb{R}$ with $d=2,\, 3$ are     
\begin{align}
\label{B}
u_t-\nu\Delta u&=-P[ u\cdot \nabla u - a e_2\Theta] +f ~~,~~~~~u(x,0) = u_0 (x)
\\ \nonumber
\Theta_t-\mu \Delta \Theta&= -u\cdot \nabla \Theta ~~~,~~~\Theta (x, 0) =\Theta_0 (x) 
\end{align}
where $P=I-\nabla \Delta^{-1}(\nabla\cdot )$ is the Hodge projection operator to 
the space of divergence free vector fields and $e_2$ is the unit vector aligned opposite to gravity and parameter $a$ is proportional to gravity. Here $(u, \Theta)$ corresponds
to the fluid velocity and temperature field. 
Using standard energy methods, see for instance \cite{Temam}, existence of Leray type solutions in $L^{\infty}(0,T,L^2(\mathbb{R}^d))\cap L^2(0,T,H^1(\mathbb{R}^d))$ follows easily for any $T>0$. In $\mathbb{R}^2$ a unique classical global solution can be shown to exist for all time. Further, in $\mathbb{R}^3$ there is a unique solution under the additional assumption that the solution lies in $L^{\infty}(0,T,H^1(\mathbb{R}^3)$. In \cite{Cannon}, local existence and uniqueness for Boussinesq equation are shown in $L^p(0,T,L^q(\mathbb{R}^d))$ for $d<p<\infty$ and $\frac{d}{p}+\frac{2}{q}\leq 1$.

For the second problem, we study the 
the viscous magnetic B$\acute{\textnormal{e}}$nard equation, or MHD equation, which arises in the
motion of a magnetic fluid in situations where displacement current and 
charge density variations are negligible \cite{Chandra}.
The equations for $v, B:\mathbb{R}^d\times\mathbb{R}^+\rightarrow \mathbb{R}^d$ are
\begin{align}\label{Ber}
v_t-\nu\Delta v&=-P[ v\cdot \nabla v - \frac{1}{\mu \rho}B\cdot \nabla B] +f ~~~,~~~v(x, 0) =v_0 (x) \\ \nonumber
B_t-\frac{1}{\mu\sigma} \Delta B&=-P[v\cdot \nabla B-B\cdot \nabla v] ~~~~,~~~B(x, 0) = B_0 (x) 
\end{align}
\noindent where $d=2, 3$ as before, $v$ is the fluid velocity, 
$B$ is the magnetic field, while $\nu$, $\rho$, $\mu$ and $\sigma$ are constants related to
fluid viscosity, density, magnetic permeability and electric conductivity respectively. The question of regularity of solutions to the MHD equation in two and three dimensions has been well studied. Duraut and Lions \cite{DurautandLions} constructed a class of global weak solutions and a local class of strong solutions using energy methods in both two and three dimensions. In the two dimensional case, uniqueness and smoothness were established for all time. More generally, Sermange and Temam \cite{TemamMHD} showed existence in three dimensions in the class $L^{\infty}(0,T,L^2(\mathbb{R}^d))\cap L^2(0,T,H^1(\mathbb{R}^d))$ and uniqueness assuming the solution lies in $L^{\infty}(0,T,H^1(\mathbb{R}^d))$. Many others \cite{reg-of-weak-solution1}, \cite{reg-of-weak-solution2}, \cite{reg-of-weak-solution3}, and \cite{reg-of-weak-solution4} have a variety of results improving regularity.

In both the problems above, the existence of classical solutions, globally in time, remains an open problem as it is for the 3-D Navier
Stokes equation. Control of a higher order energy norm (like the $H^1$ norm of velocity) has remained 
a serious impediment for a long time. This
motivates one to look at other formulations of the existence problem that do not rely on energy bounds.

The primary purpose of this paper is to show that the 
Borel transform methods, developed earlier in \cite{smalltime} and \cite{longtime} in the context of  Navier-Stokes equation, can be extended to determine classical PDE solutions for the Boussinesq and magnetic B$\acute{\textnormal{e}}$nard equations. This provides an alternate existence and uniqueness
theory for a class of nonlinear PDEs for which the question of global existence
of solution to the PDE becomes one of asymptotics for known solution to the associated nonlinear integral equations. While this asymptotics problem is difficult and yet to be resolved, it is shown (Thm \ref{improved existence}) how information about solution on a finite interval in the dual variable for specific initial condition and forcing may be used for obtaining better exponential bounds in the Borel plane and therefore better existence time for classical solutions to the PDEs.

Further, many analyticity properties readily follow from this representation. Time analyticity for $\Re \frac{1}{t}>\alpha$ follow from the solution representation. We also prove that the classical $H^2(\mathbb{R}^d)$ solution, which is unique, has the Laplace transform representation given here, provided initial data and forcing are in $L^1\cap L^{\infty}$ in Fourier space. Furthermore, for analytic 
initial data and forcing, we prove that the formal expansion in powers of
$t$ is Borel summable and hence Gevrey asymptotic for small $t$. 
In the latter case, it is also shown that the associated power series 
in the Borel plane has a radius of convergence independent of size 
of initial data and forcing when initial data and 
forcing have a fixed number of Fourier modes, 
this is useful in computing the solution in the Borel plane.  

\section{Main Results}\label{sec1}

We first write the equations as integral equations in Fourier space. We denote by $\hat{f}$ the Fourier transform of $f$ and $\hat{*}$ the Fourier convolution. The Fourier transform operator is denoted by $\mathcal{F}$. As usual, a repeated index $j$ denotes the sum over $j$ from $1$
to $d$. 
$P_k$ is the Fourier transform of the Hodge projection and has the representation
\begin{equation*}
P_k\equiv \left( 1-\frac{k(k\cdot)}{|k|^2}\right).
\end{equation*}
Moreover $u$, $v,$ and $B$ are divergence free. Formal derivation\footnote{While derivation is formal, 
in the space of functions where existence
is proved, it will be clear the integral and differential formulations are equivalent} based on inversion
of the heat operator in Fourier space in (\ref{B}) leads to the following integral equations:
\begin{align}\label{FB2}
\hat{u}(k,t)=- \int_0^t& e^{-\nu|k|^2(t-\tau)}\left(i k_j P_k[\hat u_j\hat *\hat u-a e_2\hat{\Theta}](k,\tau)-\hat f(k)\right)d\tau\\ \nonumber
&+e^{-\nu|k|^2t}\hat u_0(k)\\ \nonumber
\hat{\Theta}(k,t)=- \int_0^t& e^{-\mu|k|^2(t-\tau)}\left( i k_j[\hat u_j\hat *\hat{\Theta}](k,\tau)\right)d\tau+e^{-\mu|k|^2t} \hat{\Theta}_0(k)
\end{align}
and, for the magnetic B$\acute{\textnormal{e}}$nard equation (\ref{Ber}), one obtains
\begin{align}\label{MB2}
\hat{v}(k,t)=- \int_0^t &e^{-\nu|k|^2(t-\tau)}\left(i k_j P_k\left[\hat v_j\hat *\hat v-\frac{1}{\mu \rho}\hat B_j\hat *\hat B\right](k,\tau)-\hat f(k)\right)d\tau\\ \nonumber
&+e^{-\nu|k|^2t}\hat v_0(k)\\ \nonumber
\hat B(k,t)=- \int_0^t &e^{\frac{-|k|^2(t-\tau)}{\mu \sigma}}\left(i k_j P_k\left[\hat v_j\hat *\hat B-\hat B_j\hat *\hat v\right](k,\tau)\right)d\tau+e^{\frac{-|k|^2t}{\mu \sigma}}\hat B_0(k).
\end{align}

\begin{Remark}
We may assume the initial conditions $u_0$ in the Boussinesq equation
and $v_0$, $B_0$ for the B$\acute{\textnormal{e}}$nard equation, as well
as the
forcing $f$ are divergence free, since any non-zero divergence part of $f$ can be included
in a gradient term, which has been projected away. 
We assume $f=f(x)$ to be time independent for simplicity 
although a time dependent $f$ with some restrictions may be treated in a similar manner. Additional forcing terms on the temperature and magnetic equations can be accommodated in the formalism here.
\end{Remark}

\begin{Definition}\label{gamma,beta} We introduce the norm $||\cdot||_{\gamma,\beta}$ for some $\beta \geq 0$ 
and $\gamma >d$ by
\begin{equation*}
|| \hat f ||_{\gamma,\beta}=\sup_{k\in \mathbb{R}^d} (1+|k|)^{\gamma}e^{\beta |k|}|\hat f(k)|, \textnormal{ where }\hat{f}(k)=\mathcal{F}[f(\cdot)](k).
\end{equation*}
\end{Definition}
\begin{Definition}\label{L1}
We also use the space $L^1\cap L^{\infty}$ with the norm defined by 
\begin{equation*}
||\hat{f}||_{L^1\cap L^{\infty}}=\max\left\{\int_{\mathbb{R}^d} |\hat{f}(k)|dk, \sup_{k\in\mathbb{R}^d}|\hat{f}(k)|\right\}.
\end{equation*}
\end{Definition}

In the case when results hold either for $\| \cdot \|_{\gamma, \beta}$ or $\| \cdot \|_{L^1\cap L^{\infty}}$ norm,  we will use $||\cdot||_{N}$ for brevity of notation. 

We assume 
$||(1+|k|)^2( \hat{u}_0,\hat{\Theta}_0)||_{N}<\infty$, 
$||(1+|k|)^2(\hat{v}_0,\hat{B}_0)||_{N}<\infty$, 
and $||\hat{f}||_{N}<\infty$ in what follows. 
If $\| \cdot \|_N= \| \cdot \|_{\gamma,\beta}$ and $\beta>0$ then the initial condition and forcing are 
real analytic in $x$ in a strip of width at least $\beta$.
 
\begin{Theorem}\label{existence} (Boussinesq Existence and Uniqueness) 

If $||(1+|\cdot|)^2(\hat{u}_0, \hat{\Theta}_0)||_{N}
<\infty$ and $||\hat{f}||_{N}<\infty$, then the following hold.

i) The Boussinesq equation (\ref{FB2}) has a solution 
$(\hat{u}, \hat{\Theta})(k,t)$ such that $||(\hat{u}, \hat{\Theta})(\cdot,t)||_{N}<\infty$ 
for $\Re \frac{1}{t} >\omega$ for $\omega$ sufficiently large. 
Specifically, (\ref{1-2.39}) holds, where $(\hat{u}_1,\hat{\Theta}_1)$, defined in (\ref{u1}), depends on the initial data and forcing. 

ii) The solution has the Laplace transform representation
\begin{equation}\label{Laplacetransform1}
(\hat{u}, \hat{\Theta})(k,t)=(\hat{u}_0, \hat{\Theta}_0)(k)+\int_0^{\infty}(\hat{H}, \hat{S})(k,p)e^{-p/t}dp
\end{equation}
where $(\hat{H},\hat{S})$ satisfies a set of integral equations that has a unique solution for $||(\hat{H},\hat{S})(\cdot,p)||_{N}e^{-\omega p}\in L^1(0,\infty)$. From this representation $(u, \Theta)(x,t)=\mathcal{F}^{-1}[(\hat{u},\hat{\Theta})(k)](x,t)$ is 
analytic in $t$ for $\Re \frac{1}{t}>\omega$. This implies that if $\beta>0$ then $(u, \Theta)$ is analytic in $x$ in a strip of the same width as the analyticity strip for the initial data and forcing for any $t\in [0,\omega^{-1})$. 

iii) Further for this solution, $||(1+|\cdot|)^2(\hat{u}, \hat{\Theta})(\cdot,t)||_{N} < \infty$ for $t\in (0,\omega^{-1})$. 
Moreover, $(u, \Theta)(x,t)$ solves (\ref{B}) and is the unique solution in $L^{\infty}(0,T,H^2(\mathbb{R}^d))$. In other words, given any solution in $H^2(\mathbb{R}^d)$ to the Boussinesq equation for which the initial data and forcing satisfy the given assumption then the solution has the representation (\ref{Laplacetransform1}).

iv) A sufficient condition for global existence of smooth solution to the Boussinesq equation is that $e^{-\omega p}||(\hat{H},\hat{S})(\cdot,p)||_{N}\in L^1(0,\infty)$ for any $\omega> 0$.
\end{Theorem}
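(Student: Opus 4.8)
The plan is to convert the time-integral system (\ref{FB2}) into an equivalent system of integral equations in the Borel variable $p$ and to solve the latter by a contraction mapping. Substituting the ansatz (\ref{Laplacetransform1}) for $(\hat u,\hat\Theta)$ into (\ref{FB2}), I would use two elementary facts. First, a product of two Laplace transforms in the variable $q=1/t$ corresponds to the additive convolution
\[
(\hat F \,\hat{\ast}\, \hat G)(k,p)=\int_0^p \hat F(k,p-p')\,\hat G(k,p')\,dp'
\]
in $p$, so the quadratic terms $\hat u_j\,\hat{\ast}\,\hat u$ and $\hat u_j\,\hat{\ast}\,\hat\Theta$ become convolutions both in $k$ and in $p$. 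Second, the factor $e^{-\nu|k|^2 t}$ and the time-convolution against the heat semigroup transform into convolution in $p$ against an explicit kernel, namely the inverse Laplace transform in $q$ of $e^{-\nu|k|^2/q}$, which is a Dirac mass at $p=0$ (accounting for the $\hat u_0$ term pulled out front) plus a Bessel-type kernel $\sim\sqrt{\nu|k|^2/p}\,J_1(2\sqrt{\nu|k|^2 p})$. Carrying this out yields a system $(\hat H,\hat S)=(\hat H_0,\hat S_0)+\mathcal N[(\hat H,\hat S)]$, where $(\hat H_0,\hat S_0)$ collects the pieces built from $\hat u_0,\hat\Theta_0,\hat f$ and $\mathcal N$ is a quadratic operator expressed through the $p$-convolution composed with the heat kernel.

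For the functional setting I would work in the Banach space of $(k,p)$-functions with finite norm $\int_0^\infty \|\,\cdot\,(\cdot,p)\|_N\, e^{-\omega p}\,dp$, exactly the space named in part (ii). The core estimate is a weighted Young inequality: since $e^{-\omega p}=e^{-\omega(p-p')}e^{-\omega p'}$, the $p$-convolution obeys $\|\hat F\,\hat{\ast}\,\hat G\|_{L^1_\omega}\le \|\hat F\|_{L^1_\omega}\|\hat G\|_{L^1_\omega}$, while the $\|\cdot\|_N$ norm controls the $k$-convolution because $\gamma>d$ makes $(1+|k|)^{-\gamma}$ integrable and $e^{\beta|k|}$ is submultiplicative under convolution. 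The single power $|k|$ from the gradient factor $ik_j$ is dominated by the $|k|^{-2}$ gain of the inverse-heat kernel, and the residual $p$-integrals against that kernel carry factors that vanish as $\omega\to\infty$. This is what forces both the self-mapping property and the Lipschitz constant of $\mathcal N$ below $1$ on a suitable ball, for $\omega$ sufficiently large, and delivers the unique fixed point. Existence and uniqueness of $(\hat H,\hat S)$ (part ii) then follow from the contraction, and part (i) is immediate: for $\Re\frac1t>\omega$ the Laplace integral converges and $\|(\hat u,\hat\Theta)(\cdot,t)\|_N\le \|(\hat u_0,\hat\Theta_0)\|_N+\int_0^\infty\|(\hat H,\hat S)(\cdot,p)\|_N\, e^{-p\,\Re(1/t)}\,dp$, which is the asserted bound.

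The analyticity claims in (ii) come directly from the representation: differentiation under the integral sign, justified by the $L^1_\omega$ control, shows $\int_0^\infty(\hat H,\hat S)\,e^{-p/t}dp$ is analytic in $1/t$ on $\Re\frac1t>\omega$, hence so is $(\hat u,\hat\Theta)$, and therefore $(u,\Theta)=\mathcal F^{-1}[(\hat u,\hat\Theta)]$ is analytic in $t$ there. When $\beta>0$ the fixed point lives in the $e^{\beta|k|}$-weighted space, so $|\hat u(k,t)|\lesssim (1+|k|)^{-\gamma}e^{-\beta|k|}$ uniformly for $t\in[0,\omega^{-1})$, and the inverse Fourier transform extends analytically to the strip $|\Im x|<\beta$, giving preservation of the analyticity width. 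For part (iii) I would rerun the contraction in the $(1+|\cdot|)^2$-weighted version of the same space; the only new point is that the two extra powers of $|k|$ are absorbed by the smoothing of the heat kernel, again controllable for $\omega$ large. Once $\|(1+|\cdot|)^2(\hat u,\hat\Theta)(\cdot,t)\|_N<\infty$ is established, $\gamma>d$ yields $(u,\Theta)\in H^2$, and differentiating the (now rigorously equivalent) integral form recovers (\ref{B}).

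The uniqueness-in-$H^2$ statement and part (iv) I would treat last. For uniqueness, given any $L^\infty(0,T;H^2)$ solution with the stated data, I would show its Fourier transform satisfies (\ref{FB2}), take its Borel transform in $q=1/t$, verify it solves the same Borel-plane system, and invoke uniqueness of the fixed point to conclude it admits representation (\ref{Laplacetransform1}). The delicate step is establishing a priori that such a solution is Laplace-representable in $p$ with the required exponential control; I expect this, together with the contraction estimate for the derivative-loaded nonlinearity, to be the main obstacle of the whole argument. Finally, part (iv) is essentially a convergence observation: if $e^{-\omega p}\|(\hat H,\hat S)(\cdot,p)\|_N\in L^1(0,\infty)$ for every $\omega>0$, then the integral in (\ref{Laplacetransform1}) converges for all $t$ with $\Re\frac1t>0$, so the solution persists for all positive real times, yielding global existence.
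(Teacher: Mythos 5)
Your treatment of parts (i) and (ii) is essentially the paper's: the same Borel-plane fixed-point system with a Bessel-type kernel, a contraction in the space with norm $\int_0^\infty\|\cdot(\cdot,p)\|_{N}\,e^{-\omega p}\,dp$ (the paper's $\mathcal{A}_1^{\omega}$), smallness for large $\omega$ coming from the residual $p$-integrals, and analyticity in $t$ and in the $x$-strip read off from the representation. One calibration, which matters below: the Borel kernel gives a gain of $|k|^{-1}p^{-1/2}$ (it is $\pi\mathcal{G}(z,z')/z$ with $\mathcal{G}$ bounded and $z=2|k|\sqrt{\nu p}$), not $|k|^{-2}$; the gradient's single $|k|$ is absorbed and the leftover $p^{-1/2}$ yields the $\sqrt{\pi}\,\omega^{-1/2}$ factor, with no negative powers of $|k|$ to spare.

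The genuine gap is part (iii). Rerunning the contraction in the $(1+|\cdot|)^2$-weighted space cannot work under the stated hypotheses. The inhomogeneous term of the fixed-point equation is $2\hat{u}_1(k)J_1(z)/z$, and by (\ref{u1}), $\hat{u}_1$ contains $-\nu|k|^2\hat{u}_0$ and $\hat{f}$; the assumptions give only $\|\hat{u}_1\|_{N}<\infty$, not $\|(1+|\cdot|)^2\hat{u}_1\|_{N}<\infty$ (that would require four powers of decay on $\hat{u}_0$ and two on $\hat{f}$). Moreover the solution itself satisfies $\lim_{p\to0^+}(\hat{H},\hat{S})(k,p)=(\hat{u}_1,\hat{\Theta}_1)(k)$, so $(\hat{H},\hat{S})(\cdot,p)$ genuinely fails to carry two extra powers of $k$-decay uniformly in $p$: the kernel's decay scale is $z=2|k|\sqrt{\nu p}$, which degenerates at $p=0$, and no choice of $\omega$ repairs this. (For $\beta>0$ one can trade exponential for polynomial decay, but only by shrinking the strip, and the theorem also covers $\beta=0$ and $L^1\cap L^{\infty}$.) The two extra derivatives are an instantaneous smoothing effect that blows up as $t\to0^+$ --- which is exactly why the theorem asserts iii) only on the open interval $(0,\omega^{-1})$ --- and the paper obtains it in the time domain (Lemma \ref{2.11.}): a bootstrap on the Duhamel form of (\ref{FB2}), where $\int_0^t|k|^2e^{-\nu|k|^2(t-\tau)}d\tau\le\nu^{-1}$ gives one power of $|k|$ per pass at the price of factors $\epsilon^{-1/2}$ and $\epsilon^{-1}$ on $[\epsilon,T]$. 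Your Borel-plane weighted contraction has no counterpart of this mechanism.

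The second gap is the $H^2$-uniqueness claim, which you leave as an acknowledged obstacle (Borel-transforming an arbitrary $L^\infty(0,T;H^2)$ solution and proving its Laplace representability). The paper bypasses this entirely: Lemma \ref{equivalence} shows the constructed $(u,\Theta)$ lies in $L^{\infty}(0,T;H^2(\mathbb{R}^d))$ and solves (\ref{B}) classically, and uniqueness of classical solutions in this class is quoted from energy-method theory (\cite{Temam}); hence any $H^2$ solution coincides with the constructed one and inherits (\ref{Laplacetransform1}). Relatedly, in part (iv) "smooth" global solution also requires invoking the instantaneous-smoothing lemma, not just convergence of the Laplace integral for all $t>0$.
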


\begin{Theorem}\label{existenceMHD}(MHD Existence and Uniqueness) If $||(1+|\cdot|)^2(\hat{v}_0, \hat{B}_0)||_{N}<\infty$ and 
$||\hat{f}||_{N}<\infty$, then the following hold.
 
i) The magnetic B$\acute{\textnormal{e}}$nard equation (\ref{MB2}) has a solution 
$(\hat{v}, \hat{B})(k,t)$ such that $||(\hat{v}, \hat{B})(\cdot,t)||_{N}<\infty$ for $\Re \frac{1}{t}
>\alpha$ for $\alpha$ sufficiently large.
Specifically, (\ref{2-2.39}) holds, 
where $(\hat{v}_1,\hat{B}_1)$, defined in (\ref{v1}), depends on the initial data and forcing.
 
ii) The solution has the Laplace transform representation
\begin{equation}\label{Laplacetransform2}
(\hat{v}, \hat{B})(k,t)=(\hat{v}_0, \hat{B}_0)(k)+\int_0^{\infty}(\hat{W}, \hat{Q})(k,p)e^{-p/t}dp
\end{equation}
where $(\hat{W},\hat{Q})$ satisfies a set of integral equations that has a unique solution for $||(\hat{W},\hat{Q})(\cdot,p)||_{N}e^{-\alpha p}\in L^1(0,\infty)$. From this representation $(v, B)(x,t)=\mathcal{F}^{-1}[(\hat{v},\hat{B})(k)](x,t)$ is analytic in $t$ 
for $\Re \frac{1}{t}>\alpha$. This implies that if $\beta>0$ then 
$(v, B)$ is analytic in $x$ in a strip of the same width as the analyticity strip for the initial data and forcing for any $t\in [0,\alpha^{-1})$.

iii) Further for this solution, $||(1+|\cdot|)^2(\hat{v}, \hat{B})(\cdot,t)||_{N} <\infty$ 
for $t\in (0,\alpha^{-1})$. Moreover, $(v,B)(x,t)$ is the unique solution to (\ref{Ber}) in $L^{\infty}(0,T,H^2(\mathbb{R}^d))$. In other words, given any solution in $H^2(\mathbb{R}^d)$ to the MHD equation for which the initial data and forcing satisfy the given assumption then the solution has the representation (\ref{Laplacetransform2}).

iv) A sufficient condition for global existence of smooth solution to the magnetic B$\acute{\textnormal{e}}$nard equation is that $e^{-\alpha p}||(\hat{W},\hat{Q})(\cdot,p)||_{N}\in L^1(0,\infty)$ for any $\alpha> 0$.
\end{Theorem}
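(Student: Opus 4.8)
The plan is to prove Theorem \ref{existenceMHD} in parallel with the Boussinesq case (Theorem \ref{existence}), since the system (\ref{MB2}) has the same quadratic-with-one-derivative nonlinear structure; the only genuine differences are the specific bilinear terms $\hat v_j\hat*\hat v$, $\tfrac{1}{\mu\rho}\hat B_j\hat*\hat B$, $\hat v_j\hat*\hat B$ and $\hat B_j\hat*\hat v$, and the fact that the $\hat B$-equation carries the diffusion constant $1/(\mu\sigma)$ rather than $\nu$. First I would substitute the Laplace ansatz (\ref{Laplacetransform2}), namely $\hat v=\hat v_0+\int_0^\infty \hat W e^{-p/t}\,dp$ and $\hat B=\hat B_0+\int_0^\infty \hat Q e^{-p/t}\,dp$, into (\ref{MB2}). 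Because $e^{-p/t}e^{-q/t}=e^{-(p+q)/t}$, each product of two Laplace integrals in $t$ becomes a convolution in $p$, turning the quadratic terms into expressions bilinear in $(\hat W,\hat Q)$ that convolve in $p$ and Fourier-convolve in $k$. The Duhamel operators $\int_0^t e^{-\nu|k|^2(t-\tau)}(\cdot)\,d\tau$ (and the $1/(\mu\sigma)$ analogue) are converted using the inverse Laplace transform in $1/t$ of $e^{-\nu|k|^2 t}-1$, an explicit Bessel-type kernel depending on $k$ through $\nu|k|^2$; this produces a linear integral operator in $p$.

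Collecting terms yields a closed system $(\hat W,\hat Q)=(\hat W_1,\hat Q_1)+\mathcal N[(\hat W,\hat Q)]$, where $(\hat W_1,\hat Q_1)$ is the inverse Laplace transform of the data-and-forcing contribution --- the quantity $(\hat v_1,\hat B_1)$ entering (\ref{v1}) and the bound (\ref{2-2.39}) --- and $\mathcal N$ is the quadratic map built from the $p$-convolution, the $k$-convolution against the multiplier $ik_jP_k$, and the Bessel kernel. I would then run a contraction argument in $X_\alpha=\{(\hat W,\hat Q): \int_0^\infty e^{-\alpha p}\|(\hat W,\hat Q)(\cdot,p)\|_N\,dp<\infty\}$. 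The estimates needed are: the Banach-algebra bound $\|\hat F\hat*\hat G\|_N\le C\|\hat F\|_N\|\hat G\|_N$ (valid for $\|\cdot\|_{\gamma,\beta}$ when $\gamma>d$, and for $\|\cdot\|_{L^1\cap L^\infty}$), which keeps the $k$-convolutions in $\|\cdot\|_N$ and preserves the weight $e^{\beta|k|}$; submultiplicativity of $p$-convolution under the weight $e^{-\alpha p}$; and a kernel bound showing the Bessel operator together with $ik_j$ maps $X_\alpha$ into itself. Since $\mathcal N$ is quadratic with constants independent of $\alpha$ while $\|(\hat W_1,\hat Q_1)\|_{X_\alpha}$ stays finite, taking $\alpha$ large gives a contraction on a small ball and hence the unique fixed point, proving (ii). Part (i) follows by reading $(\hat v,\hat B)$ off (\ref{Laplacetransform2}): the Laplace integral converges absolutely for $\Re \frac{1}{t}>\alpha$, giving $\|(\hat v,\hat B)(\cdot,t)\|_N<\infty$, analyticity in $t$ there, and --- because every estimate preserves $e^{\beta|k|}$ --- retention of the spatial analyticity strip of width $\beta$ for $t\in[0,\alpha^{-1})$.

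For (iii) I would repeat the fixed-point estimates in the norm carrying the extra factor $(1+|k|)^2$, again an algebra for $\gamma>d$, to obtain $\|(1+|\cdot|)^2(\hat v,\hat B)(\cdot,t)\|_N<\infty$ on $(0,\alpha^{-1})$; this decay is $H^2(\mathbb{R}^d)$ regularity in $x$ and justifies differentiating under the Laplace integral, so $(v,B)=\mathcal F^{-1}[(\hat v,\hat B)]$ is classical and solves (\ref{Ber}). Uniqueness in $L^\infty(0,T,H^2(\mathbb{R}^d))$ comes from the converse direction: any such solution has a Fourier transform satisfying (\ref{MB2}) with $\|(1+|\cdot|)^2(\hat v,\hat B)\|_N<\infty$, hence an inverse-Laplace transform in $1/t$ lying in $X_\alpha$ for large $\alpha$; this transform solves the same integral system, and uniqueness of the fixed point forces it to equal $(\hat W,\hat Q)$, so the solution has the representation (\ref{Laplacetransform2}). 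Finally (iv) is immediate: if $e^{-\alpha p}\|(\hat W,\hat Q)(\cdot,p)\|_N\in L^1(0,\infty)$ for every $\alpha>0$, then (\ref{Laplacetransform2}) converges for all $t>0$ and the smooth solution is global.

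The main obstacle I expect is the kernel bound in the Borel variable: the heat-Duhamel operator is represented by a Bessel-type kernel depending on $k$ through $\nu|k|^2$ (resp. $|k|^2/(\mu\sigma)$), and the extra $ik_j$ from the nonlinearity worsens the large-$|k|$ behavior, so proving this operator is bounded on $X_\alpha$ with the weight $(1+|k|)^2$ needed for (iii) --- while tracking the two distinct diffusion constants in the $\hat v$- and $\hat B$-equations separately --- is the delicate step. The converse part of (iii) is the second subtle point, since it requires showing a priori that an arbitrary $H^2$ solution possesses a Borel transform in $X_\alpha$, not merely a formal representation.
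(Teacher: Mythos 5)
Your plan for parts i), ii) and iv) matches the paper's route: the Laplace/Borel ansatz, the Bessel-kernel representation of the Duhamel operator, and a contraction mapping for $(\hat W,\hat Q)$ in the space $\int_0^\infty e^{-\alpha p}\|\cdot\|_N\,dp<\infty$, with largeness of $\alpha$ supplying the small factor (in the paper this is the $\alpha^{-1/2}$ gain of Lemma \ref{2.7.}, leading to condition (\ref{2-2.39}) in Lemma \ref{2.8.}, then Lemma \ref{2.9.} for the Laplace transform and analyticity). However, part iii) as you propose it has two genuine gaps. First, you cannot ``repeat the fixed-point estimates in the norm carrying the extra factor $(1+|k|)^2$'': the inhomogeneous term of the integral equation is $2\hat v_1(k)J_1(\tilde z)/\tilde z$, and by (\ref{v1}) $\hat v_1$ contains $-\nu|k|^2\hat v_0$ and $ik_jP_k[\hat v_{0,j}\hat *\hat v_0-\frac{1}{\mu\rho}\hat B_{0,j}\hat *\hat B_0]$, so finiteness of $\|(1+|\cdot|)^2(\hat v_1,\hat B_1)\|_N$ would require roughly $\|(1+|\cdot|)^4(\hat v_0,\hat B_0)\|_N<\infty$, which the hypothesis $\|(1+|\cdot|)^2(\hat v_0,\hat B_0)\|_N<\infty$ does not give. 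Indeed $(\hat W,\hat Q)(k,p)\to(\hat v_1,\hat B_1)(k)$ as $p\to 0^+$, so no bound with the extra weight can hold uniformly down to $p=0$. The gain of two derivatives is a smoothing effect in $t$, not a property of the Borel-plane fixed point: the paper proves it by a bootstrap (Lemma \ref{2.11.}) using the Duhamel form in $t$ on $[\epsilon,T]$, where the heat factor $e^{-\min(\nu,1/(\mu\sigma))|k|^2(t-\tau)}$ first yields $\||k|\hat V_{\epsilon/2}\|_N<\infty$ at the cost of $\epsilon^{-1/2}$, and then, restarting from $t=\epsilon/2$ and using the divergence-free conditions $k\cdot\hat v=0=k\cdot\hat B$ to place the factor $k_j$ inside the convolutions, yields $\||k|^2\hat V_\epsilon\|_N<\infty$.

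Second, your uniqueness argument runs the converse direction: take an arbitrary solution in $L^\infty(0,T,H^2(\mathbb{R}^d))$, produce its inverse Laplace transform in $1/t$, and invoke uniqueness of the fixed point. This step would fail as stated: an arbitrary $H^2$ solution is not a priori analytic in $t$ in a half-plane of $1/t$, so its Borel transform need not exist, and moreover $H^2$ regularity gives $(1+|k|)^2(\hat v,\hat B)\in L^2$, not finiteness in either $N$-norm, so such a solution need not even satisfy the hypotheses under which the integral-equation framework applies. You flagged this as a ``subtle point,'' but it is not merely subtle --- it is the missing ingredient, and the paper does not attempt it. Instead the paper closes the loop in the forward direction only: Lemma \ref{equivalence} shows the constructed solution is a classical solution lying in $L^\infty(0,T,H^2(\mathbb{R}^d))$, and then uniqueness of solutions in that class is quoted from the classical energy-method literature (\cite{TemamMHD}, \cite{DurautandLions}); consequently any $H^2$ solution must coincide with the constructed one and so inherits the representation (\ref{Laplacetransform2}). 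If you want a self-contained argument you must either reproduce that energy uniqueness proof or add hypotheses guaranteeing the competitor solution itself admits a Borel representation.
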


\begin{Remark}
If the initial condition and forcing are known to be in $L^1$ in Fourier space but not necessarily to be in $L^{\infty}$, then we have a unique solution to (\ref{FB2}) or (\ref{MB2}) for which $||(\hat{u},\hat{\Theta})||_{L^1(\mathbb{R}^d)}<\infty$ for $t\in (0,\omega^{-1})$, respectively $||(\hat{v},\hat{B})||_{L^1(\mathbb{R}^d)}<\infty$ for $t\in (0,\alpha^{-1})$. This solution is smooth pointwise by instantaneous smoothing and solves the corresponding equation (\ref{B}) or (\ref{Ber}). What is not known is 
whether the corresponding $(u, \Theta)$ or $(v, B)$ in the physical space is in $L^\infty(0,T,H^2(\mathbb{R}^d))$.
\end{Remark}

\begin{Remark} The guaranteed existence time $T=\omega^{-1}$ 
or $\alpha^{-1}$,
depending on the equation being considered, depends on $||(1+|\cdot|)^2(\hat{u}_0,\hat{\Theta}_0)(\cdot)||_{N}$ 
or $||(1+|\cdot|)^2(\hat{v}_0,\hat{B}_0)(\cdot)||_{N}$. This condition 
can be weakened using an accelerated version of the Borel transform as in \cite{longtime}, {\it i.e.} using
an alternate representation for $n > 1$:
\begin{equation}\label{accelerated}
(\hat{u},\hat{\Theta})(k,t)=(\hat{u}_0,\hat{\Theta}_0)(k)+\int_0^{\infty}(\hat{H},\hat{S})(k,q)e^{-q/(t^n)}dq
\end{equation}
\end{Remark}

\begin{Remark}
Using an accelerated
variable instead of $p$, as in (\ref{accelerated})
for $n$ sufficiently large, we expect to be able to prove that in the case without forcing 
for the periodic case $x \in \mathbb{T}^d$, global solutions of the PDEs implies that the growth
rate $\alpha$ for associated integral equation solution is arbitrarily small, a result already shown for 3-D Navier-Stokes \cite {longtime}.
\end{Remark}

\begin{Theorem} \label{Borel summability} (Borel Summability) i) For analytic initial data and forcing and $\beta >0$ the solution to the Boussinesq equation, $(u, \Theta)$, and the solution to the magnetic B$\acute{\textnormal{e}}$nard equation, $(v, B)$, are Borel summable in $t$. 
That is there exists $(H, S)(x,p)$ and $(W, Q)(x,p)$ analytic in a neighborhood of $
\{ 0 \} \cup \mathbb{R}^{+}$, exponentially bounded and analytic in $x$ for $|$Im$(x)|<\beta$ such that
\begin{equation}\label{Laplacetransform3}
(u, \Theta)(x,t)=(u_0, \Theta_0)(x)+\int_0^{\infty}(H, S)(x,p)e^{-p/t}dp
\end{equation}
and
\begin{equation}\label{Laplacetransform4}
(v, B)(x,t)=(v_0, B_0)(x)+\int_0^{\infty}(W, Q)(x,p)e^{-p/t}dp.
\end{equation}
\noindent In particularly by Watson's Lemma, as $t\rightarrow 0^+$
\begin{equation*} 
(u, \Theta)(x,t)\sim (u_0, \Theta_0)(x)+
\sum_{m=1}^{\infty}(u_m, \Theta_m) (x)t^m
\end{equation*}
and
\begin{equation*}
(v, B)(x,t)\sim (v_0, B_0)(x)+\sum_{m=1}^{\infty}(v_m, B_m)(x)t^m,
\end{equation*}
\noindent where $|(u_m, \Theta_m)(x)|\leq m! A_0 D_0^m$ and 
$|(v_m, B_m) (x)|\leq m!\tilde A_0\tilde D_0^m$ with constants
$A_0$, $\tilde A_0$, $D_0$, and $\tilde D_0$ generally dependent on the initial condition and forcing through Lemma \ref{3.2.}.

ii) Further, if analytic initial data and forcing have only a finite number of Fourier modes and $\beta>0$, the solutions $(\hat{H},\hat{S})(k,p)$ and $(\hat{W},\hat{Q})(k,p)$ have radii of convergence independent of the size of the initial data and forcing.
\noindent In particularly, constants
$A_0$, $\tilde A_0$ depend on the initial condition and forcing and constants $D_0$, and $\tilde D_0$ depend on the number of Fourier modes of the initial condition and forcing but are independent of the size of initial data and forcing. 
\end{Theorem}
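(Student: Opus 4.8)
The plan is to reduce Borel summability to three properties of the Borel transforms $(\hat H,\hat S)(k,p)$ and $(\hat W,\hat Q)(k,p)$, regarded as functions of the dual variable $p$: (a) analyticity in a neighborhood of $\{0\}\cup\mathbb{R}^+$ in the complex $p$-plane; (b) a pointwise exponential bound $\|(\hat H,\hat S)(\cdot,p)\|_{\gamma,\beta}\le C\,e^{\omega|p|}$ there; and (c) when $\beta>0$, analyticity in $x$ of the physical-space inverse transforms in the strip $|\mathrm{Im}\,x|<\beta$. Theorems \ref{existence} and \ref{existenceMHD} already furnish existence and uniqueness of $(\hat H,\hat S)$ and $(\hat W,\hat Q)$ in the weighted space, together with the Fourier-space Laplace representation; what remains is to promote these to genuine analyticity in complex $p$ and to pass to physical space. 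Since the two systems differ only in their linear diffusive kernels and in their bilinear couplings (buoyancy $a e_2\hat\Theta$ for Boussinesq, the Lorentz and induction terms for MHD), I would treat them in parallel, establishing the estimates for the abstract convolution equation satisfied by the Borel transform and then specializing.

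For (a) and (b) I would Laplace-transform the integral equations (\ref{FB2}) and (\ref{MB2}) in the variable $1/t$, with dual variable $p$, producing a fixed-point equation of the schematic form $(\hat H,\hat S)=(\hat H_1,\hat S_1)+\mathcal N[(\hat H,\hat S)]$, in which products in $t$ become convolutions in $p$ and the inverted heat operators become $p$-dependent kernels. The inhomogeneous term $(\hat H_1,\hat S_1)$, built from the initial data and forcing as in (\ref{u1}), is entire in $p$ (being of Bessel type) and satisfies the $\beta$-weighted bound by hypothesis. I would then rerun the contraction argument of Theorems \ref{existence} and \ref{existenceMHD} with $p$ allowed to range over a complex neighborhood of the positive axis: the $p$-convolution maps functions analytic in such a neighborhood to analytic functions, and the diffusive factors $e^{-\nu|k|^2(\cdot)}$, $e^{-\mu|k|^2(\cdot)}$ supply the decay needed to keep the contraction constant below $1$ after multiplication by the weight $e^{-\omega|p|}$. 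Uniform convergence of the iteration on compact subsets gives analyticity by the standard Morera/uniform-limit argument, the fixed-point bound upgrades the $L^1$ estimate of Theorems \ref{existence} and \ref{existenceMHD} to the pointwise exponential bound in (b), and analyticity at $p=0$ follows from a locally convergent power series.

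Property (c) and the asymptotic expansion then follow cleanly. For $\beta>0$ the bound in (b) gives $|\hat H(k,p)|\le C(p)(1+|k|)^{-\gamma}e^{-\beta|k|}$ with $\gamma>d$, so $H(x,p)=\mathcal F^{-1}[\hat H(\cdot,p)](x)$ converges and is analytic wherever $e^{|k|\,|\mathrm{Im}\,x|}e^{-\beta|k|}$ remains integrable against $(1+|k|)^{-\gamma}$, that is for $|\mathrm{Im}\,x|<\beta$, and the exponential bound in $p$ is inherited; this yields (\ref{Laplacetransform3}) and (\ref{Laplacetransform4}). Applying Watson's Lemma to these representations produces the stated asymptotic series, and the Gevrey-$1$ bounds are a direct consequence of analyticity at $p=0$: if $H(x,p)=\sum_{j\ge0}h_j(x)p^j$ has radius of convergence $1/D_0$, then $u_m=(m-1)!\,h_{m-1}$ gives $|(u_m,\Theta_m)(x)|\le (m-1)!\,A_0'D_0^{m}\le m!\,A_0D_0^m$, and likewise for $(v_m,B_m)$, with the constants supplied by Lemma \ref{3.2.}.

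The substantive part is (ii), the amplitude-independence of the radius of convergence when the data have finitely many Fourier modes. Here I would analyze the recursion obeyed by the Taylor coefficients $\hat h_j(k)$ of $\hat H(k,\cdot)$ at $p=0$: with finite spectral support, each $\hat h_j$ is a finite convolution of lower-order coefficients weighted by the inverse diffusive factors, so the number of interacting modes at each order is controlled purely by the cardinality of the initial mode set, while the data size enters only as an overall scale. The claim is that the resulting geometric growth rate $D_0$ — equivalently the location of the nearest $p$-singularity — is fixed by this combinatorial and diffusive structure and not by the amplitude, so that rescaling the data rescales $A_0$ but leaves $D_0$ invariant. I expect this to be the main obstacle: separating the amplitude dependence from the combinatorial growth in the $p$-recursion for a quadratically nonlinear coupled system is delicate, and the bookkeeping must be done carefully to ensure that the buoyancy and Lorentz couplings do not enlarge the effective mode set in a size-dependent way. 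This is the step where the analysis of \cite{longtime} must be adapted with the most care.
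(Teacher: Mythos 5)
There is a genuine gap in both halves of your plan. For part (i), your proposal to ``rerun the contraction argument with $p$ in a complex neighborhood of $\mathbb{R}^+$'' misreads the structure of the Borel-plane equation. After the Borel transform, the heat factors $e^{-\nu|k|^2(t-\tau)}$ are gone: the kernel of (\ref{1-2.18}) is $\pi\mathcal{G}(z,z')/z$ with $z=2|k|\sqrt{\nu p}$, built from the Bessel functions $J_1,Y_1$, and every contraction estimate in Lemmas \ref{2.4.}--\ref{2.8.} rests on the fact that $|\mathcal{G}(z,z')|$ is bounded \emph{for real nonnegative} $z'\le z$. Off the real $p$-axis this fails: for fixed non-real $p$ the kernel grows exponentially, like $e^{|\mathrm{Im}\,z|}$ with $\mathrm{Im}\,z=2|k|\sqrt{\nu}\,\mathrm{Im}\sqrt{p}$, which is unbounded in $k$; there is no diffusive decay in the $p$-plane to compensate (your appeal to $e^{-\nu|k|^2(\cdot)}$ refers to factors that exist only in the $t$-domain equations (\ref{FB2}), (\ref{IE-t-1})). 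Making your route work would require uniform-in-$k$ complex-Bessel estimates and a sacrifice of part of the $e^{-\beta|k|}$ weight, none of which you address. The paper avoids this difficulty entirely by staying on the real axis: it proves, by induction on $l$, the bound $\bigl|\tfrac{1}{l!}\partial_p^l(\hat{H},\hat{S})(k,p_0)\bigr|\le e^{\omega' p_0}e^{-\beta|k|}AD^l(1+p_0^2)^{-1}(1+|k|)^{-\gamma}\mathcal{Q}_{2l}(\beta|k|)(2l+1)^{-2}$ (Lemma \ref{4.2.}), obtained by differentiating the ODE form (\ref{1-2.9}) $l$ times and inverting with order-$(l+1)$ Bessel kernels (Lemma \ref{4.4.}); since $D$ is independent of $p_0$, the Taylor series at every real $p_0$ has a uniform radius of convergence, which yields analyticity on a neighborhood of $\{0\}\cup\mathbb{R}^+$ together with the exponential bound. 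Your final step (Fourier inversion for $|\mathrm{Im}\,x|<\beta$ and Watson's Lemma) is fine once that is in hand, but the analyticity claim itself is where the work lies, and your mechanism for it does not survive contact with the actual kernel.

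For part (ii) you correctly flag the main obstacle, but what you offer is a restatement of the desired conclusion rather than an argument, and its premise is false: with data supported in $|k|\le K_1$, the spectral support of the $m$-th coefficient is \emph{not} controlled by the initial mode set --- it grows linearly, $\sup_{k\in \mathrm{supp}}|k|\le(m+1)K_1$ --- and for a quadratic nonlinearity, rescaling the data by $\lambda$ is not ``an overall scale'' on the recursion (linear terms scale like $\lambda$, quadratic ones like $\lambda^2$), so ``$D_0$ is invariant under rescaling'' cannot be read off the bookkeeping. The paper's actual mechanism is different in kind: it expands in powers of $t$ (not $p$), uses the support growth to get $a_{m+1}\le \tfrac{b_m}{m+1}+\tfrac{a\,a_m}{m+1}+K_1^2\max(\nu,\mu)(m+1)a_m+2K_1C_0\sum_{l=0}^m a_la_{m-l}$, majorizes by a scalar ODE in $s=1/t$, and then performs an exponential-asymptotics analysis: a Borel singularity corresponds to an exponentially small correction $\delta\sim e^{-K_1^{-2}\max(\nu,\mu)^{-1}s}\,s^{4\tilde{a}_0C_0K_1^{-1}\max(\nu,\mu)^{-1}+1}$. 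The point is that the factorial (Gevrey) growth rate --- hence the singularity location $K_1^{-2}\max(\nu,\mu)^{-1}$ --- is produced by the \emph{linear} diffusion term, whose coefficient depends only on $K_1$ and the viscosities, while the amplitude and the quadratic coupling enter only through the algebraic prefactor in $s$. That separation of the exponential rate from the amplitude is the entire content of part (ii), and it is absent from your sketch.
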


\begin{Remark}
In the case $\beta >0$, we do not need the restriction $\gamma>d$. If $||\hat{u}||_{\gamma,\beta}<\infty$, then for $\beta'\in (0,\beta)$ we have for any $n\in \mathbb{N}$,  $||\hat{u}||_{\gamma+n,\beta'}<\infty$.
\end{Remark}

\begin{Remark}
Besides the nature of early time asymptotics, the finite radius of convergence of the series in $p$ being independent of size of initial condition, at least for data with finite Fourier modes, helps determine the solution in $[0, p_0]$. Knowledge of the solution on $[0, p_0]$ can be exploited (as in the following Theorem \ref{improved existence}) to compute a revised estimate on $\omega$ and $\alpha$ for specific initial data and forcing.
\end{Remark}

Let $(\hat{H},\hat{S})(k,p)$ be the solution to (\ref{1-2.18}) provided by Lemma \ref{2.8.}. Define
\begin{equation*}
(\hat{H}, \hat{S})^{(a)}(k,p)=\begin{cases}(\hat{H}, \hat{S})(k,p) \mbox{ for } p\in (0,p_0]\subset \mathbb{R}^{+}\\
0 \mbox{ otherwise}\end{cases}
\end{equation*}
\noindent and
\begin{align*}
\hat{H}^{(s)}(k,p)&=\frac{i k_j\pi}{2|k|\sqrt{\nu p}}\int_0^{\min(p,2p_0)} \mathcal{G}(z,z')\hat{G}_j^{[1],(a)}(k,p')dp'+2\hat u_1(k)\frac{J_1(2|k|\sqrt{\nu p})}{2|k|\sqrt{\nu p}}\\ 
&\quad+\frac{a\pi}{2|k|\sqrt{\nu p}}\int_0^{\min(p,p_0)} \mathcal{G}(z,z')P_k[e_2\hat{S}^{(a)}(k,p')]dp'\\ 
\hat{S}^{(s)}(k,p)&=\frac{i k_j\pi}{2|k|\sqrt{\mu p}}\int_0^{\min(p,2p_0)} \mathcal{G}(\zeta ,\zeta ')\hat{G}_j^{[2],(a)}(k,p')dp'+2\hat \Theta_1(k)\frac{J_1(2|k|\sqrt{\mu p})}{2|k|\sqrt{\mu p}}
\end{align*}
\noindent where 
\begin{align}\nonumber
\hat{G}_j^{[1],(a)}(k,p)&=-P_k[\hat{u}_{0,j}\hat{*}\hat{H}^{(a)}+\hat{H}_j^{(a)}\hat{*}\hat{u}_0+\hat{H}_j^{(a)}\, ^{\ast}_{\ast}\hat{S}^{(a)}]\\ \nonumber
\hat{G}_j^{[2],(a)}(k,p)&=-[\hat{u}_{0,j}\hat{*}\hat{S}^{(a)}+\hat{H}_j^{(a)}\hat{*}\hat{\Theta}_0+\hat{S}_j^{(a)}\, ^{\ast}_{\ast}\hat{S}^{(a)}].
\end{align}
Notice if $(\hat{H}, \hat{S})^{(a)}(k,p)$ is known, then $\hat{H}^{(s)}(k,p)$, $\hat{S}^{(s)}(k,p)$, $G_j^{[1],(a)}(k,p)$, and $G_j^{[2],(a)}(k,p)$ are also known functions given by (\ref{G_j}). Also, recall $\hat{u}_1$ and $\hat{\Theta}_1$ are quantities based on the initial condition and forcing given in (\ref{u1}).

\begin{Theorem} \label{improved existence} (Improved exponential estimates) Assume $\epsilon_1$, $B_3$ and $b$ are
functionals of the forcing $f$, initial condition $(\hat{u}_0,\hat{\Theta}_0)$, and 
the solution $({\hat H}, {\hat S} )$ to the
set of integral equations (\ref{N}) on
a finite interval $[0, p_0]$,
determined from the following equations for any chosen $\omega_0 \ge 0$: 
\begin{equation*}
b=\omega_0 \int_{p_0}^{\infty}e^{-\omega_0 p}||(\hat{H}, \hat{S})^{(s)}(\cdot,p)||_{N}dp
\end{equation*}
\begin{equation*}
\epsilon_1=\mathcal{B}_1+\mathcal{B}_4+\int_0^{p_0}e^{-\omega_0 p}\mathcal{B}_2(p)dp,
\end{equation*}
\noindent where
\begin{multline}\nonumber
\mathcal{B}_0(k)=C_0\sup_{p_0\leq p'\leq p}|\mathcal{G}(z,z')/z|,\hspace{.5 in} \mathcal{B}_1=2\sup_{k\in\mathbb{R}^d}|k|\mathcal{B}_0(k)||(\hat{u}_0, \hat{\Theta}_0)||_{N},\\ \nonumber
\mathcal{B}_2=2\sup_{k\in\mathbb{R}^d}|k|\mathcal{B}_0(k)||(\hat{H}, \hat{S})^{(a)}(\cdot,p)||_{N},\,\, \mathcal{B}_3=\sup_{k\in\mathbb{R}^d}|k|\mathcal{B}_0(k), \,\,\mathcal{B}_4=a\sup_{k\in\mathbb{R}^d}\mathcal{B}_0(k).
\end{multline}
Then, over an extended interval, the solution
satisfies the relation
\begin{equation}\nonumber
\left \| \left ( {\hat H} (\cdot, p), {\hat S} (\cdot, p) \right ) 
\right \|_{N} \in L^1 \left (e^{-\omega p} dp \right )
\end{equation}
for any $\omega\geq \omega_0$ satisfying 
\begin{equation} \nonumber
\omega>\epsilon_1+2\sqrt{\mathcal{B}_3 b}, 
\end{equation}
where $f\in L^1(e^{-\omega p}dp)$ means $\int_0^{\infty}|f(p)|e^{-\omega p}dp<\infty$.
\end{Theorem}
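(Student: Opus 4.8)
The plan is to collapse the fixed-point equation (\ref{N}) for $(\hat H,\hat S)$ into a single scalar quadratic inequality for the weighted tail norm
\[
M(\omega)=\int_{p_0}^{\infty}e^{-\omega p}\,\|(\hat H,\hat S)(\cdot,p)\|_{N}\,dp,
\]
and then to read off finiteness from the discriminant of that quadratic. First I would split the solution as $(\hat H,\hat S)=(\hat H,\hat S)^{(a)}+(\hat h,\hat s)$, where $(\hat H,\hat S)^{(a)}$ is the known restriction to $[0,p_0]$ and the tail $(\hat h,\hat s)$ is supported on $p>p_0$. Substituting into (\ref{N}) and expanding each quadratic convolution, e.g. $\hat H_j\,{}^{\ast}_{\ast}\hat S=\hat H^{(a)}_j\,{}^{\ast}_{\ast}\hat S^{(a)}+\hat H^{(a)}_j\,{}^{\ast}_{\ast}\hat s+\hat h_j\,{}^{\ast}_{\ast}\hat S^{(a)}+\hat h_j\,{}^{\ast}_{\ast}\hat s$, sorts the terms into three groups: the purely active convolutions together with the Bessel data terms $2\hat u_1 J_1(2|k|\sqrt{\nu p})/(2|k|\sqrt{\nu p})$, which assemble precisely into the explicit source $(\hat H,\hat S)^{(s)}$; the cross terms together with the $\hat{u}_0$-convolution and tail-buoyancy terms, which are linear in $(\hat h,\hat s)$; and the single remaining term $\hat h_j\,{}^{\ast}_{\ast}\hat s$, which is quadratic in the tail.

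Next I would estimate the kernel prefactor. Using $z=2|k|\sqrt{\nu p}$ one has $\tfrac{|k_j|\pi}{2|k|\sqrt{\nu p}}|\mathcal{G}(z,z')|\le |k|\,\mathcal{B}_0(k)$, so the operator $\tfrac{ik_j\pi}{2|k|\sqrt{\nu p}}\int\mathcal{G}(z,z')\,(\cdot)\,dp'$ contributes at most the factor $\mathcal{B}_3=\sup_k|k|\mathcal{B}_0(k)$ after the $k$-supremum, while the buoyancy operator (which lacks the $ik_j$ numerator) contributes only $\mathcal{B}_4=a\sup_k\mathcal{B}_0(k)$. I would then take the weighted norm $\|\cdot\|_{L^1(e^{-\omega p}dp)}$ of each group, using three facts: the norm $\|\cdot\|_{N}$ is a Banach algebra under both $\hat{*}$ and ${}^{\ast}_{\ast}$, so that $\|F\,{}^{\ast}_{\ast}G\|_{N}(p)\le\int_0^p\|F(\cdot,p-p')\|_N\|G(\cdot,p')\|_N\,dp'$; Fubini converts the indefinite $p'$-integral produced by the kernel into a clean factor $1/\omega$; and the Young-type weighted-$L^1$ convolution estimate gives $\|\hat h\,{}^{\ast}_{\ast}\hat s\|_{L^1(e^{-\omega p})}\le M(\omega)^2$. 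Collecting the linear coefficients into $\epsilon_1=\mathcal{B}_1+\mathcal{B}_4+\int_0^{p_0}e^{-\omega_0p}\mathcal{B}_2(p)\,dp$ (here $\omega\ge\omega_0$ lets me replace $e^{-\omega p}$ by $e^{-\omega_0 p}$ on $[0,p_0]$) and bounding the source by $b$ via the monotonicity of $\omega\mapsto\omega\int_{p_0}^\infty e^{-\omega p}g\,dp$ (valid once $\omega p_0\ge 1$, so that $\omega\int_{p_0}^\infty e^{-\omega p}\|(\hat H,\hat S)^{(s)}\|_N\,dp\le b$), I would arrive at
\[
\omega M(\omega)\le b+\epsilon_1 M(\omega)+\mathcal{B}_3 M(\omega)^2,\qquad\text{i.e.}\qquad \mathcal{B}_3 M^2-(\omega-\epsilon_1)M+b\ge 0 .
\]

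Finally I would extract the bound from this quadratic. Its two roots are real and positive exactly when $(\omega-\epsilon_1)^2\ge 4\mathcal{B}_3 b$, i.e. $\omega\ge\epsilon_1+2\sqrt{\mathcal{B}_3 b}$, which is the stated threshold; in that regime the inequality forces $M(\omega)\le M_-(\omega)$ or $M(\omega)\ge M_+(\omega)$, with a genuine gap between the branches. Since Theorem \ref{existence} already guarantees $M(\omega)<\infty$ with $M(\omega)\to 0$ for large $\omega$, the tail norm starts on the lower branch; because $\omega\mapsto M(\omega)$ is continuous and monotone on its domain of finiteness, it cannot jump across the forbidden gap, so $M(\omega)\le M_-(\omega)<\infty$ persists for every $\omega>\epsilon_1+2\sqrt{\mathcal{B}_3 b}$ with $\omega\ge\omega_0$. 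Adding back the bounded contribution of the active part on $[0,p_0]$ then yields $\|(\hat H,\hat S)(\cdot,p)\|_N\in L^1(e^{-\omega p}dp)$, as claimed. I expect the main obstacle to be twofold: proving the weighted convolution estimate cleanly in the presence of the singular $1/\sqrt p$ weight and the Bessel kernel $\mathcal{G}$, so that the coupled kernel-plus-double-convolution operator really produces only the factors $\mathcal{B}_3/\omega$ and $1/\omega$; and making the continuity and ``stay on the lower branch'' argument rigorous, in particular verifying that $M(\omega)$ cannot escape to the upper root, which relies on the strict branch separation guaranteed by the strict inequality $\omega>\epsilon_1+2\sqrt{\mathcal{B}_3 b}$.
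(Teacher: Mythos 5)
Up to the derivation of the scalar quadratic inequality, your argument is the paper's own proof: the same splitting of $(\hat H,\hat S)$ into the known piece $(\hat H,\hat S)^{(a)}$ and the tail $(\hat H,\hat S)^{(b)}$, the same identification of the inhomogeneous term $(\hat H,\hat S)^{(s)}$, the same kernel bounds producing $\mathcal{B}_1,\mathcal{B}_2,\mathcal{B}_3,\mathcal{B}_4$, the same Fubini factor $1/\omega$, and the same relation $\mathcal{B}_3M^2-(\omega-\epsilon_1)M+b\ge 0$. Your observation that bounding the source term by $b\,\omega^{-1}$ requires monotonicity of $\lambda\mapsto\lambda\int_{p_0}^{\infty}e^{-\lambda p}\|(\hat H,\hat S)^{(s)}(\cdot,p)\|_N\,dp$ is a real subtlety that the paper passes over in silence; note only that the sufficient condition is $\omega_0p_0\ge 1$ rather than $\omega p_0\ge 1$, since monotonicity must hold on all of $[\omega_0,\omega]$, not just at the endpoint $\omega$.

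The genuine gap is in your final branch-selection step. You apply the quadratic inequality to the untruncated quantity $M(\omega)=\int_{p_0}^{\infty}e^{-\omega p}\|(\hat H,\hat S)(\cdot,p)\|_N\,dp$ and propagate the bound $M\le M_-$ downward in $\omega$ by continuity, anchored at large $\omega$. But the inequality constrains $M(\omega)$ only at those $\omega$ where $M(\omega)$ is already known to be finite — when $M(\omega)=\infty$ it is vacuous — and finiteness on the whole range $\omega>\epsilon_1+2\sqrt{\mathcal{B}_3b}$ is precisely the conclusion of the theorem. Since $M$ is non-increasing in $\omega$, the set where it is finite is a half-line with some left endpoint $\omega_c$; your continuity argument, combined with monotone convergence, shows $M\le M_-$ on $(\omega_c,\infty)$ and even $M(\omega_c)\le M_-(\omega_c)<\infty$, but it provides no mechanism to cross below $\omega_c$: monotonicity of $M$ works against you in that direction, and a scalar inequality cannot create finiteness where none is known. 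The paper's proof is arranged exactly to avoid this trap: it works with the truncation $L_{p_0,M}=\int_{p_0}^{M}e^{-\omega p}\|(\hat H,\hat S)^{(b)}(\cdot,p)\|_N\,dp$, which is finite for every finite $M$ by local integrability (Lemma \ref{2.8.}), continuous in $M$, and equal to $0$ at $M=p_0$. For each fixed admissible $\omega$ the quadratic inequality holds for all $M$, so an intermediate-value argument in the variable $M$ — not in $\omega$ — keeps $L_{p_0,M}$ on the lower branch uniformly in $M$, and letting $M\to\infty$ yields the theorem. Replacing your continuity-in-$\omega$ argument by this continuity-in-$M$ argument closes the gap; the rest of your proposal then goes through.
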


\begin{Remark} This means that if the solution $(\hat{H},\hat{S})$, 
restricted to $[0,p_0]$, to the integral equation 
equivalent to the Boussinesq equation 
is known, through computation of power series in $p$ or otherwise, 
and the corresponding functionals $\epsilon$ and $\mathcal{B}_3 b$ are
small, as is the case for sufficiently rapidly decaying $({\hat H}, {\hat S})$
over a large enough interval $[0, p_0]$, then a    
long time interval of existence $(0,\omega^{-1})$ 
for classical solutions to Boussinesq equation
is guaranteed. A specific choice of $\omega_0$ may be made
to optimize the lower bound on $\omega$ in the above calculations.
The point of Theorem \ref{improved existence} is that solutions to the integral equation over a finite 
interval in $p$ (either in the form of a Taylor series in $p$, as appropriate for analytic data
and initial conditions, or in the form of numerical calculations, where rigorous error control are expected 
similar to 3-D Navier-Stokes \cite{longtime}) can lead to a revised asymptotic bounds on $\omega$ which translates into
a longer existence time for the PDE.
\end{Remark}
\begin{Remark}
A similar result holds for the magnetic B$\acute{\textnormal{e}}$nard equation with the obvious changes.
\end{Remark}

\section{Formulation of Integral Equation: Borel Transform}\label{sec2}

Our goal is to take the Borel transform and create equivalent 
integral equations. To ensure decay in $1/t$ and 
avoid dealing with delta distribution when applying the Borel transform in $1/t$, it is convenient to define $\hat{h}$, $\hat{w}$, $\hat{s}$, 
and $\hat{q}$ so that 
\begin{align}
\hat{u}(k,t)=\hat{u}_0(k)+\hat{h}(k,t)\\ \nonumber
\hat{\Theta}(k,t)=\hat{\Theta}_0(k)+\hat{s}(k,t)\\ \nonumber
\hat{v}(k,t)=\hat{v}_0(k)+\hat{w}(k,t)\\ \nonumber
\hat{B}(k,t)=\hat{B}_0(k)+\hat{q}(k,t).
\end{align}

\noindent For (\ref{FB2}), we define 
\begin{align}\label{1-2.7}
\hat{g}^{[1]}_j&:=- P_k[\hat{h}_j\hat{*}\hat{h}+\hat{h}_j\hat{*}\hat{u}_0+
\hat{u}_{0,j}\hat{*}\hat{h}]\\ \nonumber
\hat{g}^{[2]}_j&:=-[\hat{h}_j\hat{*}\hat{s}+\hat{h}_j\hat{*}\hat{\Theta}_0+
\hat{u}_{0,j}\hat{*}\hat{s}]\\ \nonumber
\end{align}
and
\begin{align}\label{u1}
\hat{u}_1(k)&:=-\nu|k|^2\hat{u}_0-i k_j P_k[\hat{u}_{0,j}\hat{*}\hat{u}_0]+a P_k[e_2\hat{\Theta}_0]+\hat{f}\\ \nonumber
\hat{\Theta}_1(k)&:=-\mu|k|^2\hat{\Theta}_0-i k_j(\hat{u}_{0,j}\hat{*}\hat{\Theta}_0).
\end{align}

\noindent Similarly, for ($\ref{MB2}$), we define
\begin{align}\label{2-2.7}
\hat{g}^{[3]}_j&:=- P_k[\hat{v}_{0,j}\hat{*}\hat{w}+\hat{w}_j\hat{*}\hat{v}_0+\hat{w}_j\hat{*}\hat{w}]+\frac{1}{\mu \rho}P_k[\hat{B}_{0,j}\hat{*}\hat{q}+\hat{q}_j\hat{*}\hat{B}_0+\hat{q}_j\hat{*}\hat{q}]\\ \nonumber
\hat{g}^{[4]}_j&:=- P_k[\hat{v}_{0,j}\hat{*}\hat{q}+\hat{w}_j\hat{*}\hat{B}_0+\hat{w}_j\hat{*}\hat{q}]+ P_k[\hat{B}_{0,j}\hat{*}\hat{w}+\hat{q}_j\hat{*}\hat{v}_0+\hat{q}_j\hat{*}\hat{w}]\\ \nonumber
\end{align}
and
\begin{align}\label{v1}
\hat{v}_1(k)&:=-\nu|k|^2\hat{v}_0-i k_j P_k[\hat{v}_{0,j}\hat{*}\hat{v}_0-\frac{1}{\mu\rho}\hat{B}_{0,j}\hat{*}\hat{B}_0]+\hat{f}\\ \nonumber
\hat{B}_1(k)&:=-\frac{1}{\mu \sigma}|k|^2\hat{B}_0-i k_j P_k[\hat{v}_{0,j}\hat{*}\hat{B}_0-\hat{B}_{0,j}\hat{*}\hat{v}_0].
\end{align}
Using these definitions in (\ref{FB2}) and (\ref{MB2}) and integrating terms whose $\tau$ dependence appears only in the exponential, we obtain the integral equations
\begin{align}\label{IE-t-1}
\hat{h}(k,t)&=-ik_j\int_0^te^{-\nu|k|^2(t-s')}\left(\hat{g}^{[1]}_j(k,s')-P_k[ae_2\hat{s}](k,s')\right)ds'+\left(\frac{1-e^{-\nu|k|^2t}}{\nu|k|^2}\right)\hat{u}_1\\ \nonumber
\hat{s}(k,t)&=-ik_j\int_0^te^{-\mu|k|^2(t-s)}\hat{g}^{[2]}_j(k,s)ds+\left(\frac{1-e^{-\mu|k|^2t}}{\mu|k|^2}\right)\hat{\Theta}_1
\end{align}
and 
\begin{align}\label{IE-t-2}
\hat{w}(k,t)&=-ik_j\int_0^te^{-\nu|k|^2(t-s)}\hat{g}^{[3]}_j(k,s)ds+\left(\frac{1-e^{-\nu|k|^2t}}{\nu|k|^2}\right)\hat{v}_1\\ \nonumber
\hat{q}(k,t)&=-ik_j\int_0^te^{-(\mu\sigma)^{-1}|k|^2(t-s)}\hat{g}^{[4]}_j(k,s)ds+\left(\frac{1-e^{-(\mu\sigma)^{-1}|k|^2t}}{(\mu\sigma)^{-1}|k|^2}\right)\hat{B}_1.
\end{align}

In both systems, we seek a solution as a Laplace transform, 
\begin{align*} 
(\hat{h},\hat{s})(k,t)&=\int_0^{\infty}\left(\hat{H}, \hat{S}\right)(k,p)e^{-p/t}dp\\
(\hat{w}, \hat{q})(k,t)&=\int_0^{\infty}\left(\hat{W}, \hat{Q}\right)(k,p)e^{-p/t}dp.
\end{align*}
\noindent With this goal, we take the formal
\footnote[1]{While the derivation of the integral equation is formal, we prove later (Lemma \ref{2.9.}) that the unique solution to the integral equation in the Borel plane generates a solution to the Boussinesq/magnetic B$\acute{\textnormal{e}}$nard equation through Laplace transform.}  
inverse Laplace transform in $1/t$ of our two equations. The inverse Laplace transform of $f$ is given as usual by 
\begin{equation}\nonumber
[\mathcal{L}^{-1}f](p)=\frac{1}{2\pi i}\int_{c-i\infty}^{c+i\infty}f(s)e^{sp}ds,
\end{equation}
where $c$ is chosen so that for $\mathrm{Re} \,s\geq c$, $f$ is analytic and has suitable asymptotic decay. We define 
\begin{equation}\nonumber
\mathcal{H}^{(\nu)}(p,p',k):=\int_{p'/p}^1\left\{\frac{1}{2\pi i}\int_{c-i\infty}^{c+i\infty}\tau^{-1}exp[-\nu|k|^2\tau^{-1}(1-s)+(p-p's^{-1})\tau]d\tau\right\}ds.
\end{equation}
Then (\ref{IE-t-1}) becomes
\begin{align}\label{IE-p-1}
\hat{H}(k,p)&=-ik_j\int_0^p\mathcal{H}^{(\nu)}(p,p',k)\hat{G}^{[1]}_j(k,p')dp'+\int_0^p\mathcal{H}^{(\nu)}(p,p',k)P_k[ae_2\hat{S}](k,p)dp\\ \nonumber
&\qquad\qquad +\hat{u}_1(k)\mathcal{L}^{-1}\left(\frac{1-e^{-\nu|k|^2t}}{\nu|k|^2}\right)(p)\\ \nonumber
\hat{S}(k,p)&=-ik_j\int_0^p\mathcal{H}^{(\mu)}(p,p',k)\hat{G}^{[2]}_j(k,p')dp'+\hat{\Theta}_1(k)\mathcal{L}^{-1}\left(\frac{1-e^{-\mu|k|^2t}}{\mu|k|^2}\right)(p)
\end{align}
and (\ref{IE-t-2}) becomes
\begin{align}\label{IE-p-2}
\hat{W}(k,p)&=-ik_j\int_0^p\mathcal{H}^{(\nu)}(p,p',k)\hat{G}^{[3]}_j(k,p')dp'+\hat{v}_1(k)\mathcal{L}^{-1}\left(\frac{1-e^{-\nu|k|^2t}}{\nu|k|^2}\right)(p)\\ \nonumber
\hat{Q}(k,p)&=-ik_j\int_0^p\mathcal{H}^{(\mu\sigma)^{-1}}(p,p',k)\hat{G}^{[4]}_j(k,p')dp'+\hat{B}_1(k)\mathcal{L}^{-1}\left(\frac{1-e^{-(\mu\sigma)^{-1}|k|^2t}}{(\mu\sigma)^{-1}|k|^2}\right)(p).
\end{align}
\noindent In the above, $\hat{G}_j^{1,2,3,4}=\mathcal{L}^{-1}[g_j^{1,2,3,4}]$. Specifically,
\begin{align}\label{G_j}
 \hat{G}^{[1]}_j&=P_k[\hat u_{0,j}\hat{*}\hat H+\hat H_j\hat{*}\hat u_0+\hat H_j \, ^*_* \hat H],\\ \nonumber
 \hat{G}^{[2]}_j&=[\hat u_{0,j}\hat{*}\hat S+\hat H_j\hat{*}\hat{\Theta}_0+\hat H_j\, ^*_* \hat S],\\ \nonumber
 \hat{G}^{[3]}_j&=P_k[\hat v_{0,j}\hat{*}\hat W+\hat W_j\hat{*}\hat v_0+\hat W_j \, ^*_* \hat W]-\frac{1}{\mu \rho}P_k[\hat{B}_{0,j}\hat{*}\hat{Q}+\hat{Q}_j\hat{*}\hat{B}_0+\hat{Q}_j\, ^*_* \hat{Q}],\\ \nonumber
 \hat{G}^{[4]}_j&=P_k[\hat v_{0,j}\hat{*}\hat Q+\hat W_j\hat{*}\hat{B}_0+\hat W_j\, ^*_* \hat Q]-P_k[\hat{B}_{0,j}\hat{*}\hat{W}+\hat{Q}_j\hat{*}\hat{v}_0+\hat{Q}_j\, ^*_* \hat{W}]
\end{align}
where $\, ^* _*$ denotes the Laplace convolution followed by Fourier convolution (order is unimportant). 
We now make the observation that our kernel $\mathcal{H}^{(\nu)}(p,p',k)$ has a representation in terms of Bessel functions. Namely,
\begin{equation}\nonumber
\mathcal{H}^{(\nu)}(p,p',k)=\frac{\pi}{z}\mathcal{G}(z,z'):=\frac{\pi z'}{z}\left\{-J_1(z)Y_1(z')+Y_1(z)J_1(z')\right\}
\end{equation}
where $J_1$ and $Y_1$ are the Bessel functions of order 1, $z=2|k|\sqrt{\nu p}$, and $z'=2|k|\sqrt{\nu p'}$. In similar spirit, we have
\begin{equation}
\frac{2J_1(z)}{z}=\mathcal{L}^{-1}\left(\frac{1-e^{-\nu|k|^2\tau^{-1}}}{\nu|k|^2}\right)(p).
\end{equation}
These assertions are proved in the appendix in Lemma \ref{kernel} and Lemma \ref{U_0}.
Thus, our integral Boussinesq equation becomes 
\begin{align}\label{1-2.18}
\hat{H}(k,p)=&\frac{i k_j\pi}{2|k|\sqrt{\nu p}}\int_0^p \mathcal{G}(z,z')\hat{G}_j^{[1]}(k,p')dp'+a\pi\int_0^p \frac{\mathcal{G}(z,z')}{z}P_k[e_2\hat T(k,p')]dp'\\ \nonumber
&\qquad+2\hat u_1(k)\frac{J_1(z)}{z}\\ \nonumber
\hat{S}(k,p)=&\frac{i k_j\pi}{2|k|\sqrt{\mu p}}\int_0^p \mathcal{G}(\zeta ,\zeta ')\hat{G}_j^{[2]}(k,p')dp'+2\hat \Theta_1(k)\frac{J_1(\zeta)}{\zeta},
\end{align}
where $\zeta=2|k|\sqrt{\mu p}$, and $\zeta'=2|k|\sqrt{\mu p'}$. Abstractly, we may write the set of equations (\ref{1-2.18}) as
\begin{equation}\label{N}
(\hat{H},\hat{S})(k,p)=\mathcal{N}[(\hat{H},\hat{S})](k,p).
\end{equation}
Similarly, our integral MHD equation becomes
\begin{align}\label{2-2.18}
\hat{W}(k,p)=&\frac{i k_j\pi}{2|k|\sqrt{\nu p}}\int_0^p \mathcal{G}(\tilde{z},\tilde{z}')\hat{G}_j^{[3]}(k,p')dp'+2\hat v_1(k)\frac{J_1(\tilde{z})}{\tilde{z}}\\ \nonumber
\hat{Q}(k,p)=&\frac{i k_j\pi\sqrt{\mu \sigma}}{2|k|\sqrt{p}}\int_0^p \mathcal{G}(\tilde{\zeta} ,\tilde{\zeta} ')\hat{G}_j^{[4]}(k,p')dp'+2\hat B_1(k)\frac{J_1(\tilde{\zeta})}{\tilde{\zeta}},
\end{align}
where $\tilde{z}=2|k|\sqrt{\nu p}$, $\tilde{z}'=2|k|\sqrt{\nu p'}$, $\tilde{\zeta}=2|k|\sqrt{\frac{p}{\mu\sigma}}$, and $\tilde{\zeta}'=2|k|\sqrt{\frac{p'}{\mu\sigma}}$. Abstractly, we will denote the set of integral equations in (\ref{2-2.18}) as
\begin{equation}\label{M}
(\hat{W},\hat{Q})(k,p)=\mathcal{M}[(\hat{W},\hat{Q})](k,p).
\end{equation}

\begin{Remark} By properties of Bessel functions $|\mathcal{G}(z,z')|$ is bounded for all real nonnegative $z'\leq z$. (The approximate bound is $0.6$).
\end{Remark}

\begin{Remark} By properties of Bessel functions $|\mathcal{G}(z,z')/z|$ is bounded for all real nonnegative $z'\leq z$.
\end{Remark}

To prove Theorem \ref{existence} and \ref{existenceMHD}, we will show $\mathcal{N}$ and $\mathcal{M}$ are contractive in a suitable space, so $(\hat H,\hat S)$ and $ (\hat W,\hat Q)$ are Laplace transformable in $1/t$. Then Lemma \ref{kernel} tells us that $(\hat h,\hat s)$ and $(\hat w,\hat q)$ the Laplace transforms satisfy ($\ref{IE-t-1}$) and ($\ref{IE-t-2}$) for $\Re (1/t)$ large enough. 
This means that at least for small enough $t$,
\begin{equation}\nonumber
(\hat u,\hat{\Theta})(k,t)=(\hat u_0,\hat{\Theta}_0)+\int_0^{\infty}(\hat H,\hat{S})(k,p)e^{-p/t}dp
\end{equation}
\noindent solves the Boussinesq equation ($\ref{FB2}$) in the Fourier space with given initial condition and
\begin{equation}\nonumber
(\hat v,\hat{B})(k,t)=(\hat v_0,\hat{B}_0)+\int_0^{\infty}(\hat W,\hat Q)(k,p)e^{-p/t}dp
\end{equation}
\noindent solves the magnetic B$\acute{\textnormal{e}}$nard equation ($\ref{MB2}$) in the Fourier space with given initial condition. 
Furthermore, we show $(u, \Theta)(x,t)=\mathcal{F}^{-1}[(\hat{u},\hat{\Theta})(\cdot,t)](x)$ (respectively, $(v, B)(x,t)=\mathcal{F}^{-1}[(\hat{v},\hat{B})(\cdot,t)](x)$) is a classical solution to the Boussinesq (magnetic Bernard) problem. 

\section{Norms in $p$}\label{sec3}
Recall the norm $|| \cdot ||_{N}$ in $k$ is either the $(\gamma,\beta)$ norm given in Definition \ref{gamma,beta} for some $\beta \geq 0$ and $\gamma >d$ or the $L^1\cap L^{\infty}$ norm.
\begin{Definition}
For $\alpha \geq 1$, we define 
\begin{equation}\nonumber
||\hat f||^{(\alpha)}=\sup_{p\geq 0}(1+p^2)e^{-\alpha p}||\hat{f}(\cdot, p)||_{N}.
\end{equation}
\end{Definition}

\begin{Definition}
We define $\mathcal{A}^{\alpha}$ to be the Banach space of continuous function of $(k,p)$ for $k\in \mathbb{R}^d$ and $p\in \mathbb{R}^+$ for which $||\cdot ||^{\alpha}$ is finite. In similar spirit, we define the space $\mathcal{A}_1^{\alpha}$ of locally integrable functions for $p\in [0,L)$, and continuous in $k$ such that
\begin{equation}\nonumber
||\hat f||_1^{\alpha}=\int_0^Le^{-\alpha p}||\hat f(\cdot, p)||_{N}dp < \infty.
\end{equation}
\end{Definition}
\begin{Definition}
Finally, we also define $\mathcal{A}_L^{\alpha}$ to be the Banach space of continuous functions in $(k,p)$ for $k$ in $\mathbb{R}^d$ and $p\in [0,L]$ such that
\begin{equation}\nonumber
||\hat f||^{\infty}_L=\sup_{p\in[0,L]}||\hat f(\cdot,p)||_{N}<\infty.
\end{equation}
\end{Definition}
These norms are used in the analysis of the solutions to (\ref{1-2.18}) and (\ref{2-2.18}). The norms are used to guarantee the solutions have the properties necessary to insure their Laplace transforms satisfy the corresponding integral equations, (\ref{FB2}) and (\ref{MB2}). Furthermore, to show Borel summability for analytic data and forcing, more regularity in $p$ is required than provided by $||\cdot||_1^{\alpha}$. By proving the solution is unique in the spaces $\mathcal{A}_1^{\alpha}$ and $\mathcal{A}_L^{\alpha}$, where one clearly contains the other for finite $L$, we are assured of regularity in $p$.

\section{Existence of a Solution to (\ref{1-2.18}) and (\ref{2-2.18})}\label{sec4}
We need some preliminary lemmas. Recall, $d=2$ or $d=3$ denotes the dimension in $x$ or its dual $k$. Often constants appearing in subalgebra bounds will depend on dimension. We will explicitly state the dependence when defining them and suppress the dependence elsewhere.

\begin{Lemma}
If $||\hat{v}||_{\gamma, \beta}$ and $||\hat{w}||_{\gamma, \beta}<\infty$ for $\gamma >d$ and $k\in \mathbb{R}^d$, then
\begin{equation*}
||\hat{v}\hat{*}\hat{w}||_{\gamma,\beta}\leq \tilde{C}_0(d)||\hat{v}||_{\gamma, \beta}||\hat{w}||_{\gamma, \beta},
\end{equation*}
\noindent where 
\begin{align*}
\tilde{C}_0(2)=2^{\gamma+1}\int_{k'\in \mathbb{R}^2}\frac{1}{(1+|k'|)^{\gamma}}dk'=\frac{\pi 2^{\gamma +2}}{(\gamma -1)(\gamma -2)}  \textnormal{ and }\\
\tilde{C}_0(3)=2^{\gamma+1}\int_{k'\in \mathbb{R}^3}\frac{1}{(1+|k'|)^{\gamma}}dk'=\frac{\pi 2^{\gamma +4}}{(\gamma -1)(\gamma -2)(\gamma -3)}.
\end{align*} 
\end{Lemma}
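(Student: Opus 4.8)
For $\|\hat v\|_{\gamma,\beta},\|\hat w\|_{\gamma,\beta}<\infty$ with $\gamma>d$, the Fourier convolution satisfies $\|\hat v\hat*\hat w\|_{\gamma,\beta}\le \tilde C_0(d)\|\hat v\|_{\gamma,\beta}\|\hat w\|_{\gamma,\beta}$, with the explicit constants given.

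Let me sketch the proof.

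The key is to show the weighted norm is a Banach algebra. We have convolution $\widehat{v*w}(k) = \int v(k')w(k-k')dk'$. We need to bound $(1+|k|)^\gamma e^{\beta|k|}|(\hat v * \hat w)(k)|$.

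The plan:

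Start from the definition. Write $(1+|k|)^\gamma e^{\beta|k|}|(\hat v\hat* \hat w)(k)| \le (1+|k|)^\gamma e^{\beta|k|}\int_{\mathbb{R}^d}|\hat v(k-k')||\hat w(k')|\,dk'$. Insert $1 = \frac{(1+|k-k'|)^\gamma e^{\beta|k-k'|}(1+|k'|)^\gamma e^{\beta|k'|}}{(1+|k-k'|)^\gamma e^{\beta|k-k'|}(1+|k'|)^\gamma e^{\beta|k'|}}$ inside the integral. Then by definition of the norm, $|\hat v(k-k')|\le \|\hat v\|_{\gamma,\beta}(1+|k-k'|)^{-\gamma}e^{-\beta|k-k'|}$ and similarly for $\hat w(k')$. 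Factor the norms out.

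This leaves a supremum over $k$ of a ratio of weights times an integral. For the exponential part, use the triangle inequality $|k|\le|k-k'|+|k'|$, so $e^{\beta|k|}e^{-\beta|k-k'|}e^{-\beta|k'|}\le 1$. For the polynomial part, the main obstacle is to control $\frac{(1+|k|)^\gamma}{(1+|k-k'|)^\gamma(1+|k'|)^\gamma}$. Here use the elementary inequality $1+|k|\le 1+|k-k'|+|k'|\le (1+|k-k'|)(1+|k'|)$, so the ratio is bounded by $\min\{(1+|k-k'|)^{-\gamma},(1+|k'|)^{-\gamma}\}$ up to a constant. More precisely, split the integration domain into $\{|k'|\ge |k-k'|\}$ and its complement; on each region one of the factors $(1+|k'|)$ or $(1+|k-k'|)$ dominates $(1+|k|)$ up to a factor $2^\gamma$, leaving an integrable tail $(1+|\cdot|)^{-\gamma}$, which yields the factor $2^{\gamma+1}\int_{\mathbb{R}^d}(1+|k'|)^{-\gamma}\,dk'$.

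Finally, evaluate the integral explicitly. In $d=2$, switch to polar coordinates: $\int_{\mathbb{R}^2}(1+r)^{-\gamma}\,dk' = 2\pi\int_0^\infty \frac{r}{(1+r)^\gamma}\,dr$, and integrating by parts (or substituting $s=1+r$) gives $\frac{2\pi}{(\gamma-1)(\gamma-2)}$, confirming $\tilde C_0(2)=\frac{\pi 2^{\gamma+2}}{(\gamma-1)(\gamma-2)}$; similarly in $d=3$ one computes $4\pi\int_0^\infty \frac{r^2}{(1+r)^\gamma}\,dr=\frac{8\pi}{(\gamma-1)(\gamma-2)(\gamma-3)}$, giving $\tilde C_0(3)$. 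The condition $\gamma>d$ is exactly what guarantees convergence of these integrals. I expect the domain-splitting step that controls the polynomial weight ratio, and the accounting that produces the clean constant $2^{\gamma+1}$, to be the only genuinely delicate points; the exponential factor and the integral evaluations are routine.
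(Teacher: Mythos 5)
Your proposal is correct and follows essentially the same argument as the paper: factor out the norms, kill the exponential weight via the triangle inequality $e^{-\beta(|k'|+|k-k'|)}\leq e^{-\beta|k|}$, split the convolution integral into two regions so that one polynomial factor dominates $(1+|k|)$ up to $2^{\gamma}$ (the paper uses $|k'|\leq |k|/2$ and its complement, a trivial variant of your $|k'|\geq |k-k'|$ split), and evaluate $\int_{\mathbb{R}^d}(1+|k'|)^{-\gamma}dk'$ in polar coordinates. Your explicit evaluations of the radial integrals match the stated constants $\tilde{C}_0(2)$ and $\tilde{C}_0(3)$ exactly.
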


\begin{proof} The $d=3$ {\bf case}
can be found in \cite{smalltime} and 
the $d=2$ {\bf case} is basically the same. For a detailed proof see \cite{Thesis}. From the definition of $||\cdot||_{\gamma,\beta}$ and the fact that $e^{-\beta(|k'|+|k-k'|)}\leq e^{-\beta|k|}$, we have 
\begin{equation}\nonumber
|\hat{v}\hat{*}\hat{w}|\leq e^{-\beta|k|}||\hat{v}||_{\gamma,\beta}||\hat{w}||_{\gamma,\beta}\int_{k'\in\mathbb{R}^2}\frac{1}{(1+|k'|)^{\gamma}(1+|k-k'|)^{\gamma}}dk'.
\end{equation}
\noindent Split the integral into two domains $|k'|\leq|k|/2$ and its compliment to show
\begin{align}\nonumber
\int_{k'\in\mathbb{R}^2}\frac{1}{(1+|k'|)^{\gamma}(1+|k-k'|)^{\gamma}}dk'&\leq \frac{2^{\gamma+1}}{(1+|k|)^{\gamma}}\int_{k'\in\mathbb{R}^2}\frac{1}{(1+|k'|)^{\gamma}}dk'\\ \nonumber
&=\frac{2^{\gamma+2}\pi}{(1+|k|)^{\gamma}(\gamma-1)(\gamma-2)},
\end{align} 
where polar coordinates and integration by parts are used to evaluate the last integral.
\end{proof}

\begin{Corollary}\label{2.2.} If $||\hat{v}||_{N}$, $||\hat{w}||_{N}<\infty$, then for $C_0=C_0(d)$ chosen such that $C_0=\tilde{C}_0 $ for $N = (\gamma, \beta) $, $\gamma>d$ and $C_0=1$ for $N=L^1\cap L^{\infty}$, we have
\begin{equation}\nonumber
||\hat{v}\hat{*}\hat{w}||_N\leq C_0||\hat{v}||_N||\hat{w}||_N.
\end{equation}
\end{Corollary}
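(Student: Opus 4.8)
The plan is to dispose of the claim by splitting on which norm the symbol $\|\cdot\|_N$ abbreviates, since the two cases are handled by entirely different (and in each case short) arguments. In the case $N=(\gamma,\beta)$ with $\gamma>d$, there is nothing new to prove: the inequality is exactly the assertion of the preceding Lemma, with the subalgebra constant $C_0=\tilde C_0(d)$ as computed there for $d=2,3$. So the only real work is the $L^1\cap L^\infty$ case, and I would present that as the body of the proof.

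For $N=L^1\cap L^\infty$, recall from Definition \ref{L1} that $\|\hat f\|_{L^1\cap L^\infty}=\max\{\|\hat f\|_{L^1},\|\hat f\|_{L^\infty}\}$, so it suffices to bound each of the two constituent norms of $\hat v\,\hat*\,\hat w$ separately by the product $\|\hat v\|_N\|\hat w\|_N$ and then take the maximum. First I would estimate the $L^1$ piece by Young's convolution inequality with exponent triple $(1,1,1)$, giving $\|\hat v\,\hat*\,\hat w\|_{L^1}\leq \|\hat v\|_{L^1}\|\hat w\|_{L^1}$. Then I would estimate the $L^\infty$ piece by Young with the triple $(1,\infty,\infty)$, giving $\|\hat v\,\hat*\,\hat w\|_{L^\infty}\leq \|\hat v\|_{L^1}\|\hat w\|_{L^\infty}$ (equivalently, bounding $|(\hat v\,\hat*\,\hat w)(k)|=\left|\int \hat v(k')\hat w(k-k')\,dk'\right|$ directly by pulling $\|\hat w\|_{L^\infty}$ out of the integral).

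The final step is to observe that each factor appearing on the right of these two bounds is dominated by the full $L^1\cap L^\infty$ norm of the corresponding function, since $\|\hat v\|_{L^1}\leq\|\hat v\|_N$, $\|\hat w\|_{L^1}\leq\|\hat w\|_N$, and $\|\hat w\|_{L^\infty}\leq\|\hat w\|_N$. Hence both $\|\hat v\,\hat*\,\hat w\|_{L^1}$ and $\|\hat v\,\hat*\,\hat w\|_{L^\infty}$ are at most $\|\hat v\|_N\|\hat w\|_N$, and taking the maximum yields $\|\hat v\,\hat*\,\hat w\|_N\leq \|\hat v\|_N\|\hat w\|_N$, i.e. $C_0=1$. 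I expect there to be essentially no genuine obstacle here: the statement is a routine packaging of the previous Lemma together with the classical Young inequality. The only point requiring any care is the bookkeeping of matching each constituent norm of the convolution to the correct Young exponent triple and then verifying that both estimates collapse cleanly under the maximum defining $\|\cdot\|_{L^1\cap L^\infty}$, which is what produces the clean constant $C_0=1$.
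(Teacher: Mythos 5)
Your proof is correct, and since the paper states this Corollary without proof (the $(\gamma,\beta)$ case being exactly the preceding Lemma), your argument is precisely the intended one: Young's inequality $L^1\hat{*}L^1\to L^1$ and $L^1\hat{*}L^\infty\to L^\infty$, with each factor dominated by the $\max$-norm, gives $C_0=1$. No gaps; the case split and bookkeeping are handled properly.
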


\begin{Lemma}\label{2.3.} Also, notice that
\begin{equation}\nonumber
\left\|\left(P_k(\hat{f}), P_k(\hat{g})\right)\right\|_{N}\leq ||(\hat{f} , \hat{g})||_{N}
\end{equation}
\end{Lemma}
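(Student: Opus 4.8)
The plan is to reduce the statement to the elementary fact that the Fourier symbol $P_k$ of the Hodge projection is, for each fixed $k$, an orthogonal projection and hence a pointwise contraction on $\mathbb{C}^d$. First I would recall that $P_k = I - \frac{k(k\cdot)}{|k|^2}$ is the matrix of orthogonal projection onto the hyperplane $k^\perp$, so that $P_k = P_k^2 = P_k^{*}$. Consequently, for every vector $w \in \mathbb{C}^d$ one has $|P_k w|^2 = \langle P_k^2 w, w\rangle = \langle P_k w, w\rangle \leq |P_k w|\,|w|$ by Cauchy--Schwarz, which gives $|P_k w| \leq |w|$ for each $k \neq 0$ (and one sets $P_0 = I$, or simply ignores the single point $k=0$, which affects neither the supremum nor the integral defining $\|\cdot\|_{N}$).

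Applying this pointwise bound to $w = \hat{f}(k)$ and to $w = \hat{g}(k)$ yields $|P_k \hat{f}(k)| \leq |\hat{f}(k)|$ and $|P_k \hat{g}(k)| \leq |\hat{g}(k)|$ for all $k$. Since the magnitude of the pair is the Euclidean norm of the stacked vector, this upgrades to $|(P_k\hat{f}, P_k\hat{g})(k)|^2 = |P_k\hat{f}(k)|^2 + |P_k\hat{g}(k)|^2 \leq |\hat{f}(k)|^2 + |\hat{g}(k)|^2 = |(\hat{f},\hat{g})(k)|^2$, so the projected pair is pointwise dominated in magnitude by the original pair.

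It then remains to observe that both admissible choices of $\|\cdot\|_{N}$ depend on the field only through this pointwise magnitude and are monotone under pointwise domination. For $\|\cdot\|_{\gamma,\beta}$ the weight $(1+|k|)^{\gamma} e^{\beta|k|}$ is nonnegative, so taking the supremum preserves the inequality; for $\|\cdot\|_{L^1\cap L^{\infty}}$ both the integral and the supremum are monotone in $|(\hat{f},\hat{g})(k)|$. In either case the pointwise estimate of the previous step passes directly to the norm, giving $\|(P_k\hat{f}, P_k\hat{g})\|_{N} \leq \|(\hat{f},\hat{g})\|_{N}$, which is the claim.

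I do not expect a genuine obstacle here: the entire content is the contractivity of the orthogonal projection $P_k$, and the only points requiring a word of care are fixing the convention that the norm on vector- and pair-valued functions is built from a pointwise magnitude monotone under domination, together with the harmless treatment of the symbol's singularity at $k=0$.
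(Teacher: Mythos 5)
Your proof is correct and follows the same route as the paper: the paper's entire proof is the one-line observation that $P_k$ is the orthogonal projection onto $k^{\perp}$, which is exactly the contractivity fact you establish. Your write-up simply fills in the details (Cauchy--Schwarz for $|P_k w|\leq |w|$, pointwise domination of the stacked pair, and monotonicity of both admissible norms) that the paper leaves implicit.
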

\begin{proof} $P_k$ is the projection of a vector onto $k^{\bot}$.
\end{proof}

\begin{Lemma}\label{2.4.} With $C_0$ as defined in Corollary \ref{2.2.}, appropriately modified for $d=2$ or $3$, and constants
\begin{align*}
C_2&=\frac{\pi C_0}{\min (\sqrt{\nu}, \sqrt{\mu})} \sup_{z\in\mathbb{R}^+,0\leq z'\leq z}|\mathcal{G}(z,z')|,\\ 
C_4&=2\pi\max (\frac{1}{\sqrt{\nu}}, 
\sqrt{\mu \sigma})\max (1,\frac{1}{\mu \rho}) C_0 \sup_{z\in\mathbb{R}^+,0\leq z'\leq z}|\mathcal{G}(z,z')|,\\ 
C_3&=\pi a\sup_{z\in\mathbb{R}^+,0\leq z'\leq z}|\mathcal{G}(z,z')/z|,
\end{align*} 
we have the following bounds on the norm in $k$, for operators $\mathcal{N}$ and $\mathcal{M}$ defined in (\ref{N}) and (\ref{M}) respectively: 
\begin{multline}\label{1-2.23}
||\mathcal{N}[(\hat H, \hat S)(\cdot, p)]||_{N} \leq  \frac{C_2}{\sqrt{p}}\int_0^p\left( ||(\hat H,\hat{S}) (\cdot, p')||_{N}*||(\hat H, \hat S)(\cdot,p')||_{N}\right. \hspace{2 in}\\ 
+\left. ||(\hat{u}_0, \hat \Theta_0)||_{N}||(\hat H, \hat S)(\cdot,p')||_{N} \right) dp'
+||(\hat u_1, \hat \Theta_1)||_{N}+C_3\int_0^p||\hat S(\cdot,p')||_{N}dp'
\end{multline}
\begin{multline} \label{2-2.23}
||\mathcal{M}[(\hat W, \hat Q)(\cdot, p)]||_{N} \leq  \frac{C_4}{\sqrt{p}}\int_0^p\left( ||(\hat W , \hat{Q})(\cdot, p')||_{N}*||(\hat W, \hat Q)(\cdot,p')||_{N}\right. \hspace{1.5 in}\\ 
+\left. ||(\hat{v}_0, \hat B_0)||_{N}||(\hat W, \hat Q)(\cdot,p')||_{N} \right) dp'
+||(\hat v_1, \hat B_1)||_{N}
\end{multline}
and
\begin{align}\label{1-2.24}
||\mathcal{N}[(\hat H^{[1]}, \hat S^{[1]})](\cdot, p)-\mathcal{N}[(\hat H^{[2]}, \hat S^{[2]})](\cdot, p)||_{N} \leq\qquad& \\ \nonumber
\frac{C_2}{\sqrt{p}}\int_0^p \left( ||(\hat H^{[1]},\hat{S}^{[1]}) (\cdot, p')||_{N}+||(\hat H^{[2]},\hat{S}^{[2]} )(\cdot, p')||_{N}\right)& *\left\|(\hat H^{[1]}, \hat S^{[1]})(\cdot,p')\right.\\ \nonumber
\left.-(\hat H^{[2]}, \hat S^{[2]})(\cdot,p')\right\|_{N} +||(\hat u_0,\hat \Theta_0)||_{N}||(\hat H^{[1]}, \hat S^{[1]})(\cdot,p')&-(\hat H^{[2]}, \hat S^{[2]})(\cdot,p')||_{N} dp'\\ \nonumber
+C_3\int_0^p||\hat S^{[1]}-\hat S^{[2]}(\cdot, p')||_{N}dp'&
\end{align}
\begin{align} \label{2-2.24}
||\mathcal{M}[(\hat W^{[1]}, \hat Q^{[1]})(\cdot, p)]-\mathcal{M}[(\hat W^{[2]}, \hat Q^{[2]})(\cdot, p)]||_{N} \leq \qquad&\\ \nonumber 
\frac{C_4}{\sqrt{p}}\int_0^p \left( ||(\hat W^{[1]},\hat{Q}^{[1]}) (\cdot, p')||_{N}+||(\hat W^{[2]} , \hat{Q}^{[2]})(\cdot, p')||_{N}\right)&*\left\|(\hat W^{[1]}, \hat Q^{[1]})(\cdot,p') \right. \\ \nonumber
\left.-(\hat W^{[2]}, \hat Q^{[2]})(\cdot,p')\right\|_{N} +||(\hat v_0,\hat B_0)||_{N}||(\hat W^{[1]}, \hat Q^{[1]})(\cdot,p')&-(\hat W^{[2]}, \hat Q^{[2]})(\cdot,p')||_{N} dp'. 
\end{align}
\end{Lemma}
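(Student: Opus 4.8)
The plan is to estimate, term by term, the explicit representations (\ref{1-2.18}) of $\mathcal{N}$ and (\ref{2-2.18}) of $\mathcal{M}$, using three ingredients already established: the Fourier-convolution subalgebra bound of Corollary \ref{2.2.}, $\|\hat v\hat*\hat w\|_N\le C_0\|\hat v\|_N\|\hat w\|_N$; the projection bound of Lemma \ref{2.3.}, $\|P_k\hat f\|_N\le\|\hat f\|_N$; and the uniform bounds on the Bessel kernels asserted in the two Remarks following (\ref{M}), namely that $|\mathcal{G}(z,z')|$ and $|\mathcal{G}(z,z')/z|$ are bounded for $0\le z'\le z$, together with the elementary facts $|k_j|/|k|\le1$ and $|2J_1(z)/z|\le1$. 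Throughout, the combined norm $\|(\hat H,\hat S)\|_N$ is handled componentwise and the two pieces are reassembled at the end.

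First I would treat the homogeneous bound (\ref{1-2.23}). In the first integral of $\hat H$ the prefactor $\frac{ik_j\pi}{2|k|\sqrt{\nu p}}$ contributes at most $\frac{\pi}{2\sqrt{\nu p}}$ after using $|k_j|\le|k|$, while $|\mathcal{G}(z,z')|$ is replaced by its supremum; the integrand $\hat G_j^{[1]}$ is then estimated via (\ref{G_j}), Lemma \ref{2.3.} and Corollary \ref{2.2.}. The crucial point is the Laplace--Fourier term $\hat H_j\,{}^{*}_{*}\hat H$: pulling the $k$-norm inside the $p$-integral (triangle inequality/Minkowski) and applying Corollary \ref{2.2.} pointwise in $p$ gives $\|(\hat H_j\,{}^{*}_{*}\hat H)(\cdot,p')\|_N\le C_0\,(\|\hat H\|_N*\|\hat H\|_N)(p')$, where $*$ now denotes Laplace convolution in $p$, while each mixed term $\hat u_{0,j}\hat*\hat H$ and $\hat H_j\hat*\hat u_0$ yields $C_0\|\hat u_0\|_N\|\hat H(\cdot,p')\|_N$. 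The buoyancy integral is bounded by $a\pi\,\sup|\mathcal{G}/z|\int_0^p\|\hat S(\cdot,p')\|_N\,dp'$, producing the $C_3$ term, and the inhomogeneous term is bounded by $\|\hat u_1\|_N$ since $|2J_1(z)/z|\le1$ uniformly in $k$. The $\hat S$-component is identical with $\mu$ in place of $\nu$ and no buoyancy term; taking $1/\min(\sqrt\nu,\sqrt\mu)$ as a common upper bound for the two diffusion factors, and absorbing $C_0$, $\pi$ and the harmless factor $\tfrac12$, collects everything into the stated $C_2$ and yields (\ref{1-2.23}).

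The Lipschitz estimate (\ref{1-2.24}) follows from the same bounds applied to the difference. Since $\hat u_1,\hat\Theta_1$ depend only on data and forcing, they cancel; every remaining term is either linear in the unknown (the mixed products with $\hat u_0,\hat\Theta_0$ and the buoyancy term), handled by direct subtraction, or quadratic, handled by the bilinear identity $A\,{}^{*}_{*}A-A'\,{}^{*}_{*}A'=A\,{}^{*}_{*}(A-A')+(A-A')\,{}^{*}_{*}A'$. Taking norms and using Corollary \ref{2.2.} replaces $|A|,|A'|$ by $\|\hat H^{[1]}\|_N+\|\hat H^{[2]}\|_N$ and leaves the factor $\|(\hat H^{[1]},\hat S^{[1]})-(\hat H^{[2]},\hat S^{[2]})\|_N$, producing exactly the convolution structure displayed in (\ref{1-2.24}). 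The MHD bounds (\ref{2-2.23}) and (\ref{2-2.24}) are proved the same way from (\ref{2-2.18}) and (\ref{G_j}): the $\hat Q$-kernel carries the factor $\sqrt{\mu\sigma}$ rather than $1/\sqrt\nu$, giving $\max(1/\sqrt\nu,\sqrt{\mu\sigma})$; the coupling constants produce $\max(1,\tfrac{1}{\mu\rho})$; and the doubled count of quadratic and mixed terms in $\hat G_j^{[3]},\hat G_j^{[4]}$ is absorbed by the factor $2$, assembling into $C_4$.

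The only genuinely delicate step is the treatment of the Laplace--Fourier convolution ${}^{*}_{*}$: one must justify interchanging the $k$-norm with the $p$-integration and verify that applying the Fourier subalgebra bound pointwise in the Laplace variable reproduces a clean Laplace convolution of the scalar norm functions $\|\hat H(\cdot,p')\|_N$. Everything else is bookkeeping of explicit prefactors and uniform Bessel estimates; in particular the constants are deliberately generous (for instance $\tfrac12\le1$ and $1/\sqrt\nu\le1/\min(\sqrt\nu,\sqrt\mu)$), so no sharpness is needed and the stated $C_2,C_3,C_4$ are valid upper bounds.
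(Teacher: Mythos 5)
Your proposal is correct and follows essentially the same route as the paper's proof: term-by-term estimation of (\ref{1-2.18}) and (\ref{2-2.18}) using Corollary \ref{2.2.}, Lemma \ref{2.3.}, the uniform bounds on $|\mathcal{G}(z,z')|$ and $|\mathcal{G}(z,z')/z|$, the bound $|2J_1(z)/z|\le 1$, and the identical bilinear splitting $A\,{}^*_*A-A'\,{}^*_*A'=A\,{}^*_*(A-A')+(A-A')\,{}^*_*A'$ for the Lipschitz estimates. The only cosmetic differences are that the paper writes out the MHD case in detail (you write out the Boussinesq case), and that for the summed index $j$ the paper invokes the Schwarz inequality $|k_j\hat{G}_j|\le|k|\,|\hat{G}|$ where you write $|k_j|\le|k|$; the bookkeeping of constants is the same in both.
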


\begin{proof} We will give the proof for (\ref{2-2.23}) and (\ref{2-2.24}). The two inequalities for $(\hat{H},\hat{S})$ are very similar. From \cite{handbook}, $|J_1(z)/z|\leq 1/2$ for $z\in \mathbb{R}^+$ and
\begin{equation}
\left\|2\left(\hat v_1(k)\frac{J_1(\tilde{z})}{\tilde{z}}, \hat B_1(k)\frac{J_1(\tilde{\zeta})}{\tilde{\zeta}}\right)\right\|_{N}\leq ||(\hat v_1, \hat B_1)||_{N}.
\end{equation}

\noindent From Corollary \ref{2.2.}, we have
\begin{multline}\nonumber
|||\hat v_{0}|\hat{*}(\hat W, \hat{Q})+|\hat W|\hat{*}(\hat v_0, \hat{B}_0)+|\hat W| \, ^*_* (\hat W,\hat{Q})||_{N}\leq \hspace{2 in}\\ \nonumber
\left[2C_0||(\hat{v}_0,\hat{B}_0)||_{N}||(\hat{W},\hat{Q})(\cdot, p)||_{N}+C_0||\hat{W}(\cdot, p)||_{N}*||(\hat{W},\hat{Q})(\cdot, p)||_{N}\right].
\end{multline}
\noindent Similarly,
\begin{multline}\nonumber
\left\||\hat{B}_{0}|\hat{*}\left(\frac{\hat{Q}}{\mu \rho}, \hat{W}\right)+|\hat{Q}|\hat{*}\left(\frac{\hat{B}_0}{\mu \rho}, \hat{v}_0\right)+ |\hat{Q}|\, ^*_* \left(\frac{\hat{Q}}{\mu \rho},\hat{W}\right)\right\|_{N}\leq \max\left(1,\frac{1}{\mu \rho}\right)\cdot \\ \nonumber
\left[2C_0||(\hat{v}_0,\hat{B}_0)||_{N}||(\hat{W},\hat{Q})(\cdot, p)||_{N}+C_0||\hat{Q}(\cdot, p)||_{N}*||(\hat{W},\hat{Q})(\cdot, p)||_{N}\right].
\end{multline}
\noindent Then using Lemma \ref{2.3.}, the two inequalities above, and Schwartz inequality we obtain
\begin{align*}
||k_j(\hat{G}_j^{[3]}, \hat{G}_j^{[4]})||_{N}\leq& 4C_0|k|\max \left(1,\frac{1}{\mu \rho}\right) \left(||(\hat W , \hat{Q}) (\cdot, p')||_{N}*||(\hat W, \hat Q)(\cdot,p')||_{N}\right.\\ \nonumber
&\qquad+\left. ||(\hat{v}_0, \hat B_0)||_{N}||(\hat W, \hat Q)(\cdot,p')||_{N} \right).
\end{align*}
\noindent Now, noticing that 
\begin{align*}
\left|k_j\left(\frac{\mathcal{G}(z,z')}{\sqrt{\nu}}\right.\right.\hat{G}^{[3]}_j,&\left.\left.\sqrt{\mu \sigma}\mathcal{G}(\zeta, \zeta ')\hat{G}^{[4]}_j\right)\right|\leq \\
&\max (\frac{1}{\sqrt{\nu}}, \sqrt{\mu \sigma})|k_j(\hat{G}^{[3]}_j, \hat{G}^{[4]}_j)|\sup_{z\in\mathbb{R}^+,0\leq z'\leq z}|\mathcal{G}(z,z')|
\end{align*}
\noindent (\ref{2-2.23}) follows directly.  To obtain (\ref{2-2.24}) notice that
\begin{align*}
\hat W_j^{[1]} \, ^*_*(\hat W^{[1]},\hat{Q}^{[1]})&-\hat W_j^{[2]} \, ^*_*(\hat W^{[2]},\hat{Q}^{[2]})=\\
&\hat W_j^{[1]} \, ^*_* \left((\hat W^{[1]}, \hat{Q}^{[1]})-(\hat W^{[2]},\hat{Q}^{[2]})\right)+(\hat W_j^{[1]}-\hat W_j^{[2]}) \, ^*_*(\hat W^{[2]},\hat{Q}^{[2]}).
\end{align*}
\noindent From which we get 
\begin{multline} \nonumber
\left\|\hat W_j^{[1]} \, ^*_*(\hat W^{[1]},\hat{Q}^{[1]})-\hat W_j^{[2]} \, ^*_*(\hat W^{[2]},\hat{Q}^{[2]})\right\|_{N}\leq C_0 \left\|(\hat W^{[1]},\hat{Q}^{[1]})-(\hat W^{[2]},\hat{Q}^{[2]})\right\|_{N}\\ \nonumber
\qquad *\left( ||(\hat W^{[1]},\hat{Q}^{[1]})||_{N} +||(\hat W^{[2]}, \hat{Q}^{[2]})||_{N} \right).
\end{multline}
\noindent Similarly,
\begin{multline} \nonumber
\left\|\hat Q_j^{[1]} \, ^*_*(\hat Q^{[1]},\hat{W}^{[1]})-\hat Q_j^{[2]} \, ^*_*(\hat Q^{[2]},\hat{W}^{[2]})\right\|_{N}\leq C_0 \left\|(\hat W^{[1]},\hat{Q}^{[1]})-(\hat W^{[2]},\hat{Q}^{[2]})\right\|_{N}\\ \nonumber
\qquad *\left(||(\hat W^{[1]},\hat{Q}^{[1]})||_{N} +||(\hat W^{[2]}, \hat{Q}^{[2]})||_{N}\right).
\end{multline}
\noindent Combining this bound and bounds using Lemma \ref{2.3.} as in the first part of the proof, we get (\ref{2-2.24}). 
\end{proof}

\begin{Lemma}\label{2.6.} For $\hat{f},\hat{g} \in \mathcal{A}^{\alpha}, \mathcal{A}^{\alpha}_1$ or $\mathcal{A}^{\infty}_L$
\begin{align}\nonumber
||\hat{f}\,^*_*\hat{g}||^{(\alpha)}&\leq M_0 C_0 ||\hat{f}||^{(\alpha)}||\hat{g}||^{(\alpha)}\\ \nonumber
||\hat{f}\,^*_*\hat{g}||^{(\alpha)}_1&\leq C_0||\hat{f}||^{(\alpha)}_1||\hat{g}||^{(\alpha)}_1\\ \nonumber
||\hat{f}\,^*_*\hat{g}||^{(\infty)}_L&\leq L C_0 ||\hat{f}||^{(\infty)}_L||\hat{g}||^{(\infty)}_L,
\end{align} 
\noindent where $M_0\approx 3.76\cdots$ is large enough so
\begin{equation}\nonumber
\int_0^p\frac{(1+p^2)ds}{(1+s^2)(1+(p-s)^2)}\leq M_0.
\end{equation}
\end{Lemma}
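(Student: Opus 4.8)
The plan is to reduce all three inequalities to a single pointwise-in-$p$ estimate obtained from Minkowski's integral inequality together with the Fourier subalgebra bound of Corollary \ref{2.2.}, and then to feed this estimate into each of the three weighted $p$-norms separately. First I would unwind the definition of the combined convolution, writing
\[
(\hat{f}\,^*_*\hat{g})(k,p)=\int_0^p \big[\hat{f}(\cdot,p-s)\,\hat{*}\,\hat{g}(\cdot,s)\big](k)\,ds,
\]
so that the triangle inequality for $\|\cdot\|_N$ applied to the $s$-integral, followed by Corollary \ref{2.2.} applied to each Fourier convolution, yields the master estimate
\[
\big\|(\hat{f}\,^*_*\hat{g})(\cdot,p)\big\|_N \le C_0\int_0^p \big\|\hat{f}(\cdot,p-s)\big\|_N\,\big\|\hat{g}(\cdot,s)\big\|_N\,ds,
\]
valid for either choice of $\|\cdot\|_N$. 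Everything after this is just insertion of the weight attached to each $p$-norm.

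For the $\|\cdot\|^{(\alpha)}$ bound I would replace $\|\hat{f}(\cdot,p-s)\|_N$ and $\|\hat{g}(\cdot,s)\|_N$ by their upper bounds $\tfrac{e^{\alpha(p-s)}}{1+(p-s)^2}\|\hat{f}\|^{(\alpha)}$ and $\tfrac{e^{\alpha s}}{1+s^2}\|\hat{g}\|^{(\alpha)}$. The exponentials combine to $e^{\alpha p}$, which exactly cancels the weight $e^{-\alpha p}$; multiplying by $(1+p^2)$ then leaves precisely the integral $\int_0^p \frac{(1+p^2)\,ds}{(1+s^2)(1+(p-s)^2)}$, which is $\le M_0$ by the defining property of $M_0$, and taking the supremum over $p\ge 0$ gives the first bound with constant $M_0C_0$.

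For $\|\cdot\|^{(\alpha)}_1$ I would multiply the master estimate by $e^{-\alpha p}$, integrate in $p$ over $[0,L]$, split $e^{-\alpha p}=e^{-\alpha(p-s)}e^{-\alpha s}$, and apply Fubini with the change of variable $r=p-s$. The resulting triangular region $\{r,s\ge 0,\ r+s\le L\}$ is contained in the square $[0,L]^2$, so the double integral factors into the product $C_0\|\hat{f}\|^{(\alpha)}_1\|\hat{g}\|^{(\alpha)}_1$, which is the second bound. For $\|\cdot\|^{(\infty)}_L$ I would simply bound both factors in the master estimate by their suprema over $[0,L]$ and use $\int_0^p ds = p \le L$, giving the factor $LC_0$.

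The only genuinely analytic point is the existence of a finite $M_0$, that is, that $\sup_{p\ge 0}\int_0^p \frac{(1+p^2)}{(1+s^2)(1+(p-s)^2)}\,ds<\infty$; I expect this to be the main (though still elementary) obstacle, since for large $p$ the growing factor $(1+p^2)$ must be absorbed by the decay of the convolution integral. By the symmetry $s\mapsto p-s$ it suffices to control $2\int_0^{p/2}$, and on $[0,p/2]$ one has $1+(p-s)^2\ge 1+p^2/4\ge \tfrac14(1+p^2)$, so the weight obeys $(1+p^2)/(1+(p-s)^2)\le 4$ and the integral is dominated by $8\int_0^\infty \frac{ds}{1+s^2}=4\pi$, establishing finiteness; the sharp value $\approx 3.76$ is then obtained numerically. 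With $M_0$ in hand, all three estimates follow routinely.
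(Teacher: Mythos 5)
Your proof is correct and follows essentially the same route as the paper, which defers the details to \cite{smalltime} but states exactly your key idea: the $k$-dependence is disposed of by the subalgebra bound of Corollary \ref{2.2.} (the constant $C_0$), reducing all three inequalities to estimates on the weighted one-dimensional convolution $\int_0^p\|\hat f(\cdot,p-s)\|_N\,\|\hat g(\cdot,s)\|_N\,ds$ in the respective $p$-norms. Your explicit check that $\sup_{p\ge 0}\int_0^p\frac{(1+p^2)\,ds}{(1+s^2)(1+(p-s)^2)}\le 4\pi<\infty$ correctly supplies the one analytic detail the paper leaves to the cited reference.
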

\noindent This means the Banach spaces listed in the norms section form subalgebras under the operation $\,^*_*$. The properties listed are independent of dimension except for a change in $C_0$ showing up due to the Fourier convolution. The proof is in \cite{smalltime}. The basic idea is that $k$ and $p$ act separately in the norm. So, we need only consider how the $p$ portion of the norm effects $\int_0^p u(p)v(p-s)ds$.

\begin{Lemma}\label{2.7.} (This lemma expands the bounds in Lemma \ref{2.4.} to bounds in $p$ in some of our other norms).
On $\mathcal{A}_1^{\alpha}$, the operators $\mathcal{M}$ and $\mathcal{N}$ satisfy the following inequalities 
\begin{multline}\label{1-2.30}
||\mathcal{N}(\hat H, \hat S)||_1^{\alpha}\leq C_2 \sqrt{\pi}\alpha^{-1/2}\left\{ (||(\hat H, \hat S)||_1^{\alpha})^2+||(\hat u_0, \hat{\Theta}_0)||_{N}||(\hat H, \hat S)||_1^{\alpha}\right\} \\ 
+\alpha ^{-1}||(\hat u_1, \hat \Theta_1)||_{N}+\alpha^{-1}C_3||\hat S||_1^{\alpha},
\end{multline}
\begin{multline} \label{2-2.30}
||\mathcal{M}(\hat W, \hat Q)||_1^{\alpha}\leq C_4 \sqrt{\pi}\alpha^{-1/2}\left\{ (||(\hat W, \hat Q)||_1^{\alpha})^2+||(\hat v_0, \hat{B}_0)||_{N}||(\hat W, \hat Q)||_1^{\alpha}\right\} \\
+\alpha ^{-1}||(\hat v_1, \hat B_1)||_{N},
\end{multline}
and
\begin{multline}\label{1-2.31}
||\mathcal{N}(\hat H^{[1]}, \hat S^{[1]})-\mathcal{N}(\hat H^{[2]}, \hat S^{[2]})||_1^{\alpha}\leq \hspace{2.6 in}\\
C_2 \sqrt{\pi}\alpha^{-1/2}\left\{ \left(||(\hat H^{[1]}, \hat S^{[1]})||_1^{\alpha}+||(\hat H^{[2]}, \hat S^{[2]})||_1^{\alpha}\right)\left(||(\hat H^{[1]}, \hat S^{[1]})-(\hat H^{[2]}, \hat S^{[2]})||_1^{\alpha}\right) \right. \\
+\left. ||(\hat u_0, \hat{\Theta}_0)||_{N}||(\hat H^{[1]}, \hat S^{[1]})-(\hat H^{[2]}, \hat S^{[2]})||_1^{\alpha} \right\} +\alpha^{-1}C_3||\hat S^{[1]}-\hat S^{[2]}||_1^{\alpha},\\
\end{multline}
\begin{multline} \label{2-2.31}
||\mathcal{M}(\hat W^{[1]}, \hat Q^{[1]})-\mathcal{M}(\hat W^{[2]}, \hat Q^{[2]})||_1^{\alpha}\leq \hspace{2.5 in}\\
C_4 \sqrt{\pi}\alpha^{-1/2}\left\{ \left(||(\hat W^{[1]}, \hat Q^{[1]})||_1^{\alpha}+||(\hat W^{[2]}, \hat Q^{[2]})||_1^{\alpha}\right)\left(||(\hat W^{[1]}, \hat Q^{[1]})-(\hat W^{[2]}, \hat Q^{[2]})||_1^{\alpha}\right) \right. \\
+\left. ||(\hat v_0, \hat{B}_0)||_{N}||(\hat W^{[1]}, \hat Q^{[1]})-(\hat W^{[2]}, \hat Q^{[2]})||_1^{\alpha} \right\}. 
\end{multline}
\noindent Similarly, for $\mathcal{A}_L^{\infty}$, we have
\begin{multline}\label{1-2.32}
||\mathcal{N}(\hat H, \hat S)||_L^{\infty}\leq C_2 \sqrt{L}\left\{ L(||(\hat H, \hat S)||_L^{\infty})^2+||(\hat u_0, \hat{\Theta}_0)||_{N}||(\hat H, \hat S)||_L^{\infty}\right\} \hspace{1.05 in}\\
+||(\hat u_1, \hat \Theta_1)||_{N}+LC_3||\hat S||_L^{\infty},
\end{multline}
\begin{multline} \label{2-2.32}
||\mathcal{M}(\hat W, \hat Q)||_L^{\infty}\leq C_4 \sqrt{L}\left\{ L(||(\hat W, \hat Q)||_L^{\infty})^2+||(\hat v_0, \hat{B}_0)||_{N}||(\hat W, \hat Q)||_L^{\infty}\right\} \hspace{.9 in}\\
+||(\hat v_1, \hat B_1)||_{N},
\end{multline}
and
\begin{multline} \label{1-2.33}
||\mathcal{N}(\hat H^{[1]}, \hat S^{[1]})-\mathcal{N}(\hat H^{[2]}, \hat S^{[2]})||_L^{\infty}\leq \hspace{3.15 in}\\
C_2 \sqrt{L}\left\{ L\left(||(\hat H^{[1]}, \hat S^{[1]})||_L^{\infty}+||(\hat H^{[2]}, \hat S^{[2]})||_L^{\infty}\right)\left(||(\hat H^{[1]}, \hat S^{[1]})-(\hat H^{[2]}, \hat S^{[2]})||_L^{\infty}\right) \right. \\
+\left. ||(\hat u_0, \hat{\Theta}_0)||_{N}||(\hat H^{[1]}, \hat S^{[1]})-(\hat H^{[2]}, \hat S^{[2]})||_L^{\infty} \right\} 
+ LC_3||\hat S^{[1]}-\hat S^{[2]}||_L^{\infty},
\end{multline}
\begin{multline} \label{2-2.33}
||\mathcal{M}(\hat W^{[1]}, \hat Q^{[1]})-\mathcal{M}(\hat W^{[2]}, \hat Q^{[2]})||_L^{\infty}\leq \hspace{3 in}\\
C_4 \sqrt{L}\left\{ L\left(||(\hat W^{[1]}, \hat Q^{[1]})||_L^{\infty}+||(\hat W^{[2]}, \hat Q^{[2]})||_L^{\infty}\right)\left(||(\hat W^{[1]}, \hat Q^{[1]})-(\hat W^{[2]}, \hat Q^{[2]})||_L^{\infty}\right) \right. \\
+\left. ||(\hat v_0, \hat{B}_0)||_{N}||(\hat W^{[1]}, \hat Q^{[1]})-(\hat W^{[2]}, \hat Q^{[2]})||_L^{\infty} \right\}. 
\end{multline}
\end{Lemma}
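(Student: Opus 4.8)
The plan is to read Lemma \ref{2.7.} as nothing more than the \emph{weighted/integrated} form of the pointwise-in-$p$ estimates already proved in Lemma \ref{2.4.}. Every right-hand side in (\ref{1-2.23})--(\ref{2-2.24}) has the shape $\frac{C}{\sqrt p}\int_0^p(\cdots)\,dp'$ plus lower-order terms in which the Fourier-convolution constant $C_0$ and the projection bounds (Corollary \ref{2.2.}, Lemma \ref{2.3.}) are \emph{already} folded into $C_2,C_3,C_4$. Thus I would obtain each inequality of Lemma \ref{2.7.} by applying the defining operation of the target norm to the bound from Lemma \ref{2.4.} --- integrate against $e^{-\alpha p}\,dp$ for the $\|\cdot\|_1^\alpha$ inequalities, or take $\sup_{p\in[0,L]}$ for the $\|\cdot\|_L^\infty$ inequalities --- and then invoke the subalgebra arithmetic of Lemma \ref{2.6.} to reassemble the Laplace convolutions of the scalar functions $p\mapsto\|(\hat H,\hat S)(\cdot,p)\|_N$, etc. Only manipulations in the single variable $p$ remain.

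For the $\mathcal{A}_1^\alpha$ bounds the one genuinely quantitative step is the singular kernel $1/\sqrt p$. Writing $F(p')$ for the nonnegative integrand inside $\int_0^p$ in (\ref{2-2.23}) (and analogously elsewhere), Tonelli gives
\[
\int_0^{\infty} e^{-\alpha p}\frac{1}{\sqrt p}\int_0^p F(p')\,dp'\,dp = \int_0^{\infty} F(p')\int_{p'}^{\infty}\frac{e^{-\alpha p}}{\sqrt p}\,dp\,dp',
\]
and the inner integral is controlled by the clean estimate
\[
\int_{p'}^{\infty}\frac{e^{-\alpha p}}{\sqrt p}\,dp = e^{-\alpha p'}\int_0^{\infty}\frac{e^{-\alpha q}}{\sqrt{q+p'}}\,dq \le e^{-\alpha p'}\int_0^{\infty}\frac{e^{-\alpha q}}{\sqrt q}\,dq = \sqrt{\pi}\,\alpha^{-1/2}e^{-\alpha p'},
\]
obtained by the shift $q=p-p'$, the elementary inequality $(q+p')^{-1/2}\le q^{-1/2}$, and $\int_0^\infty e^{-\alpha q}q^{-1/2}\,dq=\Gamma(1/2)\alpha^{-1/2}$. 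This transfers the weight onto the inner variable and produces exactly the prefactor $C_4\sqrt\pi\,\alpha^{-1/2}$ (resp.\ $C_2\sqrt\pi\,\alpha^{-1/2}$). It then remains to integrate $e^{-\alpha p'}F(p')$: the linear term yields $\|(\hat v_0,\hat B_0)\|_N\|(\hat W,\hat Q)\|_1^\alpha$, while with $g(p)=\|(\hat W,\hat Q)(\cdot,p)\|_N$ the Laplace-convolution term obeys $\int_0^\infty e^{-\alpha p'}(g\ast g)(p')\,dp'\le\big(\int_0^\infty e^{-\alpha p}g\,dp\big)^2=(\|(\hat W,\hat Q)\|_1^\alpha)^2$, which is the scalar case of the second inequality of Lemma \ref{2.6.}. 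Since the forcing and $C_3$ terms carry no $1/\sqrt p$, there $\int_{p'}^\infty e^{-\alpha p}\,dp=\alpha^{-1}e^{-\alpha p'}$ produces the stated $\alpha^{-1}\|(\hat v_1,\hat B_1)\|_N$ and $\alpha^{-1}C_3\|\hat S\|_1^\alpha$. Assembling these gives (\ref{1-2.30}) and (\ref{2-2.30}).

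The $\mathcal{A}_L^\infty$ bounds (\ref{1-2.32})--(\ref{2-2.32}) are cruder: bounding every $\|\cdot\|_N(\cdot,p')$ by its supremum over $[0,L]$, one has $(g\ast g)(p')\le p'(\|(\hat H,\hat S)\|_L^\infty)^2$, so $\frac{1}{\sqrt p}\int_0^p(g\ast g)\,dp'\le \tfrac{p^{3/2}}{2}(\cdots)^2\le L^{3/2}(\cdots)^2$ for $p\le L$; the linear term gives $\frac{1}{\sqrt p}\int_0^p\le\sqrt p\le\sqrt L$, and the $C_3$ term a factor $L$ --- precisely the powers of $L$ on the right-hand sides (this is the $\|\cdot\|_L^\infty$ subalgebra counting of Lemma \ref{2.6.}). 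The Lipschitz-type bounds (\ref{1-2.31}), (\ref{2-2.31}), (\ref{1-2.33}), (\ref{2-2.33}) then follow by running the identical weighted integration on the difference estimates (\ref{1-2.24}) and (\ref{2-2.24}), whose factored form $\hat W^{[1]}\, ^*_*\, \hat W^{[1]}-\hat W^{[2]}\, ^*_*\, \hat W^{[2]}$ has already rewritten the quadratic term as (sum of norms)$\times$(difference of norms). I expect the only real obstacle to be the singular-kernel weighted estimate of the second paragraph --- verifying that the $1/\sqrt p$ factor does not spoil the transfer of the $e^{-\alpha p}$ weight and that the resulting constant is clean enough to yield $\sqrt\pi\,\alpha^{-1/2}$; everything else is Laplace-convolution bookkeeping in the single variable $p$ via Lemma \ref{2.6.}.
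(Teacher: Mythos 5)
Your proposal is correct and takes essentially the same approach as the paper: the paper's proof likewise applies Fubini to the bounds of Lemma \ref{2.4.}, bounds $\int_{p'}^{L}e^{-\alpha(p-p')}p^{-1/2}\,dp$ by $\Gamma(1/2)\alpha^{-1/2}$ exactly as in your shifted-variable estimate, invokes the subalgebra inequality of Lemma \ref{2.6.} for the Laplace-convolution term, and uses the same crude $\sqrt{L}$, $L$, $L^{3/2}$ counting for the $\mathcal{A}_L^{\infty}$ bounds, with the difference estimates handled identically from (\ref{1-2.24}) and (\ref{2-2.24}).
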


\begin{proof} For the space $\mathcal{A}_1^{\alpha}$ and any $L>0$, we note that
\begin{equation}\nonumber
\int_0^Le^{-\alpha p}||(\hat u_1,\hat \Theta_1)||_{N}dp\leq \alpha^{-1}||(\hat u_1,\hat \Theta_1)||_{N}
\end{equation}
\noindent and
\begin{equation}\nonumber
\int_0^L e^{-\alpha p}p^{-1/2}dp\leq \Gamma \left( \frac{1}{2} \right) \alpha^{-1/2}=\sqrt{\pi}\alpha^{-1/2}.
\end{equation}
\noindent We further notice that for $y(p')\geq 0$, we have
\begin{multline}
\int_0^L e^{-\alpha p}p^{-1/2}\left( \int_0^p y(p')dp'\right) dp=\int_0^L y(p')e^{-\alpha p'}\left( \int_{p'}^L e^{-\alpha (p-p')}p^{-1/2}dp\right) dp'\\ \label{2.34}
\leq \int_0^L y(p')e^{-\alpha p'}\left( \int_{0}^L e^{-\alpha s}s^{-1/2}ds\right) dp'\leq \int_0^L y(p')e^{-\alpha p'}\sqrt{\pi}\alpha^{-1/2}dp'.
\end{multline}
\noindent Similarly,
\begin{multline}\nonumber
\int_0^L e^{-\alpha p}\left( \int_0^p ||\hat S(\cdot,p')||_{N}dp'\right) dp=\int_0^L ||\hat S(\cdot,p')||_{N}e^{-\alpha p'}\left( \int_{p'}^L e^{-\alpha (p-p')}dp\right) dp'\\ \nonumber
=\int_0^L ||\hat S(\cdot,p')||_{N}e^{-\alpha p'}\left( \int_{0}^L e^{-\alpha s}ds\right) dp'\leq \alpha ^{-1}||\hat S||_1^{\alpha}.
\end{multline}
\noindent Then, using (\ref{2.34}) in ($\ref{1-2.23}$) and the idea in Lemma \ref{2.6.} that $\int_0^p e^{-\alpha p}[(||g||_{N}*||h||_{N})(p)]dp\leq ||g||_1^{\alpha}||h||_1^{\alpha}$, we have 
\begin{multline}
\int_0^Le^{-\alpha p}||\mathcal{N}(\hat H, \hat S)||_{N} dp\leq C_2 \sqrt{\pi}\alpha^{-1/2}\left\{ (||(\hat H, \hat S)||_1^{\alpha})^2\right. \hspace{1 in}\\
+\left. ||(\hat u_0, \hat{\Theta}_0)||_{N}||(\hat H, \hat S)||_1^{\alpha} \right\} +\alpha ^{-1}||(\hat u_1, \hat \Theta_1)||_{N}+\alpha^{-1}C_3||\hat S||_1^{\alpha}.
\end{multline}

\noindent This proves (\ref{1-2.30}). Further, from ($\ref{1-2.24}$), it also follows that
\begin{multline}
\int_0^L e^{-\alpha p}||\mathcal{N}(\hat H^{[1]}, \hat S^{[1]})(\cdot,p)-\mathcal{N}(\hat H^{[2]}, \hat S^{[2]})(\cdot,p)||_{N}dp\leq \hspace{1.5 in}\\
C_2 \sqrt{\pi}\alpha^{-1/2}\left\{ \left(||(\hat H^{[1]}, \hat S^{[1]})||_1^{\alpha}+||(\hat H^{[2]}, \hat S^{[2]})||_1^{\alpha}\right)\left\|(\hat H^{[1]}, \hat S^{[1]})-(\hat H^{[2]}, \hat S^{[2]})\right\|_1^{\alpha} \right. \\
+\left. ||(\hat u_0, \hat{\Theta}_0)||_{N}\left\|(\hat H^{[1]}, \hat S^{[1]})-(\hat H^{[2]}, \hat S^{[2]})\right\|_1^{\alpha} \right\} +\alpha^{-1}C_3||\hat S^{[1]}-\hat S^{[2]}||_1^{\alpha}.
\end{multline}

\noindent This proves (\ref{1-2.31}). The inequalities for $(\hat{W},\hat{Q})$ similarly follow from (\ref{2-2.23}) and (\ref{2-2.24}). 

Now, we consider $\mathcal{A}_L^{\infty}$. We note that for $p\in[0,L]$, we have
\begin{equation}\nonumber
\left| p^{-1/2}\int_0^p y(p')dp'\right| \leq \sup_{p\in [0,L]}|y(p)|\sqrt{L}.
\end{equation}
\noindent We recall from Lemma \ref{2.6.} that
\begin{equation}\nonumber
\left| \int_0^p y_1(s)y_2(p-s)ds\right| \leq L\left( \sup_{p\in [0,L]}|y_1(p)|\right) \left( \sup_{p\in [0,L]}|y_2(p)|\right). 
\end{equation}
\noindent Taking
\begin{align*}
&y(p)=||(\hat H, \hat S)(\cdot,p)||_{N}*||(\hat H, \hat S)(\cdot,p)||_{N}+||(\hat u_0, \hat \Theta_0)||_{N}||(\hat H, \hat S)(\cdot ,p)||_{N}\\
&\textnormal{and }y_1(p)=y_2(p)=||(\hat H, \hat S)(\cdot,p)||_{N},
\end{align*}

\noindent ($\ref{1-2.32}$) follows from ($\ref{1-2.23}$). To get the bound in ($\ref{1-2.33}$) we will choose,
\begin{multline}\nonumber
y(p)= \left(||(\hat H^{[1]}, \hat S^{[1]})||_{N}+||(\hat H^{[2]}, \hat S^{[2]})||_{N}\right)*\left\|(\hat H^{[1]}, \hat S^{[1]})-(\hat H^{[2]}, \hat S^{[2]})\right\|_{N} \\
+ ||(\hat u_0, \hat{\Theta}_0)||_{N}||(\hat H^{[1]}, \hat S^{[1]})-(\hat H^{[2]}, \hat S^{[2]})||_{N} 
\end{multline}
\begin{align}\nonumber
y_1(p)&=||(\hat H^{[1]}, \hat S^{[1]})||_{N}+||(\hat H^{[2]}, \hat S^{[2]})||_{N} \\ \nonumber
y_2(p)&=\left\|(\hat H^{[1]}, \hat S^{[1]})-(\hat H^{[2]}, \hat S^{[2]})\right\|_{N} 
\end{align}
\noindent now using ($\ref{1-2.24}$) the proof follows. The bounds on $(\hat{W},\hat{Q})$, (\ref{2-2.32}) and (\ref{2-2.33}), are proved in similar spirit.
\end{proof}

\begin{Lemma}\label{2.8.} Equation ($\ref{1-2.18}$) has a unique solution in $\mathcal{A}_1^{\omega}$ for any $L > 0$ in a ball of size $2\omega^{-1}||(\hat{u}_1,\hat \Theta_1)||_{N}$ for $\omega$ large enough to guarantee
\begin{equation}\label{1-2.39}
2C_2\sqrt{\pi}\omega^{-1/2}\left\{2\omega^{-1} ||(\hat u_1, \hat \Theta_1)||_{N}+||(\hat u_0,\hat \Theta_0)||_{N} +\frac{C_3}{C_2\sqrt{\pi}}\,\omega^{-1/2}\right\}<1
\end{equation}
where $(\hat{u}_1,\hat{\Theta}_1)$ is given in (\ref{u1}). Similarly, equation ($\ref{2-2.18}$) has a unique solution in $\mathcal{A}_1^{\alpha}$ for any $L > 0$ in a ball of size $2\alpha^{-1}||(\hat{v}_1,\hat B_1)||_{N}$ for $\alpha$ large enough to guarantee
\begin{equation}\label{2-2.39}
2C_4\sqrt{\pi}\alpha^{-1/2}\left\{2\alpha^{-1} ||(\hat v_1, \hat B_1)||_{N}+||(\hat v_0,\hat B_0)||_{N}\right\} <1
\end{equation}
where $(\hat{v}_1,\hat{B}_1)$ is given in (\ref{v1}). Furthermore, the solutions also belong to $\mathcal{A}_L^{\infty}$ for $L$ small enough to ensure either
\begin{equation}\label{1-2.40}
2C_2L^{1/2}\left\{ 2L||(\hat u_1, \hat \Theta_1)||_{N}+||(\hat u_0,\hat \Theta_0)||_{N}+\frac{C_3}{C_2}L^{1/2}\right\} <1
\end{equation}
or
\begin{equation}\label{2-2.40}
2C_4L^{1/2}\left\{ 2L||(\hat v_1, \hat B_1)||_{N}+||(\hat v_0,\hat B_0)||_{N}\right\} <1
\end{equation}
\noindent depending on the equation being considered. Moreover, $\lim_{p\rightarrow 0^+}(\hat H,\hat S)(k,p)=(\hat u_1, \hat \Theta_1)(k)$ and $\lim_{p\rightarrow 0^+}(\hat W, \hat Q)(k,p)=(\hat v_1, \hat B_1)(k)$. 
\end{Lemma}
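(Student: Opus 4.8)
The plan is to establish Lemma \ref{2.8.} by the Banach fixed point theorem applied to the operators $\mathcal{N}$ and $\mathcal{M}$ on the spaces $\mathcal{A}_1^{\alpha}$ and $\mathcal{A}_L^{\infty}$, using precisely the operator bounds already assembled in Lemma \ref{2.7.}. I will carry out the argument for the Boussinesq operator $\mathcal{N}$; the MHD operator $\mathcal{M}$ is handled identically with $C_4$ in place of $C_2$, with $(\hat v_0,\hat B_0)$, $(\hat v_1,\hat B_1)$ in place of $(\hat u_0,\hat\Theta_0)$, $(\hat u_1,\hat\Theta_1)$, and with the $C_3$-term (the buoyancy coupling term) simply dropped, since it is absent in (\ref{2-2.18}).

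First I would fix the candidate ball. Set $R=2\omega^{-1}\|(\hat u_1,\hat\Theta_1)\|_{N}$ and let $\mathcal{B}_R$ be the closed ball of radius $R$ about the origin in $\mathcal{A}_1^{\omega}$. To show $\mathcal{N}$ maps $\mathcal{B}_R$ into itself, I substitute $\|(\hat H,\hat S)\|_1^{\omega}\le R$ into the bound (\ref{1-2.30}), obtaining
\begin{equation}\nonumber
\|\mathcal{N}(\hat H,\hat S)\|_1^{\omega}\le C_2\sqrt{\pi}\,\omega^{-1/2}\left\{R^2+\|(\hat u_0,\hat\Theta_0)\|_{N}R\right\}+\omega^{-1}\|(\hat u_1,\hat\Theta_1)\|_{N}+\omega^{-1}C_3 R.
\end{equation}
Since the last main term equals $R/2$, the self-mapping property $\|\mathcal{N}(\hat H,\hat S)\|_1^{\omega}\le R$ reduces to showing the remaining terms total at most $R/2$; dividing through by $R$ and inserting $R=2\omega^{-1}\|(\hat u_1,\hat\Theta_1)\|_N$ produces exactly the left-hand side of the smallness condition (\ref{1-2.39}) being less than $1$. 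For the contraction estimate I feed two points of $\mathcal{B}_R$ into (\ref{1-2.31}); the sum of the two norms is bounded by $2R$, so the contraction factor is
\begin{equation}\nonumber
C_2\sqrt{\pi}\,\omega^{-1/2}\left\{2R+\|(\hat u_0,\hat\Theta_0)\|_{N}\right\}+\omega^{-1}C_3,
\end{equation}
which, after substituting $R$ and factoring, is again controlled by (\ref{1-2.39}). Thus (\ref{1-2.39}) simultaneously secures the self-map and contraction conditions, and the Banach fixed point theorem yields a unique fixed point in $\mathcal{B}_R$, which is the asserted solution of (\ref{1-2.18}).

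Next I would address membership in $\mathcal{A}_L^{\infty}$ for small $L$. Here I repeat the fixed-point argument verbatim but with the $\mathcal{A}_L^{\infty}$ bounds (\ref{1-2.32}) and (\ref{1-2.33}) in place of their $\mathcal{A}_1^{\alpha}$ counterparts; the algebra is identical, with the factor $\sqrt{\pi}\,\omega^{-1/2}$ replaced by $\sqrt{L}$ and the coupling factor $\omega^{-1}C_3$ replaced by $LC_3$, so the smallness requirement becomes precisely (\ref{1-2.40}). Since $\mathcal{A}_L^{\infty}\subset\mathcal{A}_1^{\omega}$ on $[0,L]$, uniqueness in $\mathcal{A}_1^{\omega}$ forces the two fixed points to coincide, so the $\mathcal{A}_1^{\omega}$ solution indeed lies in $\mathcal{A}_L^{\infty}$. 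Finally, for the limiting values, I examine the representation (\ref{1-2.18}) as $p\to 0^+$: the integral terms carry a prefactor $\int_0^p\mathcal{G}\,dp'$ that vanishes (using the boundedness of $\mathcal{G}$ and $\mathcal{G}/z$ from the two Remarks following (\ref{M}), together with the local integrability of the solution), while the inhomogeneous term $2\hat u_1(k)J_1(z)/z\to\hat u_1(k)$ because $J_1(z)/z\to 1/2$ as $z\to 0$; this gives $\lim_{p\to0^+}(\hat H,\hat S)=(\hat u_1,\hat\Theta_1)$, and analogously for $(\hat W,\hat Q)$.

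I expect the main obstacle to be the $p\to 0^+$ limit rather than the fixed-point step, which is routine once Lemma \ref{2.7.} is in hand. The delicacy is that the solution is a priori only controlled in the $L^1$-type norm $\|\cdot\|_1^{\alpha}$, so the pointwise-in-$p$ statement $\lim_{p\to0^+}(\hat H,\hat S)(k,p)=(\hat u_1,\hat\Theta_1)(k)$ is not immediate from membership in $\mathcal{A}_1^{\omega}$ alone; this is exactly why the lemma also places the solution in $\mathcal{A}_L^{\infty}$, whose sup-norm control on $[0,L]$ legitimizes evaluating the limit. I would therefore carry out the limit computation inside $\mathcal{A}_L^{\infty}$, using the $\sqrt{L}$-type prefactor in (\ref{1-2.32}) to bound the integral contributions by $O(\sqrt{p}\,)$ and hence drive them to zero, leaving only the explicit Bessel inhomogeneity to supply the limit.
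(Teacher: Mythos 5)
Your proposal is correct and follows essentially the same route as the paper: a contraction-mapping argument on the balls in $\mathcal{A}_1^{\omega}$ and $\mathcal{A}_L^{\infty}$ using the bounds (\ref{1-2.30})--(\ref{2-2.33}) of Lemma \ref{2.7.}, identification of the two fixed points via $\mathcal{A}_L^{\infty}\subseteq\mathcal{A}_1^{\omega}$, and the limit $p\to 0^+$ obtained from the sup-norm control on $[0,L]$ with $L\to 0$. The only cosmetic difference is that the paper extracts the Bessel inhomogeneity by applying (\ref{1-2.33}) with the second argument equal to zero (so that $\mathcal{N}(0,0)$ is exactly the Bessel term), whereas you estimate the integral terms of (\ref{1-2.18}) directly -- the same computation in substance.
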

\begin{proof} The estimates in Lemma \ref{2.7.} imply that $\mathcal{M}$ maps a ball of radius $2\alpha^{-1}||(\hat v_1, \hat B_1)||_{N}$ in $\mathcal{A}_1^{\alpha}$ into itself and is contractive when $\alpha$ is large enough to satisfy ($\ref{2-2.39}$). Similarly, $\mathcal{M}$ maps a ball of size $2||(\hat v_1, \hat B_1)||_{N}$ in $\mathcal{A}_L^{\infty}$ into itself and is contractive when $L$ is small enough to satisfy ($\ref{2-2.40}$). Therefore, there is a unique solution to the B$\acute{\textnormal{e}}$nard integral system of equations in the ball. Furthermore, $\mathcal{A}_L^{\infty}\subseteq \mathcal{A}_1^{\alpha}$, so the solutions are in fact one and the same. Similarly, $\mathcal{N}$ is contractive on a ball of radius $2\omega^{-1}||(\hat u_1, \hat \Theta_1)||_{N}$ in $\mathcal{A}_1^{\omega}$ for $\omega$ large enough to satisfy ($\ref{1-2.39}$) and a ball of size $2||(\hat u_1, \hat \Theta_1)||_{N}$ in $\mathcal{A}_L^{\infty}$ for $L$ small enough to satisfy ($\ref{1-2.40}$). So, the Boussinesq integral system has a unique solution in each of these spaces. Since $\mathcal{A}_L^{\infty}\subseteq \mathcal{A}_1^{\alpha}$, the solutions are in fact one and the same.

Moreover, applying (\ref{1-2.33}) (respectively, (\ref{2-2.33})) with $(\hat{H}^{[1]},\hat{S}^{[1]})=(\hat{H},\hat{S})$ (respectively, $(\hat{W}^{[1]},\hat{Q}^{[1]})=(\hat{W},\hat{Q})$) and $(\hat{H}^{[2]},\hat{S}^{[2]})=0=(\hat{W}^{[2]},\hat{Q}^{[2]})$, we obtain
\begin{multline}\nonumber
\left\|(\hat{H},\hat{S})(k,p)-\left(\hat{u}_1(k)\frac{2J_1(z)}{z},\hat{\Theta}_1(k)\frac{2J_1(\zeta)}{\zeta}\right)\right\|_L^{\infty}\leq\hspace{2 in}\\ \nonumber C_2L^{1/2}\left\{L(||(\hat{H},\hat{S})||_L^{\infty})^2+||(\hat{u}_0,\hat{\Theta}_0)||_{N}||(\hat{H},\hat{S})||_L^{\infty}\right\}+LC_3||\hat{S}||_L^{\infty}
\end{multline}
and
\begin{align*}
\left\|(\hat{W},\hat{Q})\right.(k,p)&\left.-\left(\hat{v}_1(k)\frac{2J_1(\tilde{z})}{\tilde{z}},\hat{B}_1(k)\frac{2J_1(\tilde{\zeta})}{\tilde{\zeta}}\right)\right\|_L^{\infty}\leq \\ &C_4L^{1/2}\left\{\right.L(||(\hat{W},\hat{Q})||_L^{\infty})^2+||(\hat{v}_0,\hat{B}_0)||_{N}||(\hat{W},\hat{Q})||_L^{\infty}\left.\right\}.
\end{align*}
\noindent Since $||(\hat{H},\hat{S})||_L^{\infty}$ and $||(\hat{W},\hat{Q})||_L^{\infty}$ are bounded for small $L$, letting $L\rightarrow 0$,
\begin{equation}\nonumber
\left\|(\hat{H},\hat{S})(k,p)-\left(\hat{u}_1(k)\frac{2J_1(z)}{z},\hat{\Theta}_1(k)\frac{2J_1(\zeta)}{\zeta}\right)\right\|_L^{\infty}\rightarrow 0
\end{equation}
and
\begin{equation}\nonumber
\left\|(\hat{W},\hat{Q})(k,p)-\left(\hat{v}_1(k)\frac{2J_1(\tilde{z})}{\tilde{z}},\hat{B}_1(k)\frac{2J_1(\tilde{\zeta})}{\tilde{\zeta}}\right)\right\|_L^{\infty}\rightarrow 0.
\end{equation}
As $\lim_{z\rightarrow 0}2J_1(z)/z=1$, for fixed $k$,  $\lim_{p\rightarrow 0}(\hat{H},\hat{S})(k,p)=(\hat{u}_1,\hat{\Theta}_1)(k)$. Similarly, for fixed $k$,  $\lim_{p\rightarrow  0}(\hat{W},\hat{Q})(k,p)=(\hat{v}_1,\hat{B}_1)(k)$.
\end{proof}

\section{Properties of the solutions}\label{sec5}

We have unique solutions to our two integral equations, (\ref{IE-p-1}) and (\ref{IE-p-2}). We show in the following Lemma \ref{2.9.} that these solutions Laplace transforms give solutions to (\ref{FB2}) and (\ref{MB2}), which are analytic in $t$ for $\Re \frac{1}{t}>\omega$ (resp. $\alpha$). Lemma \ref{equivalence} below shows that any solution of (\ref{FB2}) with $||(1+|\cdot|)^2(\hat{u},\hat{\Theta})(\cdot, t)||_{N}<\infty$ or respectively (\ref{MB2}) with $||(1+|\cdot|)^2(\hat{v},\hat{B})(\cdot, t)||_{N}<\infty$ is inverse Fourier transformable with $(u,\Theta)$ solving (\ref{B}) and $(v,B)$ solving (\ref{Ber}).  Lemma \ref{2.11.} below insures that $||(1+|\cdot|)^2(\hat{u},\hat{\Theta})(\cdot, t)||_{N}<\infty$ and $||(1+|\cdot|)^2(\hat{v},\hat{B})(\cdot, t)||_{N}<\infty$. Thus, combining these two results, we have $(u,\Theta)(x,t)=\mathcal{F}^{-1}(\hat{u},\hat{\Theta})(k,t)$ and $(v,B)(x,t)=\mathcal{F}^{-1}(\hat{v},\hat{B})(k,t)$ are classical solutions to (\ref{B}) and (\ref{Ber}) respectively. 
 
\begin{Lemma}\label{2.9.} For any solutions $(\hat{H}, \hat{S})$ and $(\hat{W},\hat{Q})$ of (\ref{IE-p-1}) and (\ref{IE-p-2}) such that $||(\hat{H},\hat{S})(\cdot,p)||_{N}\in L^1(e^{-\omega p}dp)$ and $||(\hat{W},\hat{Q})(\cdot,p)||_{N}\in L^1(e^{-\alpha p}dp)$ the Laplace transform
\begin{equation}
(\hat{u},\hat{\Theta})(k,t)=(\hat{u}_0,\hat{\Theta}_0)(k)+\int_0^{\infty}(\hat{H},\hat{S})(k,p)e^{-p/t}dp
\end{equation}
and
\begin{equation}
(\hat{v},\hat{B})(k,t)=(\hat{v}_0,\hat{B}_0)(k)+\int_0^{\infty}(\hat{W},\hat{Q})(k,p)e^{-p/t}dp
\end{equation}
solve (\ref{FB2}) for $\Re (1/t)>\omega$ and (\ref{MB2}) for $\Re (1/t) >\alpha$ respectively. Moreover, $(\hat{u},\hat{\Theta})(k,t)$ is analytic for $t\in(0,\omega^{-1})$ and $(\hat{v},\hat{B})(k,t)$ is analytic for $t\in(0,\alpha^{-1})$. 
\end{Lemma}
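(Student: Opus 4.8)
The plan is to verify both claims—that the Laplace transforms solve the $t$-space integral equations and that they are analytic in $t$—by carrying the fixed-point identity $(\hat H,\hat S)=\mathcal N[(\hat H,\hat S)]$ across the Laplace transform, term by term. I treat only the Boussinesq case, the magnetic B\'enard case being identical with $\mathcal M$ and $\alpha$ replacing $\mathcal N$ and $\omega$.

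First I would dispose of convergence and analyticity. Since $\|(\hat H,\hat S)(\cdot,p)\|_N\in L^1(e^{-\omega p}dp)$ and $|e^{-p/t}|=e^{-p\Re(1/t)}$, for any $t$ with $\Re(1/t)=\omega+\delta$, $\delta>0$, the integrand is dominated in $k$-norm by $e^{-\delta p}\,e^{-\omega p}\|(\hat H,\hat S)(\cdot,p)\|_N$, so the Laplace integral converges absolutely and uniformly in $k$. Writing $z=1/t$, the transform $\int_0^\infty(\hat H,\hat S)(k,p)e^{-pz}dp$ is the classical Laplace transform of an $L^1(e^{-\omega p})$ function, hence analytic in $z$ for $\Re z>\omega$; equivalently one differentiates under the integral sign, the factor $\partial_t e^{-p/t}=(p/t^2)e^{-p/t}$ being controlled because $p\,e^{-\delta p}$ is bounded, so the differentiated integrand retains an $L^1(e^{-\omega p})$ majorant. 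The set $\{\Re(1/t)>\omega\}$ is the open disk of radius $(2\omega)^{-1}$ centered at $(2\omega)^{-1}$ on the real axis, which contains the segment $(0,\omega^{-1})$; this gives analyticity there.

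The substantive step is the equation itself. I would apply the Laplace transform in $1/t$ to each term of $(\hat H,\hat S)=\mathcal N[(\hat H,\hat S)]$ and match the result against (\ref{IE-t-1}), which is equivalent to (\ref{FB2}) under $\hat u=\hat u_0+\hat h$, $\hat\Theta=\hat\Theta_0+\hat s$. For the kernel contributions I invoke Lemma \ref{kernel}, which states precisely that the Laplace transform of $\int_0^p\mathcal H^{(\nu)}(p,p',k)\hat G(k,p')\,dp'$ equals $\int_0^t e^{-\nu|k|^2(t-s')}\hat g(k,s')\,ds'$ with $\hat g=\mathcal L[\hat G]$; for the inhomogeneous term I use Lemma \ref{U_0}, giving $\mathcal L[2\hat u_1 J_1(z)/z]=\hat u_1(1-e^{-\nu|k|^2 t})/(\nu|k|^2)$. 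To see that the transform of $\hat G^{[1]}_j$ is the $\hat g^{[1]}_j$ of (\ref{1-2.7}) I use that the Laplace transform sends the Laplace convolution in $\,^*_*\,$ to a product, so $\mathcal L[\hat H_j\,^*_*\,\hat H]=\hat h_j\hat *\hat h$ with $\hat h=\mathcal L[\hat H]$, and similarly for the mixed terms with $\hat u_0,\hat\Theta_0$. Summing the transformed pieces reproduces the right-hand side of (\ref{IE-t-1}) exactly.

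The hard part will be justifying the Fubini interchanges behind the convolution theorem and the kernel identity, which first requires knowing that every constituent of $\hat G^{[1]}_j,\hat G^{[2]}_j$ lies in $L^1(e^{-\omega p}dp)$ so that its transform exists. Here the subalgebra estimates enter: the weighted Young inequality $\int_0^\infty e^{-\omega p}\big|\int_0^p f(p')g(p-p')\,dp'\big|\,dp\le\big(\int_0^\infty e^{-\omega p}|f|\,dp\big)\big(\int_0^\infty e^{-\omega p}|g|\,dp\big)$—the half-line version of Lemma \ref{2.6.}—shows the Laplace convolution preserves $L^1(e^{-\omega p})$, while Corollary \ref{2.2.} controls the attendant Fourier convolution with $\hat u_0,\hat\Theta_0$; hence $\hat G^{[1]}_j,\hat G^{[2]}_j\in L^1(e^{-\omega p}dp)$. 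With this integrability the iterated integrals defining each transformed term converge absolutely, the only delicate point being the $p^{-1/2}$ weight and the inner contour integral in $\mathcal H^{(\nu)}$; but that singularity is integrable and is exactly what Lemma \ref{kernel} is constructed to absorb, so each application of Fubini is legitimate and the termwise transform is valid.
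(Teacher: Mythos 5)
Your overall strategy---transform the fixed-point identity $(\hat H,\hat S)=\mathcal N[(\hat H,\hat S)]$ term by term, send the $\,^*_*\,$ terms to Fourier-convolution products via the convolution theorem, use Lemma \ref{U_0} for the inhomogeneous term, the subalgebra estimates (Corollary \ref{2.2.}, Lemma \ref{2.6.}) for integrability of $\hat G^{[1]}_j,\hat G^{[2]}_j$, and classical Laplace-transform theory for analyticity---is the same as the paper's, and those parts are sound. The gap is the kernel term. You claim Lemma \ref{kernel} ``states precisely that the Laplace transform of $\int_0^p\mathcal H^{(\nu)}(p,p',k)\hat G(k,p')\,dp'$ equals $\int_0^t e^{-\nu|k|^2(t-s')}\hat g(k,s')\,ds'$.'' It does not. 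Lemma \ref{kernel} establishes only the pointwise identity $\frac{\pi}{z}\mathcal G(z,z')=\mathcal H^{(\nu)}(p,p',k)$ between the Bessel and Bromwich representations, by showing both solve $p\,y_{pp}+2y_p+\nu|k|^2y=0$ with matching behavior as $p'\uparrow p$; it says nothing about Laplace transforms of integrals against $\hat G$. Nor can you close the hole with the same convolution theorem you use for the nonlinear terms: $\mathcal H^{(\nu)}(p,p',k)$ is \emph{not} a difference kernel (it is not a function of $p-p'$ alone), so $\int_0^p\mathcal H^{(\nu)}(p,p',k)\hat G(k,p')\,dp'$ is not a Laplace convolution and its transform is not automatically a product.

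The missing computation is precisely the content of the paper's proof of Lemma \ref{2.9.}. One first extends the $s$-integration in the definition of $\mathcal H^{(\nu)}$ from $(p'/p,1)$ to $(0,1)$ (the inner contour integral vanishes for $s<p'/p$ by pushing the contour to the right), then rescales $p'\mapsto p's$ to obtain
\begin{equation}\nonumber
\int_0^p\mathcal H^{(\nu)}(p,p',k)\hat G(k,p')\,dp'
=\int_0^1 s\left\{\int_0^p\hat G(k,p's)\,\mathcal I^{(\nu)}(p-p',s,k)\,dp'\right\}ds,
\qquad
\mathcal I^{(\nu)}(q,s,k)=\frac{1}{2\pi i}\int_{c-i\infty}^{c+i\infty}\tau^{-1}e^{-\nu|k|^2\tau^{-1}(1-s)+q\tau}\,d\tau,
\end{equation}
that is, a one-parameter family in $s$ of \emph{genuine} Laplace convolutions against $\mathcal I^{(\nu)}(\cdot,s,k)$. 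Only at this point does the convolution theorem apply: after two applications of Fubini (justified by the $L^1(e^{-\omega p}dp)$ bounds you already have), the transform of the kernel term becomes $\int_0^1\hat g(k,st)\,I^{(\nu)}(t,s,k)\,ds$ with $I^{(\nu)}(t,s,k)=\mathcal L[\mathcal I^{(\nu)}(\cdot,s,k)](t^{-1})=te^{-\nu|k|^2t(1-s)}$, and the substitution $s'=st$ turns this into the Duhamel integral $\int_0^t e^{-\nu|k|^2(t-s')}\hat g(k,s')\,ds'$ appearing in (\ref{IE-t-1}). This decomposition-plus-rescaling argument is the substantive step of the lemma; your proposal assumes it rather than proves it, so as written the verification that the Laplace transform solves (\ref{FB2}) is incomplete.
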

\begin{proof} We may write
\begin{equation}\nonumber
\mathcal{H}^{(\nu)}(p,p',k)=\int_{0}^1\left\{\frac{1}{2\pi i}\int_{c-i\infty}^{c+i\infty}\tau^{-1}exp[-\nu|k|^2\tau^{-1}(1-s)+(p-p's^{-1})\tau]d\tau\right\}ds
\end{equation}
since by contour deformation the integral with respect to $\tau$ can be pushed to $+\infty$ and is therefore zero for $s\in (0,p'/p)$. Let $\hat{G}_1=-ik_j\hat{G}_j^{[1]}+P_k(ae_2\hat{S})$ and $\hat{G}_l=-ik_j\hat{G}_j^{[l]}$ for $l=2, 3, 4$. Changing variable $p'/s\rightarrow p'$ and applying Fubini's theorem gives
\begin{align}\label{long time 4.35}
\int_0^p&\left(\mathcal{H}^{(\nu)}(p,p',k)\hat{G}_1(k,p'),\mathcal{H}^{(\mu)}(p,p',k)\hat{G}_2(k,p')\right)dp'\\ \nonumber
&=\int_0^1s\left\{\int_0^p\left(\hat{G}_1(k,p's)\mathcal{I}^{(\nu)}(p-p',s,k),\hat{G}_2(k,p's)\mathcal{I}^{(\mu)}(p-p',s,k)\right)dp'\right\}ds
\end{align}
and 
\begin{align}\label{long time 4.36}
\int_0^p&\left(\mathcal{H}^{(\nu)}(p,p',k)\hat{G}_3(k,p'),\mathcal{H}^{(\frac{1}{\mu\sigma})}(p,p',k)\hat{G}_4(k,p')\right)dp'\\ \nonumber
&=\int_0^1s\left\{\int_0^p\left(\hat{G}_3(k,p's)\mathcal{I}^{(\nu)}(p-p',s,k),\hat{G}_4(k,p's)\mathcal{I}^{(\frac{1}{\mu\sigma})}(p-p',s,k)\right)dp'\right\}ds,
\end{align}
where for $p>0$ 
\begin{equation}
\mathcal{I}^{(\nu)}(p,s,k)=\frac{1}{2\pi i}\int_{c-i\infty}^{c+i\infty} \tau^{-1}exp[-\nu|k|^2\tau^{-1}(1-s)+p\tau]d\tau.
\end{equation}
Taking the Laplace transform of (\ref{long time 4.35}) and (\ref{long time 4.36}) with respect to $p$ and again using Fubini's theorem yields
\begin{align}\nonumber
\int_0^{\infty}e^{-pt^{-1}}\left\{\int_0^1\int_0^p\left(\hat{G}_1(k,p's)\mathcal{I}^{(\nu)}(p-p',s,k),\hat{G}_2(k,p's)\mathcal{I}^{(\mu)}(p-p',s,k)\right)sdp'ds\right\}dp\\ \nonumber
=\int_0^1\left(\hat{g}_1(k,st)I^{(\nu)}(t,s,k),\hat{g}_2(k,st)I^{(\mu)}(t,s,k)\right)ds
\end{align}
and 
\begin{align}\nonumber
\int_0^{\infty}e^{-pt^{-1}}\left\{\int_0^1\int_0^p\left(\hat{G}_3(k,p's)\mathcal{I}^{(\nu)}(p-p',s,k),\hat{G}_4(k,p's)\mathcal{I}^{(\frac{1}{\mu\sigma})}(p-p',s,k)\right)sdp'ds\right\}dp\\ \nonumber
=\int_0^1\left(\hat{g}_3(k,st)I^{(\nu)}(t,s,k),\hat{g}_4(k,st)I^{(\frac{1}{\mu\sigma})}(t,s,k)\right)ds,
\end{align}
where $\hat{g}(k,t)=\mathcal{L}[\hat{G}(k,\cdot)](t^{-1})$ and $I(t,s,k)=\mathcal{L}[\mathcal{I}(\cdot,s,k)](t^{-1})$. By assumption $||(\hat{H},\hat{S})(\cdot,p)||_{N}\in L^1(e^{-\omega p}dp)$, $||(\hat{W},\hat{Q})(\cdot,p)||_{N}\in L^1(e^{-\alpha p}dp)$, $||(\hat{u}_0,\hat{\Theta}_0)||_{N}<\infty$, and $||(\hat{v}_0,\hat{B}_0)||_{N}<\infty$. From the definition of $\hat{G}^{[l]}_j$ given in (\ref{G_j}) and 
Lemma \ref{2.6.} it follows that $\hat{G}$ are Laplace transformable in p, for $t\in(0,\omega^{-1})$ or $t\in(0,\alpha^{-1})$ as appropriate. Thus,
\begin{align}\nonumber
\hat{g}_1&:=-i k_j P_k[\hat{h}_j\hat{*}\hat{h}+\hat{h}_j\hat{*}\hat{u}_0+
\hat{u}_{0,j}\hat{*}\hat{h}]+P_k[ae_2\hat{s}]\\ \nonumber
\hat{g}_2&:=-i k_j[\hat{h}_j\hat{*}\hat{s}+\hat{h}_j\hat{*}\hat{\Theta}_0+
\hat{u}_{0,j}\hat{*}\hat{s}]\\ \nonumber
\end{align}
while in similar spirit $(\hat{g}_3,\hat{g}_4)(k,t)$ is given by multiplying the right hand side of (\ref{2-2.7}) by $ik_j$. We also have
\begin{equation}
I^{(\nu)}(t,s,k)=te^{-\nu|k|^2t(1-s)}.
\end{equation}
Recalling the integral equations for $(\hat{H},\hat{S})$ and $(\hat{W},\hat{Q})$ given in (\ref{IE-p-1}) and (\ref{IE-p-2}), we have  
\begin{align}\nonumber
(\hat{h},\hat{s})(k,t)-&\left(\hat{u}_1(k)\left(\frac{1-e^{-\nu|k|^2t}}{\nu|k|^2}\right),\hat{\Theta}_1(k)\left(\frac{1-e^{-\mu|k|^2t}}{\mu|k|^2}\right)\right)\\ \nonumber
&=t\int_0^{1}\left(e^{-\nu|k|^2t(1-s)}\hat{g}_1(k,st),e^{-\nu|k|^2t(1-s)}\hat{g}_2(k,st)\right)ds\\ \nonumber
&=\int_0^{t}\left(e^{-\nu|k|^2(t-s)}\hat{g}_1(k,s),e^{-\nu|k|^2(t-s)}\hat{g}_2(k,s)\right)ds\\ \nonumber
\end{align}
and 
\begin{align}\nonumber
(\hat{w},\hat{q})(k,t)=\int_0^{t}&\left(e^{-\nu|k|^2(t-s)}\hat{g}_3(k,s),e^{-\nu|k|^2(t-s)}\hat{g}_4(k,s)\right)ds\\ \nonumber
&+\left(\hat{v}_1(k)\left(\frac{1-e^{-\nu|k|^2t}}{\nu|k|^2}\right),\hat{B}_1(k)\left(\frac{1-e^{-(\mu\sigma)^{-1}|k|^2t}}{(\mu\sigma)^{-1}|k|^2}\right)\right).
\end{align}
Therefore, we directly verify $(\hat{u},\hat{\Theta})(k,t)=(\hat{u}_0,\hat{\Theta}_0)(k)+(\hat{h},\hat{s})(k,t)$ satisfies (\ref{FB2}) and $(\hat{v},\hat{B})(k,t)=(\hat{v}_0,\hat{B}_0)(k)+(\hat{w},\hat{q})(k,t)$ satisfies (\ref{MB2}). Moreover, analyticity in $t$ follows from the representations
\begin{align*} 
(\hat{u},\hat{\Theta})(k,t)&=(\hat{u}_0,\hat{\Theta}_0)(k)+\int_0^{\infty}\left(\hat{H}, \hat{S}\right)(k,p)e^{-p/t}dp\\
(\hat{v},\hat{B})(k,t)&=(\hat{v}_0,\hat{B}_0)(k)+\int_0^{\infty}\left(\hat{W}, \hat{Q}\right)(k,p)e^{-p/t}dp.
\end{align*}
\end{proof}

\begin{Lemma}\label{2.11.} (Instantaneous smoothing) Assume $||(\hat u_0, \hat{\Theta}_0)||_{N}<\infty $, $||(\hat v_0, \hat{B}_0)||_{N}<\infty $, and $|| \hat f||_{N}<\infty$ with $N$ either $L^1\cap L^{\infty}(\mathbb{R}^d)$ or $(\gamma, \beta)$ with $\gamma >d$, $\beta \geq 0$. For the solution $(\hat v, \hat B)$ known to exist by Lemma \ref{2.8.} for $t\in(0,T]$ with $T<\alpha^{-1}$, we have $||(1+|\cdot|)^2(\hat{v},\hat{B})(\cdot, t)||_{N}<\infty$ for $t\in(0,T]$. Respectively, $||(1+|\cdot|)^2(\hat{u},\hat{\Theta})(\cdot, t)||_{N}<\infty$ for $t\in(0,T]$ with $T<\omega^{-1}$.
\end{Lemma}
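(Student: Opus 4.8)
The plan is to work directly with the $t$-space integral equations (\ref{MB2}) and (\ref{FB2}) that the already-constructed solution satisfies (Lemma \ref{2.9.}), and to exploit the parabolic smoothing of the heat kernel to bootstrap the spatial weight from $0$ up to $2$ in half-integer increments. Throughout I would keep the solution in the $(\hat v,\hat B)$/$(\hat u,\hat\Theta)$ form rather than the $(\hat w,\hat q)$/$(\hat h,\hat s)$ form, so that the boundary contribution is $e^{-\nu|k|^2 t}\hat v_0(k)$, etc., which can be smoothed pointwise and therefore never requires a weighted norm of the data.

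First I would assemble three ingredients. (a) A uniform unweighted bound: from the Laplace representation in Lemma \ref{2.9.} and $\|(\hat W,\hat Q)(\cdot,p)\|_N\in L^1(e^{-\alpha p}dp)$, for $t\le T<\alpha^{-1}$ one has $e^{-p/t}\le e^{-\alpha p}$, so $M:=\sup_{0<t\le T}\|(\hat v,\hat B)(\cdot,t)\|_N<\infty$ (and likewise for $(\hat u,\hat\Theta)$). (b) Heat-kernel multiplier bounds $\sup_k(1+|k|)^a e^{-\nu|k|^2 s}\le C_a\max(1,s^{-a/2})$ and, because the $|k|^{-2}$ cancels, $\sup_k(1+|k|)^2\tfrac{1-e^{-\nu|k|^2 s}}{\nu|k|^2}\le C$ uniformly for $s\le T$. (c) A weighted sub-algebra estimate: from $(1+|k|)^c\le 2^c[(1+|k'|)^c+(1+|k-k'|)^c]$ together with Corollary \ref{2.2.}, one gets $\|(1+|\cdot|)^c(\hat v_j\hat *\hat w)\|_N\le 2C_0\big(\|(1+|\cdot|)^c\hat v\|_N\|\hat w\|_N+\|\hat v\|_N\|(1+|\cdot|)^c\hat w\|_N\big)$, which with Lemma \ref{2.3.} controls every bilinear term in the $\hat G^{[i]}$ in any weighted norm.

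The core is the bootstrap. Set $\Psi_a(t)=\|(1+|\cdot|)^a(\hat v,\hat B)(\cdot,t)\|_N$. Applying $(1+|k|)^a$ to (\ref{MB2}) and invoking (b)--(c), I split the weight on the nonlinear Duhamel term: extract $(a+1-c)$ powers of $(1+|k|)$ onto the kernel (the extra $+1$ absorbing the derivative factor $|k|$, giving smoothing $(t-\tau)^{-(a+1-c)/2}$), and leave $c$ powers on the convolution (bounded by $\Psi_c(\tau)\,M$). This gives
\[
\Psi_a(t)\le C_a M_0\,t^{-a/2}+C\|\hat f\|_N+C\,M\int_0^t (t-\tau)^{-(a+1-c)/2}\Psi_c(\tau)\,d\tau .
\]
Choosing $c=\max(a-\tfrac12,0)$ forces $a+1-c\le\tfrac32<2$ and $c<2$, so with the inductive bound $\Psi_c(\tau)\le K_c\tau^{-c/2}$ the integral is a convergent Beta integral equal to $B\,t^{(1-a)/2}\le T^{1/2}t^{-a/2}$, finite for every $t>0$. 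Running this for $a=0,\tfrac12,1,\tfrac32,2$ — each step using only the strictly smaller weight $c$ from the previous step, so the bound is explicit and non-circular — yields $\Psi_2(t)<\infty$ for $t\in(0,T]$. The Boussinesq case is the same, with the buoyancy term $a\int_0^t e^{-\nu|k|^2(t-\tau)}P_k[e_2\hat\Theta(\cdot,\tau)]\,d\tau$ handled by the identical split but with no derivative factor (hence milder), using $\|(1+|\cdot|)^{2-c'}\hat\Theta(\cdot,\tau)\|_N$ from the previous step; the MHD constants absorb $\max(1,1/(\mu\rho))$ and the two diffusivities exactly as in Lemma \ref{2.4.}.

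The main obstacle is the derivative loss in the nonlinearity: the factor $ik_j$ means that producing the full weight $(1+|k|)^2$ would require three powers of $|k|$ near the kernel, but $\sup_k|k|^3e^{-\nu|k|^2 s}\sim s^{-3/2}$ is not integrable at $s=0$; dually, loading all the weight onto the convolution forces $\Psi_2(\tau)\sim\tau^{-1}$ under the integral, again non-integrable at $\tau=0$. The half-step bootstrap with $c=a-\tfrac12$ is exactly the device that keeps both the kernel exponent $(a+1-c)/2$ and the convolution exponent $c/2$ strictly below $1$, so the Beta integral converges at both endpoints. A minor point to check is that, since one always integrates the smoothed solution $\hat\Theta(\cdot,\tau)$ (or $\hat v,\hat B$) and smooths the boundary term directly, no weighted norm of $(\hat v_0,\hat B_0,\hat\Theta_0,\hat f)$ is ever needed, consistent with the stated hypotheses.
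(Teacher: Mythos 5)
Your proposal is correct, but it reaches the conclusion by a genuinely different mechanism than the paper. The paper's proof is a two-stage integer bootstrap that never needs a weighted convolution inequality: in the first stage it places the derivative factor plus one extra power on the heat kernel, using $\int_0^t |k|^2 e^{-\min(\nu,\frac{1}{\mu\sigma})|k|^2(t-\tau)}d\tau \leq \frac{1}{\min(\nu,\frac{1}{\mu\sigma})}$ together with the purely unweighted bound $\hat{V}_0\hat{*}\hat{V}_0$, to conclude $\| |k|\hat{V}_{\epsilon/2}\|_N<\infty$; in the second stage it restarts the Duhamel formula at $t=\epsilon/2$ (so the ``new data'' $(\hat v,\hat B)(\cdot,\epsilon/2)$ is smoothed pointwise) and, crucially, uses the divergence-free conditions $k\cdot\hat{v}=0=k\cdot\hat{B}$ to rewrite $k_j[\hat v_j\hat{*}\hat v]=\hat v_j\hat{*}[k_j\hat v]$, so the derivative lands on one convolution factor (controlled by stage one) and the full $|k|^2$ weight goes on the kernel. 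Your half-step scheme with $c=a-\tfrac12$ replaces both devices: the fractional split $(1+|k|)^{a+1}=(1+|k|)^{a+1-c}(1+|k|)^c$ keeps the kernel exponent at $3/4$ and the convolution weight strictly below the target, so the Beta integrals close the induction without ever invoking incompressibility and without restarting at intermediate times (your bounds are instead singular, $\Psi_a(t)\lesssim t^{-a/2}$, which is harmless since the claim is only for $t>0$). What each buys: the paper's argument is shorter (two steps, clean constants) but leans on the solenoidal structure (which, note, still covers the temperature equation, since there $k_j[\hat u_j\hat*\hat\Theta]=\hat u_j\hat*[k_j\hat\Theta]$ because the divergence-free factor is $\hat u$); yours costs five steps and the extra elementary inequality $(1+|k|)^c\leq 2^c[(1+|k'|)^c+(1+|k-k'|)^c]$, but applies verbatim to any quadratic nonlinearity losing one derivative, divergence-free or not, and is consistent with the lemma's hypotheses in requiring no weighted norm of the data or forcing (your exact-integration observation $(1+|k|)^2\frac{1-e^{-\nu|k|^2t}}{\nu|k|^2}\leq C$ for the forcing matches the paper's handling of $\hat f$). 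I see no gap in your argument.
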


\begin{proof} The two cases are similar, we present the B$\acute{\textnormal{e}}$nard case. Our goal is to boot strap up using derivatives of $(v, B)$. Consider the time interval $[\epsilon,T]$ for $\epsilon> 0$ and $T<\alpha^{-1}$. Define 
\begin{equation}\nonumber
\hat{V}_\epsilon(k)=\sup_{\epsilon\leq t\leq T}|(\hat v, \hat B)|(k,t).
\end{equation}
Since $|(\hat{v}, \hat{B})(k,t)|\leq |(\hat{v}_0, \hat{B}_0)(k)|+\int_0^{\infty}|(\hat{W}, \hat{Q})(k,p)|e^{-\alpha p}dp$,
\begin{equation}\nonumber
||\hat{V}_\epsilon(k)||_{N}\leq ||(\hat{v}_0, \hat{B}_0)(k)||_{N}+||(\hat{W}, \hat{Q})(k,p)||_{1}^{\alpha}<\infty.
\end{equation}
On $[\epsilon,T]$ for $\epsilon >0$, 
\begin{align}\nonumber 
\hat{v}(k,t)&=e^{-\nu|k|^2t}\hat v_0(k)- \int_0^t e^{-\nu|k|^2(t-\tau)}\left(i k_j P_k[\hat v_j\hat *\hat v-\frac{1}{\mu \rho}\hat B_j\hat *\hat B](k,\tau)-\hat f(k)\right)d\tau\\ \nonumber
\hat B(k,t)&=e^{\frac{-|k|^2t}{\mu \sigma}}\hat B_0(k)-i k_j \int_0^t e^{\frac{-|k|^2(t-\tau)}{\mu \sigma}}\left\{ P_k[\hat v_j\hat *\hat B+\hat B_j\hat *\hat v](k,\tau)\right\}d\tau.
\end{align}

\noindent Therefore,
\begin{align*}\nonumber 
|k||(\hat{v}, \hat{B})(k, t)|\leq \left|(\hat{v}_0, \hat{B}_0)(k)\right|\sqrt{\frac{1}{\epsilon\min(\nu ,\frac{1}{\mu \sigma})}}\sup_{z\geq 0}ze^{-z^2}+|\hat{f}|\int_0^t|k|e^{-\min(\nu ,\frac{1}{\mu \sigma})|k|^2(t-\tau)}d\tau\\
+2\hat{V}_{0}\hat{*}\hat{V}_{0}\int_0^t |k|^2e^{-\min(\nu ,\frac{1}{\mu \sigma})|k|^2(t-\tau)}d\tau.
\end{align*}
Noticing that 
\begin{equation}\nonumber
\int_0^t |k|^2e^{-\min(\nu ,\frac{1}{\mu \sigma})|k|^2(t-\tau)}d\tau\leq \frac{1}{\min(\nu ,\frac{1}{\mu \sigma})}
\end{equation}
and 
\begin{equation}\nonumber
\int_0^t|k|e^{-\min(\nu ,\frac{1}{\mu \sigma})|k|^2(t-\tau)}d\tau\leq\sup_{z\geq 0}\frac{1-e^{-z}}{\sqrt{z}}\frac{\sqrt{T}}{\min(\nu ,\frac{1}{\mu \sigma})},
\end{equation}
it follows that 
\begin{equation}\nonumber
\left\| |k|\hat{V}_{\epsilon/2}\right\|_{N}\leq\frac{C}{\epsilon^{1/2}}||(\hat{v}_0, \hat{B}_0)||_{N}+\frac{1}{\min(\nu ,\frac{1}{\mu \sigma})}\left(2C_0||\hat{V}_0||_{N}^2+C\sqrt{T}||\hat{f}||_{N}\right)<\infty.
\end{equation}
In the same spirit, for $t\in[\frac{\epsilon}{2},T]$, we have
\begin{align}\nonumber 
\hat{v}(k,t)&=e^{-\nu|k|^2t}\hat v(k,\epsilon/2)- \int_{\epsilon/2}^t e^{-\nu|k|^2(t-\tau)}\left(iP_k(\hat v_j\hat *[k_j\hat v]-\frac{1}{\mu \rho}\hat B_j\hat *[k_j \hat B])(k,\tau)-\hat f(k)\right)d\tau\\\nonumber
\hat B(k,t)&=e^{\frac{-|k|^2t}{\mu \sigma}}\hat B(k,\epsilon/2)-i \int_{\epsilon/2}^t e^{\frac{-|k|^2(t-\tau)}{\mu \sigma}}\left\{ P_k(\hat v_j\hat *[k_j\hat{B}]+\hat B_j\hat *[k_j \hat v])(k,\tau)\right\}d\tau,
\end{align}
where we used the divergence free conditions $k\cdot\hat{v}=0$ and $k\cdot\hat{B}=0$. Multiplying by $|k|^2$ and using our previous bounds, we have for $t\in[\epsilon,T]$
\begin{align}\nonumber 
|k|^2|(\hat{v}, \hat{B})(k, t)|\leq &\left|(\hat{v}, \hat{B})(k,\epsilon/2)\right|\frac{1}{(t-\epsilon/2)\min(\nu,\frac{1}{\mu\sigma})}\sup_{z\geq0}ze^{-z}\\ \nonumber
&+(2\hat{V}_{\epsilon/2}\hat{*}|k|\hat{V}_{\epsilon/2}+|\hat{f}|) \int_{\epsilon/2}^t |k|^2e^{-\min(\nu ,\frac{1}{\mu \sigma})|k|^2(t-\tau)}d\tau
\end{align}
Hence, 
\begin{equation}\nonumber
\left\| |k|^2\hat{V}_{\epsilon}\right\|_{N}\leq\frac{C}{\epsilon}||(\hat{v}_0, {B}_0)||_{N}+\frac{1}{ \min(\nu ,\frac{1}{\mu \sigma})}\left(2C_0\left\|\hat{V}_{\epsilon/2}\right\|_{N}\left\||k|\hat{V}_{\epsilon/2}\right\|_{N}+||\hat{f}||_{N}\right).
\end{equation}
All the terms on the right hand side are bounded, which gives $||(1+|k|)^2\hat{V}_{\epsilon}||_{N}<\infty$. Further, as $\epsilon>0$ is arbitrary, it follows that $||(1+|\cdot|)^2(\hat{v}, \hat{B})(\cdot,t)||_{N}<\infty$ for $t\in(0,T]$.
\end{proof}
\begin{Remark}\label{remsmooth}
We note that the smoothness argument in $x$ 
of the previous Lemma
can be easily extended further 
to show $\left \| (1+|k|)^4 {\hat V}_{\epsilon} 
\right \|_N$ is finite provided $\| (1+|k|^2) {\hat f} \|_N$,
is finite. Since
$\epsilon > 0$ is arbitrary this implies instantaneous smoothing
two orders more than the forcing.
\end{Remark}  

\begin{Lemma}\label{equivalence}
Given $(\hat{u},\hat{\Theta})$ a solution to (\ref{FB2}) such that $||(1+|\cdot|)^2(\hat{u},\hat{\Theta})(\cdot, t)||_N<\infty$ for $t\in (0,\omega^{-1})$, then $(u,\Theta)\in L^{\infty}[0,\omega^{-1},H^2(\mathbb{R}^d)]$ solves (\ref{B}). Respectively, given $(\hat{v},\hat{B})$ a solution to (\ref{MB2}) such that $||(1+|\cdot|)^2(\hat{v},\hat{B})(\cdot, t)||_N<\infty$ for $t\in(0,\alpha^{-1})$, then $(v,B)\in L^{\infty}[0,\alpha^{-1},H^2(\mathbb{R}^d)]$ solves (\ref{Ber}).
\end{Lemma}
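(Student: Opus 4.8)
The plan is to read the hypothesized $k$-space bound as exactly the integrability needed to invert the Fourier transform term-by-term and recover the classical PDE from its Duhamel (integral-equation) form. I present the Boussinesq case; the B\'enard case is identical with the obvious substitutions.

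First I would extract the spatial regularity. For either choice of $\|\cdot\|_N$, the assumption $\|(1+|\cdot|)^2(\hat u,\hat\Theta)(\cdot,t)\|_N<\infty$ forces $\hat u(\cdot,t)$ and $|k|^2\hat u(\cdot,t)$ to lie in $L^1\cap L^\infty(\mathbb{R}^d)$ (for the $(\gamma,\beta)$ norm this uses $\gamma>d$, so that the weight $(1+|k|)^{-\gamma}$ is integrable), and hence in $L^2$ by $\|g\|_2^2\le\|g\|_1\|g\|_\infty$. Thus $(1+|k|^2)\hat u(\cdot,t)\in L^2$, so $u(\cdot,t)=\mathcal{F}^{-1}[\hat u(\cdot,t)]\in H^2(\mathbb{R}^d)$, and likewise $\Theta(\cdot,t)\in H^2$. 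Since the bound is uniform on $(0,\omega^{-1})$, this yields $(u,\Theta)\in L^\infty([0,\omega^{-1}],H^2(\mathbb{R}^d))$, the claimed regularity class. In particular $(1+|k|)^2\hat u\in L^1$ also gives $|k|\hat u\in L^1$, so $u$ and $\nabla u$ are bounded and continuous.

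Next I would recover the Fourier-space PDE by differentiating (\ref{FB2}) in $t$. Writing the right-hand side as $e^{-\nu|k|^2t}\hat u_0(k)-\int_0^t e^{-\nu|k|^2(t-\tau)}\hat N(k,\tau)\,d\tau$ with $\hat N(k,\tau)=ik_jP_k[\hat u_j\hat *\hat u](k,\tau)-aP_k[e_2\hat\Theta](k,\tau)-\hat f(k)$, I note that Corollary \ref{2.2.} and Lemma \ref{2.3.}, together with the uniform $N$-bound, make $\hat N(k,\cdot)$ bounded and continuous in $\tau$ (continuity of $\tau\mapsto\hat u(\cdot,\tau)$ in $N$ follows from the integral equation and strong continuity of the heat multiplier $e^{-\nu|k|^2 t}$). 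Differentiating the Duhamel formula then gives, for each fixed $k$,
\begin{equation}\nonumber
\partial_t\hat u+\nu|k|^2\hat u=-ik_jP_k[\hat u_j\hat *\hat u]+aP_k[e_2\hat\Theta]+\hat f,
\end{equation}
with the analogous identity for $\hat\Theta$ (with $\mu$ replacing $\nu$ and right-hand side $-ik_j[\hat u_j\hat *\hat\Theta]$). Applying $\mathcal{F}^{-1}$, the term $-\nu|k|^2\hat u$ maps to $\nu\Delta u$ because $u\in H^2$; the multiplier $P_k$ maps to the Hodge projection $P$ by definition; and since $\hat u(\cdot,t)$ and $|k|\hat u(\cdot,t)$ lie in $L^1$ the Fourier convolution converges absolutely, so $ik_j\hat u_j\hat *\hat u=\mathcal{F}[\nabla\cdot(u\otimes u)]=\mathcal{F}[u\cdot\nabla u]$, the divergence-free condition $k\cdot\hat u=0$ being used to pass to $u\cdot\nabla u$. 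Collecting terms shows $(u,\Theta)$ solves (\ref{B}).

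The step I expect to be the main obstacle is the justification underlying the last inversion: identifying the Fourier convolution with the transform of the physical-space nonlinearity, and interchanging $\partial_t$ with $\mathcal{F}^{-1}$, both hinge on uniform-in-$t$ integrability of $|k|^2\hat u$ and of $\partial_t\hat u=-\nu|k|^2\hat u-\hat N$. The weight $(1+|k|)^2$ in the hypothesis is precisely what makes $|k|^2\hat u\in L^1$ and thereby renders these manipulations rigorous; the delicate bookkeeping is to keep $\hat N(k,\cdot)$ continuous in $\tau$ in the correct norm so that differentiation under the integral sign is legitimate. The B\'enard equation (\ref{Ber}) is handled identically, replacing the buoyancy coupling by the Lorentz coupling $\frac{1}{\mu\rho}B\cdot\nabla B$ and using Corollary \ref{2.2.} to bound the extra quadratic terms.
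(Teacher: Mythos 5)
Your proposal is correct and follows essentially the same route as the paper: extract $H^2$ regularity from the weighted norm hypothesis (via $\gamma>d$ or the $L^1$--$L^\infty$ interpolation $\|g\|_2^2\le\|g\|_1\|g\|_\infty$), differentiate the Duhamel formula (\ref{FB2}) in $t$ to obtain the Fourier-space differential equation, and then invert the Fourier transform using the $(1+|k|)^2$ weight to identify $-\nu|k|^2\hat u$ with $\nu\Delta u$ and the convolution with the transported nonlinearity. The only difference is that you spell out the justification for differentiating under the integral sign and for the convolution identity, which the paper leaves implicit.
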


\begin{proof}
The two cases are similar, we show the Boussinesq case. Suppose $(\hat{u},\hat{\Theta})$ a solution to (\ref{FB2}) such that $||(1+|\cdot|)^2(\hat{u},\hat{\Theta})(\cdot, t)||_N<\infty$ for $t\in (0,\omega^{-1})$. We notice that by our choice of norms, $(1+|\cdot|)^2(\hat{u},\hat{\Theta})(\cdot,t)\in L^2(\mathbb{R}^d)$ for any $t\in (0,\omega^{-1})$. Indeed for $N=(\gamma, \beta)$, we have
\begin{align*}
\left(\int(1+|k|)^4|(\hat{u},\hat{\Theta})(k,t)|^2dk\right)&^{1/2}\leq \\
&||(1+|\cdot|)^2|(\hat{u},\hat{\Theta})(\cdot,t)||_{\gamma, \beta}\left(\int\frac{e^{-2\beta|k|}}{(1+|k|)^{2\gamma}}dk\right)^{1/2}.
\end{align*}
As $\gamma>d$, $\int\frac{1}{(1+|k|)^{2\gamma}}e^{-2\beta|k|}dk<\infty$. For $N=L^1\cap L^{\infty}$ we have,
\begin{equation*}
\int(1+|k|)^4|(\hat{u},\hat{\Theta})(k,t)|^2dk\leq\int(1+|k|)^2|(\hat{u},\hat{\Theta})(k,t)|dk\sup_{k\in \mathbb{R}^d}(1+|k|)^2|(\hat{u},\hat{\Theta})(k,t)|.
\end{equation*}
So, $||(1+|\cdot|)^2(\hat{u},\hat{\Theta})(\cdot,t)||_{L^2(\mathbb{R}^d)}\leq ||(1+|\cdot|)^2(\hat{u},\hat{\Theta})(\cdot,t)||_{L^1\cap L^{\infty}(\mathbb{R}^d)}$. Thus, by well known properties of the Fourier transform $(u,\Theta)=\mathcal{F}^{-1}(\hat{u},\hat{\Theta})(x,t)\in L^{\infty}(0,\omega^{-1},H^2(\mathbb{R}^d))$. As $(\hat{u},\hat{\Theta})$ solves (\ref{FB2}), $(\hat{u},\hat{\Theta})$ is differentiable almost everywhere and 
\begin{align}
\hat{u}_t+\nu |k|^2\hat{u}&=-i k_j P_k[\hat{u}_j\hat{*}\hat{u}]+a P_k[e_2\hat{\Theta}]+\hat{f}\\ \nonumber
\hat{\Theta}_t+\mu |k|^2\hat{\Theta}&=-i k_j[\hat{u}_j\hat{*}\hat{\Theta}], \quad k\in \mathbb{R}^d \quad t\in \mathbb{R}^{+}.
\end{align}
Further, $(\hat{u}_t,\hat{\Theta}_t)(k,t)\in L^{\infty}(0,\omega^{-1},L^2(\mathbb{R}^d))$ since $(1+|k|)^2(\hat{u},\hat{\Theta})(k,t)\in L^{\infty}(0,\omega^{-1},L^2(\mathbb{R}^d))$. Hence, $(u,\Theta)(x,t)=\mathcal{F}^{-1}(\hat{u},\hat{\Theta})(x,t)$ solves 
\begin{align*}
u_t-\nu\Delta u&=-P[ u\cdot \nabla u - a e_2\Theta] +f(x)\\ \nonumber
\Theta_t-\mu \Delta \Theta&= -u\cdot \nabla \Theta.
\end{align*}
\end{proof}

\noindent {\bf Proof of Theorems \ref{existence}
{\bf and} \ref{existenceMHD}:} 
Suppose $||(1+|\cdot|)^2(\hat{u}_0, \hat{\Theta}_0)||_{N}<\infty$ and $||\hat{f}||_{N}<\infty$. Then from the definition of $(\hat u_1, \hat \Theta_1)$ in $(\ref{u1})$ we see $||(\hat{u}_1,\hat{\Theta}_1)||_{N}<\infty$, since
\begin{align*}
||(\hat u_1,\hat \Theta_1)||_{N}\leq \max(\nu, \mu) \left\||k|^2(\hat u_0, \hat{\Theta}_0)\right\|_{N}&+ C_0||\hat u_0||_{N}\left\||k|(\hat u_0,\hat \Theta_0)\right\|_{N}\\
&+ a||\hat \Theta_0||_{N}+||\hat f||_{N}.
\end{align*}
Therefore, when $\omega$ is large enough to ensures ($\ref{1-2.39}$), Lemma \ref{2.8.} gives $(\hat H, \hat S)(k, \cdot)$ is in $L^1(e^{-\omega p}dp)$. Applying Lemma \ref{2.9.}, we know for $t$ such that $\Re \frac{1}{t}>\omega$, $(\hat{H},\hat{S})(k,p)$ is Laplace transformable in $1/t$ with $(\hat{u},\hat{\Theta})(k,t)=(\hat{u}_0,\hat{\Theta}_0)(k)+(\hat{h},\hat{s})(k,t)$
satisfying Boussinesq equation in the Fourier space, (\ref{FB2}). Since $||(\hat H, \hat S)(\cdot, p)||_{N}<\infty$, we have $||(\hat u, \hat \Theta)(\cdot, t)||_{N}<\infty$ if $\Re \frac{1}{t}>\omega$, and i) is proved. Moreover, Lemma \ref{2.9.} shows that $(\hat{u},\hat{\Theta})$ is analytic for $\Re \frac{1}{t}>\omega$ and has the representation
\begin{equation}
(\hat{u}, \hat{\Theta})(k,t)=(\hat{u}_0, \hat{\Theta}_0)(x)+\int_0^{\infty}(\hat{H}, \hat{S})(k,p)e^{-p/t}dp
\end{equation}
proving ii). For iii), Lemma \ref{2.11.} shows that $||(1+|\cdot|)^2(\hat{u}, \hat{\Theta})(\cdot,t)||_{N} < \infty$ for $t\in[0,\omega^{-1})$ while Lemma \ref{equivalence} shows that $(u, \Theta)(x,t)\in L^{\infty}(0,T,H^2(\mathbb{R}^d))$ solves (\ref{B}). Moreover, $(u, \Theta)(x,t)$ is the unique solution to (\ref{B}) in $L^{\infty}(0,T,H^2(\mathbb{R}^d))$ as classical solutions are known to be unique, \cite{Temam}. Finally, suppose $(\hat H, \hat S)(k, \cdot)$ is in $L^1(e^{-\omega p}dp)$ for any $\omega >0$. By Lemma \ref{2.9.}, we know for any $t>0$, $(\hat{H},\hat{S})(k,p)$ is Laplace transformable with $(\hat{u},\hat{\Theta})(k,t)=(\hat{u}_0,\hat{\Theta}_0)(k)+(\hat{h},\hat{s})(k,t)$
satisfying Boussinesq equation in the Fourier space, (\ref{FB2}). Further, appealing to instantaneous smoothing Lemma \ref{2.11.} the solution is smooth. Thus, if $(\hat H, \hat S)(k, \cdot)$ is in $L^1(e^{-\omega p}dp)$ for any $\omega >0$, then a smooth global solution exists and iv) is proved. This shows the Boussinesq existence theorem. 
The proof of Theorem \ref{existenceMHD} is very similar.

\section{Borel-Summability}\label{sec6}

\indent We now show Borel-summability of the solutions guaranteed by Theorem \ref{existence} and Theorem \ref{existenceMHD} for $\beta >0$. This requires us to show that the solutions $(\hat{H}, \hat{S})(k,p)$ and $(\hat{W}, \hat{Q})(k,p)$ to the Boussinesq and MHD equations, respectively, are analytic in $p$ for $p\in \left\{0\right\}\cup\mathbb{R}^+$. First, we will seek a solution which is a power series

\begin{eqnarray}\label{1-3.49}
(\hat{H}, \hat{S})(k,p)-(\hat{u}_1,\hat{\Theta}_1)(k)=\sum_{l=1}^{\infty}(\hat{H}^{[l]}, \hat{S}^{[l]})(k)p^l\\ \label{2-3.49}
(\hat{W}, \hat{Q})(k,p)-(\hat{v}_1,\hat{B}_1)(k)=\sum_{l=1}^{\infty}(\hat{W}^{[l]}, \hat{Q}^{[l]})(k)p^l.
\end{eqnarray} 

\begin{Remark}
We will use induction to bound the successive terms of the power series. Many of these bounds have constants depending on the dimension in $k$ as before. For brevity of notation the dependence on dimension is suppressed after introducing the constants.
\end{Remark}

For the purpose of finding power series solutions, (\ref{1-2.18}) and (\ref{2-2.18}) are not good representations of the equations. By construction, $\frac{\pi}{z} \mathcal{G}(z,z')$ satisfies $[p\partial_{pp}+2\partial_p+\nu|k|^2]y=0$ with $\frac{\pi}{z} \mathcal{G}(z,z')\rightarrow 0$ and $\partial_p\left(\frac{\pi}{z} \mathcal{G}(z,z')\right)\rightarrow \frac{1}{p}$ as $p'$ approaches $p$ from below. Hence, we have the equivalent equations
\begin{align}\label{1-2.9}
[p\partial_{pp}+2\partial_p+\nu|k|^2]\hat{H}&=ik_j\hat{G}_j^{[1]}+aP_k[\hat{e}_2\hat{S}]\\ \nonumber
[p\partial_{pp}+2\partial_p+\nu|k|^2]\hat{S}&=ik_j\hat{G}_j^{[2]}
\end{align}
and
\begin{align}\label{2-2.9}
[p\partial_{pp}+2\partial_p+\nu|k|^2]\hat{W}&=ik_j\hat{G}_j^{[3]}\\ \nonumber
[p\partial_{pp}+2\partial_p+\nu|k|^2]\hat{Q}&=ik_j\hat{G}_j^{[4]}.
\end{align}
\noindent We substitute ($\ref{1-3.49}$) into ($\ref{1-2.9}$) and ($\ref{2-3.49}$) into ($\ref{2-2.9}$) and identify powers of $p^l$ to get a relationship for the coefficients. We will use that $1*p^l=p^{l+1}/(l+1)$. We will also use the fact that
\begin{equation}\nonumber
p^l*p^n=\frac{l!n!}{(l+n+1)!}p^{l+n+1}.
\end{equation}
For $l=0$, we have
\begin{align}\label{1-3.50}
2\hat{H}^{[1]}&=-i k_j P_k[\hat{u}_{1,j}\hat{*}\hat{u}_0+\hat{u}_{0,j}\hat{*}\hat{u}_1]-\nu |k|^2\hat{u}_1+P_k[a e_2\hat{\Theta}_1]\\ \nonumber
2\hat{S}^{[1]}&=-i k_j[\hat{u}_{1,j}\hat{*}\hat{\Theta}_0+\hat{u}_{0,j}\hat{*}\hat{\Theta}_1]-\mu |k|^2\hat{\Theta}_1
\end{align}
and
\begin{align}\label{2-3.50}
2\hat{W}^{[1]}&=-i k_j P_k\left[\hat{v}_{1,j}\hat{*}\hat{v}_0+\hat{v}_{0,j}\hat{*}\hat{v}_1-\left(\frac{\hat{B}_{0,j}}{\mu \rho}\hat{*}\hat{B}_1+\frac{\hat{B}_{1,j}}{\mu \rho}\hat{*}\hat{B}_0\right)\right]-\nu |k|^2\hat{v}_1\\ \nonumber
2\hat{Q}^{[1]}&=-i k_j P_k\left[\hat{v}_{1,j}\hat{*}\hat{B}_0+\hat{v}_{0,j}\hat{*}\hat{B}_1-(\hat{B}_{1,j}\hat{*}\hat{v}_0+\hat{B}_{0,j}\hat{*}\hat{v}_1)\right]-\frac{1}{\mu \sigma} |k|^2\hat{B}_1.
\end{align}
For $l=1$, we have
\begin{align}\label{1-3.51}
6\hat{H}^{[2]}+\nu |k|^2\hat{H}^{[1]}&=-i k_j P_k[\hat{H}^{[1]}_j\hat{*}\hat{u}_0+\hat{u}_{0,j}\hat{*}\hat{H}^{[1]}+\hat{u}_{1,j}\hat{*}\hat{u}_1]+P_k[a e_2\hat{S}^{[1]}] \\ \nonumber
6\hat{S}^{[2]}+\mu |k|^2\hat{S}^{[1]}&=-i k_j[\hat{S}^{[1]}_j\hat{*}\hat{\Theta}_0+\hat{u}_{0,j}\hat{*}\hat{S}^{[1]}+\hat{u}_{1,j}\hat{*}\hat{\Theta}_1]
\end{align}
and
\begin{align}\label{2-3.51}
6\hat{W}^{[2]}+\nu |k|^2\hat{W}^{[1]}&=-i k_j P_k[\hat{W}^{[1]}_j\hat{*}\hat{v}_0+\hat{v}_{0,j}\hat{*}\hat{W}^{[1]}+\hat{v}_{1,j}\hat{*}\hat{v}_1] \\ \nonumber
 &+\frac{i k_j}{\mu \rho} P_k[\hat{Q}^{[1]}_j\hat{*}\hat{B}_0+\hat{B}_{0,j}\hat{*}\hat{Q}^{[1]}+\hat{B}_{1,j}\hat{*}\hat{B}_1] \\ \nonumber
6\hat{Q}^{[2]}+\frac{1}{\mu \sigma} |k|^2\hat{Q}^{[1]}&=-i k_j P_k[\hat{W}^{[1]}_j\hat{*}\hat{B}_0+\hat{v}_{0,j}\hat{*}\hat{Q}^{[1]}+\hat{v}_{1,j}\hat{*}\hat{B}_1]\\ \nonumber
&+i k_j P_k[\hat{Q}^{[1]}_j\hat{*}\hat{v}_0+\hat{B}_{0,j}\hat{*}\hat{W}^{[1]}+\hat{B}_{1,j}\hat{*}\hat{v}_1].
\end{align}
\noindent More generally, for $l\geq 2$ in the Boussinesq case, we have
\begin{align}\label{1-3.53}
(l+1)(l+2)\hat{H}^{[l+1]}&=-\nu|k|^2\hat{H}^{[l]}-i k_j P_k\left[\sum_{l_1=1}^{l-2}\frac{l_1!(l-l_1-1)!}{l!}\hat{H}_j^{[l_1]}\hat{*}\hat{H}^{[l-l_1-1]}\right]\\ \nonumber 
-i k_j P_k[\hat{u}_{0,j}&\hat{*}\hat{H}^{[l]}+ \hat{H}_j^{[l]}\hat{*}\hat{u}_0+\frac{1}{l}\hat{u}_{1,j}\hat{*}\hat{H}^{[l-1]} +\frac{1}{l}\hat{H}_j^{[l-1]}\hat{*}\hat{u}_1] +P_k[a e_2\hat{S}^{[l]}]\\ \label{1-3.53b}
(l+1)(l+2)\hat{S}^{[l+1]}&=-\mu|k|^2\hat{S}^{[l]}-i k_j\left[\sum_{l_1=1}^{l-2}\frac{l_1!(l-l_1-1)!}{l!}\hat{H}_j^{[l_1]}\hat{*}\hat{S}^{[l-l_1-1]}\right]\\ \nonumber
-i k_j\left[\right.\hat{u}_{0,j}&\hat{*}\hat{S}^{[l]}+\hat{H}_j^{[l]}\hat{*}\hat{\Theta}_0+ \frac{1}{l}\hat{u}_{1,j}\hat{*}\hat{S}^{[l-1]}+ \frac{1}{l}\hat{H}_j^{[l-1]}\hat{*}\hat{\Theta}_1\left. \right].
\end{align}
In the MHD case, we have
\begin{align}\label{2-3.53}
(l+1)(l+2)\hat{W}^{[l+1]}&=-\nu|k|^2\hat{W}^{[l]}-i k_j P_k\left[\sum_{l_1=1}^{l-2}\right.\frac{l_1!(l-l_1-1)!}{l!}\hat{W}_j^{[l_1]}\hat{*}\hat{W}^{[l-l_1-1]}\\ \nonumber
+\hat{v}_{0,j}\hat{*}\hat{W}^{[l]}+\hat{W}_j^{[l]}&\hat{*}\hat{v}_0+\frac{1}{l}\hat{v}_{1,j}\hat{*}\hat{W}^{[l-1]}+ \frac{1}{l}\hat{W}_j^{[l-1]}\hat{*}\hat{v}_1\left.\right]
+\frac{i k_j}{\mu \rho} P_k\left[\hat{B}_{0,j}\right.\hat{*}\hat{Q}^{[l]}+\hat{Q}_j^{[l]}\hat{*}\hat{B}_0\\ \nonumber
+\sum_{l_1=1}^{l-2}\frac{l_1!(l-l_1-1)!}{l!}&\hat{Q}_j^{[l_1]}\hat{*}\hat{Q}^{[l-l_1-1]}+\frac{1}{l}\hat{B}_{1,j}\hat{*}\hat{Q}^{[l-1]}+\frac{1}{l}\hat{Q}_j^{[l-1]}\hat{*}\hat{B}_1\left.\right]
\\ \label{2-3.53b}
(l+1)(l+2)\hat{Q}^{[l+1]}&=-\frac{1}{\mu \sigma}|k|^2\hat{Q}^{[l]}-i k_j P_k\left[\sum_{l_1=1}^{l-2}\right.\frac{l_1!(l-l_1-1)!}{l!}\hat{W}_j^{[l_1]}\hat{*}\hat{Q}^{[l-l_1-1]}\\ \nonumber
+\hat{v}_{0,j}\hat{*}\hat{Q}^{[l]}+ \hat{W}_j^{[l]}&\hat{*}\hat{B}_0+\frac{1}{l}\hat{v}_{1,j}\hat{*}\hat{Q}^{[l-1]}+ \frac{1}{l}\hat{W}_j^{[l-1]}\hat{*}\hat{B}_1\left.\right]+ i k_j P_k\left[\right.\hat{B}_{0,j}\hat{*}\hat{W}^{[l]}+\hat{Q}_j^{[l]}\hat{*}\hat{v}_0\\ \nonumber
+\sum_{l_1=1}^{l-2}\frac{l_1!(l-l_1-1)!}{l!}&\hat{Q}_j^{[l_1]}\hat{*}\hat{W}^{[l-l_1-1]}+\frac{1}{l}\hat{B}_{1,j}\hat{*}\hat{W}^{[l-1]}+\frac{1}{l}\hat{Q}_j^{[l-1]}\hat{*}\hat{v}_1\left.\right].
\end{align}
\begin{Definition} It is useful to define a n-th order polynomial, call it $\mathcal{Q}_n$, 
\begin{equation}\nonumber
\mathcal{Q}_n(y)=\sum_{j=0}^{n}2^{n-j}\frac{y^j}{j!}.
\end{equation}
\end{Definition}
\begin{Definition}\label{maximums}
It is also useful to define constants
\begin{align*}
M_1&=\max(\nu,\mu)\\
M_2&=\max\left(\nu,\frac{1}{\mu\sigma}\right)\\
M_3&=\max\left(1,\frac{1}{\mu\rho}\right).
\end{align*}
\end{Definition}

\begin{Lemma}\label{3.2.} If $||(\hat{u}_0, \hat{\Theta}_0)||_{\gamma+2,\beta}<\infty$ and $||(\hat{v}_0, \hat{B}_0)||_{\gamma+2,\beta}<\infty$ for $\gamma >d$ and $\beta>0$, then there are constants $A_0$, $\tilde A_0$, $D_0$, $\tilde D_0>0$ not depending on $l$ or $k$ such that
\begin{eqnarray}\label{1-3.54}
|(\hat{H}^{[l]}, \hat{S}^{[l]})|\leq e^{-\beta |k|}A_0D_0^l(1+|k|)^{-\gamma}\frac{\mathcal{Q}_{2l}(|\beta k|)}{(2l+1)^2}\\ \label{2-3.54}
|(\hat{W}^{[l]}, \hat{Q}^{[l]})|\leq e^{-\beta |k|}\tilde A_0\tilde D_0^l(1+|k|)^{-\gamma}\frac{\mathcal{Q}_{2l}(|\beta k|)}{(2l+1)^2}.
\end{eqnarray}
Furthermore, the solutions guaranteed to exist in Lemma (\ref{2.8.}) have convergent power series representations in $p$, and for $|p|<(4D_0)^{-1}$ 
\begin{equation*}
(\hat{H}, \hat{S})(k,p)=(\hat{u}_1, \hat{\Theta}_1)(k)+\sum_{l=1}^{\infty}(\hat{H}^{[l]}, \hat{S}^{[l]})(k)p^l \end{equation*}
and for $|p|<(4\tilde{D}_0)^{-1}$
\begin{equation*}
(\hat{W}, \hat{Q})(k,p)=(\hat{v}_1, \hat{B}_1)(k)+\sum_{l=1}^{\infty}(\hat{W}^{[l]}, \hat{Q}^{[l]})(k)p^l.
\end{equation*}
\end{Lemma}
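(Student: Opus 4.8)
The plan is to prove the coefficient estimates \eqref{1-3.54}--\eqref{2-3.54} by strong induction on $l$, using the explicit relations \eqref{1-3.50} and \eqref{1-3.51} to start the induction and the recursions \eqref{1-3.53}--\eqref{1-3.53b} (respectively \eqref{2-3.53}--\eqref{2-3.53b}) for the inductive step, and then to deduce convergence and identify the sum with the solution of Lemma~\ref{2.8.}. I will carry out the Boussinesq case; the magnetic B\'enard case is structurally identical, the extra $B$-couplings and the constants $M_2,M_3$ being absorbed in the same way to produce $\tilde A_0,\tilde D_0$. The whole argument rests on four elementary facts about the polynomials $\mathcal{Q}_n$, which I would record first. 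Writing $\mathcal{Q}_n(y)=2^n\sum_{j=0}^n (y/2)^j/j!$, one has: (i) $\mathcal{Q}_n(y)\le 2^n e^{y/2}$; (ii) the submultiplicative inequality $\mathcal{Q}_m(a)\mathcal{Q}_n(b)\le \mathcal{Q}_{m+n}(a+b)$ for $a,b\ge 0$, obtained by comparing the index set $\{i\le m,\,j\le n\}$ with $\{i+j\le m+n\}$ term by term; (iii) $\mathcal{Q}_n'=\mathcal{Q}_{n-1}$, whence $\mathcal{Q}_{n-1}\le\mathcal{Q}_n$ forces $e^{-y}\mathcal{Q}_n(y)$ to be nonincreasing on $[0,\infty)$; and (iv) the absorption bounds $y\,\mathcal{Q}_n(y)\le (n+1)\mathcal{Q}_{n+1}(y)$ and $y^2\mathcal{Q}_n(y)\le (n+1)(n+2)\mathcal{Q}_{n+2}(y)$.

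The engine of the induction is the estimate of a single Fourier convolution $\hat{H}_j^{[l_1]}\hat{*}\hat{H}^{[l_2]}$ with $l_1+l_2=l-1$. Feeding in the inductive bounds, the $k'$-dependent part of the integrand factors as a product of polynomial-decay factors $\langle k'\rangle^{-\gamma}\langle k-k'\rangle^{-\gamma}$ (with $\langle k\rangle:=1+|k|$) and a product of exponential/$\mathcal{Q}$ factors. For the latter I would apply (ii) and then (iii): since $\beta|k'|+\beta|k-k'|\ge\beta|k|$,
\begin{equation*}
e^{-\beta|k'|}\mathcal{Q}_{2l_1}(|\beta k'|)\,e^{-\beta|k-k'|}\mathcal{Q}_{2l_2}(|\beta(k-k')|)\le e^{-\beta|k|}\mathcal{Q}_{2(l_1+l_2)}(|\beta k|),
\end{equation*}
which is independent of $k'$ and so leaves the $k'$-integral $\int \langle k'\rangle^{-\gamma}\langle k-k'\rangle^{-\gamma}\,dk'\le \tilde C_0\langle k\rangle^{-\gamma}$ (the pointwise bound underlying Corollary~\ref{2.2.}). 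The factor $ik_j$ in front contributes one power of $|k|=|\beta k|/\beta$, absorbed by (iv) at the cost of $(2l-1)$ and an index shift $\mathcal{Q}_{2l-2}\to\mathcal{Q}_{2l-1}\le\mathcal{Q}_{2l+2}$; likewise the Laplacian term $\nu|k|^2\hat H^{[l]}$ is controlled by $y^2\mathcal{Q}_{2l}\le(2l+1)(2l+2)\mathcal{Q}_{2l+2}$. The combinatorial weights obey $\tfrac{l_1!(l-l_1-1)!}{l!}=\tfrac1l\binom{l-1}{l_1}^{-1}\le \tfrac1l$, and $\sum_{l_1}(2l_1+1)^{-2}(2l_2+1)^{-2}$ converges, so after dividing the recursion by $(l+1)(l+2)$ every contribution is bounded by a fixed constant times $A_0 D_0^{\,l+1}(2l+3)^{-2}\,\langle k\rangle^{-\gamma}e^{-\beta|k|}\mathcal{Q}_{2l+2}(|\beta k|)$.

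The constants are then fixed to close the induction simultaneously for all terms: the hypothesis $\|(\hat u_0,\hat\Theta_0)\|_{\gamma+2,\beta}<\infty$ is exactly what makes $\|(\hat u_1,\hat\Theta_1)\|_{\gamma,\beta}<\infty$ (because of the term $\nu|k|^2\hat u_0$ in \eqref{u1}), so one may take $A_0\gtrsim\|(\hat u_1,\hat\Theta_1)\|_{\gamma,\beta}$ large enough to verify the base cases $l=1,2$ from \eqref{1-3.50}--\eqref{1-3.51}; then $D_0$ is chosen large enough that the Laplacian constant $\sim\nu\beta^{-2}$, the data-coupling constant $\sim C_0\beta^{-1}\|(\hat u_0,\hat\Theta_0)\|_{\gamma,\beta}$, and the quadratic constraint $D_0^2\gtrsim C_0\beta^{-1}A_0$ are all met. \textbf{The main obstacle} I anticipate is precisely this bookkeeping: verifying that a single pair $(A_0,D_0)$ can be chosen so that the $|k|$-power absorptions of (iv), which each cost a factor growing like $l$ or $l^2$, are exactly compensated by the denominators $(l+1)(l+2)$ from the division and by the ratio $(2l+1)^2/(2l+3)^2\to 1$, so that no net power of $l$ survives and the geometric factor $D_0^l$ propagates cleanly.

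For the convergence statement, property (i) gives $e^{-\beta|k|}\mathcal{Q}_{2l}(|\beta k|)\le 4^l e^{-\beta|k|/2}$, so
\begin{equation*}
\sum_{l\ge1}|(\hat H^{[l]},\hat S^{[l]})(k)|\,|p|^l\le A_0\,e^{-\beta|k|/2}\langle k\rangle^{-\gamma}\sum_{l\ge1}\frac{(4D_0|p|)^l}{(2l+1)^2},
\end{equation*}
which converges uniformly in $k$ for $|p|<(4D_0)^{-1}$ and defines an analytic function of $p$ there; the same works for $(\hat W,\hat Q)$ with $\tilde D_0$. Finally, because each term of the series satisfies the differential recursion equivalent to \eqref{1-2.9} (hence to the Volterra equation \eqref{1-2.18}), and the series together with all the convolution series converge uniformly on compact subsets of the disk, one may interchange summation with the integral operator $\mathcal{N}$ to conclude that the sum solves \eqref{1-2.18} on $[0,(4D_0)^{-1})$ with $(\hat H,\hat S)\to(\hat u_1,\hat\Theta_1)$ as $p\to 0^+$. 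By the uniqueness of the Volterra solution (Lemma~\ref{2.8.}) the sum coincides with the solution furnished there, establishing the claimed convergent representation; the B\'enard assertions follow verbatim.
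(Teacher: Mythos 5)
Your proposal is correct, and its skeleton coincides with the paper's: induction on the Taylor coefficients through the recursions (\ref{1-3.50})--(\ref{1-3.53b}), base cases $l=1,2$, fixing $A_0$ from the base case and $D_0$ from quadratic closing inequalities, convergence from $\mathcal{Q}_{2l}(y)\le 4^{l}e^{y/2}$, and identification with the solution of Lemma \ref{2.8.} via uniqueness in $\mathcal{A}_L^{\infty}$. Where you genuinely differ is the engine for the convolution estimates. The paper expands each $\mathcal{Q}_{2l}$ into monomials $|\beta k'|^m$ and runs every convolution through the appendix Fourier inequalities (Lemmas \ref{6.6.}, \ref{6.7.}, \ref{6.9.}), which carry the constant $C_7(d)$, volume factors $\beta^{-d}$, and polynomial-in-$l$ losses such as the cubic factor in Lemma \ref{6.9.}; these are then re-summed inside Lemmas \ref{3.5.}--\ref{3.7.}. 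You instead prove the pointwise submultiplicativity $\mathcal{Q}_m(a)\mathcal{Q}_n(b)\le\mathcal{Q}_{m+n}(a+b)$ --- which is correct, by comparing index sets after binomial expansion --- and combine it with the monotone decrease of $e^{-y}\mathcal{Q}_n(y)$ (valid since $\mathcal{Q}_n'=\mathcal{Q}_{n-1}\le\mathcal{Q}_n$) and the triangle inequality $|k'|+|k-k'|\ge|k|$ to dominate the entire exponential-polynomial part of the integrand by $e^{-\beta|k|}\mathcal{Q}_{2l_1+2l_2}(\beta|k|)$ uniformly in $k'$; what survives is the elementary $\gamma>d$ power-law convolution behind Corollary \ref{2.2.}. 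Your route is shorter and self-contained, avoids $\beta^{-d}$ and all factorial bookkeeping, and --- contrary to the obstacle you flagged --- produces no net growth in $l$ in the bilinear term at all: the weight $l_1!(l-1-l_1)!/l!\le 1/l$ together with the uniformly bounded sum of $(2l_1+1)^{-2}(2l_2+1)^{-2}$ already supplies decay beyond what the division by $(l+1)(l+2)$ requires, so a single pair $(A_0,D_0)$, with $A_0$ of the order of $\|(\hat u_1,\hat\Theta_1)\|_{\gamma,\beta}$ and $D_0$ dominating the terms $M_1\beta^{-2}$, $C_0\beta^{-1}\|(\hat u_0,\hat\Theta_0)\|_{\gamma,\beta}$, $a$, and $D_0^2$ dominating a multiple of $\beta^{-1}A_0$, closes all cases simultaneously, just as in the paper's final display of the proof. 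What the paper's heavier route buys is reuse: the same appendix inequalities, and their refinement Lemma \ref{6.10.} with its $(l+1)^{2/3}$ factors, are the backbone of the subsequent derivative estimates (Lemmas \ref{4.6.}--\ref{4.10.}) needed for analyticity on all of $\mathbb{R}^{+}$ and hence Borel summability, so within the paper that machinery is amortized across several proofs rather than being introduced solely for this lemma.
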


To prove this lemma we will establish bounds for $(\hat{H}^{[l]},\hat{S}^{[l]})$ and $(\hat{W}^{[l]}, \hat{Q}^{[l]})$ using induction. 

\begin{Lemma}\label{3.3.} For the base case, we have
\begin{eqnarray}\label{1-3.55}
|(\hat{H}^{[1]}, \hat{S}^{[1]})(k,p)|\leq \frac{e^{-\beta |k|}\mathcal{Q}_{2}(\beta |k|)A_0D_0}{(1+|k|)^{\gamma}9}\\ \label{2-3.55}
|(\hat{W}^{[1]}, \hat{Q}^{[1]})(k,p)|\leq \frac{e^{-\beta |k|}\mathcal{Q}_{2}(\beta |k|)\tilde A_0\tilde D_0}{(1+|k|)^{\gamma}9}
\end{eqnarray}
\noindent for
\begin{align}\nonumber
A_0D_0&\geq \frac{9}{\beta^2}||(\hat{u}_1, \hat{\Theta}_1)||_{\gamma,\beta}\left(C_0\beta||(\hat{u}_0, \hat{\Theta}_0)||_{\gamma,\beta}+ M_1+a\beta^2\right)\\ \nonumber
\tilde A_0\tilde D_0&\geq \frac{9}{\beta^2}||(\hat{v}_1, \hat{B}_1)||_{\gamma,\beta}M_2 M_3\left(1+C_0\beta||(\hat{v}_0, \hat{B}_0)||_{\gamma,\beta}\right)
\end{align}
\end{Lemma}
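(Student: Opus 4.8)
The plan is to recognize Lemma \ref{3.3.} as nothing more than the base ($l=1$) case of the inductive bound (\ref{1-3.54})--(\ref{2-3.54}): the $l=0$ recursion relations (\ref{1-3.50}) and (\ref{2-3.50}) already express $\hat{H}^{[1]},\hat{S}^{[1]}$ and $\hat{W}^{[1]},\hat{Q}^{[1]}$ \emph{explicitly} in terms of the data $(\hat{u}_0,\hat{\Theta}_0),(\hat{u}_1,\hat{\Theta}_1)$ (resp. $(\hat{v}_0,\hat{B}_0),(\hat{v}_1,\hat{B}_1)$), all of which are finite in the $(\gamma,\beta)$ norm by hypothesis. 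Hence no fixed point or integral equation is needed at this stage; it is a direct pointwise estimate of finitely many terms.

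First I would bound each term on the right of (\ref{1-3.50}) pointwise while keeping the weight $(1+|k|)^{-\gamma}e^{-\beta|k|}$ explicit. The two convolution terms $k_jP_k[\hat{u}_{1,j}\hat{*}\hat{u}_0+\hat{u}_{0,j}\hat{*}\hat{u}_1]$ are controlled by Corollary \ref{2.2.} (the $(\gamma,\beta)$ subalgebra estimate with constant $C_0$) together with Lemma \ref{2.3.} ($P_k$ does not increase the norm); each contributes one power of $|k|$ from $k_j$ and the factor $C_0\|(\hat{u}_0,\hat{\Theta}_0)\|_{\gamma,\beta}\|(\hat{u}_1,\hat{\Theta}_1)\|_{\gamma,\beta}$. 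The diffusive terms $\nu|k|^2\hat{u}_1$ and $\mu|k|^2\hat{\Theta}_1$ contribute $|k|^2$ with coefficient $\le M_1$, and the buoyancy term $aP_k[e_2\hat{\Theta}_1]$ contributes a $|k|$-independent factor $a$. Collecting these, with the prefactor $\tfrac12$ coming from $2\hat{H}^{[1]}=\cdots$, yields
\[
\left|(\hat{H}^{[1]},\hat{S}^{[1]})(k)\right|\le \frac{1}{2}\left(2C_0\|(\hat{u}_0,\hat{\Theta}_0)\|_{\gamma,\beta}|k|+M_1|k|^2+a\right)\|(\hat{u}_1,\hat{\Theta}_1)\|_{\gamma,\beta}(1+|k|)^{-\gamma}e^{-\beta|k|}.
\]

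The heart of the argument is to dominate this quadratic polynomial in $|k|$ by $\tfrac{A_0D_0}{9}\,\mathcal{Q}_2(\beta|k|)$, where $\mathcal{Q}_2(\beta|k|)=4+2\beta|k|+\tfrac12\beta^2|k|^2$. I would do this coefficient by coefficient: the $|k|^2$ comparison forces $A_0D_0\ge 9M_1\beta^{-2}\|(\hat{u}_1,\hat{\Theta}_1)\|_{\gamma,\beta}$, the $|k|^1$ comparison forces $A_0D_0\ge \tfrac{9}{2}C_0\beta^{-1}\|(\hat{u}_0,\hat{\Theta}_0)\|_{\gamma,\beta}\|(\hat{u}_1,\hat{\Theta}_1)\|_{\gamma,\beta}$, and the constant comparison forces $A_0D_0\ge\tfrac{9}{8}a\|(\hat{u}_1,\hat{\Theta}_1)\|_{\gamma,\beta}$. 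The lower bound on $A_0D_0$ stated in the lemma is precisely the \emph{sum} of these three pieces (with the harmless extra factors $2$ and $8$ working in our favor), so it dominates each one individually and all three coefficient comparisons hold at once, giving (\ref{1-3.55}). The MHD bound (\ref{2-3.55}) is obtained identically from (\ref{2-3.50}): the convolution terms now carry $M_3=\max(1,1/\mu\rho)$ to absorb both the unit and the $1/\mu\rho$ coefficients, the diffusive terms carry $M_2=\max(\nu,1/\mu\sigma)$, there is no buoyancy contribution, and factoring the common prefactor $M_2M_3$ out of the resulting coefficients produces the stated bound on $\tilde A_0\tilde D_0$.

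I expect the only genuinely delicate point to be the constant bookkeeping rather than any analytic difficulty: one must check that the single scalar inequality imposed on $A_0D_0$ (resp. $\tilde A_0\tilde D_0$) simultaneously implies every coefficient comparison, and, in the MHD case, that the uniform prefactor $M_2M_3$ chosen for cleanliness indeed dominates the constant appearing in each separate monomial (using $M_3\ge1$ on the quadratic term and bounding the convolution coefficients crudely, since only a sufficient lower bound on $\tilde A_0\tilde D_0$ is required). Everything else reduces to the subalgebra estimate of Corollary \ref{2.2.} and the projection bound of Lemma \ref{2.3.} applied term by term, so no new machinery is needed.
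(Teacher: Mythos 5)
Your proposal is correct and follows essentially the same route as the paper: both start from the $l=0$ recursions (\ref{1-3.50})--(\ref{2-3.50}), apply the subalgebra bound of Corollary \ref{2.2.} and the projection bound of Lemma \ref{2.3.} to obtain the intermediate quadratic-in-$|k|$ estimates (the paper's (\ref{1-3.52})--(\ref{2-3.52})), and then dominate that quadratic by $\tfrac{A_0D_0}{9}\mathcal{Q}_2(\beta|k|)$ term by term. The only difference is presentational: you make explicit the coefficient-by-coefficient comparison (and the harmless slack factors $2$ and $8$) that the paper compresses into the remark that $\mathcal{Q}_2(\beta|k|)=4+2\beta|k|+\tfrac12(\beta|k|)^2$.
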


\begin{proof} From $(\ref{1-3.50})$, $(\ref{2-3.50})$, and Lemma \ref{2.3.}, we get
\begin{align}\label{1-3.52}
|(\hat{H}^{[1]}, \hat{S}^{[1]})(k,p)|\leq &\frac{e^{-\beta|k|}}{2(1+|k|)^{\gamma}}\left( |k|^2||(\hat{u}_1, \hat{\Theta}_1)||_{\gamma, \beta}M_1\right. \\ \nonumber
 &+\left. 2C_0|k|\left\|(\hat{u}_0, \hat{\Theta}_0)\right\|_{\gamma, \beta}||(\hat{u}_1, \hat{\Theta}_1)||_{\gamma, \beta}+ a||\hat{\Theta}_1||_{\gamma, \beta}\right)
\end{align}
\begin{equation}\label{2-3.52}
|(\hat{W}^{[1]}, \hat{Q}^{[1]})(k,p)|\leq \frac{e^{-\beta|k|}M_2M_3}{2(1+|k|)^{\gamma}}||(\hat{v}_1, \hat{B}_1)||_{\gamma, \beta}\left( |k|^2+4C_0|k|\left\|(\hat{v}_0, \hat{B}_0)\right\|_{\gamma, \beta}\right)
\end{equation}
The result now follows from (\ref{1-3.52}) and (\ref{2-3.52}) after noting that $\mathcal{Q}_2(\beta |k|)=4+2\beta |k|+1/2(\beta |k|)^2$.
\end{proof}

\noindent For the general terms we will need a series of lemmas, which depend heavily on the lemmas developed in the Fourier inequalities section, bounding the terms that appear on the right side of (\ref{1-3.53}) and (\ref{2-3.53}). 

\begin{Lemma}\label{3.4.} Assume that $(\hat{H}^{[l]}, \hat{S}^{[l]})$ satisfies (\ref{1-3.54}) and $(\hat{W}^{[l]}, \hat{Q}^{[l]})$ satisfies (\ref{2-3.54}) for $l\geq 1$. Then we have,
\begin{align}\nonumber
\frac{|k|^2|(\hat{H}^{[l]}, \hat{S}^{[l]})|}{(l+1)(l+2)}&\leq\frac{6A_0D_0^l e^{-\beta|k|}\mathcal{Q}_{2l+2}(\beta|k|)}{\beta^2(1+|k|)^{\gamma}(2l+3)^2}\\[8pt]\nonumber
\frac{|k|^2|(\hat{W}^{[l]}, \hat{Q}^{[l]})|}{(l+1)(l+2)}&\leq\frac{6\tilde A_0\tilde D_0^l e^{-\beta|k|}\mathcal{Q}_{2l+2}(\beta|k|)}{\beta^2(1+|k|)^{\gamma}(2l+3)^2}.
\end{align}
\end{Lemma}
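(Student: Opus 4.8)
The plan is to treat this as a purely algebraic consequence of the inductive hypotheses \eqref{1-3.54} and \eqref{2-3.54}, since the left-hand sides differ from the bounds asserted there only by the factor $|k|^2/[(l+1)(l+2)]$. The entire content of the lemma is thus to show that multiplication by $|k|^2$ shifts $\mathcal{Q}_{2l}$ into $\mathcal{Q}_{2l+2}$ at an acceptable cost in the $l$-dependent prefactor. First I would isolate and prove a monomial-shift inequality for the polynomials $\mathcal{Q}_n$, then feed it through the hypotheses and collect constants; the two cases $(\hat H,\hat S)$ and $(\hat W,\hat Q)$ are handled by the identical computation.

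For the key inequality I would show that for all $y\ge 0$ and $l\ge 0$,
\begin{equation}\nonumber
y^2\,\mathcal{Q}_{2l}(y)\le (2l+1)(2l+2)\,\mathcal{Q}_{2l+2}(y).
\end{equation}
This is immediate from the definition $\mathcal{Q}_n(y)=\sum_{j=0}^n 2^{n-j}y^j/j!$: writing $y^2\mathcal{Q}_{2l}(y)=\sum_{m=2}^{2l+2} 2^{2l+2-m}\,y^m/(m-2)!$ and comparing coefficients with $\mathcal{Q}_{2l+2}(y)=\sum_{m=0}^{2l+2} 2^{2l+2-m}\,y^m/m!$, the $m$-th coefficient ratio is $m!/(m-2)!=m(m-1)\le (2l+2)(2l+1)$ for $2\le m\le 2l+2$, while the $m=0,1$ terms only add to the right-hand side. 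Since every monomial is nonnegative for $y\ge 0$, the bound follows termwise.

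Applying the hypothesis \eqref{1-3.54}, multiplying by $|k|^2/[(l+1)(l+2)]$, and using the monomial-shift inequality with $y=\beta|k|$ (so $|k|^2\mathcal{Q}_{2l}(\beta|k|)\le \beta^{-2}(2l+1)(2l+2)\mathcal{Q}_{2l+2}(\beta|k|)$), I would obtain
\begin{equation}\nonumber
\frac{|k|^2|(\hat H^{[l]},\hat S^{[l]})|}{(l+1)(l+2)}\le \frac{2}{(l+2)(2l+1)}\cdot\frac{A_0D_0^l e^{-\beta|k|}\mathcal{Q}_{2l+2}(\beta|k|)}{\beta^2(1+|k|)^{\gamma}},
\end{equation}
where $2l+2=2(l+1)$ has been used to cancel a factor $(l+1)$ and one of the factors $(2l+1)$. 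Comparing with the claimed bound reduces the whole statement to the scalar inequality $2/[(l+2)(2l+1)]\le 6/(2l+3)^2$, i.e.\ $(2l+3)^2\le 3(l+2)(2l+1)$, which rearranges to $2l^2+3l-3\ge 0$ and holds for every $l\ge 1$. The $(\hat W^{[l]},\hat Q^{[l]})$ estimate follows verbatim from \eqref{2-3.54} with $\tilde A_0,\tilde D_0$ replacing $A_0,D_0$.

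The calculation is routine, so there is no analytic obstacle; the only thing to get right is the bookkeeping. In particular I would verify that the final scalar inequality $(2l+3)^2\le 3(l+2)(2l+1)$ is valid precisely on the asserted range $l\ge 1$: it fails at $l=0$, where $2l^2+3l-3=-3<0$ and indeed $9>6$, which is exactly why the base case $l=1$ is treated separately in Lemma \ref{3.3.}. The positive margin in $2l^2+3l-3$ for $l\ge1$ also confirms that the constant $6$ in the target is not tight, so the bound holds comfortably throughout.
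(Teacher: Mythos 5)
Your proof is correct and follows essentially the same route as the paper's: the paper's own argument consists of exactly the two facts you isolate, namely $y^2\mathcal{Q}_{2l}(y)\le(2l+1)(2l+2)\mathcal{Q}_{2l+2}(y)$ and the scalar inequality $\frac{(2l+2)(2l+3)^2}{(l+1)(l+2)(2l+1)}\le 6$ (equivalent to your $(2l+3)^2\le 3(l+2)(2l+1)$), applied to the inductive bounds (\ref{1-3.54}) and (\ref{2-3.54}). The only difference is that you supply the termwise coefficient comparison and the quadratic rearrangement $2l^2+3l-3\ge 0$ that the paper states without proof, along with the correct observation that the constant-$6$ inequality fails at $l=0$, which is consistent with the lemma's hypothesis $l\ge 1$.
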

\begin{proof} The proof follows from (\ref{1-3.54}) or (\ref{2-3.54}) directly by noting that for $y\geq 0$
\begin{equation}\nonumber
\frac{y^2\mathcal{Q}_{2l}(y)}{(2l+2)(2l+1)}\leq \mathcal{Q}_{2l+2}(y) \textnormal{ and }\frac{(2l+2)(2l+3)^2}{(l+1)(l+2)(2l+1)}\leq 6.
\end{equation}
\end{proof}
\begin{Lemma}\label{3.5.} Suppose $(\hat{H}^{[l]}, \hat{S}^{[l]})$ satisfies (\ref{1-3.54}) and $(\hat{W}^{[l]}, \hat{Q}^{[l]})$ satisfies (\ref{2-3.54}) for $l\geq 1$. Then both
\begin{equation}\nonumber
\frac{1}{(l+1)(l+2)}\left|k_j\left(P_k(\hat{u}_{0,j}\hat{*}\hat{H}^{[l]}), \hat{u}_{0,j}\hat{*}\hat{S}^{[l]}\right)\right| \textnormal{ and }
\frac{1}{(l+1)(l+2)}\left|k_j\left(P_k(\hat{H}^{[l]}_j\hat{*}\hat{u}_{0}), \hat{H}^{[l]}_j\hat{*}\hat{\Theta}_0\right)\right|
\end{equation}
are bounded by 
\begin{equation}\nonumber
 2^{\gamma}||(\hat{u}_0, \hat{\Theta}_0)||_{\gamma,\beta}\frac{9C_7\pi A_0D_0^l e^{-\beta|k|}}{2\beta^d(2l+3)^2(1+|k|)^\gamma}\mathcal{Q}_{2l+2}(|\beta k|).
\end{equation}
Similarly,
\begin{align}\nonumber
\frac{1}{(l+1)(l+2)}\left|k_j\left(P_k(\hat{v}_{0,j}\hat{*}\hat{W}^{[l]}),  P_k(\hat{v}_{0,j}\hat{*}\hat{Q}^{[l]})\right)\right|&,\,
\frac{1}{(l+1)(l+2)}\left|k_j\left(P_k(\hat{W}^{[l]}_j\hat{*}\hat{v}_{0}), P_k(\hat{W}^{[l]}_j\hat{*}\hat{B}_0)\right)\right|,\\ \nonumber
\frac{1}{(l+1)(l+2)}\left|k_j\left(P_k(\hat{B}_{0,j}\hat{*}\hat{Q}^{[l]}), P_k(\hat{B}_{0,j}\hat{*}\hat{W}^{[l]})\right)\right|&,\textnormal{ and } \frac{1}{(l+1)(l+2)}\left|k_j\left(P_k(\hat{Q}^{[l]}_j\hat{*}\hat{B}_{0}), P_k(\hat{Q}^{[l]}_j\hat{*}\hat{v}_0)\right)\right|
\end{align}
are bounded by 
\begin{equation}\nonumber
2^{\gamma}||(\hat{v}_0,\hat{B}_0)||_{\gamma,\beta}\frac{9C_7\pi \tilde A_0\tilde D_0^l e^{-\beta|k|}}{2\beta^d(2l+3)^2(1+|k|)^\gamma}\mathcal{Q}_{2l+2}(|\beta k|).
\end{equation}
\end{Lemma}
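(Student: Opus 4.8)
The plan is to substitute the inductive hypotheses (\ref{1-3.54}) and (\ref{2-3.54}) into the Fourier convolutions that appear on the right-hand sides of (\ref{1-3.53}) and (\ref{2-3.53}), and to reduce each term to the scalar convolution integral already controlled in the proof of Corollary \ref{2.2.}. First I would discard the Hodge projection by Lemma \ref{2.3.} and bound $|k_j|\le|k|$, so that it suffices to estimate $\frac{|k|}{(l+1)(l+2)}\,|\hat{u}_0\hat{*}\hat{H}^{[l]}|$ and its three companions ($\hat{H}^{[l]}_j\hat{*}\hat{u}_0$, $\hat{u}_0\hat{*}\hat{S}^{[l]}$, $\hat{H}^{[l]}_j\hat{*}\hat{\Theta}_0$), all governed by $\|(\hat{u}_0,\hat{\Theta}_0)\|_{\gamma,\beta}$. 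Inserting $|\hat{u}_0(k-k')|\le\|(\hat{u}_0,\hat{\Theta}_0)\|_{\gamma,\beta}\,e^{-\beta|k-k'|}(1+|k-k'|)^{-\gamma}$ and (\ref{1-3.54}), the whole problem becomes a pointwise bound on
\[
\int_{\mathbb{R}^d} e^{-\beta|k-k'|}(1+|k-k'|)^{-\gamma}\,e^{-\beta|k'|}(1+|k'|)^{-\gamma}\,\mathcal{Q}_{2l}(\beta|k'|)\,dk'.
\]

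The heart of the argument is to push the polynomial weight $\mathcal{Q}_{2l}(\beta|k'|)$ through this convolution with a constant that does \emph{not} degrade with $l$. For this I would record two elementary properties of the truncated exponential, both checked termwise from the definition of $\mathcal{Q}_n$: monotonicity, and the submultiplicative-type bound $\mathcal{Q}_n(a+b)\le e^{b}\mathcal{Q}_n(a)$ for $a,b\ge0$. Setting $\rho:=|k-k'|+|k'|-|k|\ge0$, the triangle inequality gives $e^{-\beta|k-k'|}e^{-\beta|k'|}=e^{-\beta|k|}e^{-\beta\rho}$, while $|k'|\le|k|+\rho$ together with the two properties yields $\mathcal{Q}_{2l}(\beta|k'|)\le\mathcal{Q}_{2l}(\beta|k|)\,e^{\beta\rho}$. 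The factor $e^{\beta\rho}$ exactly cancels $e^{-\beta\rho}$, so the integrand collapses to $e^{-\beta|k|}\mathcal{Q}_{2l}(\beta|k|)\,(1+|k-k'|)^{-\gamma}(1+|k'|)^{-\gamma}$, and the surviving integral is precisely the scalar bound $\int(1+|k-k'|)^{-\gamma}(1+|k'|)^{-\gamma}\,dk'\le\tilde{C}_0(d)(1+|k|)^{-\gamma}$ established in the proof of Corollary \ref{2.2.}. This reproduces the required $(1+|k|)^{-\gamma}e^{-\beta|k|}$ profile with an $l$-independent constant.

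It then remains to handle the outer factor $|k|$ and the combinatorial prefactor. Writing $|k|\mathcal{Q}_{2l}(\beta|k|)=\beta^{-1}(\beta|k|)\mathcal{Q}_{2l}(\beta|k|)$ and using $y\,\mathcal{Q}_n(y)\le(n+1)\mathcal{Q}_{n+1}(y)$ (again termwise) gives $|k|\mathcal{Q}_{2l}(\beta|k|)\le\beta^{-1}(2l+1)\mathcal{Q}_{2l+1}(\beta|k|)\le\beta^{-1}(2l+1)\mathcal{Q}_{2l+2}(\beta|k|)$, which supplies the shifted polynomial $\mathcal{Q}_{2l+2}$ demanded by the claim. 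Combining with the $1/[(l+1)(l+2)]$ from (\ref{1-3.53}) and the $1/(2l+1)^2$ carried by (\ref{1-3.54}), the elementary inequality $\frac{2l+1}{(l+1)(l+2)(2l+1)^2}\le\frac{C}{(2l+3)^2}$ converts the combinatorial factors into the stated $1/(2l+3)^2$; all numerical, dimensional and $\beta$-dependent factors are absorbed into $C_7$ (the displayed power $\beta^{-d}$ being a crude uniform over-estimate of the same integral). The MHD estimates are identical in structure: one applies the same reduction to each of the four convolutions in (\ref{2-3.53}), with $(\hat{v}_0,\hat{B}_0)$ replacing $(\hat{u}_0,\hat{\Theta}_0)$, tildes on $A_0,D_0$, and the constant $M_3=\max(1,1/(\mu\rho))$ accounting for the coupling terms.

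I expect the main obstacle to be exactly this $l$-uniformity of the convolution constant. Naive bounds such as $\mathcal{Q}_{2l}(y)\le2^{2l}e^{y/2}$ destroy the full-rate $e^{-\beta|k|}$ decay that the target $e^{-\beta|k|}\mathcal{Q}_{2l+2}(\beta|k|)$ requires for large $|k|$, so the whole estimate hinges on the cancellation $e^{-\beta\rho}\cdot e^{\beta\rho}=1$ made possible by $\mathcal{Q}_n(a+b)\le e^{b}\mathcal{Q}_n(a)$. Verifying this inequality together with $y\,\mathcal{Q}_n(y)\le(n+1)\mathcal{Q}_{n+1}(y)$, and confirming that the bound on $\rho$ is valid uniformly (with no hidden $l$-dependence and for all $k,k'$), is where the care is concentrated; once these are in hand, every term reduces to the already-proven scalar subalgebra bound.
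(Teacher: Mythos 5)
Your proof is correct in substance but takes a genuinely different route from the paper's. The paper expands $\mathcal{Q}_{2l}(\beta|k'|)=\sum_{m=0}^{2l}\frac{2^{2l-m}}{m!}(\beta|k'|)^m$ and applies the appendix Fourier inequality (Lemma \ref{6.7.} with $n=0$, which rests on Lemma \ref{6.6.}) monomial by monomial; the factorial $m!$ in Lemma \ref{6.7.} cancels the $1/m!$ of the expansion, the resulting sum $\sum_{m=0}^{2l}2^{2l-m}(m+2)\mathcal{Q}_{m+2}(\beta|k|)$ is resummed into $(2l+1)(l+2)\,\mathcal{Q}_{2l+2}(\beta|k|)$, and $\frac{2(2l+3)^2}{(2l+1)(l+1)}\le 9$ then gives the stated form. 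You instead keep $\mathcal{Q}_{2l}$ whole and use the submultiplicative bound $\mathcal{Q}_n(a+b)\le e^b\mathcal{Q}_n(a)$ --- which is correct, by comparing the coefficient $2^{n-i-m}$ of $a^ib^m/(i!\,m!)$ on the left against $2^{n-i}$ on the right --- to move the polynomial weight from $k'$ to $k$ at the price of $e^{\beta\rho}$, $\rho=|k'|+|k-k'|-|k|\ge 0$, which exactly cancels the surplus exponential decay; what survives is the elementary convolution bound behind Corollary \ref{2.2.}, and $y\,\mathcal{Q}_n(y)\le(n+1)\mathcal{Q}_{n+1}(y)$ supplies the shift to $\mathcal{Q}_{2l+2}$ with an acceptable combinatorial factor. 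Both of your polynomial inequalities check out termwise, the $l$-uniformity you identify as the crux does hold, and the same reduction covers all eight displayed quantities (your invocation of $M_3$ is unnecessary: the coupling constants $1/(\mu\rho)$ enter only when the lemma is inserted into (\ref{2-3.53}), not in the lemma itself). Your route is more elementary and self-contained for this lemma, though the paper loses little by using the appendix machinery, which it needs anyway for Lemma \ref{3.7.} and the derivative estimates later.

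Two caveats. First, because you spend all of the excess decay $e^{-\beta\rho}$ on the cancellation, integrability of the surviving integral must come from the algebraic factors, which is exactly why you need $\gamma>d$; this is harmless here since Lemma \ref{3.2.} assumes $\gamma>d$, but the paper's route through Lemma \ref{6.7.} needs only $\gamma\ge1$, its integrability coming from the exponential (which is also the origin of the $\beta^{-d}$ in the stated constant). Second, your final constant is roughly $\frac{25}{18}\tilde C_0(d)\,\beta^{-1}$ rather than the displayed $\frac{9}{2}C_7\pi 2^\gamma\beta^{-d}$; these scale differently in $\beta$ and neither dominates the other (for $\beta>1$ yours is the larger), so the discrepancy cannot be ``absorbed into $C_7$'', which is a fixed dimensional constant from Lemma \ref{6.6.}. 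Strictly speaking you prove the lemma with a different $l$- and $k$-independent constant. That is immaterial to its role --- the constant only feeds the lower bounds on $D_0$, $\tilde D_0$ in the induction of Lemma \ref{3.2.} (e.g.\ (\ref{1-3.57})), which would be adjusted accordingly --- but your write-up should say so rather than claim the displayed constant is recovered.
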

\begin{proof} We use the estimate (\ref{1-3.54}) on $(\hat{H}^{[l]}, \hat{S}^{[l]})$ and Lemma \ref{6.7.} in $\mathbb{R}^d$ with $n=0$ to get
\begin{align}\nonumber
|k_j\hat{u}_{0,j}\hat{*}(\hat{H}^{[l]},\hat{S}^{[l]})|&\leq ||\hat{u}_0||_{\gamma, \beta}\frac{A_0D_0^l}{(2l+1)^2}\left(|k|\int_{k'\in\mathbb{R}^d}\frac{e^{-\beta(|k'|+|k-k'|)}}{(1+|k'|)^{\gamma}(1+|k-k'|)^{\gamma}}\mathcal{Q}_{2l}(\beta|k'|)dk'\right)\\ \nonumber
&\leq \frac{||\hat{u}_0||_{\gamma, \beta}A_0D_0^l}{(2l+1)^2}\sum_{m=0}^{2l}\frac{2^{2l-m}}{m!}|k|\int_{k'\in\mathbb{R}^d}\frac{e^{-\beta(|k'|+|k-k'|)}}{(1+|k'|)^{\gamma}(1+|k-k'|)^{\gamma}}|\beta k'|^m dk'\\ \nonumber 
&\leq \frac{C_7\pi ||\hat{u}_0||_{\gamma, \beta}A_0D_0^l2^{\gamma}e^{-\beta|k|}}{(2l+1)^2\beta^d(1+|k|)^{\gamma}}\sum_{m=0}^{2l}2^{2l-m}(m+2)\mathcal{Q}_{m+2}(\beta|k|)\\ \nonumber
&\leq \frac{2^{\gamma}C_7\pi ||\hat{u}_0||_{\gamma, \beta}A_0D_0^l e^{-\beta|k|}}{(2l+1)\beta^d(1+|k|)^{\gamma}}(l+2)\mathcal{Q}_{2l+2}(\beta|k|).
\end{align}
\noindent The first part of the lemma now follows noting $\frac{2(2l+3)^2}{(2l+1)(l+1)}\leq 9$ for $l\geq 1$. For the other four terms, we use the estimate (\ref{2-3.54}) on $(\hat{W}^{[l]}, \hat{Q}^{[l]})$ and Lemma \ref{6.7.} in $\mathbb{R}^d$ with $n=0$. Hence, the proof is the same as that given above with $\tilde A_0$ in place of $A_0$ and $\tilde D_0$ in place of $D_0$. 
\end{proof}

\begin{Lemma}\label{3.6.} Suppose $(\hat{H}^{[l-1]},\hat{S}^{[l-1]})$ satisfies (\ref{1-3.54}) and  $(\hat{W}^{[l-1]},\hat{Q}^{[l-1]})$ satisfies (\ref{2-3.54}) for $l\geq 2$. Then both
\begin{equation}\nonumber
\frac{1}{l(l+1)}\left|k_j\left(P_k(\hat{u}_{1,j}\hat{*}\hat{H}^{[l-1]}), \hat{u}_{1,j}\hat{*}\hat{S}^{[l-1]}\right)\right|\textnormal{ and } \frac{1}{l(l+1)}\left|k_j\left(P_k(\hat{H}^{[l-1]}_j\hat{*}\hat{u}_{1}),\hat{H}^{[l-1]}_j\hat{*}\hat{\Theta}_1\right)\right|
\end{equation}
are bounded by
\begin{equation}\nonumber
2^{\gamma}||(\hat{u}_1, \hat{\Theta}_1)||_{\gamma,\beta}\frac{9C_7\pi A_0 D_0^{l-1} e^{-\beta|k|}\mathcal{Q}_{2l}(|\beta k|)}{2\beta^d(2l+1)^2(1+|k|)^\gamma}.
\end{equation}
Similarly,
\begin{align}\nonumber
\frac{1}{l(l+1)}\left|k_j\left(P_k(\hat{v}_{1,j}\hat{*}\hat{W}^{[l-1]}), P_k(\hat{v}_{1,j}\hat{*}\hat{Q}^{[l-1]})\right)\right|,&\, \frac{1}{l(l+1)}\left|k_j\left(P_k(\hat{W}^{[l-1]}_j\hat{*}\hat{v}_{1}), P_k(\hat{W}^{[l-1]}_j\hat{*}\hat{B}_1)\right)\right|,\\ \nonumber
\frac{1}{l(l+1)}\left|k_j\left(P_k(\hat{B}_{1,j}\hat{*}\hat{Q}^{[l-1]}), P_k(\hat{B}_{1,j}\hat{*}\hat{W}^{[l-1]})\right)\right|,&\textnormal{ and } \frac{1}{l(l+1)}\left|k_j\left(P_k(\hat{Q}^{[l-1]}_j\hat{*}\hat{B}_{1}), P_k(\hat{Q}^{[l-1]}_j\hat{*}\hat{v}_1)\right)\right|
\end{align}
are bounded by
\begin{equation}\nonumber
2^{\gamma}||(\hat{v}_1, \hat{B}_1)||_{\gamma,\beta}\frac{9C_7\pi \tilde A_0\tilde D_0^{l-1} e^{-\beta|k|}\mathcal{Q}_{2l}(|\beta k|)}{2\beta^d(2l+1)^2(1+|k|)^\gamma}.
\end{equation}
\end{Lemma}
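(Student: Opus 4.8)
The plan is to mirror the proof of Lemma \ref{3.5.} almost verbatim, shifting the superscript from $l$ to $l-1$ and replacing the zeroth-order data $(\hat{u}_0,\hat{\Theta}_0)$ by the first-order data $(\hat{u}_1,\hat{\Theta}_1)$ (and, in the magnetic case, by $(\hat{v}_1,\hat{B}_1)$). The inductive hypothesis (\ref{1-3.54}) applied at level $l-1$ reads $|(\hat{H}^{[l-1]},\hat{S}^{[l-1]})|\le e^{-\beta|k|}A_0D_0^{l-1}(1+|k|)^{-\gamma}\mathcal{Q}_{2l-2}(|\beta k|)/(2l-1)^2$, so the only structural changes from Lemma \ref{3.5.} are that the polynomial weight dragged into the convolution is $\mathcal{Q}_{2l-2}$ rather than $\mathcal{Q}_{2l}$, and the combinatorial prefactor is $1/[l(l+1)]$ rather than $1/[(l+1)(l+2)]$. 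Since the convolution against the fixed datum boosts the degree of the $\mathcal{Q}$-polynomial by exactly two (through Lemma \ref{6.7.}), the level-$(l-1)$ input produces the level-$l$ envelope $\mathcal{Q}_{2l}/(2l+1)^2$ claimed in the statement.

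First I would estimate $|k_j\,\hat{u}_{1,j}\hat{*}(\hat{H}^{[l-1]},\hat{S}^{[l-1]})|$ by inserting (\ref{1-3.54}) and pulling $||\hat{u}_1||_{\gamma,\beta}$ out of the Fourier convolution, exactly as in the first display of Lemma \ref{3.5.}. Expanding $\mathcal{Q}_{2l-2}(\beta|k'|)=\sum_{m=0}^{2l-2}2^{2l-2-m}(\beta|k'|)^m/m!$ and applying Lemma \ref{6.7.} in $\mathbb{R}^d$ with $n=0$ term-by-term yields
\begin{equation*}
|k_j\,\hat{u}_{1,j}\hat{*}(\hat{H}^{[l-1]},\hat{S}^{[l-1]})|\le \frac{2^{\gamma}C_7\pi ||\hat{u}_1||_{\gamma,\beta}A_0D_0^{l-1}e^{-\beta|k|}}{(2l-1)^2\beta^d(1+|k|)^{\gamma}}\sum_{m=0}^{2l-2}2^{2l-2-m}(m+2)\mathcal{Q}_{m+2}(\beta|k|).
\end{equation*}
The Hodge projection $P_k$ and the index contraction $k_j(\cdot)$ are absorbed using Lemma \ref{2.3.} and $|k_j|\le|k|$, so the same bound governs all the displayed quantities (the four MHD terms included).

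The key algebraic step is bounding the sum. From $\mathcal{Q}_{n+1}(y)=2\mathcal{Q}_n(y)+y^{n+1}/(n+1)!\ge 2\mathcal{Q}_n(y)$ for $y\ge 0$ one gets $\mathcal{Q}_{2l}\ge 2^{2l-2-m}\mathcal{Q}_{m+2}$, hence each term satisfies $2^{2l-2-m}\mathcal{Q}_{m+2}\le \mathcal{Q}_{2l}$, giving
\begin{equation*}
\sum_{m=0}^{2l-2}2^{2l-2-m}(m+2)\mathcal{Q}_{m+2}(\beta|k|)\le \mathcal{Q}_{2l}(\beta|k|)\sum_{m=0}^{2l-2}(m+2)=(2l-1)(l+1)\,\mathcal{Q}_{2l}(\beta|k|).
\end{equation*}
Dividing by $l(l+1)$ leaves the factor $1/[l(2l-1)]$, and the claimed bound follows once I verify the numerical inequality $2(2l+1)^2\le 9\,l(2l-1)$ for all $l\ge 2$; this is the analogue of $\tfrac{2(2l+3)^2}{(2l+1)(l+1)}\le 9$ used in Lemma \ref{3.5.}, and it is checked directly (the ratio $2(2l+1)^2/[l(2l-1)]$ is largest at $l=2$, where it equals $50/6<9$, and decreases to $4$ as $l\to\infty$).

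Finally, the four magnetic estimates are identical: one replaces (\ref{1-3.54}) by (\ref{2-3.54}), the constants $A_0,D_0$ by $\tilde{A}_0,\tilde{D}_0$, and $||\hat{u}_1||_{\gamma,\beta}$ by $||(\hat{v}_1,\hat{B}_1)||_{\gamma,\beta}$, with $P_k$ again contributing no growth by Lemma \ref{2.3.}. I expect no genuine obstacle: the only points needing care are the bookkeeping of the degree shift $\mathcal{Q}_{2l-2}\to\mathcal{Q}_{2l}$ generated by Lemma \ref{6.7.} and the final integer inequality, both routine once the proof of Lemma \ref{3.5.} is in hand.
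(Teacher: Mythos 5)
Your proposal is correct and follows essentially the same route as the paper: the paper's own proof of Lemma \ref{3.6.} is literally the proof of Lemma \ref{3.5.} with $l$ replaced by $l-1$ and $(\hat{u}_0,\hat{\Theta}_0)$, $(\hat{v}_0,\hat{B}_0)$ replaced by $(\hat{u}_1,\hat{\Theta}_1)$, $(\hat{v}_1,\hat{B}_1)$, which is exactly the substitution you carry out. Your explicit verifications---the sum identity $\sum_{m=0}^{2l-2}(m+2)=(2l-1)(l+1)$, the comparison $2^{2l-2-m}\mathcal{Q}_{m+2}\leq\mathcal{Q}_{2l}$, and the numerical inequality $2(2l+1)^2\leq 9\,l(2l-1)$ for $l\geq 2$---are precisely the shifted versions of the steps the paper uses in Lemma \ref{3.5.}, so nothing is missing.
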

\noindent The proof is the same as that for Lemma \ref{3.5.} with $l$ replaces by $l-1$ and $(\hat{u}_0, \hat{\Theta}_0)$ replaced by $(\hat{u}_1, \hat{\Theta}_1)$ or $(\hat{v}_0, \hat{B}_0)$ replaced by $(\hat{v}_1, \hat{B}_1)$.

\begin{Lemma}\label{3.7.} Let $l\geq 3$. Suppose $(\hat{H}^{[l_1]}, \hat{S}^{[l_1]})$ and $(\hat{H}^{[l-1-l_1]}, \hat{S}^{[l-1-l_1]})$ satisfy $(\ref{1-3.54})$ and $(\hat{W}^{[l_1]}, \hat{Q}^{[l_1]})$ and $(\hat{W}^{[l-1-l_1]}, \hat{Q}^{[l-1-l_1]})$ satisfy $(\ref{2-3.54})$ for $l_1=1,\dots, l-2$. Then
\begin{equation}\nonumber
\left|\frac{k_j}{(l+1)(l+2)}\left[\sum_{l_1=1}^{l-2}\frac{l_1!(l-1-l_1)!}{l!}\left(P_k(\hat{H}_j^{[l_1]}\hat{*}\hat{H}^{[l-1-l_1]}),\hat{H}_j^{[l_1]}\hat{*}\hat{S}^{[l-1-l_1]}\right)\right]\right| 
\end{equation}
is bounded by
\begin{equation}\nonumber
2^{\gamma+3} C_7 A_0^2D_0^{l-1}(1+|k|)^{-\gamma}e^{-\beta|k|}\frac{\mathcal{Q}_{2l}(\beta|k|)}{\beta^d (2l+3)^2}.
\end{equation} Similarly, both
\begin{equation}\nonumber
\left|\frac{k_j}{(l+1)(l+2)}\left[\sum_{l_1=1}^{l-2}\frac{l_1!(l-1-l_1)!}{l!}\left(P_k(\hat{W}_j^{[l_1]}\hat{*}\hat{W}^{[l-1-l_1]}), P_k(\hat{W}_j^{[l_1]}\hat{*}\hat{Q}^{[l-1-l_1]})\right)\right]\right| \end{equation}
and
\begin{equation}\nonumber
\left|\frac{k_j}{(l+1)(l+2)}\left[\sum_{l_1=1}^{l-2}\frac{l_1!(l-1-l_1)!}{l!}\left(P_k(\hat{Q}_j^{[l_1]}\hat{*}\hat{Q}^{[l-1-l_1]}), P_k(\hat{Q}_j^{[l_1]}\hat{*}\hat{W}^{[l-1-l_1]})\right)\right]\right| 
\end{equation}
are bounded by
\begin{equation}\nonumber
2^{\gamma+3} C_7\tilde A_0^2\tilde D_0^{l-1}(1+|k|)^{-\gamma}e^{-\beta|k|}\frac{\mathcal{Q}_{2l}(\beta|k|)}{\beta^d (2l+3)^2}.
\end{equation}
\end{Lemma}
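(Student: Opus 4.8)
The plan is to dominate the convolution sum termwise using the inductive hypotheses (\ref{1-3.54}) and (\ref{2-3.54}), reduce each summand to a single integral of exactly the type already estimated in Lemma \ref{3.5.}, and then show that the factorial weights together with the normalizing factor $(l+1)^{-1}(l+2)^{-1}$ produce the decay $(2l+3)^{-2}$. I present the Boussinesq term; the two MHD expressions follow by the identical argument with $\tilde A_0,\tilde D_0$ and (\ref{2-3.54}) in place of $A_0,D_0$ and (\ref{1-3.54}), the coupling constants $M_2,M_3$ being already absorbed into $\tilde A_0\tilde D_0$ via Lemma \ref{3.3.}.

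First I would insert (\ref{1-3.54}) for both factors. Using $D_0^{l_1}D_0^{l-1-l_1}=D_0^{l-1}$ and the triangle inequality $e^{-\beta(|k'|+|k-k'|)}\le e^{-\beta|k|}$, the convolution $\hat H_j^{[l_1]}\hat{*}\hat H^{[l-1-l_1]}$ is bounded pointwise in $k$ by
\[
\frac{A_0^2 D_0^{l-1}e^{-\beta|k|}}{(2l_1+1)^2(2l-2l_1-1)^2}\int_{\mathbb{R}^d}\frac{\mathcal{Q}_{2l_1}(\beta|k'|)\,\mathcal{Q}_{2l-2-2l_1}(\beta|k-k'|)}{(1+|k'|)^{\gamma}(1+|k-k'|)^{\gamma}}\,dk'.
\]
Multiplying by $|k|$ (from the $k_j$ in front), expanding the two truncated-exponential polynomials into monomials $|\beta k'|^m|\beta(k-k')|^n$ with $m\le 2l_1$ and $n\le 2l-2-2l_1$, and applying Lemma \ref{6.7.} to each monomial turns the integral into a multiple of $\tfrac{C_7\pi}{\beta^d}2^{\gamma}(1+|k|)^{-\gamma}e^{-\beta|k|}\mathcal{Q}_{m+n+2}(\beta|k|)$. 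Since $m+n+2\le 2l$, every term is controlled by $\mathcal{Q}_{2l}(\beta|k|)$; the total polynomial degree $2l-2$ of the product is raised by exactly two under the operation $|k|\int(\cdot)\,dk'$, exactly as the degree was raised from $2l$ to $2l+2$ in Lemma \ref{3.5.}. Collecting the powers of $2$ coming from the two $\mathcal{Q}$-expansions is what yields the factor $2^{\gamma+3}$ in the final constant.

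It then remains to absorb the weighted sum. Combining $(l+1)^{-1}(l+2)^{-1}$ with $\tfrac{l_1!(l-1-l_1)!}{l!}$ and the two inverse squares, I would use the elementary bound $\tfrac{l_1!(l-1-l_1)!}{l!}\le \tfrac{1}{l(l-1)}$ (the reciprocal binomial coefficient attains its maximum over $1\le l_1\le l-2$ at the endpoints $l_1=1,\,l-2$) together with the convergence of $\sum_{l_1\ge 1}(2l_1+1)^{-2}$ to obtain
\[
\frac{1}{(l+1)(l+2)}\sum_{l_1=1}^{l-2}\frac{l_1!(l-1-l_1)!}{l!}\,\frac{1}{(2l_1+1)^2(2l-2l_1-1)^2}\le \frac{C}{(2l+3)^2}
\]
with an absolute constant $C$. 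In fact the left side decays far faster than $(2l+3)^{-2}$, so there is ample room to absorb the polynomial-in-$l$ factors produced by the $\mathcal{Q}$-summation of the previous step.

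I expect the main obstacle to be precisely this last bookkeeping: verifying that the reciprocal binomial weight, the two $(2l_i+1)^{-2}$ factors from the inductive hypothesis, and the degree-raising $\mathcal{Q}$-summation identity all conspire to leave a single $\mathcal{Q}_{2l}(\beta|k|)/(2l+3)^2$ with the constant $2^{\gamma+3}C_7$ genuinely independent of $l$, rather than one carrying hidden powers of $l$. The integral estimate (Lemma \ref{6.7.}) is mechanical once the monomials are separated; the delicate accounting lies in tracking the powers of $2$ and the factorials through the double monomial sum.
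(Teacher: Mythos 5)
Your decomposition is the same as the paper's: the paper inserts the inductive bounds (\ref{1-3.54}) and (\ref{2-3.54}) and then invokes its Lemma \ref{6.9.}, whose own proof is precisely your step of expanding both $\mathcal{Q}$-polynomials into monomials and applying Lemma \ref{6.7.} termwise, followed by the same weighted sum over $l_1$. One transcription error should be fixed first: in your opening display you applied $e^{-\beta(|k'|+|k-k'|)}\le e^{-\beta|k|}$ prematurely and removed the exponential from the integrand. As written, that integral diverges as soon as the total degree $2l-2$ of $\mathcal{Q}_{2l_1}\mathcal{Q}_{2l_2}$ reaches $2\gamma-d$, and Lemma \ref{6.7.} cannot then be applied to it, since the left-hand side of that lemma carries $e^{-\beta(|k'|+|k-k'|)}$ inside the integral. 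Keeping the exponential inside repairs the display, and your second step then literally reproduces the proof of Lemma \ref{6.9.}.

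The genuine gap is in the final bookkeeping, exactly where you predicted trouble. Bounding each monomial contribution $2^{2l-2-m-n}\mathcal{Q}_{m+n+2}(\beta|k|)$ by $\mathcal{Q}_{2l}(\beta|k|)$ is free (so the powers of $2$ are absorbed there, not in the final constant), but the weights $(m+n+2)$ from Lemma \ref{6.7.} together with the number of monomial pairs are not: summed uniformly in $j=m+n$ they cost
\begin{equation*}
\sum_{j=0}^{2l-2}(j+1)(j+2)=\frac{(2l-1)(2l)(2l+1)}{3},
\end{equation*}
a cubic in $l$ (this is the factor displayed in Lemma \ref{6.9.}). Against this cubic, the two ingredients you cite --- the endpoint bound $\frac{l_1!(l-1-l_1)!}{l!}\le\frac{1}{l(l-1)}$ and the mere convergence of $\sum_{l_1\ge1}(2l_1+1)^{-2}$ --- give your weighted sum only a bound of order $l^{-4}$, so the product is of order $1/l$, which is \emph{not} $O\bigl((2l+3)^{-2}\bigr)$: the absorption fails by one power of $l$. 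What closes the argument is the convolution structure of the inner sum: since $(2l_1+1)+(2l_2+1)=2l$, the two factors cannot both be small, whence
\begin{equation*}
\sum_{l_1=1}^{l-2}\frac{1}{(2l_1+1)^2(2l_2+1)^2}\le\frac{C}{(2l+3)^2},\qquad C=1.0755\dots\le 3,
\end{equation*}
which is exactly the estimate the paper uses. With it your weighted sum is $O\bigl(l^{-4}(2l+3)^{-2}\bigr)$, and the cubic is absorbed with room to spare because $\frac{(2l-1)(2l)(2l+1)}{(l-1)l(l+1)(l+2)}\le\frac{7}{4}$ for $l\ge3$. (Alternatively, keep the monomial count attached to each pair, $\sum_{m\le 2l_1}\sum_{n\le 2l_2}(m+n+2)=(2l_1+1)(2l_2+1)(l+1)$, so that it cancels against the $(2l_i+1)^{-2}$ weights; then $\sum_{l_1}\frac{l+1}{(2l_1+1)(2l_2+1)}=O(\log l)$ by partial fractions, and crude bounds do suffice.)
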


\begin{proof} The proof is similar to that in \cite{smalltime} with $\hat{W}^{[l_2]}$ replaced by $(\hat{W}^{[l_2]}, \hat{Q}^{[l_2]})$.
If $l\geq 3$ then $l_2=l-l_1-1\geq 0$ for $l_1=1,\dots, l-2$ and we apply Lemma \ref{6.9.} in $\mathbb{R}^d$ giving,
\begin{multline}\nonumber
\frac{l_1!l_2!}{l!}|k_j \hat{W}_j^{[l_1]}\hat{*}(\hat{W}^{[l_2]}, \hat{Q}^{[l_2]})|\leq
\frac{l_1!l_2!}{l!(2l_1+1)^2(2l_2+1)^2}\tilde A_0^2\tilde D_0^{l-1}|k|\cdot \hspace{1in}\\ \nonumber
\int_{k'\in\mathbb{R}^d}e^{-\beta(|k'|+|k-k'|)}(1+|k'|)^{-\gamma}(1+|k-k'|)^{-\gamma}\mathcal{Q}_{2l_1}(\beta|k'|)\mathcal{Q}_{2l_2}(\beta|k-k'|)dk'\\ \nonumber
\leq\frac{C_7l_1!l_2!\tilde A_0^2\tilde D_0^{l-1}}{l!(2l_1+1)^2(2l_2+1)^2}\frac{\pi2^{\gamma}e^{-\beta|k|}(2l-1)(2l)(2l+1)}{3\beta^d(1+|k|)^{\gamma}}\mathcal{Q}_{2l}(\beta|k|).
\end{multline}
Thus,
\begin{align}\nonumber
\sum_{l_1=1}^{l-2}&\frac{l_1!l_2!}{l!(l+1)(l+2)}\left|k_j \hat{W}_j^{[l_1]}\hat{*}(\hat{W}^{[l_2]},\hat{Q}^{[l_2]})\right|\leq\\ \nonumber
& \frac{C_7\tilde A_0^2\tilde D_0^{l-1}\pi 2^{\gamma+1	}e^{-\beta|k|}(2l-1)(2l+1)}{3\beta^d(l+1)(l+2)(1+|k|)^{\gamma}}\mathcal{Q}_{2l}(\beta|k|)\sum_{l_1=1}^{l-2}\frac{l_1!l_2!}{(l-1)!(2l_1+1)^2(2l_2+1)^2}.
\end{align}
\noindent After noting that $\frac{l_1!l_2!}{(l-1)!}\leq 1$, $\frac{(2l-1)(2l+1)}{(l+1)(l+2)}\leq 4$, and
\begin{equation}\nonumber
\sum_{l_1=1}^{l-2}\frac{1}{((2l_1+1)^2(2l_2+1)^2}\leq \frac{C}{(2l+3)^2},
\end{equation}
\noindent  where $C=1.0755\cdots \leq 3$, the second inequality is proved. The others are done in the same manner.
\end{proof}

\begin{Lemma}\label{3.8.} For $l=2$ we have,
\begin{align}\nonumber
|(\hat{H}^{[2]},\hat{S}^{[2]})|\leq& \frac{e^{-\beta|k|}\mathcal{Q}_4(\beta |k|)}{5^2(1+|k|)^{\gamma }}\left(\frac{6A_0D_0M_1}{\beta^2}+\frac{2^{\gamma}9C_7\pi A_0D_0||(\hat{u}_0, \hat{\Theta}_0)||_{\gamma,\beta}}{\beta^d}\right.\\ \nonumber
&\left. \quad+A_0D_0a+\frac{C_0}{\beta}||(\hat{u}_1, \hat{\Theta}_1)||^2_{\gamma, \beta}\right)\\ \nonumber
|(\hat{W}^{[2]}, \hat{Q}^{[2]})|\leq& \frac{e^{-\beta|k|}Q_4(\beta |k|)}{5^2(1+|k|)^{\gamma }}\left(\frac{6\tilde A_0\tilde D_0M_2}{\beta^2}+\frac{2^{\gamma+1}9C_7\pi M_3\tilde A_0\tilde D_0||(\hat{v}_0, \hat{B}_0)||_{\gamma,\beta}}{\beta^d}\right. \\ \nonumber
&\left. \quad +M_3\frac{2C_0}{\beta}||(\hat{v}_1, \hat{B}_1)||^2_{\gamma, \beta}\right).
\end{align}
\noindent Thus, $(\hat{H}^{[2]}, \hat{S}^{[2]})$ satisfies ($\ref{1-3.54}$) and $(\hat{W}^{[2]}, \hat{Q}^{[2]})$ satisfies ($\ref{2-3.54}$) for 
\begin{equation}\label{1-3.57}
D_0^2\geq\frac{6D_0M_1}{\beta^2}+D_0a+\frac{2^{\gamma}9C_7\pi D_0}{\beta^d}||(\hat{u}_0, \hat{\Theta}_0)||_{\gamma,\beta}+ \frac{C_0}{A_0\beta}||(\hat{u}_1, \hat{\Theta}_1)||^2_{\gamma, \beta}
\end{equation}
and
\begin{equation}\label{2-3.57}
\tilde D_0^2\geq \frac{6\tilde D_0M_2}{\beta^2}+\frac{2^{\gamma+1}9C_7\pi M_3 \tilde D_0||(\hat{v}_0,\hat{B}_0)||_{\gamma,\beta}}{\beta^d}+\frac{2C_0M_3||(\hat{v}_1, \hat{B}_1)||^2_{\gamma, \beta}}{\tilde A_0\beta}.
\end{equation}
\end{Lemma}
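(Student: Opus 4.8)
The plan is to read off $(\hat H^{[2]},\hat S^{[2]})$ and $(\hat W^{[2]},\hat Q^{[2]})$ from the $l=1$ recursions (\ref{1-3.51}) and (\ref{2-3.51}), bound the right-hand side term by term, and factor out the common envelope $e^{-\beta|k|}\mathcal{Q}_4(\beta|k|)(1+|k|)^{-\gamma}/5^2$. Solving (\ref{1-3.51}) gives $6\hat H^{[2]}=-\nu|k|^2\hat H^{[1]}-ik_jP_k[\hat H^{[1]}_j\hat{*}\hat u_0+\hat u_{0,j}\hat{*}\hat H^{[1]}+\hat u_{1,j}\hat{*}\hat u_1]+P_k[ae_2\hat S^{[1]}]$, and likewise for $\hat S^{[2]}$ and the MHD components; each right-hand term will be matched to exactly one summand in the claimed bound.

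Three of the four groups are already handled by the preparatory lemmas specialized to the relevant index. The diffusive term $\tfrac{1}{6}\nu|k|^2\hat H^{[1]}$ (resp. $\tfrac{1}{6}\tfrac{1}{\mu\sigma}|k|^2\hat Q^{[1]}$) is bounded by Lemma \ref{3.4.} at $l=1$, for which $(2l+3)^2=25$ and $\mathcal{Q}_{2l+2}=\mathcal{Q}_4$, producing exactly the $6A_0D_0M_1/\beta^2$ term (resp. the $M_2$ term) once $\nu\le M_1$ and $\tfrac{1}{\mu\sigma}\le M_2$ are used. The two cross terms $\hat u_{0,j}\hat{*}\hat H^{[1]}$ and $\hat H^{[1]}_j\hat{*}\hat u_0$ are bounded by Lemma \ref{3.5.} at $l=1$; summing the two copies doubles the $\tfrac{9}{2}C_7\pi$ prefactor into the stated $9C_7\pi$, and in the MHD case the coupling $\tfrac{1}{\mu\rho}$ is absorbed into $M_3=\max(1,1/\mu\rho)$.

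The genuinely new estimates are the quadratic source term $\hat u_{1,j}\hat{*}\hat u_1$ (resp. $\hat v_1\hat{*}\hat v_1$ and $\tfrac{1}{\mu\rho}\hat B_1\hat{*}\hat B_1$) and, for Boussinesq only, the buoyancy term $P_k[ae_2\hat S^{[1]}]$, since these involve the zeroth-order coefficient convolved with itself, or the base case, rather than a higher term, and so fall outside Lemmas \ref{3.4.}--\ref{3.5.}. For the source term I would apply Corollary \ref{2.2.} to get $|\hat u_{1,j}\hat{*}\hat u_1|\le C_0\|(\hat u_1,\hat\Theta_1)\|_{\gamma,\beta}^2\,e^{-\beta|k|}(1+|k|)^{-\gamma}$ and then convert the leftover single power $|k|$ using the elementary inequality $\beta|k|\le\tfrac{1}{8}\mathcal{Q}_4(\beta|k|)$, which leaves enough slack (since $25/6\le 8$) to land on the coefficient $C_0/\beta$. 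For the buoyancy term I would use Lemma \ref{2.3.} to drop $P_ke_2$ and the base-case bound (\ref{2-3.55}) on $\hat S^{[1]}$, then upgrade $\mathcal{Q}_2\le\mathcal{Q}_4$; the surviving numerical factor $25/54<1$ yields the coefficient $A_0D_0a$. The MHD source terms are identical up to the factor $M_3$.

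Assembling the four bounds gives precisely the displayed estimates for $|(\hat H^{[2]},\hat S^{[2]})|$ and $|(\hat W^{[2]},\hat Q^{[2]})|$. The stated conclusion is then immediate: dividing the bracketed sum by $A_0$ (resp. $\tilde A_0$) and demanding that it not exceed $D_0^2$ (resp. $\tilde D_0^2$) is exactly condition (\ref{1-3.57}) (resp. (\ref{2-3.57})), under which the envelope becomes $A_0D_0^2\,\mathcal{Q}_4/25$, i.e. (\ref{1-3.54}) (resp. (\ref{2-3.54})) at $l=2$. I expect the only real work to be the constant bookkeeping---verifying that every stray factor ($1/6$, $1/9$, and the $\mathcal{Q}_2\!\to\!\mathcal{Q}_4$ and $|k|\!\to\!\mathcal{Q}_4$ upgrades) is dominated so the coefficients match as written; the slack in $\mathcal{Q}_4(y)\ge 8y$ and $\mathcal{Q}_4\ge\mathcal{Q}_2$ makes each comparison go through.
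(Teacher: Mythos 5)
Your proposal is correct and follows essentially the same route as the paper's proof: start from the $l=1$ recursions (\ref{1-3.51})--(\ref{2-3.51}), bound the diffusive term by Lemma \ref{3.4.} at $l=1$, the cross terms by Lemma \ref{3.5.}, the quadratic source $\hat{u}_{1,j}\hat{*}\hat{u}_1$ by Corollary \ref{2.2.} together with $\frac{|k|}{6}\leq\frac{\mathcal{Q}_4(\beta|k|)}{25\beta}$, and the buoyancy term by the base case plus $\frac{\mathcal{Q}_2(\beta|k|)}{54}\leq\frac{\mathcal{Q}_4(\beta|k|)}{25}$, before matching constants against (\ref{1-3.57}) and (\ref{2-3.57}). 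The only slip is notational: the base-case bound you invoke for $\hat{S}^{[1]}$ is the Boussinesq estimate (\ref{1-3.55}), not the MHD estimate (\ref{2-3.55}).
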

\begin{proof} We start from $(\ref{1-3.51})$ or $(\ref{2-3.51})$. For the first term we use Lemma \ref{3.4.}. For the second term, appearing in (\ref{1-3.57}) the Boussinesq case only, we use our induction assumption and $\frac{Q_2(\beta |k|)}{54}\leq\frac{Q_4(\beta|k|)}{25}$. For the next term, we use Lemma \ref{3.5.}. For the last terms, apply Corollary \ref{2.2.} and use $\frac{|k|}{6}\leq\frac{Q_4(\beta|k|)}{25 \beta}$.
\end{proof}

\noindent{\bf Proof of Lemma \ref{3.2.}}
The base case is proved picking $D_0$ and $\tilde D_0$ large enough so $(\ref{1-3.57})$, $(\ref{2-3.57})$, $(\ref{1-3.55})$, and $(\ref{2-3.55})$ hold. For general $l\geq 2$ suppose $(\hat{H}^{[m]}, \hat{S}^{[m]})$ satisfies (\ref{1-3.54}) and $(\hat{W}^{[m]}, \hat{Q}^{[m]})$ satisfies (\ref{2-3.54}) for $m=1, \dots, l$. We estimate terms on the right of $(\ref{1-3.53})$, $(\ref{1-3.53b})$, $(\ref{2-3.53})$, and $(\ref{2-3.53b})$, using Lemma \ref{3.4.}, \ref{3.5.}, \ref{3.6.}, and \ref{3.7.} and the fact that $Q_{2l}(y)\leq 1/4Q_{2l+2}(y)$, to get 
\begin{align}\nonumber
|(\hat{H}^{[l+1]},\hat{S}^{[l+1]})|&\leq \frac{A_0D_0^{l-1}Q_{2l+2}(\beta |k|)}{(2l+3)^2(1+|k|)^{\gamma}}\left\{\frac{6D_0M_1}{\beta^2}+\frac{a D_0}{2}+\frac{2^{\gamma}9C_7\pi D_0}{\beta^d}||(\hat{u}_0, \hat{\Theta}_0)||_{\gamma, \beta} \right.  \\ \nonumber
 &\left. \qquad +\frac{2^{\gamma}9C_7\pi (2l+3)^2}{4(l+2)(2l+1)^2\beta^d}||(\hat{u}_1, \hat{\Theta}_1)||_{\gamma, \beta}+\frac{2^{\gamma +3}C_7A_0}{4\beta^d}\right\}\\ \nonumber
&\leq \frac{A_0D_0^{l+1}e^{-\beta |k|}}{(1+|k|)^{\gamma}(2l+3)^2}\mathcal{Q}_{2l+2}(\beta |k|)
\end{align}
and
\begin{align}\nonumber
|(\hat{W}^{[l+1]}, \hat{Q}^{[l+1]})|&\leq \frac{\tilde A_0\tilde D_0^{l-1}\mathcal{Q}_{2l+2}(\beta |k|)}{(2l+3)^2(1+|k|)^{\gamma}}\left\{\frac{6\tilde D_0M_2}{\beta^2} +M_3\left[ \frac{2^{\gamma+1}9C_7\pi\tilde D_0}{\beta^d}||(\hat{v}_0, \hat{B}_0)||_{\gamma, \beta}\right. \right. \\ \nonumber
 &\left. \left.\qquad+\frac{2^{\gamma+1}9C_7\pi (2l+3)^2}{4(l+2)(2l+1)^2\beta^d}||(\hat{v}_1, \hat{B}_1)||_{\gamma, \beta}+\frac{2^{\gamma+4}C_7\tilde A_0}{4\beta^d}\right] \right\}\\ \nonumber
&\leq \frac{\tilde A_0\tilde D_0^{l+1}e^{-\beta |k|}}{(1+|k|)^{\gamma}(2l+3)^2}Q_{2l+2}(\beta |k|),
\end{align}
\noindent where $D_0$ has been chosen large enough so
\begin{align}\nonumber
\left\{  \frac{6D_0M_1}{\beta^2}\right.+\frac{a D_0}{2} +&\frac{2^{\gamma}9C_7\pi D_0}{\beta^d}||(\hat{u}_0, \hat{\Theta}_0)||_{\gamma, \beta}+\frac{2^{\gamma}9C_7\pi D_0}{4\beta^d}||(\hat{u}_1, \hat{\Theta}_1)||_{\gamma, \beta}\\ \nonumber
&\left. +\frac{2^{\gamma +1}C_7A_0}{\beta^d}\right\} \leq D_0^2
\end{align}
and $\tilde D_0$ large enough so
\begin{align}\nonumber
\left\{ \frac{6\tilde D_0M_2}{\beta^2}\right. +&M_3\left[ \frac{2^{\gamma+1}9C_7\pi\tilde D_0}{\beta^d}||(\hat{v}_0, \hat{B}_0)||_{\gamma, \beta} +\frac{2^{\gamma+1}9C_7\pi }{4\beta^d}||(\hat{v}_1, \hat{B}_1)||_{\gamma, \beta} \right. \\ \nonumber
&\left. \left.+\frac{2^{\gamma+2}C_7\tilde A_0}{\beta^d}\right] \right\}\leq \tilde D_0^2.
\end{align}
\noindent We also used $\frac{(2l+3)^2}{(2l+1)^2(l+2)}\leq 1$ in the above. Thus, by induction, we have $(\ref{1-3.54})$ and $(\ref{2-3.54})$ satisfied for any $l\geq 1$. So, $\sum_{l=1}^{\infty}(\hat{H}^{[l]},\hat{S}^{[l]})(k)p^l$ is convergent for $|p|\leq \frac{1}{4 D_0}$ and $\sum_{l=1}^{\infty}(\hat{W}^{[l]},\hat{Q}^{[l]})(k)p^l$ is convergent for $|p|\leq \frac{1}{4\tilde D_0}$ since $\mathcal{Q}_{2l}(\beta |k|)\leq 4^l e^{\beta|k|/2}$. By construction of the iteration, $(\hat{H}, \hat{S})-(\hat{u}_1,\hat{\Theta}_1)= \sum_{l=1}^{\infty}(\hat{H}^{[l]}, \hat{S}^{[l]})(k)p^l$ is a solution to $(\ref{1-2.9})$ which is zero at $p=0$. Similarly, $(\hat{W}, \hat{Q})-(\hat{v}_1,\hat{B})= \sum_{l=1}^{\infty}(\hat{W}^{[l]}, \hat{Q}^{[l]})(k)p^l$ is a solution to $(\ref{2-2.9})$ which is zero at $p=0$. However, we know there are unique solutions to $(\ref{1-2.9})$ and $(\ref{2-2.9})$ which are zero and $p=0$ in the space $\mathcal{A}_L^{\infty}$, which includes analytic functions at the origin for $L$ sufficiently small. Thus, for $(\hat{H}, \hat{S})$ and $(\hat{W}, \hat{Q})$ the solutions guaranteed by Lemma \ref{2.8.} we have,
\begin{align}\nonumber
(\hat{H}, \hat{S})(k,p)&=(\hat{u}_1, \hat{\Theta}_1)(k)+\sum_{l=1}^{\infty}(\hat{H}^{[l]}, \hat{S}^{[l]})(k)p^l\\ \nonumber
(\hat{W}, \hat{Q})(k,p)&=(\hat{v}_1, \hat{B}_1)(k)+\sum_{l=1}^{\infty}(\hat{W}^{[l]}, \hat{Q}^{[l]})(k)p^l.
\end{align}

\begin{center}
 \large Estimates on $\partial_p^l(\hat{H}, \hat{S})(k,p)$ and $\partial_p^l(\hat{W}, \hat{Q})(k,p)$
\end{center}

We now want to develop estimates on $\partial_p^l(\hat{H}, \hat{S})(k,p)$ and $\partial_p^l(\hat{W}, \hat{Q})(k,p)$ in order to show that the series about any $p=p_0\in \mathbb{R}^+$ is convergent. We will proceed in the same spirit as above. That is to use induction to bound the successive derivatives. Our goal is to show that we can analytically extend our solutions along $\mathbb{R}^{+}$ with a radius of convergence independent of center $p_0$ along $\mathbb{R}^{+}$. Combining this with the fact that the solutions are exponentially bounded will give Borel Summability.
\begin{Definition}\label{Def4.1}For $l\geq 1$ we define,
\begin{align}\nonumber
(\hat{H}^{[l]}, \hat{S}^{[l]})(k,p)&=\frac{1}{l!}\partial_p^l(\hat{H}, \hat{S})(k,p) \\ \nonumber (\hat{H}^{[0]}, \hat{S}^{[0]})(k,p)&=(\hat{H}, \hat{S})(k,p)-(\hat{u}_1,\hat{\Theta}_1)\\ \nonumber
(\hat{W}^{[l]}, \hat{Q}^{[l]})(k,p)&=\frac{1}{l!}\partial_p^l(\hat{W}, \hat{Q})(k,p) \\ \nonumber (\hat{W}^{[0]}, \hat{Q}^{[0]})(k,p)&=(\hat{W}, \hat{Q})(k,p)-(\hat{v}_1,\hat{B}_1).
\end{align}
\end{Definition}

\begin{Lemma}\label{4.2.} If $||(\hat{u}_0, \hat{\Theta}_0)||_{\gamma+2,\beta}<\infty$ and $||(\hat{v}_0, \hat{B}_0)||_{\gamma+2,\beta}<\infty$ for and $\beta>0$, then there are constants $A$, $D$, $\tilde A$, $\tilde D>0$ not depending on $l, k$ or $p$ such that
\begin{eqnarray}\label{1-4.60}
|(\hat{H}^{[l]}, \hat{S}^{[l]})(k,p)|\leq \frac{e^{\omega' p}e^{-\beta |k|}AD^l}{(1+p^2)(1+|k|)^{\gamma}}\frac{\mathcal{Q}_{2l}(|\beta k|)}{(2l+1)^2}\\ \label{2-4.60}
|(\hat{W}^{[l]}, \hat{Q}^{[l]})(k,p)|\leq \frac{e^{\alpha' p}e^{-\beta |k|}\tilde A\tilde D^l}{(1+p^2)(1+|k|)^{\gamma}}\frac{\mathcal{Q}_{2l}(|\beta k|)}{(2l+1)^2}
\end{eqnarray}
\noindent where $\omega'=\omega+1$ and $\alpha'=\alpha+1$ for $\omega$ and $\alpha$ chosen as in Lemma \ref{2.8.}. We will prove the lemma by induction, and as before we will develop several lemmas to establish the bound. 
\end{Lemma}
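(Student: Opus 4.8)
The plan is to prove \eqref{1-4.60} and \eqref{2-4.60} by induction on $l$, in close parallel with the proof of Lemma \ref{3.2.}, except that I now bound the rescaled derivative functions $(\hat H^{[l]},\hat S^{[l]})(k,p)$ of Definition \ref{Def4.1} \emph{uniformly over $k\in\mathbb{R}^d$ and all $p\ge 0$}, carrying the extra weight $\tfrac{e^{\omega' p}}{1+p^2}$ in place of the pure $p$-power weight used at $p=0$. The point of the $p$-uniform constant $D$ is that, once \eqref{1-4.60} holds, the Taylor series of $(\hat H,\hat S)$ about \emph{any} centre $p_0\in\mathbb{R}^+$ has radius of convergence at least $(4D)^{-1}$ (using $\mathcal{Q}_{2l}(\beta|k|)\le 4^l e^{\beta|k|/2}$ exactly as at the end of Lemma \ref{3.2.}), independent of $p_0$; combined with the exponential bound this is precisely analyticity of $(\hat H,\hat S)$ in a neighbourhood of $\{0\}\cup\mathbb{R}^+$ together with exponential growth, which is what Borel summability requires.

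To set up the recursion I would differentiate the second-order equations \eqref{1-2.9} $l$ times in $p$. Writing the singular operator as $p\partial_{pp}+2\partial_p=\partial_p^2(p\,\cdot)$ and using Definition \ref{Def4.1}, differentiation yields the three-term relation
\begin{equation}\nonumber
p(l+1)(l+2)\hat H^{[l+2]}+(l+1)(l+2)\hat H^{[l+1]}+\nu|k|^2\hat H^{[l]}=\tfrac{1}{l!}\partial_p^l\!\left(ik_j\hat G_j^{[1]}+aP_k[e_2\hat S]\right),
\end{equation}
and analogues for $\hat S^{[l]}$ and for $(\hat W^{[l]},\hat Q^{[l]})$. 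To expand the right-hand side I use that Fourier convolutions against the $p$-independent data differentiate trivially, $\partial_p^l(\hat u_{0,j}\hat{*}\hat H)=l!\,\hat u_{0,j}\hat{*}\hat H^{[l]}$, whereas the mixed Laplace–Fourier convolution obeys the convolution–Leibniz rule
\begin{equation}\nonumber
\partial_p^l\bigl(\hat H_i\,{}^{\ast}_{\ast}\hat H\bigr)=\bigl(\partial_p^l\hat H_i\bigr)\,{}^{\ast}_{\ast}\hat H+\sum_{j=0}^{l-1}\bigl(\partial_p^{\,j}\hat H_i(\cdot,0)\bigr)\hat{*}\,\bigl(\partial_p^{\,l-1-j}\hat H(\cdot,p)\bigr).
\end{equation}
The boundary values $\partial_p^{\,j}\hat H_i(k,0)=j!\,\hat H_i^{[j]}(k,0)$ are exactly the $p=0$ Taylor coefficients already controlled by Lemma \ref{3.2.}; setting $p=0$ recovers \eqref{1-3.53} verbatim, which both checks the computation and supplies the base of a nested induction. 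I would then re-derive the term-by-term bounds (the analogues of Lemmas \ref{3.4.}--\ref{3.8.}), with the combinatorial $\mathcal{Q}_{2l}$ bookkeeping produced by the Fourier-convolution Lemmas \ref{6.7.} and \ref{6.9.} just as in Lemmas \ref{3.5.}--\ref{3.7.}, and the $p$-convolution controlled by the subalgebra estimate of Lemma \ref{2.6.} in the weighted norm, which is precisely what generates and preserves the $(1+p^2)^{-1}$ factor and contributes the constant $M_0C_0$. Choosing $D,\tilde D$ large and $A,\tilde A$ to absorb the base data then closes the induction as in Lemma \ref{3.8.}.

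The hard part, and the reason this does not reduce mechanically to Lemma \ref{3.2.}, is the coupling term $p(l+1)(l+2)\hat H^{[l+2]}$: unlike at $p=0$, one cannot solve the three-term recursion for $\hat H^{[l+1]}$ pointwise without dividing by $p$, which would destroy the $p$-independence of $D$ near the origin. I would resolve this by \emph{inverting the singular operator} $p\partial_{pp}+2\partial_p+\nu|k|^2$ with the Bessel Green's function $\mathcal{G}$ — the same inversion that produced \eqref{1-2.18} from \eqref{1-2.9} — turning the recursion into an integral equation that expresses $\hat H^{[l]}$ as $\tfrac{1}{\sqrt p}\int_0^p$-type integrals of the already-bounded lower-order data plus the boundary terms. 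This yields $\hat H^{[l]}$ as a function of $p$ with a bound valid uniformly down to $p=0$, and the boundary contributions are controlled uniformly by the Lemma \ref{3.2.} constants $A_0,D_0$; this is the step I expect to be the main obstacle to make fully rigorous.

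Finally, the base cases $l=0,1$ are not immediate from Lemma \ref{2.8.}, which only delivers $(\hat H,\hat S)\in\mathcal{A}_1^{\omega}$ (an $L^1(e^{-\omega p}dp)$ bound) and membership in $\mathcal{A}_L^{\infty}$. I would upgrade this to the pointwise weighted bound $|(\hat H,\hat S)(k,p)-(\hat u_1,\hat\Theta_1)(k)|\lesssim \tfrac{e^{\omega' p}}{(1+p^2)}e^{-\beta|k|}(1+|k|)^{-\gamma}$ by feeding the $L^1$ estimate back into \eqref{1-2.18}; the loss in passing from an $L^1$-in-$p$ bound to a pointwise $(1+p^2)^{-1}$-weighted bound is exactly what forces $\omega'=\omega+1$ rather than $\omega$, and likewise $\alpha'=\alpha+1$. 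The MHD bounds \eqref{2-4.60} follow by the identical argument with $\mathcal{G}$, $M_2$, and $M_3$ playing the roles of the Boussinesq constants.
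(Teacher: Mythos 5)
Your overall skeleton --- induction on $l$, differentiating \eqref{1-2.9} $l$ times in $p$, the convolution--Leibniz rule (your three-term relation is algebraically identical to the paper's equations \eqref{1-4.62}--\eqref{4-4.62}, since $p\hat H^{[l]}_{pp}+(l+2)\hat H^{[l]}_p = p(l+1)(l+2)\hat H^{[l+2]}+(l+1)(l+2)\hat H^{[l+1]}$), term-by-term Fourier bounds, and a bootstrap of the base case to a pointwise weighted bound with $\omega'=\omega+1$ --- coincides with the paper's. But your resolution of the step you yourself flag as the main obstacle is not correct, and that step is exactly where the paper's proof lives. The equation satisfied by $\hat H^{[l]}$ involves the operator $p\partial_{pp}+(l+2)\partial_p+\nu|k|^2$, not $p\partial_{pp}+2\partial_p+\nu|k|^2$; the order-one Bessel kernel $\mathcal{G}$ (built from $J_1,Y_1$) inverts only the latter. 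To use $\mathcal{G}$ you would have to move $l\,\partial_p\hat H^{[l]}=l(l+1)\hat H^{[l+1]}$ to the right-hand side, and then the ``source'' contains precisely the quantity the induction has not yet controlled: the argument becomes circular, and that term is of the same size as what you are solving for, so it cannot be absorbed perturbatively.

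The paper instead inverts the full $l$-dependent operator (Lemma \ref{4.4.}, imported from \cite{smalltime}), with a Green's function built from Bessel functions of order $l+1$, and the whole point of doing so is the quantitative gain it produces,
\begin{equation}\nonumber
|(\hat H^{[l+1]},\hat S^{[l+1]})(k,p)|\le \frac{C_6}{(l+1)^{5/3}}\sup_{p'\in[0,p]}|(\hat R_1^{[l]},\hat R_2^{[l]})| + \frac{M_1|k|^2|(\hat H^{[l]},\hat S^{[l]})(k,0)|}{(l+1)(l+2)},
\end{equation}
which comes from the estimates $2^{l+2}(l+1)!\,|J_{l+2}(z)/z^{l+2}|\le (l+2)^{-1}$ and $\int_0^z (z'/z)\,|\mathcal{L}_z(z,z')|\,dz'\le C(l+1)^{-2/3}$ for the order-$(l+1)$ kernel. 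This factor $(l+1)^{-5/3}$ is not a convenience: the source-term bounds (Lemmas \ref{4.6.}--\ref{4.10.}, via Lemma \ref{6.10.}) grow like $(l+1)^{2/3}(2l+3)^2/(2l+1)\sim l^{5/3}$ relative to the inductive target $\mathcal{Q}_{2l+2}(\beta|k|)/(2l+3)^2$, so the induction with $l$-independent constants $D,\tilde D$ closes only because this growth is exactly cancelled by the $(l+1)^{-5/3}$ from the inversion. Your proposal contains no mechanism producing any negative power of $l$ at the inversion step, so as written it cannot yield constants $A$, $D$ independent of $l$. (Your treatment of the base case, and your reading of what the lemma buys --- a $p_0$-independent radius of convergence, hence analyticity on $\{0\}\cup\mathbb{R}^+$ together with exponential bounds, hence Borel summability --- are consistent with the paper.)
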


For $l=0$, we use Lemma \ref{2.9.} which says that for $\omega$ and $\alpha$ sufficiently large
\begin{align}\nonumber
|(\hat H, \hat S)(k,p)|&\leq \frac{2e^{-\beta |k|+\omega p}||(\hat u_1, \hat \Theta_1)||_{\gamma, \beta}}{(1+|k|)^{\gamma}}\\ \nonumber
|(\hat W, \hat Q)(k,p)|&\leq \frac{2e^{-\beta |k|+\alpha p}||(\hat v_1, \hat B_1)||_{\gamma, \beta}}{(1+|k|)^{\gamma}}.
\end{align}
\noindent We chose $\omega'=\omega+1$ and $\alpha'=\alpha +1$ and recall Definition \ref{Def4.1} to get
\begin{align}\label{4.59}
|(\hat H^{[0]}, \hat{S}^{[0]})(k,p)|&\leq \frac{3e^{-\beta |k|+\omega' p}||(\hat u_1, \hat \Theta_1)||_{\gamma, \beta}}{(1+p^2)(1+|k|)^{\gamma}},\\ \nonumber
|(\hat W^{[0]}, \hat{Q}^{[0]})(k,p)|&\leq \frac{3e^{-\beta |k|+\alpha' p}||(\hat v_1, \hat B_1)||_{\gamma, \beta}}{(1+p^2)(1+|k|)^{\gamma}},
\end{align}
\noindent and the base cases of $(\ref{1-4.60})$ and $(\ref{2-4.60})$ are proved for $A=3||(\hat{u}_1, \hat{\Theta}_1)||_{\gamma, \beta}$ and $\tilde A=3||(\hat{v}_1, \hat{B}_1)||_{\gamma, \beta}$. 

For the general case ($l\geq1$) we take $\partial _p^l$ in $(\ref{1-2.9})$ or $(\ref{2-2.9})$ and divide by $l!$, to obtain
\begin{multline}\label{1-4.62}
p\hat{H}^{[l]}_{pp}+(l+2)\hat{H}^{[l]}_p+\nu |k|^2\hat{H}^{[l]}=\left(-i k_j P_k[\hat{u}_{0,j}\hat{*}\hat{u}_1+\hat{u}_{1,j}\hat{*}\hat{u}_0]-\nu|k|^2\hat{u}_1\right)\delta_{l,0}\hspace{.5 in}\\ 
-i k_j P_k\left[ \int_0^p\hat{H}^{[l]}_j(\cdot,p-s)\hat{*}\hat{H}^{[0]}(\cdot,s)ds +\sum_{l_1=1}^{l-1}\frac{l_1!(l-l_1-1)!}{l!}\hat{H}^{[l_1]}_j(\cdot,0)\hat{*}\hat{H}^{[l-l_1-1]}(\cdot,p)\right]\\ 
-i k_j P_k[\frac{1}{l}(\hat{u}_{1,j}\hat{*}\hat{H}^{[l-1]}+\hat{H}_j^{[l-1]}\hat{*}\hat{u}_1)+\hat{H}^{[l]}_j\hat{*}\hat{u}_0+
\hat{u}_{0,j}\hat{*}\hat{H}^{[l]} + \delta_{l,1}\hat{u}_{1,j}\hat{*}\hat{u}_1]+P_k(a e_2\hat S^{[l]})
\end{multline}
\begin{multline}\label{2-4.62}
p\hat{S}^{[l]}_{pp}+(l+2)\hat S^{[l]}_p+\mu |k|^2\hat{S}^{[l]}=\left(-i k_j[\hat{u}_{0,j}\hat{*}\hat{\Theta}_1+\hat{u}_{1,j}\hat{*}\hat{\Theta}_0]-\mu|k|^2\hat{\Theta}_1\right)\delta_{l,0}\hspace{.5in}\\ 
-i k_j\left[ \int_0^p\hat{H}^{[l]}_j(\cdot,p-s)\hat{*}\hat{S}^{[0]}(\cdot,s)ds +\sum_{l_1=1}^{l-1}\frac{l_1!(l-l_1-1)!}{l!}\hat{H}^{[l_1]}_j(\cdot,0)\hat{*}\hat{S}^{[l-l_1-1]}(\cdot,p)\right]\\ -i k_j[\frac{1}{l}(u_{1,j}\hat{*}\hat{S}^{[l-1]}+\hat{H}_{j}^{[l-1]}\hat{*}\hat{\Theta}_1)+\hat{H}_j^{[l]}\hat{*}\hat{\Theta}_0+\hat{u}_{0,j}\hat{*}\hat{S}^{[l]}+ \delta_{l=1}\hat{u}_{1,j}\hat{*}\hat{\Theta}_1]
\end{multline}
and 
\begin{multline}\label{3-4.62}
p\hat{W}^{[l]}_{pp}+(l+2)\hat{W}^{[l]}_p+\nu |k|^2\hat{W}^{[l]}=\hspace{3 in}\\ 
\left(-i k_j P_k[\hat{v}_{0,j}\hat{*}\hat{v}_1+\hat{v}_{1,j}\hat{*}\hat{v}_0]+\frac{i k_j}{\mu \rho} P_k[\hat{B}_{0,j}\hat{*}\hat{B}_1+\hat{B}_{1,j}\hat{*}\hat{B}_0] -\nu|k|^2\hat{v}_1\right)\delta_{l,0}\hspace{.5 in}\\ 
-i k_j P_k\left[ \int_0^p\hat{W}^{[l]}_j(\cdot,p-s)\hat{*}\hat{W}^{[0]}(\cdot,s)ds +\sum_{l_1=1}^{l-1}\frac{l_1!(l-l_1-1)!}{l!}\hat{W}^{[l_1]}_j(\cdot,0)\hat{*}\hat{W}^{[l-l_1-1]}(\cdot,p)\right]\\
+\frac{i k_j}{\mu \rho} P_k\left[ \int_0^p\hat{Q}^{[l]}_j(\cdot,p-s)\hat{*}\hat{Q}^{[0]}(\cdot,s)ds +\sum_{l_1=1}^{l-1}\frac{l_1!(l-l_1-1)!}{l!}\hat{Q}^{[l_1]}_j(\cdot,0)\hat{*}\hat{Q}^{[l-l_1-1]}(\cdot,p)\right]\\
-i k_j P_k[\frac{1}{l}(\hat{v}_{1,j}\hat{*}\hat{W}^{[l-1]}+\hat{W}_j^{[l-1]}\hat{*}\hat{v}_1)+\hat{W}^{[l]}_j\hat{*}\hat{v}_0+
\hat{v}_{0,j}\hat{*}\hat{W}^{[l]} + \delta_{l,1}\hat{v}_{1,j}\hat{*}\hat{v}_1]\\ 
+\frac{i k_j}{\mu \rho} P_k[\frac{1}{l}(\hat{B}_{1,j}\hat{*}\hat{Q}^{[l-1]}+\hat{Q}_j^{[l-1]}\hat{*}\hat{B}_1)+\hat{Q}^{[l]}_j\hat{*}\hat{B}_0+
\hat{B}_{0,j}\hat{*}\hat{Q}^{[l]} + \delta_{l,1}\hat{B}_{1,j}\hat{*}\hat{B}_1]
\end{multline}
\begin{multline}\label{4-4.62}
p\hat{Q}^{[l]}_{pp}+(l+2)\hat Q^{[l]}_p+\frac{1}{\mu \sigma} |k|^2\hat{Q}^{[l]}=\hspace{3 in}\\ 
\left(-i k_j P_k[\hat{v}_{0,j}\hat{*}\hat{B}_1+\hat{v}_{1,j}\hat{*}\hat{B}_0]+i k_j P_k[\hat{B}_{0,j}\hat{*}\hat{v}_1+\hat{B}_{1,j}\hat{*}\hat{v}_0]-\frac{1}{\mu \sigma}|k|^2\hat{B}_1\right)\delta_{l,0}\hspace{.5in}\\ 
-i k_j P_k\left[ \int_0^p\hat{W}^{[l]}_j(\cdot,p-s)\hat{*}\hat{Q}^{[0]}(\cdot,s)ds +\sum_{l_1=1}^{l-1}\frac{l_1!(l-l_1-1)!}{l!}\hat{W}^{[l_1]}_j(\cdot,0)\hat{*}\hat{Q}^{[l-l_1-1]}(\cdot,p)\right]\\ 
+i k_j P_k\left[ \int_0^p\hat{Q}^{[l]}_j(\cdot,p-s)\hat{*}\hat{W}^{[0]}(\cdot,s)ds +\sum_{l_1=1}^{l-1}\frac{l_1!(l-l_1-1)!}{l!}\hat{Q}^{[l_1]}_j(\cdot,0)\hat{*}\hat{W}^{[l-l_1-1]}(\cdot,p)\right]\\ -i k_j P_k[\frac{1}{l}(v_{1,j}\hat{*}\hat{Q}^{[l-1]}+\hat{W}_{j}^{[l-1]}\hat{*}\hat{B}_1)+\hat{W}_j^{[l]}\hat{*}\hat{B}_0+\hat{v}_{0,j}\hat{*}\hat{Q}^{[l]}+ \delta_{l=1}\hat{v}_{1,j}\hat{*}\hat{B}_1]\\  +i k_j P_k[\frac{1}{l}(B_{1,j}\hat{*}\hat{W}^{[l-1]}+\hat{Q}_{j}^{[l-1]}\hat{*}\hat{v}_1)+\hat{Q}_j^{[l]}\hat{*}\hat{v}_0+\hat{B}_{0,j}\hat{*}\hat{W}^{[l]}+ \delta_{l=1}\hat{B}_{1,j}\hat{*}\hat{v}_1]
\end{multline}
\noindent Identify the right hand side of these four equations by $R_m^{[l]}$ for $m=1,\dots, 4$ respectively.

\begin{Lemma}\label{4.4.} For any $l\geq 0$ and for some absolute constant $C_6$, if $(\hat{H}^{[l]}, \hat{S}^{[l]})$ satisfies $(\ref{1-4.60})$, $(\hat{W}^{[l]}, \hat{Q}^{[l]})$ satisfies $(\ref{2-4.60})$, and both are bounded at $p=0$ then
\begin{multline}\nonumber
|(\hat{H}^{[l+1]}, \hat{S}^{[l+1]})(k,p)|\leq \frac{C_6}{(l+1)^{5/3}}\sup_{p'\in [0,p]}|(\hat{R}^{[l]}_1, \hat{R}_2^{[l]})|+\frac{M_1|k|^2|(\hat{H}^{[l]}, \hat{S}^{[l]})(k,0)|}{(l+1)(l+2)}\\ \nonumber
|(\hat{W}^{[l+1]}, \hat{Q}^{[l+1]})(k,p)|\leq \frac{C_6}{(l+1)^{5/3}}\sup_{p'\in [0,p]}|(\hat{R}^{[l]}_3, \hat{R}_4^{[l]})|+\frac{M_2|k|^2|(\hat{W}^{[l]}, \hat{Q}^{[l]})(k,0)|}{(l+1)(l+2)}.
\end{multline}
\end{Lemma}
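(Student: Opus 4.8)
The plan is to read (\ref{1-4.62})–(\ref{4-4.62}) as inhomogeneous linear second–order ODEs in $p$ for the fixed profiles $\hat H^{[l]}(k,\cdot),\dots,\hat Q^{[l]}(k,\cdot)$, and to extract $\hat H^{[l+1]}=\tfrac1{l+1}\partial_p\hat H^{[l]}$ (and likewise for $\hat S,\hat W,\hat Q$, using $\partial_p\hat H^{[l]}=(l+1)\hat H^{[l+1]}$ from Definition \ref{Def4.1}) from an explicit solution formula. Writing the operator on the left of (\ref{1-4.62}) in divergence form, $p\partial_{pp}+(l+2)\partial_p+\nu|k|^2=p^{-(l+1)}\partial_p(p^{l+2}\partial_p)+\nu|k|^2$, one recognizes the homogeneous equation as a Bessel equation of order $l+1$: with $z=2|k|\sqrt{\nu p}$ its solutions are $\phi_l(p)=p^{-(l+1)/2}J_{l+1}(z)$, bounded at $p=0$, and $\psi_l(p)=p^{-(l+1)/2}Y_{l+1}(z)$, singular like $p^{-(l+1)}$. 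This is the order-$(l+1)$ analogue of the $l=0$ kernel $\tfrac{\pi}{z}\mathcal G(z,z')$ used in (\ref{1-2.18}). I would first record the scaling observation that makes $C_6$ absolute: the substitution $z=2|k|\sqrt{\nu p}$ removes all dependence on $k$ and $\nu$ (resp.\ $\mu$, $(\mu\sigma)^{-1}$) from the fundamental system and from the Green's function assembled from it, so every integral below depends on $k$ and the diffusivity only through $z$, and on $l$ through the Bessel order.

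Since the bounded solution space is one–dimensional and $\phi_l(0)\neq0$, the bounded solution of the ODE is pinned by its value at $p=0$, giving
\[
\hat H^{[l]}(k,p)=\hat H^{[l]}(k,0)\,\frac{\phi_l(p)}{\phi_l(0)}+\int_0^p G_l(p,p')\,\hat R_1^{[l]}(k,p')\,dp',
\]
where $G_l$ is the Green's function for the full operator with zero data at $p=0$, built from $\phi_l,\psi_l$ through the Wronskian relation $p^{l+2}W[\phi_l,\psi_l]=c_0$ (constant, by $J_\nu Y_\nu'-J_\nu'Y_\nu=2/(\pi z)$). Differentiating in $p$, using $G_l(p,p)=0$, splits $\hat H^{[l+1]}=\tfrac1{l+1}\partial_p\hat H^{[l]}$ into a boundary piece $\tfrac1{l+1}\hat H^{[l]}(k,0)\,\phi_l'(p)/\phi_l(0)$ and a forcing piece $\tfrac1{l+1}\int_0^p\partial_pG_l(p,p')\,\hat R_1^{[l]}(k,p')\,dp'$, which I bound separately.

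For the boundary piece, evaluating the homogeneous equation at $p=0$ gives $\phi_l'(0)/\phi_l(0)=-\nu|k|^2/(l+2)$, which is exactly the stated coefficient. The uniform–in–$p$ bound $\sup_{p\ge0}|\phi_l'(p)/\phi_l(0)|\le\nu|k|^2/(l+2)$ reduces, via $\partial_p\phi_l\propto -\nu|k|^2\,z^{-(l+2)}J_{l+2}(z)$, to the fact that $z\mapsto z^{-(l+2)}J_{l+2}(z)$ attains its maximum modulus as $z\to0^+$; this follows from the Weierstrass product $z^{-\nu}J_\nu(z)=\tfrac{2^{-\nu}}{\Gamma(\nu+1)}\prod_m(1-z^2/j_{\nu,m}^2)$ together with the $z^{-1/2}$ decay of $J_{\nu}$. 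Dividing by $l+1$ and using $\nu\le M_1$ yields $M_1|k|^2|\hat H^{[l]}(k,0)|/[(l+1)(l+2)]$. The temperature component $\hat S$ is identical with $\mu$ in place of $\nu$ (still $\le M_1$), and the MHD equations (\ref{3-4.62})–(\ref{4-4.62}) are identical with diffusivities $\nu$ and $(\mu\sigma)^{-1}$, both bounded by $M_2$.

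The forcing piece is the crux and the step I expect to dominate the work. Estimating it by $\tfrac1{l+1}\big(\sup_{[0,p]}|\hat R_1^{[l]}|\big)\int_0^p|\partial_pG_l(p,p')|\,dp'$, the claim $\tfrac{C_6}{(l+1)^{5/3}}$ is equivalent to the uniform kernel estimate
\[
\sup_{p\ge0}\int_0^p|\partial_pG_l(p,p')|\,dp'\le C_6\,(l+1)^{-2/3},
\]
with $C_6$ absolute by the scaling reduction above, where $\partial_pG_l=c_0^{-1}p'^{l+1}\big(\psi_l'(p)\phi_l(p')-\phi_l'(p)\psi_l(p')\big)$. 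The difficulty is gaining the factor $(l+1)^{-2/3}$ over the trivial $O(1)$ bound; a crude estimate that merely integrates $|\partial_pG_l|$ cannot see the cancellation and only gives $(l+1)^{-1}$ overall, which is not enough to close the induction in Lemma \ref{3.2.}'s $p$–derivative analogue. I would split $\int_0^p(\cdots)dp'$ at the turning point $p'\approx(l+1)^2/(4\nu|k|^2)$ and use the uniform large–order (Olver/Airy) asymptotics of $J_{l+1},Y_{l+1}$ and their derivatives: exponential smallness of $J_{l+1}$ in the monotone region $z'<l+1$, the $z'^{-1/2}$ decay plus oscillatory cancellation in $z'>l+1$, and Airy bounds in the transition layer $z'\approx l+1$, where the Bessel functions are of size $(l+1)^{-1/3}$ on a layer of width scaling like $(l+1)^{1/3}$ in $z'$; collecting these three contributions is what produces the fractional power $(l+1)^{-2/3}$. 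Finally I would observe that in the MHD equations the cross terms only alter which lower–order profiles appear inside $\hat R_3^{[l]},\hat R_4^{[l]}$ and leave the linear ODE structure, hence this entire argument, unchanged, with $M_1$ replaced by $M_2$.
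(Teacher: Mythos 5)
Your proposal is correct and follows essentially the same route as the paper's proof: invert the operator $p\partial_{pp}+(l+2)\partial_p+\nu|k|^2$ using the Green's function built from $J_{l+1}$, $Y_{l+1}$ with the bounded branch selected at $p=0$, differentiate in $p$, bound the homogeneous piece via $\sup_{z\geq 0}\left|z^{-(l+2)}J_{l+2}(z)\right|=2^{-(l+2)}/(l+2)!$ (giving exactly the $M_i|k|^2/[(l+1)(l+2)]$ term), and invoke the kernel estimate $\int_0^z\frac{z'}{z}|\mathcal{L}_z(z,z')|\,dz'\leq C(l+1)^{-2/3}$ with $C$ absolute by the $z=2|k|\sqrt{\nu p}$ scaling. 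The only difference is that the paper imports this last estimate from \cite{smalltime} as a known Bessel property, whereas you sketch its proof via uniform large-order (turning-point/Airy) asymptotics, which is indeed the mechanism producing the fractional power $(l+1)^{-2/3}$.
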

\begin{proof} The proof is in \cite{smalltime} under Lemma 4.4. The lemma is dependent only on the operator $\mathcal{D}$ which is the same in our case. The idea of the proof is as follows. We invert the operator on the left of (\ref{1-4.62}) with the requirement that $\hat{H}$ is bounded at $p=0$, obtaining
\begin{equation}\nonumber
\hat{H}^{[l]}(k,p)=\int_0^p\mathcal{L}(2|k|\sqrt{\nu p},2|k|\sqrt{\nu p'})\hat{R}_1^{(l)}(k,p')dp'+2^{l+1}(l+1)!\frac{J_{l+1}(z)}{z^{l+1}}\hat{H}^{[l]}(k,0),
\end{equation}
where
\begin{equation}\nonumber
\mathcal{L}(z,z')=\pi z^{-(l+1)}\left[-J_{l+1}(z)z'^{(l+1)}Y_{l+1}(z')+z'^{(l+1)}J_{l+1}(z')Y_{l+1}(z)\right].
\end{equation}
Then, we take a derivative with respect to $p$ yielding
\begin{equation}\nonumber
(l+1)\hat{H}^{[l+1]}(k,p)=\frac{|k|\sqrt{\nu}}{\sqrt{p}}\int_0^p\mathcal{L}_z(2|k|\sqrt{\nu p},2|k|\sqrt{\nu p'})\hat{R}_1^{(l)}(k,p')dp'-2^{l+2}(l+1)!|k|^2\frac{J_{l+2}(z)}{z^{l+2}}\hat{H}^{[l]}(k,0).
\end{equation}
Using properties of Bessel functions, it is know that 
\begin{equation}\nonumber
2^{l+2}(l+1)!\left|\frac{J_{l+2}(z)}{z^{l+2}}\right|\leq\frac{1}{l+2} 
\end{equation}
and that
\begin{equation}\nonumber
\int_0^z\frac{z'}{z}|\mathcal{L}_z(z,z')|dz'\leq\frac{C}{(l+1)^{2/3}},
\end{equation}
where the constant is independent of $l$. Thus, after a change of variables,
\begin{equation*}
(l+1)|\hat{H}^{[l+1]}(k,p)|\leq \sup_{p'\in[0,p]}|\hat{R}_1^{(l)}|\frac{C}{(l+1)^{2/3}}+\frac{|k|^2|\hat{H}(k,0)|}{l+2},
\end{equation*}
and the claim follows.
\end{proof}

\begin{Lemma}\label{4.6.} Sup
\begin{equation}\nonumber
\left|k_j\left(P_k(\hat{u}_{0,j}\hat{*}\hat{H}^{[l]}), \hat{u}_{0,j}\hat{*}\hat{S}^{[l]}\right)\right|\textnormal{ and }
\left|k_j\left(P_k(\hat{H}^{[l]}_j\hat{*}\hat{u}_{0}), \hat{H}^{[l]}_j\hat{*}\hat{\Theta}_0\right)\right|
\end{equation} 
are bounded by
\begin{equation}\nonumber
C_1 ||(\hat{u}_0, \hat{\Theta}_0)||_{\gamma,\beta}\frac{(l+1)^{2/3} AD^l e^{-\beta|k|+\omega' p}}{(2l+1)(1+p^2)(1+|k|)^\gamma}\mathcal{Q}_{2l+2}(|\beta k|).
\end{equation}
Similarly,
\begin{align*}
\left|k_j\left(P_k(\hat{v}_{0,j}\hat{*}\hat{W}^{[l]}), P_k(\hat{v}_{0,j}\hat{*}\hat{Q}^{[l]})\right)\right|&,\,
\left|k_j\left(P_k(\hat{W}^{[l]}_j\hat{*}\hat{v}_{0}), P_k(\hat{W}^{[l]}_j\hat{*}\hat{B}_0)\right)\right|,\\ \nonumber
\left|k_j\left(P_k(\hat{B}_{0,j}\hat{*}\hat{S}^{[l]}), P_k(\hat{B}_{0,j}\hat{*}\hat{W}^{[l]})\right)\right|&,\textnormal{ and }
\left|k_j\left(P_k(\hat{Q}^{[l]}_j\hat{*}\hat{B}_{0}), P_k(\hat{Q}^{[l]}_j\hat{*}\hat{v}_0)\right)\right|
\end{align*}
are bounded by
\begin{equation}\nonumber
C_1 ||(\hat{v}_0, \hat{B}_0)||_{\gamma,\beta}\frac{(l+1)^{2/3} \tilde A\tilde D^l e^{-\beta|k|+\alpha' p}}{(2l+1)(1+p^2)(1+|k|)^\gamma}\mathcal{Q}_{2l+2}(|\beta k|).
\end{equation}
We also have
\begin{equation}\nonumber
|P_k(a e_2\hat{S}^{[l]})|\leq a\frac{e^{\omega' p}e^{-\beta |k|}AD^l}{(1+p^2)(1+|k|)^{\gamma}}\frac{\mathcal{Q}_{2l}(|\beta k|)}{(2l+1)^2}.
\end{equation}
\noindent In the above, $C_1=C_1(d)$ is defined in Lemma \ref{6.10.}.
\end{Lemma}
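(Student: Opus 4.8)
The plan is to treat this as the $p$-dependent analogue of Lemma \ref{3.5.}, carrying the weight $e^{\omega' p}/(1+p^2)$ (resp.\ $e^{\alpha' p}/(1+p^2)$) passively through the Fourier convolution. Since the data $(\hat{u}_0,\hat{\Theta}_0)$ and $(\hat{v}_0,\hat{B}_0)$ are independent of $p$, the only object carrying $p$-dependence is $(\hat{H}^{[l]},\hat{S}^{[l]})$ (resp.\ $(\hat{W}^{[l]},\hat{Q}^{[l]})$), so the factor $e^{\omega' p}/(1+p^2)$ pulls out of the $k'$-integral untouched and it remains only to estimate a spatial convolution of the same form as in Lemma \ref{3.5.}.

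First I would insert the inductive bound (\ref{1-4.60}) for $(\hat{H}^{[l]},\hat{S}^{[l]})$ and use Lemma \ref{2.3.} to discard the Hodge projection $P_k$ at no cost. Bounding $|k_j|\le|k|$ and $|\hat{u}_{0,j}(k')|\le e^{-\beta|k'|}\|(\hat{u}_0,\hat{\Theta}_0)\|_{\gamma,\beta}(1+|k'|)^{-\gamma}$, together with the subadditivity $e^{-\beta(|k'|+|k-k'|)}\le e^{-\beta|k|}$, reduces each of the first two expressions to
\[
\frac{\|(\hat{u}_0,\hat{\Theta}_0)\|_{\gamma,\beta}\,A\,D^{l}\,e^{-\beta|k|+\omega' p}}{(2l+1)^{2}(1+p^{2})}\;|k|\int_{k'\in\mathbb{R}^{d}}\frac{e^{-\beta|k-k'|}\,\mathcal{Q}_{2l}(\beta|k'|)}{(1+|k'|)^{\gamma}(1+|k-k'|)^{\gamma}}\,dk'.
\]
The remaining integral is exactly the object estimated by Lemma \ref{6.10.}, which raises the polynomial order from $\mathcal{Q}_{2l}$ to $\mathcal{Q}_{2l+2}$, extracts the decay $(1+|k|)^{-\gamma}$, and supplies the constant $C_1=C_1(d)$ together with the sharp combinatorial factor $(l+1)^{2/3}/(2l+1)$. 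Substituting this yields the first two claimed bounds verbatim. The buoyancy term $P_k(ae_2\hat{S}^{[l]})$ requires no convolution at all: Lemma \ref{2.3.} and the bound (\ref{1-4.60}) give it directly, with the scalar $a$ in front and the polynomial order left at $\mathcal{Q}_{2l}$.

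The MHD estimates are identical term by term. Each of the four convolutions pairs a $p$-independent datum ($\hat{v}_0$ or $\hat{B}_0$) with one component of $(\hat{W}^{[l]},\hat{Q}^{[l]})$, so the same computation applies with $(\hat{u}_0,\hat{\Theta}_0)$ replaced by $(\hat{v}_0,\hat{B}_0)$, $A$ by $\tilde A$, $D$ by $\tilde D$, and $\omega'$ by $\alpha'$; the $\max$ built into $\|(\hat{v}_0,\hat{B}_0)\|_{\gamma,\beta}$ lets me absorb both components at once, and Lemma \ref{6.10.} again delivers the same factor $C_1(l+1)^{2/3}/(2l+1)$.

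The only genuinely delicate point is the exponent of $(l+1)$, and I expect the real work to be hidden in the invocation of Lemma \ref{6.10.}: its role is to sharpen the crude $(l+2)$ factor that Lemma \ref{6.7.} produced in Lemma \ref{3.5.} down to $(l+1)^{2/3}$. This sharper exponent is precisely what is needed so that, once these bounds are fed into the inversion estimate of Lemma \ref{4.4.} with its prefactor $C_6/(l+1)^{5/3}$, the net power $(l+1)^{2/3}/(l+1)^{5/3}=(l+1)^{-1}$ combines with the $1/(2l+1)$ here to close the induction for (\ref{1-4.60}) at level $l+1$. Tracking this $(l+1)$-power, rather than any single convolution bound, is where the care is required.
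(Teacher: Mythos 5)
Your overall route is exactly the paper's: insert the inductive bound (\ref{1-4.60}), observe that the weight $e^{\omega' p}/(1+p^2)$ (resp.\ $e^{\alpha' p}/(1+p^2)$) rides passively through the $k'$-integral because the data are $p$-independent, apply Lemma \ref{6.10.} to the resulting spatial convolution, and read the buoyancy term $P_k(ae_2\hat{S}^{[l]})$ directly off the assumed bound via Lemma \ref{2.3.}. The final constants, including the bookkeeping $(l+1)^{2/3}(2l+1)/(2l+1)^2=(l+1)^{2/3}/(2l+1)$, agree with the paper, and your closing observation --- that Lemma \ref{6.10.} sharpens the $(l+2)$ of Lemma \ref{3.5.} to $(l+1)^{2/3}$ precisely so the induction closes against the $C_6/(l+1)^{5/3}$ prefactor of Lemma \ref{4.4.} --- is exactly the structural point of this section.

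There is, however, one step that is wrong as written and would fail if taken literally. You invoke the subadditivity $e^{-\beta(|k'|+|k-k'|)}\le e^{-\beta|k|}$ to pull $e^{-\beta|k|}$ out of the integral while simultaneously keeping a factor $e^{-\beta|k-k'|}$ inside; but the inequality $e^{-\beta(|k'|+|k-k'|)}\le e^{-\beta|k|}e^{-\beta|k-k'|}$ is false in general (it would require $|k'|\ge |k|$). More importantly, subadditivity must not be applied at this stage at all: the full factor $e^{-\beta(|k'|+|k-k'|)}$ has to stay inside the integral, both because that is the integrand to which Lemma \ref{6.10.} applies, and because without interior exponential decay the integral
\begin{equation*}
\int_{k'\in\mathbb{R}^d}\frac{\mathcal{Q}_{2l}(\beta|k'|)}{(1+|k'|)^{\gamma}(1+|k-k'|)^{\gamma}}\,dk'
\end{equation*}
diverges once $2l$ exceeds roughly $2\gamma-d$, since $\mathcal{Q}_{2l}$ has degree $2l$ while $\gamma$ is fixed; the exponential is what tames the growing polynomial. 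The factor $e^{-\beta|k|}$ in the conclusion is an \emph{output} of Lemma \ref{6.10.}, not something to extract beforehand. The repair is one line: the correct reduction is
\begin{equation*}
\frac{\|(\hat{u}_0,\hat{\Theta}_0)\|_{\gamma,\beta}\,A D^l e^{\omega' p}}{(2l+1)^2(1+p^2)}\;|k|\int_{k'\in\mathbb{R}^d}\frac{e^{-\beta(|k'|+|k-k'|)}}{(1+|k'|)^{\gamma}(1+|k-k'|)^{\gamma}}\,\mathcal{Q}_{2l}(\beta|k'|)\,dk',
\end{equation*}
to which Lemma \ref{6.10.} applies verbatim and yields $C_1(l+1)^{2/3}(2l+1)e^{-\beta|k|}(1+|k|)^{-\gamma}\mathcal{Q}_{2l+2}(\beta|k|)$ for the integral; with that correction your proof coincides with the paper's.
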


\begin{proof} For the first inequality, we use (\ref{1-4.60}) and then apply Lemma \ref{6.10.} to get
\begin{multline}\nonumber
(1+p^2)e^{-\omega' p}|k_j\hat{u}_{0,j}\hat{*}(\hat{H}^{[l]}, \hat{S}^{[l]})|\leq||\hat{u}_0||_{\gamma,\beta}\frac{A D^l}{(2l+1)^2}|k|\int_{k'\in\mathbb{R}^d}\frac{e^{-\beta(|k'|+|k-k'|)}}{(1+|k'|)^{\gamma}(1+|k-k'|)^{\gamma}}\mathcal{Q}_{2l}(\beta|k'|)dk'\\ \nonumber
\leq C_1(l+1)^{2/3}||\hat{u}_0||_{\gamma,\beta}\frac{A D^l e^{-\beta |k|}}{(2l+1)(1+|k|)^{\gamma}}\mathcal{Q}_{2l+2}(\beta|k|).
\end{multline}
\noindent The other inequalities are similar except for the last which is simply the statement of the assumed bound.
\end{proof}

\begin{Lemma}\label{4.7.} Suppose $(\hat{H}^{[l]}, \hat{S}^{[l]})$ satisfies $(\ref{1-4.60})$ and $(\hat{W}^{[l]}, \hat{Q}^{[l]})$ satisfies $(\ref{2-4.60})$ for $l\geq 1$. Then both
\begin{equation}\nonumber
\left|\frac{k_j}{l}\left(P_k(\hat{u}_{1,j}\hat{*}\hat{H}^{[l-1]}), \hat{u}_{1,j}\hat{*}\hat{S}^{[l-1]}\right)\right|\textnormal{ and }
\left|\frac{k_j}{l}\left(P_k(\hat{H}^{[l-1]}_j\hat{*}\hat{u}_{1}), \hat{H}^{[l-1]}_j\hat{*}\hat{\Theta}_1\right)\right|
\end{equation}
are bounded by 
\begin{equation}\nonumber
C_1 ||(\hat{u}_1, \hat{\Theta}_1)||_{\gamma,\beta}\frac{l^{2/3} AD^{l-1}e^{-\beta|k|+\omega' p}}{l(2l-1)(1+p^2)(1+|k|)^\gamma}\mathcal{Q}_{2l}(|\beta k|).
\end{equation}
Similarly,
\begin{align*}
\left|\frac{k_j}{l}\left(P_k(\hat{v}_{1,j}\hat{*}\hat{W}^{[l-1]}), P_k(\hat{v}_{1,j}\hat{*}\hat{Q}^{[l-1]})\right)\right|&, \,
\left|\frac{k_j}{l}\left(P_k(\hat{W}^{[l-1]}_j\hat{*}\hat{v}_{1}), P_k(\hat{W}^{[l-1]}_j\hat{*}\hat{B}_1)\right)\right|,\\ \nonumber
\left|\frac{k_j}{l}\left(P_k(\hat{B}_{1,j}\hat{*}\hat{Q}^{[l-1]}), P_k(\hat{B}_{1,j}\hat{*}\hat{W}^{[l-1]})\right)\right|&,\textnormal{ and }
\left|\frac{k_j}{l}\left(P_k(\hat{Q}^{[l-1]}_j\hat{*}\hat{B}_{1}), P_k(\hat{Q}^{[l-1]}_j\hat{*}\hat{v}_1)\right)\right|
\end{align*}
are bounded by
\begin{equation}\nonumber
C_1 ||(\hat{v}_1, \hat{B}_1)||_{\gamma,\beta}\frac{l^{2/3} \tilde A\tilde D^{l-1}e^{-\beta|k|+\alpha' p}}{l(2l-1)(1+p^2)(1+|k|)^\gamma}\mathcal{Q}_{2l}(|\beta k|).
\end{equation}
\end{Lemma}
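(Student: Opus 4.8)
The plan is to follow the argument for Lemma \ref{4.6.} essentially verbatim, making the single substitution $l\mapsto l-1$ throughout the index bookkeeping, replacing the fixed functions $(\hat{u}_0,\hat{\Theta}_0)$ (resp. $(\hat{v}_0,\hat{B}_0)$) by $(\hat{u}_1,\hat{\Theta}_1)$ (resp. $(\hat{v}_1,\hat{B}_1)$), and then accounting for the extra prefactor $1/l$. First I would observe that $\hat{u}_1,\hat{\Theta}_1,\hat{v}_1,\hat{B}_1$ are fixed functions of $k$ with finite $\|\cdot\|_{\gamma,\beta}$ norm (the bound on $\|(\hat{u}_1,\hat{\Theta}_1)\|_N$ was established in the proof of Theorems \ref{existence} and \ref{existenceMHD}), carrying no $p$-dependence. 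The convolutions appearing in the statement are pure Fourier convolutions in $k$; there is no Laplace convolution in $p$ here, since the $\int_0^p$ terms in (\ref{1-4.62})--(\ref{4-4.62}) are treated separately. Consequently the weight $e^{\omega' p}/(1+p^2)$ carried by the inductive bound (\ref{1-4.60}) on $(\hat{H}^{[l-1]},\hat{S}^{[l-1]})$ simply factors through the convolution and is left untouched.

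Next I would insert (\ref{1-4.60}) for $(\hat{H}^{[l-1]},\hat{S}^{[l-1]})$, pull out $\|(\hat{u}_1,\hat{\Theta}_1)\|_{\gamma,\beta}$, and reduce the estimate to controlling
\begin{equation*}
|k|\int_{k'\in\mathbb{R}^d}\frac{e^{-\beta(|k'|+|k-k'|)}}{(1+|k'|)^{\gamma}(1+|k-k'|)^{\gamma}}\mathcal{Q}_{2(l-1)}(\beta|k'|)\,dk'.
\end{equation*}
Applying Lemma \ref{6.10.} in $\mathbb{R}^d$ with $n=0$ (its internal index now being $l-1$) bounds this by a constant times $l^{2/3}\,e^{-\beta|k|}(1+|k|)^{-\gamma}\mathcal{Q}_{2l}(\beta|k|)/(2l-1)$, exactly as in Lemma \ref{4.6.} but with every occurrence of $l$ replaced by $l-1$; in particular the polynomial degree is raised from $2(l-1)$ to $2l$, matching the target $\mathcal{Q}_{2l}$. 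Lemma \ref{2.3.} absorbs the Hodge projection $P_k$, and the remaining $1/l$ out front produces the denominator $l(2l-1)$ in the claimed bound.

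The MHD estimates are identical: one uses (\ref{2-4.60}) in place of (\ref{1-4.60}), replaces $(A,D)$ by $(\tilde A,\tilde D)$ and $(\hat{u}_1,\hat{\Theta}_1)$ by $(\hat{v}_1,\hat{B}_1)$, and invokes the same Lemma \ref{6.10.} bound for each of the four convolution pairings. I do not anticipate a genuine obstacle, since this lemma is a routine reindexing of Lemma \ref{4.6.}; the only points demanding care are verifying that the arithmetic of the index factors indeed collapses to $l^{2/3}/(l(2l-1))$ after the $1/l$ prefactor is included, and that $\|(\hat{u}_1,\hat{\Theta}_1)\|_{\gamma,\beta}$ (rather than $\|(\hat{u}_0,\hat{\Theta}_0)\|_{\gamma,\beta}$) is the norm to be extracted, both of which follow directly from the stated hypotheses and the definitions in (\ref{u1}) and (\ref{v1}).
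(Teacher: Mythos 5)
Your proposal is correct and matches the paper's proof exactly: the paper disposes of Lemma \ref{4.7.} in one line, saying it is the proof of Lemma \ref{4.6.} with $l$ replaced by $l-1$ and $(\hat{u}_0,\hat{\Theta}_0)$, $(\hat{v}_0,\hat{B}_0)$ replaced by $(\hat{u}_1,\hat{\Theta}_1)$, $(\hat{v}_1,\hat{B}_1)$, which combined with the prefactor $1/l$ yields the stated $l^{2/3}/\bigl(l(2l-1)\bigr)$ factor, just as you argue. One cosmetic slip: Lemma \ref{6.10.} has no parameter $n$ (you are thinking of Lemma \ref{6.7.}); the step you intend --- applying Lemma \ref{6.10.} with its internal index set to $l-1$, so the integral of $\mathcal{Q}_{2(l-1)}$ is bounded by $C_1\,l^{2/3}(2l-1)\,e^{-\beta|k|}(1+|k|)^{-\gamma}\mathcal{Q}_{2l}(\beta|k|)$, which then combines with the $(2l-1)^{-2}$ from the inductive hypothesis --- is exactly what your arithmetic uses and is correct.
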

\noindent The proof is the same as Lemma \ref{4.6.} with $l-1$ replacing $l$, $(\hat{u}_1,\hat{\Theta}_1)$ replacing $(\hat{u}_0, \hat{\Theta}_0)$, and $(\hat{v}_1, \hat{B}_1)$ replacing $(\hat{v}_0, \hat{B}_0)$.

\begin{Lemma}\label{4.8.} Suppose $(\hat{H}^{[l]}, \hat{S}^{[l]})$ satisfies $(\ref{1-4.60})$ and $(\hat{W}^{[l]}, \hat{Q}^{[l]})$ satisfies $(\ref{2-4.60})$ for $l\geq1$. Then 
\begin{equation}\nonumber
\left|\frac{k_j}{l}\left(P_k(\hat{H}_j^{[l-1]}(\cdot,0)\hat{*}\hat{H}^{[0]}(\cdot, p)),\hat{H}_j^{[l-1]}(\cdot,0)\hat{*}\hat{S}^{[0]}(\cdot, p)\right)\right|
\end{equation}
is bounded by
\begin{equation}\nonumber
C_1\frac{(l+1)^{2/3}\tilde A^2 \tilde D^{l-1}e^{-\beta |k|+\alpha' p}}{l(2l-1)(1+|k|)^{\gamma}(1+p^2)}\mathcal{Q}_{2l}(\beta |k|).
\end{equation}
We also have 
\begin{equation}\nonumber
\left|\frac{k_j}{l}\left(P_k(\hat{W}_j^{[l-1]}(\cdot,0)\hat{*}\hat{W}^{[0]}(\cdot, p)), P_k(\hat{W}_j^{[l-1]}(\cdot,0)\hat{*}\hat{Q}^{[0]}(\cdot, p))\right)\right|
\end{equation}
and
\begin{equation}\nonumber
\left|\frac{k_j}{l}\left[\left(P_k(\hat{Q}_j^{[l-1]}(\cdot,0)\hat{*}\hat{Q}^{[0]}(\cdot, p)), P_k(\hat{Q}_j^{[l-1]}(\cdot,0)\hat{*}\hat{W}^{[0]}(\cdot, p))\right)\right]\right|
\end{equation}
bounded by
\begin{equation}\nonumber
C_1\frac{(l+1)^{2/3}\tilde A^2 \tilde D^{l-1}e^{-\beta |k|+\alpha' p}}{l(2l-1)(1+|k|)^{\gamma}(1+p^2)}\mathcal{Q}_{2l}(\beta |k|).
\end{equation}
\end{Lemma}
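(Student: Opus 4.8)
The plan is to treat each of these products as a \emph{pure} Fourier convolution in $k$: one factor is frozen at $p=0$ and therefore carries no $p$-dependence, while the other factor $(\hat H^{[0]},\hat S^{[0]})$ (respectively $(\hat W^{[0]},\hat Q^{[0]})$) carries all of it. This is exactly what distinguishes these ``boundary'' terms (the $l_1=l-1$ endpoint of the sums) from the genuine Laplace-convolution terms $\int_0^p(\cdots)(p-s)\hat{*}(\cdots)(s)\,ds$ in (\ref{1-4.62})--(\ref{4-4.62}), and it makes the estimate strictly easier: no convolution in $p$ occurs, so no $M_0$-type loss from Lemma \ref{2.6.} enters and the $p$-weight separates cleanly from the $k$-integral.

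First I would bound the frozen factor by evaluating the inductive hypothesis at $p=0$. Setting $p=0$ in (\ref{2-4.60}) (or (\ref{1-4.60}) in the Boussinesq line) kills $e^{\alpha'\cdot 0}=1$ and $(1+0^2)=1$, so that
\begin{equation*}
|\hat W^{[l-1]}_j(k',0)|\le \frac{e^{-\beta|k'|}\tilde A\tilde D^{l-1}}{(1+|k'|)^{\gamma}}\,\frac{\mathcal{Q}_{2l-2}(\beta|k'|)}{(2l-1)^2},
\end{equation*}
and analogously for $\hat Q^{[l-1]}_j$ and $\hat H^{[l-1]}_j$. For the second factor I would use the base case, i.e.\ (\ref{4.59}) (equivalently (\ref{2-4.60}) with $l=0$, where $\mathcal{Q}_0\equiv 1$),
\begin{equation*}
|(\hat W^{[0]},\hat Q^{[0]})(k-k',p)|\le \frac{e^{\alpha' p}e^{-\beta|k-k'|}\tilde A}{(1+p^2)(1+|k-k'|)^{\gamma}}.
\end{equation*}
Since the frozen factor is $p$-independent, the prefactor $e^{\alpha' p}/(1+p^2)$ pulls straight out of the $k'$-integral, leaving precisely the $k$-convolution integral
\begin{equation*}
|k|\int_{k'\in\mathbb{R}^d}\frac{e^{-\beta(|k'|+|k-k'|)}}{(1+|k'|)^{\gamma}(1+|k-k'|)^{\gamma}}\,\mathcal{Q}_{2l-2}(\beta|k'|)\,dk'
\end{equation*}
that already appeared in the proofs of Lemmas \ref{4.6.} and \ref{4.7.}, now with polynomial index $2(l-1)$.

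Next I would apply Lemma \ref{6.10.} in $\mathbb{R}^d$ with $n=0$ at the shifted index $2(l-1)$, which raises $\mathcal{Q}_{2l-2}$ to $\mathcal{Q}_{2l}$ and returns the factor $C_1\,l^{2/3}(2l-1)\,e^{-\beta|k|}(1+|k|)^{-\gamma}\mathcal{Q}_{2l}(\beta|k|)$, together with the scalar prefactor $|k_j|/l\le|k|/l$ from the statement and the Hodge projection $P_k$, which costs nothing by Lemma \ref{2.3.}. Collecting constants, the $(2l-1)^{-2}$ from the frozen factor, the $l^{-1}$ from $k_j/l$, and the $(2l-1)$ gained from Lemma \ref{6.10.} combine to $\tfrac{1}{l(2l-1)}$, while $l^{2/3}\le(l+1)^{2/3}$; this gives exactly the claimed bound with $\tilde A^2\tilde D^{l-1}e^{\alpha' p}$ for the two MHD products. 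The Boussinesq product is identical in method but naturally yields $A^2 D^{l-1}e^{\omega' p}$; choosing the free constants so that $\tilde A\ge A$, $\tilde D\ge D$, $\alpha'\ge\omega'$ (which is permissible since they are only required to be sufficiently large) reproduces the stated uniform form.

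The only real bookkeeping to watch — and the step I expect to be the main obstacle — is matching the powers of $l$: one must check that freezing at $p=0$ leaves the $(2l-1)^{-2}$ weight untouched, that Lemma \ref{6.10.} at the shifted index returns exactly the factor $(2l-1)$ (rather than $(2l+1)$ or $(2l+3)$), and that pairing with $k_j/l$ lands on $\tfrac{(l+1)^{2/3}}{l(2l-1)}$ and not an off-by-one variant. Everything else is a direct transcription of the estimates already carried out for Lemmas \ref{4.6.}--\ref{4.7.}.
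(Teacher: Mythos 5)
Your proposal is correct and follows essentially the same route as the paper's proof: freeze the $[l-1]$ factor at $p=0$ via the inductive bound (\ref{2-4.60}), bound $(\hat{W}^{[0]},\hat{Q}^{[0]})$ by (\ref{4.59}), pull the $e^{\alpha' p}/(1+p^2)$ weight out of the $k$-integral, and apply Lemma \ref{6.10.} at the shifted index $l-1$ together with Lemma \ref{2.3.} and $l^{2/3}\leq(l+1)^{2/3}$. Your closing remark about arranging $\tilde A\geq A$, $\tilde D\geq D$, $\alpha'\geq\omega'$ is a sensible patch for the fact that the statement writes the Boussinesq bound with tilded constants, whereas in the paper's own application (the proof of Lemma \ref{4.2.}) that term is used with $A^2D^{l-1}e^{\omega' p}$.
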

 
\begin{proof} We give the proof of one of the magnetic B$\acute{\textnormal{e}}$nard cases the others are similar. Using $(\ref{2-4.60})$ with $p=0$ and $(\ref{4.59})$ with $\tilde A=3||(\hat{v}_1, \hat{B_1})||_{\gamma, \beta}$ along with Lemma \ref{6.10.}, we get
\begin{align}\nonumber 
(1+p^2)e^{-\alpha' p}&\left|\right.\frac{k_j}{l}[\hat{W}_j^{[l-1]}(\cdot,0)\hat{*}(\hat{W}^{[0]}, \hat{Q}^{[0]})(\cdot, p)]\left.\right|\\ \nonumber 
&\leq \frac{\tilde A^2\tilde D^{l-1}}{l(2l-1)^2}|k|\int_{k'\in \mathbb{R}^d}\frac{e^{-\beta(|k'|+|k-k'|)}}{(1+|k'|)^{\gamma}(1+|k-k'|)^{\gamma}}\mathcal{Q}_{2l-2}(\beta |k'|)dk'\\ \nonumber
&\leq C_1\frac{l^{2/3}\tilde A^2 \tilde D^{l-1}e^{-\beta |k|}}{l(2l-1)(1+|k|)^{\gamma}}\mathcal{Q}_{2l}(\beta |k|)
\end{align}
\noindent From this the lemma follows after noting $(1+l)^{2/3}\geq l^{2/3}$ and using Lemma \ref{2.3.}.
\end{proof}

\begin{Lemma}\label{4.9.} Suppose $(\hat{H}^{[l_1]}, \hat{S}^{[l_1]})$ and $(\hat{H}^{[l-l_1-1]}, \hat{S}^{[l-l_1-1]})$ satisfies $(\ref{1-4.60})$ and $(\hat{W}^{[l_1]}, \hat{Q}^{[l_1]})$ and $(\hat{W}^{[l-l_1-1]}, \hat{Q}^{[l-l_1-1]})$ satisfies $(\ref{2-4.60})$ for $l_1=1, \dots , l-2$ where $l\geq 2$. Then for $C_8=82$ and  $C_7=C_7(d)$ given in Lemma \ref{6.9.}, we have
\begin{equation}\nonumber
\left|k_j\sum_{l_1=1}^{l-2}\frac{l_1!(l-l_1-1)!}{l!}\left(P_k(\hat{H}^{[l_1]}_j(\cdot,0)\hat{*}\hat{H}^{[l-l_1-1]}(\cdot,p)), \hat{H}^{[l_1]}_j(\cdot,0)\hat{*}\hat{S}^{[l-l_1-1]}(\cdot,p)\right)\right|
\end{equation}
is bounded by 
\begin{equation*}
C_8C_72^{\gamma}\pi A^2D^{l-1}\frac{e^{-\beta|k|+\omega' p}}{3\beta^d(1+p^2)(1+|k|)^{\gamma}}\frac{l\mathcal{Q}_{2l}(\beta |k|)}{(2l+3)^2}.
\end{equation*}
Both
\begin{equation} \nonumber
\left|k_j\sum_{l_1=1}^{l-2}\frac{l_1!(l-l_1-1)!}{l!}\left(P_k(\hat{W}^{[l_1]}_j(\cdot,0)\hat{*}\hat{W}^{[l-l_1-1]}(\cdot,p)), P_k(\hat{W}^{[l_1]}_j(\cdot,0)\hat{*}\hat{Q}^{[l-l_1-1]}(\cdot,p))\right)\right|
\end{equation}
and 
\begin{equation}\nonumber
\left|k_j\sum_{l_1=1}^{l-2}\frac{l_1!(l-l_1-1)!}{l!}\left(P_k(\hat{Q}^{[l_1]}_j(\cdot,0)\hat{*}\hat{Q}^{[l-l_1-1]}(\cdot,p)), P_k(\hat{Q}^{[l_1]}_j(\cdot,0)\hat{*}\hat{W}^{[l-l_1-1]}(\cdot,p))\right)\right|
\end{equation}
are bounded by
\begin{equation}\nonumber
C_8C_72^{\gamma}\pi\tilde  A^2\tilde D^{l-1}\frac{e^{-\beta|k|+\alpha' p}}{3\beta^d(1+p^2)(1+|k|)^{\gamma}}\frac{l\mathcal{Q}_{2l}(\beta |k|)}{(2l+3)^2}.
\end{equation}
\end{Lemma}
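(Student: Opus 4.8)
The plan is to follow the template of Lemma \ref{3.7.} almost verbatim, the only genuinely new ingredient being the control of the $p$-dependence. I will treat the first Boussinesq estimate in detail; the two magnetic B\'enard bounds are identical after replacing $A,D$ by $\tilde A,\tilde D$ and the pair $(\hat H,\hat S)$ by the appropriate pair among $(\hat W,\hat Q)$. Note first that the sum is empty when $l=2$, so the bound is trivial there and I may assume $l\ge 3$, whence $l_2:=l-l_1-1\ge 1$ for every $l_1\in\{1,\dots,l-2\}$ and both factors in each convolution carry a genuine $\mathcal Q$-weight.

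The key observation is that the first factor $\hat H^{[l_1]}_j(\cdot,0)$ is evaluated at $p=0$: bounding it by (\ref{1-4.60}) at $p=0$, where $e^{\omega' p}=1$ and $1+p^2=1$, contributes only the weight $\mathcal Q_{2l_1}(\beta|k'|)/(2l_1+1)^2$ with no growth in $p$, while the second factor $\hat H^{[l-l_1-1]}(\cdot,p)$ bounded by (\ref{1-4.60}) contributes $\frac{e^{\omega' p}}{1+p^2}\,\frac{\mathcal Q_{2l_2}(\beta|k-k'|)}{(2l_2+1)^2}$. Consequently the entire $p$-dependence pulls out as the common prefactor $e^{\omega' p}/(1+p^2)$, and what remains inside each Fourier convolution is exactly the $k$-integral already estimated in the proof of Lemma \ref{3.7.}; the projections $P_k$ are discarded using Lemma \ref{2.3.}. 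For each fixed $l_1$ I would then invoke Lemma \ref{6.9.} in $\mathbb R^d$ to bound $|k|\int_{k'}e^{-\beta(|k'|+|k-k'|)}(1+|k'|)^{-\gamma}(1+|k-k'|)^{-\gamma}\mathcal Q_{2l_1}(\beta|k'|)\mathcal Q_{2l_2}(\beta|k-k'|)\,dk'$ by $C_7\frac{\pi 2^\gamma e^{-\beta|k|}(2l-1)(2l)(2l+1)}{3\beta^d(1+|k|)^\gamma}\mathcal Q_{2l}(\beta|k|)$, exactly as in the displayed step of Lemma \ref{3.7.}.

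Summing the resulting per-term bounds over $l_1=1,\dots,l-2$ and using $\sum_{l_1}\frac{1}{(2l_1+1)^2(2l_2+1)^2}\le \frac{C}{(2l+3)^2}$ with $C\le 3$ reduces everything to controlling the scalar ratio $\frac{(2l-1)(2l)(2l+1)}{l(l-1)(2l+3)^2}$. The step that distinguishes this lemma from Lemma \ref{3.7.}, and which I expect to be the main pitfall, is the factorial accounting: here there is no $\frac{1}{(l+1)(l+2)}$ prefactor, so the crude estimate $\frac{l_1!(l-l_1-1)!}{l!}\le\frac1l$ is too lossy and yields only a constant multiple of $\mathcal Q_{2l}$, which is the wrong order. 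Instead I would use the sharper bound $\frac{l_1!(l-l_1-1)!}{l!}=\frac{1}{l\binom{l-1}{l_1}}\le\frac{1}{l(l-1)}$, valid precisely because $\binom{l-1}{l_1}\ge l-1$ for $1\le l_1\le l-2$; this extra factor $\frac{1}{l-1}$ is exactly what converts the $O(1)$ ratio into $\frac{l}{(2l+3)^2}\cdot\frac{2(4l^2-1)}{l(l-1)}$.

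Finally, since $\frac{2(4l^2-1)}{l(l-1)}$ is decreasing in $l$ with maximum $70/6$ at $l=3$, the residual ratio is bounded uniformly in $l$, and the numerical check that closes the argument is $3\cdot\frac{2(4l^2-1)}{l(l-1)}\le 35 < 82 = C_8$ for all $l\ge 3$, so $C_8=82$ absorbs this factor together with the sum constant $C\le 3$ and the $\pi$ from Lemma \ref{6.9.}, leaving precisely the claimed weight $\frac{l\,\mathcal Q_{2l}(\beta|k|)}{(2l+3)^2}$. The two magnetic B\'enard bounds follow by the identical arithmetic, using (\ref{2-4.60}) and (\ref{4.59}) in place of (\ref{1-4.60}) and replacing $A^2D^{l-1}$ by $\tilde A^2\tilde D^{l-1}$.
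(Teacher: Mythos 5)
Your proof is correct and takes essentially the same route as the paper, which simply defers to \cite{smalltime}: namely the $p$-dependent analogue of the proof of Lemma \ref{3.7.}, combining the inductive bounds (\ref{1-4.60})--(\ref{2-4.60}) (the factor at $p=0$ contributing no $p$-growth, the other contributing $e^{\omega' p}/(1+p^2)$) with Lemma \ref{6.9.} and the sum estimate $\sum_{l_1=1}^{l-2}(2l_1+1)^{-2}(2(l-l_1-1)+1)^{-2}\le C\,(2l+3)^{-2}$, $C\le 3$. Your key bookkeeping point is exactly right and is the step the paper leaves implicit: since the $1/((l+1)(l+2))$ prefactor of Lemma \ref{3.7.} is absent here, the crude bound $l_1!(l-l_1-1)!/l!\le 1/l$ would not suffice, whereas $l_1!(l-l_1-1)!/l!\le 1/(l(l-1))$ does, and your numerical check $3\cdot 2(4l^2-1)/(l(l-1))\le 35<82=C_8$ for $l\ge 3$ (with $l=2$ vacuous) is accurate.
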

\noindent The proof is the same as in \cite{smalltime} the only difference is a change in the constants arising when Lemma \ref{6.9.} in $\mathbb{R}^2$ or $\mathbb{R}^3$ is applied.

\begin{Lemma}\label{4.10.} Suppose $(\hat{H}^{[l]}, \hat{S}^{[l]})$ satisfies $(\ref{1-4.60})$ and $(\hat{W}^{[l]}, \hat{Q}^{[l]})$ satisfies $(\ref{2-4.60})$ for $l\geq0$. Then
\begin{multline}\nonumber
\left|k_j\int_0^p\left(P_k(\hat{H}^{[l]}_j(\cdot,p-s)\hat{*}\hat{H}^{[0]}(\cdot,s)), \hat{H}^{[l]}_j(\cdot,p-s)\hat{*}\hat{S}^{[0]}(\cdot,s)\right)ds\right| \hspace{.5 in}\\ \nonumber
\leq C_1M_0A^2 D^{l}\frac{(l+1)^{2/3}e^{-\beta |k|+\omega' p}}{(2l+1)(1+|k|)^{\gamma}(1+p^2)}\mathcal{Q}_{2l+2}(\beta |k|).
\end{multline}
Similarly,
\begin{align*}
\left|k_j\int_0^p\left(P_k(\hat{W}^{[l]}_j(\cdot,p-s)\hat{*}\hat{W}^{[0]}(\cdot,s)), P_k(\hat{W}^{[l]}_j(\cdot,p-s)\hat{*}\hat{Q}^{[0]}(\cdot,s))\right) ds\right| \textnormal{ and}\\ \nonumber
\left|k_j\int_0^p\left(P_k(\hat{Q}^{[l]}_j(\cdot,p-s)\hat{*}\hat{Q}^{[0]}(\cdot,s)), P_k(\hat{Q}^{[l]}_j(\cdot,p-s)\hat{*}\hat{W}^{[0]}(\cdot,s))\right) ds\right| 
\end{align*}
are bounded by
\begin{equation}\nonumber
C_1M_0\tilde A^2 \tilde D^{l}\frac{(l+1)^{2/3}e^{-\beta |k|+\alpha' p}}{(2l+1)(1+|k|)^{\gamma}(1+p^2)}\mathcal{Q}_{2l+2}(\beta |k|).
\end{equation}
\noindent In the above, $M_0$, defined in Lemma \ref{2.6.}, is such that 
\begin{equation}\nonumber
\int_0^p\frac{1}{(1+(p-s)^2)(1+s^2)}ds\leq\frac{M_0}{1+p^2}.
\end{equation}
\end{Lemma}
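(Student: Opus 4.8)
The plan is to estimate the Laplace convolution in $p$ directly: move the absolute value inside both the $s$-integral and the Fourier convolution, then insert the inductive bounds. For the Boussinesq expression I would begin with
\begin{equation}\nonumber
\left|k_j\int_0^p \hat{H}^{[l]}_j(\cdot,p-s)\hat{*}\hat{H}^{[0]}(\cdot,s)\,ds\right|\leq |k|\int_0^p\int_{k'\in\mathbb{R}^d}\left|\hat{H}^{[l]}_j(k',p-s)\right|\left|\hat{H}^{[0]}(k-k',s)\right|dk'\,ds,
\end{equation}
and apply $(\ref{1-4.60})$ to the factor $\hat{H}^{[l]}_j(k',p-s)$ together with the base-case bound $(\ref{4.59})$ for $\hat{H}^{[0]}(k-k',s)$ (which is exactly $(\ref{1-4.60})$ at $l=0$, since $A=3\|(\hat u_1,\hat\Theta_1)\|_{\gamma,\beta}$). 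The crucial structural observation is that the two exponential growth factors combine as $e^{\omega'(p-s)}e^{\omega' s}=e^{\omega' p}$, so the $p$-growth decouples cleanly; the remaining $p$-dependence is $\frac{1}{(1+(p-s)^2)(1+s^2)}$, which depends only on $s$, while the surviving $k'$-kernel is independent of $s$. Hence the double integral factorizes into an $s$-integral times a $k'$-integral.

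For the $s$-integral I would invoke the $M_0$-property quoted in the statement, namely $\int_0^p\frac{ds}{(1+(p-s)^2)(1+s^2)}\leq\frac{M_0}{1+p^2}$. For the $k'$-integral multiplied by $|k|$ I would apply Lemma \ref{6.10.}, exactly as in Lemma \ref{4.6.}, which converts the Fourier convolution of the $\mathcal{Q}_{2l}$-weighted kernel against the $\mathcal{Q}_0$-weighted one into
\begin{equation}\nonumber
|k|\int_{k'\in\mathbb{R}^d}\frac{e^{-\beta(|k'|+|k-k'|)}}{(1+|k'|)^{\gamma}(1+|k-k'|)^{\gamma}}\mathcal{Q}_{2l}(\beta|k'|)\,dk'\leq C_1(l+1)^{2/3}(2l+1)\frac{e^{-\beta|k|}}{(1+|k|)^{\gamma}}\mathcal{Q}_{2l+2}(\beta|k|).
\end{equation}
Multiplying these two estimates against the prefactor $\frac{A^2D^l}{(2l+1)^2}$ coming from the two inductive hypotheses, the gain $(2l+1)$ from Lemma \ref{6.10.} cancels one power of $(2l+1)^{-2}$, leaving precisely the claimed $\frac{(l+1)^{2/3}}{2l+1}$ together with the constant $C_1M_0A^2D^l$, the growth–decay factor $e^{\omega' p-\beta|k|}$, the weights $(1+p^2)^{-1}(1+|k|)^{-\gamma}$, and $\mathcal{Q}_{2l+2}(\beta|k|)$, as asserted.

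For the two magnetic B$\acute{\textnormal{e}}$nard expressions the argument is identical, using $(\ref{2-4.60})$ and the $(\hat W^{[0]},\hat Q^{[0]})$ base case in place of the Boussinesq bounds, with $\tilde A,\tilde D,\alpha'$ replacing $A,D,\omega'$, and Lemma \ref{2.3.} absorbing the Hodge projections $P_k$ at no cost in the $\|\cdot\|_N$-norm; the same factorization and the same two estimates then yield the bound with $C_1M_0\tilde A^2\tilde D^l$. I do not expect a genuine obstacle: the only point requiring care is the bookkeeping that converts $(2l+1)^{-2}$ into $(2l+1)^{-1}$, which hinges on matching the $(2l+1)$-gain of Lemma \ref{6.10.} against the decay built into the inductive hypotheses. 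The feature distinguishing this lemma from Lemmas \ref{4.6.}--\ref{4.9.} is that \emph{both} convolution factors depend on the Borel variable $p$, so a full Laplace convolution over $[0,p]$ must be controlled; this is exactly what the $M_0$-bound accomplishes, and once it and Lemma \ref{6.10.} are granted, the estimate is a direct computation.
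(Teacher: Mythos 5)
Your proposal is correct and is essentially the paper's own proof: the paper likewise inserts the inductive bounds into both convolution factors, notes that $e^{\alpha'(p-s)}e^{\alpha's}=e^{\alpha'p}$ so the double integral factorizes, and then applies the $M_0$-bound of Lemma \ref{2.6.} to the $s$-integral and Lemma \ref{6.10.} to the $|k|$-weighted $k'$-integral, with exactly the bookkeeping $(2l+1)/(2l+1)^2=1/(2l+1)$ you describe. The only cosmetic difference is that the paper writes out the magnetic B\'enard case and says the rest are computed the same way, whereas you lead with the Boussinesq case.
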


\begin{proof} Using (\ref{2-4.60}) for the first inequality and Lemma \ref{6.10.} and Lemma \ref{2.6.} for the second, we have
\begin{align}\nonumber
&\left|k_j\int_0^p\left(P_k(\hat{W}^{[l]}_j(\cdot,p-s)\hat{*}\hat{Q}^{[0]}(\cdot,s)), P_k(\hat{W}^{[l]}_j(\cdot,p-s)\hat{*}\hat{W}^{[0]}(\cdot,s))\right)ds\right|\leq \\ \nonumber
&|k|\frac{\tilde A^2\tilde D^l}{(2l+1)^2}\int_0^p\int_{k'\in\mathbb{R}^d} \frac{e^{-\beta|k'|+|k-k'|}e^{\alpha'(p-s)+\alpha's}}{(1+(p-s)^2)(1+s^2)(1+|k'|)^{\gamma}(1+|k-k'|)^{\gamma}}\mathcal{Q}_{2l}(\beta|k'|)ds dk'\\ \nonumber
&\qquad\qquad\leq C_1M_0\tilde A^2 \tilde D^{l}\frac{(l+1)^{2/3}e^{-\beta |k|+\alpha p}}{(2l+1)(1+|k|)^{\gamma}(1+p^2)}\mathcal{Q}_{2l+2}(\beta |k|)
\end{align}
The rest are computed in the same way.
\end{proof}

\begin{Lemma}\label{4.11.} We have 
\begin{align}\nonumber k_j\left(P_k(\hat{u}_{0,j}\hat{*}\hat{u}_1),\hat{u}_{0,j}\hat{*}\hat{\Theta}_1\right)&+k_j\left(P_k(\hat{u}_{1,j}\hat{*}\hat{u}_0), \hat{u}_{1,j}\hat{*}\hat{\Theta}_0\right)\\ \nonumber
& \leq\frac{2C_0|k|e^{-\beta |k|}}{(1+|k|)^{\gamma}}||(\hat{u}_0,\hat{\Theta}_0)||_{\gamma, \beta}||(\hat{u}_1,\hat{\Theta}_1)||_{\gamma, \beta}\\ \nonumber
 \left|k_j\left(P_k(\hat{u}_{1,j}\hat{*}\hat{u}_1),\hat{u}_{1,j}\hat{*}\hat{\Theta}_1\right)\right|&\leq \frac{|k|e^{-\beta |k|}C_0}{(1+|k|)^{\gamma}}||\hat{u}_1, \hat{\Theta}_1||_{\gamma, \beta}^2.
\end{align}
Similarly, we have 
\begin{align}\nonumber
k_j\left(P_k(\hat{v}_{0,j}\hat{*}\hat{v}_1), P_k(\hat{v}_{0,j}\hat{*}\hat{B}_1)\right)&+k_j\left(P_k(\hat{v}_{1,j}\hat{*}\hat{v}_0), P_k(\hat{v}_{1,j}\hat{*}\hat{B}_0)\right)\textnormal{ and }\\ \nonumber
k_j\left(P_k(\hat{B}_{0,j}\hat{*}\hat{B}_1), P_k(\hat{B}_{0,j}\hat{*}\hat{v}_1)\right)&+k_j\left(P_k(\hat{B}_{1,j}\hat{*}\hat{B}_0), P_k(\hat{B}_{1,j}\hat{*}\hat{v}_0)\right)
\end{align}
bounded by
\begin{equation*}
\frac{2C_0|k|e^{-\beta |k|}}{(1+|k|)^{\gamma}}||(\hat{v}_0, \hat{B}_0)||_{\gamma, \beta}||(\hat{v}_1, \hat{B}_1)||_{\gamma, \beta}.
\end{equation*}
Finally, we have
\begin{equation*}
\left|k_j\left(P_k(\hat{v}_{1,j}\hat{*}\hat{v}_1), P_k(\hat{v}_{1,j}\hat{*}\hat{B}_1)\right)\right|
\textnormal{ and }\left|k_j\left(P_k(\hat{B}_{1,j}\hat{*}\hat{B}_1), P_k(\hat{B}_{1,j}\hat{*}\hat{v}_1)\right)\right|
\end{equation*} 
bounded by 
\begin{equation*}
\frac{A e^{-\beta |k|+\alpha'p}}{(1+|k|)^{\gamma}(1+p^2)}\frac{4C_0\mathcal{Q}_4(\beta |k|)}{25A\beta}||(\hat{v}_1, \hat{B}_1)||^2_{\gamma, \beta}.
\end{equation*}
\end{Lemma}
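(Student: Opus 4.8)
The plan is to derive every inequality from the single subalgebra estimate of Corollary~\ref{2.2.}, which in the $(\gamma,\beta)$ norm reads pointwise as $|\hat f\hat *\hat g(k)|\le C_0\|\hat f\|_{\gamma,\beta}\|\hat g\|_{\gamma,\beta}\,e^{-\beta|k|}(1+|k|)^{-\gamma}$, combined with the two elementary facts that $|k_j|\le|k|$ for each $j$ and that $P_k$ is a contraction in the relevant norm (Lemma~\ref{2.3.}). No $p$-dependence enters the convolutions themselves, since all factors $\hat u_0,\hat u_1,\hat\Theta_0,\hat\Theta_1$ (and their B\'enard analogues) are functions of $k$ alone.

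For the first estimate I would bound each of the two summands separately. By Corollary~\ref{2.2.}, each of $\hat u_{0,j}\hat *\hat u_1$ and $\hat u_{0,j}\hat *\hat\Theta_1$ admits the pointwise bound $C_0\|(\hat u_0,\hat\Theta_0)\|_{\gamma,\beta}\|(\hat u_1,\hat\Theta_1)\|_{\gamma,\beta}\,e^{-\beta|k|}(1+|k|)^{-\gamma}$, after estimating each scalar/vector component norm by the corresponding vector norm; applying $P_k$ does not increase this by Lemma~\ref{2.3.}, and the prefactor $k_j$ contributes a factor $|k|$. The second summand is identical under $\hat u_0\leftrightarrow\hat u_1$, which is symmetric in the product of norms, so the two together produce the factor $2$ and the claimed bound follows. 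The second estimate is the same computation with the single term $\hat u_1\hat *\hat u_1$, hence no factor $2$ and the square $\|(\hat u_1,\hat\Theta_1)\|_{\gamma,\beta}^2$. The two B\'enard estimates in the third group are structurally identical, the only change being that $P_k$ now appears on both slots of each pair (both MHD equations carry the Hodge projection), which is harmless since $P_k$ is still a contraction; one again bounds each of the two summands by $C_0|k|\,e^{-\beta|k|}(1+|k|)^{-\gamma}\|(\hat v_0,\hat B_0)\|_{\gamma,\beta}\|(\hat v_1,\hat B_1)\|_{\gamma,\beta}$ and adds.

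For the final pair of estimates I would start from the same elementary bound $C_0|k|\,e^{-\beta|k|}(1+|k|)^{-\gamma}\|(\hat v_1,\hat B_1)\|_{\gamma,\beta}^2$ and then recast the bare factor $|k|$ into the polynomial template used throughout Section~\ref{sec6}. Since $\mathcal Q_4(y)\ge 8y$ for $y\ge 0$ one has $\beta|k|\le\tfrac{4}{25}\mathcal Q_4(\beta|k|)$, i.e.\ $|k|\le\frac{4}{25\beta}\mathcal Q_4(\beta|k|)$, and since $\alpha'=\alpha+1\ge 2$ one has $e^{\alpha'p}\ge 1+p^2$, so that $1\le e^{\alpha'p}(1+p^2)^{-1}$. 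Inserting these two rewrites, and multiplying and dividing by $A$, turns the elementary bound into exactly $\frac{A\,e^{-\beta|k|+\alpha'p}}{(1+|k|)^{\gamma}(1+p^2)}\cdot\frac{4C_0\mathcal Q_4(\beta|k|)}{25A\beta}\|(\hat v_1,\hat B_1)\|_{\gamma,\beta}^2$, matching the $(\ref{2-4.60})$ form so that this term can be absorbed alongside the other $\delta_{l,1}$ contributions in the induction of Lemma~\ref{4.2.}.

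There is no genuine obstacle here: the lemma is a bookkeeping step that recasts the low-order ($l=0,1$) source terms of the derivative recursion into the common template $e^{-\beta|k|+\alpha'p}\mathcal Q_{2l}(\beta|k|)(1+|k|)^{-\gamma}(1+p^2)^{-1}$. The only points requiring care are matching each scalar/vector component to the correct vector norm, tracking the combinatorial factor $2$ for the two-term sums versus $1$ for the pure $\hat u_1\hat *\hat u_1$ (resp.\ $\hat v_1\hat *\hat v_1$ and $\hat B_1\hat *\hat B_1$) terms, and verifying the two elementary scalar inequalities $\beta|k|\le\tfrac{4}{25}\mathcal Q_4(\beta|k|)$ and $e^{\alpha'p}\ge 1+p^2$ used in the last step.
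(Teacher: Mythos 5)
Your proposal is correct and follows essentially the same route as the paper: the first three groups of estimates come directly from the subalgebra bound (Corollary \ref{2.2.}) together with $|k_j|\le |k|$ and the fact that $P_k$ is a contraction (Lemma \ref{2.3.}), and the final pair is obtained from the same elementary bound by inserting $|k|\le \frac{4}{25\beta}\mathcal{Q}_4(\beta|k|)$ (i.e.\ $\mathcal{Q}_4(y)\ge 8y$) and the trivial factor $e^{\alpha' p}(1+p^2)^{-1}\ge 1$. Your write-up in fact makes explicit the step $e^{\alpha' p}\ge 1+p^2$ for $\alpha'\ge 2$, which the paper uses silently when it multiplies and divides by $A$ and inserts the $p$-dependent factors.
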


\begin{proof} The first two claims follow directly from Corollary \ref{2.2.} and Lemma \ref{2.3.}. The last uses the additional fact that 
\begin{equation}\nonumber
\frac{4\mathcal{Q}_4(\beta |k|)C_0}{25\beta}\geq \frac{32\beta |k|C_0}{25\beta}\geq C_0|k|.
\end{equation}
Thus,
\begin{equation*}
\frac{|k|e^{-\beta |k|}C_0}{(1+|k|)^{\gamma}}||(\hat{v}_1, \hat{B}_1)||_{\gamma, \beta}^2\leq \frac{A e^{-\beta |k|+\alpha'p}}{(1+|k|)^{\gamma}(1+p^2)}\frac{4C_0\mathcal{Q}_4(\beta |k|)}{25A\beta}||(\hat{v}_1, \hat{B}_1)||^2_{\gamma, \beta}
\end{equation*}
and the last claim follows.
\end{proof}

\begin{Lemma}\label{4.12.} For the case $l=1$, we have 
\begin{align*}
|(\hat{H}^{[1]}, \hat{S}^{[1]})(k,p)|&\leq \frac{e^{\omega' p}e^{-\beta |k|}AD}{(1+p^2)(1+|k|)^{\gamma}}\mathcal{Q}_{2}(|\beta k|),\\ 
|(\hat{W}^{[1]}, \hat{Q}^{[1]})(k,p)|&\leq \frac{e^{\alpha' p}e^{-\beta |k|}\tilde A \tilde D}{(1+p^2)(1+|k|)^{\gamma}}\mathcal{Q}_{2}(|\beta k|),
\end{align*}
\noindent where
\begin{align} \nonumber
AD\geq &C_6\left(\right. \frac{C_0}{\beta}|| (\hat{u}_0, \hat{\Theta}_0)||_{\gamma, \beta}||(\hat{v}_1, \hat{\Theta}_1)||_{\gamma, \beta}+M_1\frac{2}{\beta^2}||(\hat{u}_1, \hat{\Theta}_1)||_{\gamma, \beta}\\ \nonumber
 &\left.\qquad+C_1M_0A^2+2C_1A||(\hat{u}_0, \hat{\Theta}_0)||_{\gamma, \beta}+\frac{a A}{4}\right),\\ \nonumber
\tilde A \tilde D\geq &C_6\left(\right. \frac{2C_0M_3}{\beta}|| (\hat{v}_0, \hat{B}_0)||_{\gamma, \beta}||(\hat{v}_1, \hat{B}_1)||_{\gamma, \beta}+\frac{2M_2}{\beta^2}||(\hat{v}_1, \hat{B}_1)||_{\gamma, \beta}\\ \nonumber
&\left. \qquad +2C_1M_3M_0A^2+4C_1M_3A||(\hat{v}_0, \hat{B}_0)||_{\gamma, \beta}\right).
\end{align}
\end{Lemma}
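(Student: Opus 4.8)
The plan is to treat this statement as the first step ($l=1$) of the induction behind Lemma \ref{4.2.}, bootstrapping from the already–established $l=0$ bounds (\ref{4.59}) via the inversion estimate of Lemma \ref{4.4.}. Concretely, I would apply Lemma \ref{4.4.} with index $l=0$: it bounds $|(\hat H^{[1]},\hat S^{[1]})(k,p)|$ by $C_6\sup_{p'\in[0,p]}|(\hat R_1^{[0]},\hat R_2^{[0]})|$ plus the boundary contribution $\tfrac{M_1|k|^2|(\hat H^{[0]},\hat S^{[0]})(k,0)|}{2}$, and analogously $|(\hat W^{[1]},\hat Q^{[1]})|$ by $C_6\sup|(\hat R_3^{[0]},\hat R_4^{[0]})|$ plus the corresponding $M_2$–boundary term. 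The first observation I would record is that both boundary terms vanish: by Lemma \ref{2.8.}, $\lim_{p\to0^+}(\hat H,\hat S)=(\hat u_1,\hat\Theta_1)$ and $\lim_{p\to0^+}(\hat W,\hat Q)=(\hat v_1,\hat B_1)$, so $(\hat H^{[0]},\hat S^{[0]})(k,0)=0=(\hat W^{[0]},\hat Q^{[0]})(k,0)$ by Definition \ref{Def4.1}. Everything thus reduces to estimating the source terms $R_m^{[0]}$.

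Reading off $R_1^{[0]},R_2^{[0]}$ from (\ref{1-4.62})–(\ref{2-4.62}) at $l=0$, the singular $\tfrac1l$ terms and the $\delta_{l,1}$ term are absent while only the $\delta_{l,0}$ data source switches on, leaving five groups: (i) the diffusion source $-\nu|k|^2\hat u_1$ (resp. $-\mu|k|^2\hat\Theta_1$); (ii) the bilinear data source $k_jP_k[\hat u_{0,j}\hat*\hat u_1+\hat u_{1,j}\hat*\hat u_0]$; (iii) the Laplace–convolution term $k_j\int_0^p\hat H_j^{[0]}(\cdot,p-s)\hat*\hat H^{[0]}(\cdot,s)\,ds$; (iv) the linear terms $k_j[\hat H_j^{[0]}\hat*\hat u_0+\hat u_{0,j}\hat*\hat H^{[0]}]$; and (v) the buoyancy term $aP_k(e_2\hat S^{[0]})$. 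I would bound these in turn: group (i) via $|k|^2\le 2\beta^{-2}\mathcal{Q}_2(\beta|k|)$, giving the $\tfrac{2M_1}{\beta^2}\|(\hat u_1,\hat\Theta_1)\|_{\gamma,\beta}$ contribution; group (ii) via Lemma \ref{4.11.} together with $|k|\le\tfrac1{2\beta}\mathcal{Q}_2(\beta|k|)$, giving $\tfrac{C_0}{\beta}\|(\hat u_0,\hat\Theta_0)\|_{\gamma,\beta}\|(\hat u_1,\hat\Theta_1)\|_{\gamma,\beta}$; group (iii) via Lemma \ref{4.10.} at $l=0$, giving $C_1M_0A^2$; group (iv) via Lemma \ref{4.6.} at $l=0$ (two terms), giving $2C_1A\|(\hat u_0,\hat\Theta_0)\|_{\gamma,\beta}$; and group (v) via the last inequality of Lemma \ref{4.6.} at $l=0$ together with $\mathcal{Q}_0\le\tfrac14\mathcal{Q}_2$, giving $\tfrac{aA}{4}$. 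Multiplying the sum by $C_6$ and comparing with the required prefactor $\frac{e^{\omega'p}e^{-\beta|k|}AD}{(1+p^2)(1+|k|)^\gamma}\mathcal{Q}_2(\beta|k|)$ produces exactly the stated lower bound on $AD$.

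For the magnetic B\'enard case I would repeat the same reading of $R_3^{[0]},R_4^{[0]}$ from (\ref{3-4.62})–(\ref{4-4.62}) at $l=0$. The structure is identical except that each diffusion source now carries $M_2=\max(\nu,(\mu\sigma)^{-1})$, the coupling between $\hat W$ and $\hat Q$ doubles the number of Laplace–convolution and linear terms, and the magnetic nonlinearity contributes $M_3=\max(1,(\mu\rho)^{-1})$; there is no buoyancy term. Using the $(\hat W,\hat Q)$ versions of Lemmas \ref{4.6.}, \ref{4.10.} and \ref{4.11.}, the four surviving estimates become $\tfrac{2M_2}{\beta^2}\|(\hat v_1,\hat B_1)\|_{\gamma,\beta}$, $\tfrac{2C_0M_3}{\beta}\|(\hat v_0,\hat B_0)\|_{\gamma,\beta}\|(\hat v_1,\hat B_1)\|_{\gamma,\beta}$, $2C_1M_3M_0\tilde A^2$, and $4C_1M_3\tilde A\|(\hat v_0,\hat B_0)\|_{\gamma,\beta}$; multiplying by $C_6$ yields the stated lower bound on $\tilde A\tilde D$.

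I expect the only genuinely delicate point to be the correct bookkeeping at $l=0$: verifying that the singular-looking $\tfrac1l$ terms and the $\delta_{l,1}$ term truly drop out while the $\delta_{l,0}$ data source turns on, and that the $p=0$ boundary term vanishes. Once the surviving terms are identified, the remainder is a routine assembly of the previously proved convolution lemmas with the scalar inequalities $\mathcal{Q}_0\le\tfrac14\mathcal{Q}_2$, $|k|\le\tfrac1{2\beta}\mathcal{Q}_2(\beta|k|)$ and $|k|^2\le2\beta^{-2}\mathcal{Q}_2(\beta|k|)$, while tracking the dimension-dependent constants $C_0,C_1,C_6,C_7,M_0$ so that the aggregate matches the displayed thresholds for $AD$ and $\tilde A\tilde D$.
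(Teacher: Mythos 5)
Your proposal is correct and follows essentially the same route as the paper's own proof: apply Lemma \ref{4.4.} at $l=0$ (with the boundary term killed by $(\hat H^{[0]},\hat S^{[0]})(k,0)=0=(\hat W^{[0]},\hat Q^{[0]})(k,0)$, which follows from Lemma \ref{2.8.}), then bound the surviving pieces of $R_m^{[0]}$ term by term using Lemmas \ref{4.6.}, \ref{4.10.}, and \ref{4.11.}, finishing with the scalar inequalities $|k|\leq \mathcal{Q}_2/(2\beta)$, $|k|^2\leq 2\mathcal{Q}_2/\beta^2$, and $1\leq \tfrac14\mathcal{Q}_2$. Your term-by-term bookkeeping (including the $aA/4$ buoyancy contribution and the doubled $M_3$-weighted terms in the MHD case) reproduces the paper's constants exactly.
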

\begin{proof} Lemma \ref{4.4.} with $l=0$ tells us that 
\begin{align*}
|(\hat{H}^{[1]}, \hat{S}^{[1]})(k,p)|&\leq C_6 \sup_{p'\in [0,p]}|(\hat{R}^{[0]}_1, \hat{R}^{[0]}_2)(k,p')|\\ 
|(\hat{W}^{[1]}, \hat{Q}^{[1]})(k,p)|&\leq C_6 \sup_{p'\in [0,p]}|(\hat{R}^{[0]}_3, \hat{R}^{[0]}_4)(k,p')|
\end{align*}
\noindent since $(\hat{H}^{[0]}, \hat{S}^{[0]})(k,0)=0$ and $(\hat{W}^{[0]}, \hat{Q}^{[0]})(k,0)=0$. In both cases, we use Lemma \ref{4.6.}, Lemma \ref{4.10.}, and Lemma \ref{4.11.} to bound the terms appearing in $R_m$s. The terms are kept in the same order as they appear in $R_m$s as much as possible to help with organization.
\begin{align*}
&|(\hat{R}^{[0]}_1, \hat{R}^{[0]}_2)(k,p)|\leq \frac{2C_0|k|e^{-\beta |k|}}{(1+|k|)^{\gamma}}||(\hat{u}_0, \hat{\Theta}_0)||_{\gamma, \beta}||(\hat{u}_1, \hat{\Theta}_1)||_{\gamma, \beta}\\ 
&\qquad +M_1\frac{|k|^2e^{-\beta |k|}}{(1+|k|)^{\gamma}}||(\hat{v}_1, \hat{\Theta}_1)||_{\gamma,\beta}+C_1M_0A^2 \frac{e^{-\beta |k|
+\omega' p}}{(1+|k|)^{\gamma}(1+p^2)}\mathcal{Q}_{2}(\beta |k|)\\ 
&\qquad +2C_1||(\hat{u}_0, \hat{\Theta}_0)||_{\gamma,\beta}\frac{A e^{-\beta|k|+\omega' p}}{(1+p^2)(1+|k|)^\gamma}\mathcal{Q}_{2}(|\beta k|)+a\frac{e^{\omega' p}e^{-\beta |k|}A}{(1+p^2)(1+|k|)^{\gamma}}
\end{align*}
and
\begin{align*}
&|(\hat{R}^{[0]}_3, \hat{R}^{[0]}_4)(k,p)|\leq \frac{4C_0M_3|k|e^{-\beta |k|}}{(1+|k|)^{\gamma}}||(\hat{v}_0, \hat{B}_0)||_{\gamma, \beta}||(\hat{v}_1, \hat{B}_1)||_{\gamma, \beta}\\ 
&\qquad +M_2\frac{|k|^2e^{-\beta |k|}}{(1+|k|)^{\gamma}}||(\hat{v}_1, \hat{B}_1)||_{\gamma,\beta}+2C_1M_3M_0\tilde A^2 \frac{e^{-\beta |k|+\alpha' p}}{(1+|k|)^{\gamma}(1+p^2)}\mathcal{Q}_{2}(\beta |k|)\\ 
&\qquad+4C_1M_3||(\hat{v}_0, \hat{B}_0)||_{\gamma,\beta}\frac{\tilde A e^{-\beta|k|+\alpha' p}}{(1+p^2)(1+|k|)^\gamma}\mathcal{Q}_{2}(|\beta k|).
\end{align*}
\noindent The lemma now follows since $4|k|\leq \frac{2\mathcal{Q}_2}{\beta}$ and $|k|^2\leq\frac{2\mathcal{Q}_2}{\beta^2}$.
\end{proof}

{\bf Proof of Lemma \ref{4.2.}} Lemma \ref{4.12.} and $(\ref{4.59})$ prove the base case. Suppose, for the purpose of induction, that for $l\geq1$ $(\ref{1-4.60})$ and $(\ref{2-4.60})$ hold. Then by Lemma \ref{4.4.} we need only prove a bound for $|(\hat{R}_1^{[l]}, \hat{R}_2^{[l]})|$ and $|(\hat{R}_3^{[l]}, \hat{R}_4^{[l]})|$ whose terms we bounded in the previous lemmas. 
\begin{align}\nonumber
|(\hat{R}_1^{[l]}, \hat{R}_2^{[l]})|\leq &\frac{AD^{l-1}e^{-\beta |k|+\omega' p}}{(2l+3)^2(1+p^2)(1+|k|)^{\gamma}}\mathcal{Q}_{2l+2}(\beta |k|)\left\{\frac{C_1M_0AD(l+1)^{2/3}(2l+3)^2}{(2l+1)}\right. \\ \nonumber
&+\frac{C_1A(l+1)^{2/3}(2l+3)^2}{4l(2l-1)}+\frac{C_8C_72^{\gamma}\pi Al}{12\beta^d}+\frac{C_1l^{2/3}||(\hat{u}_1, \hat{\Theta}_1)||_{\gamma,\beta}(2l+3)^2}{2l(2l-1)}\\ \nonumber
&\left. +2C_1D||(\hat{u}_0, \hat{\Theta}_0)||_{\gamma,\beta}\frac{(l+1)^{2/3}(2l+3)^2}{2l+1} +25\delta_{l,1}\frac{C_0}{A\beta}||(\hat{u}_1, \hat{\Theta}_1)||^2_{\gamma,\beta}+\frac{a D(2l+3)^2}{4(2l+1)^2}\right\}
\end{align} and 
\begin{align*}
|(\hat{R}_3^{[l]}, \hat{R}_4^{[l]})|\leq& \frac{\tilde A\tilde D^{l-1}e^{-\beta |k|+\alpha' p}}{(2l+3)^2(1+p^2)(1+|k|)^{\gamma}}\mathcal{Q}_{2l+2}(\beta |k|)\left\{M_3\left(\frac{2C_1M_0\tilde A\tilde D(l+1)^{2/3}(2l+3)^2}{(2l+1)}\right. \right.\\ 
&+\frac{2C_1\tilde A(l+1)^{2/3}(2l+3)^2}{4l(2l-1)}+\frac{2C_8C_72^{\gamma}\pi Al}{12\beta^d}+\frac{C_1 l^{2/3}||(\hat{v}_1, \hat{B}_1)||_{\gamma,\beta}(2l+3)^2}{l(2l-1)}\\ 
&\left. \left. +4C_1D||(\hat{v}_0, \hat{B}_0)||_{\gamma,\beta}\frac{(l+1)^{2/3}(2l+3)^2}{2l+1}+25\delta_{l,1}\frac{C_0}{\tilde A\beta}||(\hat{v}_1, \hat{B}_1)||^2_{\gamma,\beta}\right) \right\}.
\end{align*}
\noindent We also note that as $(\hat{H}^{[l]}, \hat{S}^{[l]})$ satisfies $(\ref{1-4.60})$ and $(\hat{W}^{[l]}, \hat{Q}^{[l]})$ satisfies $(\ref{2-4.60})$,
\begin{align}\nonumber
\frac{|k|^2|(\hat{H}^{[l]}, \hat{S}^{[l]})(k,0)|}{(l+1)(l+2)}&\leq \frac{|k|^2e^{-\beta |k|}AD^l \mathcal{Q}_{2l}(\beta|k|)}{(l+1)(l+2)(1+|k|)^{\gamma}(2l+1)^2}\\ \nonumber
&\leq\frac{AD^{l}e^{-\beta |k|+\alpha' p}}{(2l+3)^2(1+p^2)(1+|k|)^{\gamma}}\mathcal{Q}_{2l+2}(\beta |k|)\frac{6}{\beta^2}
\end{align}
and
\begin{equation*}
\frac{|k|^2|(\hat{W}^{[l]}, \hat{Q}^{[l]})(k,0)|}{(l+1)(l+2)}\leq\frac{\tilde A\tilde D^{l}e^{-\beta |k|+\alpha' p}}{(2l+3)^2(1+p^2)(1+|k|)^{\gamma}}\mathcal{Q}_{2l+2}(\beta |k|)\frac{6}{\beta^2}.
\end{equation*}
\noindent Here, we used the following two facts
\begin{equation*}
\frac{y^2\mathcal{Q}_{2l}(y)}{(2l+2)(2l+1)}\leq \mathcal{Q}_{2l+2}(y) \textnormal{ and }\frac{(2l+2)(2l+3)^2}{(l+1)(l+2)(2l+1)}\leq 6.
\end{equation*}
\noindent Thus, for $D$ and $\tilde D$ chosen, independently of $l,k,$ and $p$, large enough so
\begin{align*}
D^2\geq& C_6\left\{\frac{C_1M_0AD(2l+3)^2}{(l+1)(2l+1)}+\frac{C_1A(2l+3)^2}{4(l+1)l(2l-1)}+\frac{C_8C_72^{\gamma}\pi Al}{12\beta^d(l+1)^{5/3}}\right. \\ 
& \qquad+\frac{C_1||(\hat{u}_1, \hat{\Theta}_1)||_{\gamma,\beta}(2l+3)^2}{2(l+1)^{5/3}l^{1/3}(2l-1)} +2C_1D||(\hat{u}_0, \hat{\Theta}_0)||_{\gamma,\beta}\frac{((2l+3)^2}{(l+1)(2l+1)}\\ 
&\qquad +25\delta_{l,1}\frac{C_0}{A2^{5/3}\beta}||(\hat{u}_1, \hat{\Theta}_1)||^2_{\gamma,\beta}+\frac{a D(2l+3)^2}{4(l+1)^{5/3}(2l+1)}\left.\right\}+M_1\frac{6D}{\beta^2}\\ 
\tilde D^2\geq &C_6M_3\left(\right.\frac{2C_1M_0\tilde A\tilde D(2l+3)^2}{(l+1)(2l+1)}+\frac{C_1\tilde A(2l+3)^2}{2(l+1)l(2l-1)}+\frac{C_8C_72^{\gamma}\pi \tilde Al}{12\beta^d(l+1)^{5/3}} \\ 
&\qquad+\frac{C_1 ||(\hat{v}_1, \hat{B}_1)||_{\gamma,\beta}(2l+3)^2}{(l+1)^{5/3}l^{1/3}(2l-1)}
 +4C_1\tilde D||(\hat{v}_0, \hat{B}_0)||_{\gamma,\beta}\frac{(2l+3)^2}{(l+1)(2l+1)}\\ 
&\qquad +25\delta_{l,1}\frac{C_0}{\tilde A\beta 2^{5/3}}||(\hat{v}_1, \hat{B}_1)||^2_{\gamma,\beta}\left.\right) \left.\right\}+M_2\frac{6\tilde D}{\beta^2},
\end{align*}
\noindent $(\ref{1-4.60})$ and $(\ref{2-4.60})$ hold and the lemma is proved. 

As $\mathcal{Q}_{2l}(\beta|k|)\leq 4^l e^{|\beta k|/2}$,
\begin{align}\label{series}
(\hat{H}, \hat{S})(k,p;p_0)&=\sum_{l=0}^{\infty}(\hat{H}^{[l]}, \hat{S}^{[l]})(k,p_0)(p-p_0)^l\\ \nonumber
(\hat{W}, \hat{Q})(k,p;p_0)&=\sum_{l=0}^{\infty}(\hat{W}^{[l]}, \hat{Q}^{[l]})(k,p_0)(p-p_0)^l
\end{align}
\noindent are convergent for $|p-p_0|\leq\frac{1}{4D}$ ( or respectively $|p-p_0|\leq\frac{1}{4D}$) where $D$ is independent of $p_0$. Moreover, the following lemma proved in \cite{smalltime} says that these series are indeed local representations of the solution $(\hat{H}, \hat{S})(k,p)$ or respectively $(\hat{W}, \hat{Q})(k,p)$.

\begin{Lemma}\label{4.13.} The unique solution to ($\ref{1-2.9}$) satisfying $(\hat{H}, \hat{S})(k,0)=0$ guaranteed in Lemma \ref{2.8.} has a local representation given by $(\hat{H}, \hat{S})(k,p;p_0)$ for $p_0\in \mathbb{R}^+$. So, the solution is analytic on $\mathbb{R}^{+}\cup\{ 0\}$. Similarly,
the unique solution to ($\ref{2-2.9}$) satisfying $(\hat{W}, \hat{Q})(k,0)=0$ again guaranteed in Lemma \ref{2.8.} has a local representation given by $(\hat{W}, \hat{Q})(k,p;p_0)$ for $p_0\in \mathbb{R}^+$ and is therefor analytic on $\mathbb{R}^{+}\cup\{ 0\}$.
\end{Lemma}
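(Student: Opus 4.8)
The plan is to obtain Lemma \ref{4.13.} as a direct consequence of the uniform derivative bounds in Lemma \ref{4.2.}. By Definition \ref{Def4.1} the coefficients in the series (\ref{series}) are precisely the scaled derivatives $(\hat H^{[l]},\hat S^{[l]})(k,p_0)=\tfrac1{l!}\partial_p^l(\hat H,\hat S)(k,p_0)$, so $(\hat H,\hat S)(k,p;p_0)$ is nothing but the formal Taylor expansion of the solution about $p_0$. Establishing that this expansion \emph{represents} the solution is therefore the same as establishing real analyticity of $p\mapsto(\hat H,\hat S)(k,p)$ on $\{0\}\cup\mathbb R^+$, and I would prove it for each fixed $k$ by showing that the Taylor remainder about an arbitrary base point $p_0$ vanishes on a disc whose radius is bounded below independently of $p_0$.

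First I would convert (\ref{1-4.60}) into a Cauchy-type estimate. Using $\mathcal Q_{2l}(\beta|k|)\le 4^l e^{\beta|k|/2}$, the bound of Lemma \ref{4.2.} gives, for fixed $k$ and $p$ in any bounded subinterval of $\mathbb R^+$,
\[
\frac{1}{l!}\bigl|\partial_p^l(\hat H,\hat S)(k,p)\bigr|\le C(k,p)\,(4D)^l,
\]
with $C(k,p)$ locally bounded in $p$ and $D$ independent of $l,k,p$; the identical estimate holds for $(\hat W,\hat Q)$ with $\tilde D$ in place of $D$ by (\ref{2-4.60}). The essential feature is that the geometric rate $4D$ does not depend on the expansion point $p_0$.

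Next, fixing $p_0\in\mathbb R^+$ and $p$ with $|p-p_0|<(4D)^{-1}$, I would apply Taylor's theorem with the integral form of the remainder to the complex-valued, smooth (in $p$) function $(\hat H,\hat S)(k,\cdot)$,
\[
R_n=\frac{1}{(n-1)!}\int_{p_0}^{p}(p-s)^{n-1}\,\partial_p^{\,n}(\hat H,\hat S)(k,s)\,ds,
\]
so that the Cauchy estimate yields $|R_n|\le \bar C\,(4D)^n|p-p_0|^n\to 0$, where $\bar C$ majorizes $C(k,\cdot)$ on the segment joining $p_0$ to $p$. Hence the partial sums of (\ref{series}) converge to $(\hat H,\hat S)(k,p)$, so $(\hat H,\hat S)(k,p;p_0)$ is a genuine local representation and the solution is analytic at $p_0$. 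As $p_0\in\mathbb R^+$ is arbitrary and analyticity at the origin is already supplied by the convergent expansion about $p=0$ in Lemma \ref{3.2.}, the solution is analytic on $\{0\}\cup\mathbb R^+$. The $(\hat W,\hat Q)$ case is identical, using (\ref{2-4.60}) and (\ref{2-2.9}).

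The main obstacle is entirely front-loaded into Lemma \ref{4.2.}: the remainder estimate above is routine once the bounds are in hand, and the real work is the \emph{uniformity} of the radius $(4D)^{-1}$ in $p_0$, which is exactly what the $e^{\omega' p}/(1+p^2)$-weighted estimates were engineered to produce. Without this uniformity one would only learn that each Taylor series converges, not that its radius stays bounded away from zero as $p_0\to\infty$, and analyticity along the whole ray would fail. As a minor technical point I would note that the existence of all $p$-derivatives needed even to form $R_n$ is guaranteed by the inversion formula of Lemma \ref{4.4.}, which expresses $\partial_p^{\,l+1}$ of the solution through lower-order derivatives and the (analytic) data.
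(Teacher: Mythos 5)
Your proof is correct: the uniform-in-$p_0$ Cauchy estimate $\frac{1}{l!}\bigl|\partial_p^l(\hat H,\hat S)(k,p)\bigr|\le C(k,p)(4D)^l$ does follow from Lemma \ref{4.2.} via $\mathcal{Q}_{2l}(\beta|k|)\le 4^l e^{\beta|k|/2}$, and the vanishing of the integral-form Taylor remainder on $|p-p_0|<(4D)^{-1}$ then gives analyticity at every $p_0\in\mathbb{R}^+$ with the origin covered by Lemma \ref{3.2.}. The paper itself supplies no argument here (it simply cites \cite{smalltime}), and what you wrote is exactly the standard argument that citation stands in for, so the approaches coincide; your closing observation that smoothness in $p$ (needed even to form $R_n$) is supplied by the inversion formula in Lemma \ref{4.4.} correctly handles the one point that would otherwise make the appeal to Lemma \ref{4.2.} circular.
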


\begin{proof} The proof is in \cite{smalltime}.
\end{proof}

{\bf Proof of Theorem \ref{Borel summability} i)} We prove the Boussinesq case. The MHD case is the same with the obvious changes. Using Lemma \ref{4.2.} and the fact that $||g||_{L^{\infty}}\leq ||\hat{g}||_{L^1}$ we know that
\begin{align*}
|(H^{[l]},S^{[l]})(x,p_0)|&\leq\frac{8\pi A(4B)^le^{\omega p_0}}{\beta(2l+1)^2(1+p_0^2)}\\
|D(H^{[l]},S^{[l]})(x,p_0)|&\leq\frac{8\pi A(4B)^le^{\omega p_0}}{\beta(2l+1)^2(1+p_0^2)}\\
|D^2(H^{[l]},S^{[l]})(x,p_0)|&\leq\frac{16\pi A(4B)^le^{\omega p_0}}{\beta^2(2l+1)^2(1+p_0^2)}
\end{align*}
and the series (\ref{series}) converges for $|p-p_0|<\frac{1}{4B}$. By Lemma \ref{4.13.} the series is the local representation of the solution guaranteed to exist by Lemma \ref{2.8.} which is zero at $p=0$. Combining this with the facts that the solution is analytic in a neighborhood of zero and exponentially bounded for large p, recall ($\hat{H},\hat{S}\in \mathcal{A}^{\omega}$), implies Borel Summability in $1/t$. Watson's Lemma then implies as $t\rightarrow 0^+$
\begin{equation*} 
(u, \Theta)(x,t)\sim (u_0, \Theta_0)(x)+
\sum_{m=1}^{\infty}(u_m, \Theta_m) (x)t^m
\end{equation*}
\noindent where $|(u_m, \Theta_m)(x)|\leq m! A_0 D_0^m$ with constants
$A_0$ and $D_0$ generally dependent on the initial condition and forcing through Lemma \ref{3.2.}.

\section{Extension of existence time}\label{sec6}

\indent We have shown by Theorem \ref{existence} and Theorem \ref{existenceMHD} that there is a unique solution to (\ref{1-2.18}) and (\ref{2-2.18}) within the class of locally integrable functions, which are exponentially bounded in p, uniformly in $x$. Further, the solutions $(\hat{H}, \hat{S})(k,p)$ and $(\hat{W}, \hat{Q})(k,p)$ generate, in each case, a smooth solution to the Boussinesq and magnetic B$\acute{\textnormal{e}}$nard equation for $t\in [0,\omega^{-1})$ where $\omega$ is the exponential growth rate of the integral equation (\ref{1-2.18}) or respectively, for $t\in [0,\alpha^{-1})$ where $\alpha$ is the exponential growth rate of the integral equation (\ref{2-2.18}). By Theorem \ref{Borel summability} i), we know that the solution is Borel Summable. The question of global existence in either problem can then be reduced to a question of exponential growth for the integral equation solution. If $(\hat{H}, \hat{S})(k,p)$ or $(\hat{W},\hat{Q})(k,p)$ grow subexponentially, then global existence will follow. The exponential growth rate $\omega$ or $\alpha$ previously found is suboptimal and ignores possible cancellations in the integrals. If we improve the estimates, we get a longer interval of existence. One example of improvement is given in the second part of Theorem \ref{Borel summability}, in the special case when the initial condition and forcing have a finite number of Fourier modes, then the radius of convergence in the Borel plane is independent of the size of initial data and forcing. We then prove Theorem \ref{improved existence} which says that based on detailed knowledge of the solution to the integral equation in $[0,p_0)$ given either by the power series at $p=0$ or by numerical calculation, if the solution is small for $p$ towards the right of this interval then $\omega$ or $\alpha$ can be shown to be small. 

\subsection{Improved Radius of Convergence}\label{subsec1}

When the initial data and forcing are analytic Borel summability given in Theorem \ref{Borel summability} implies that 
\begin{align}\label{4.20}
(\hat{H}, \hat{S})(k,p)&=\sum_{m=1}^{\infty}(\hat{u}^{[m]}, \hat{\Theta}^{[m]})(k)\frac{p^{m-1}}{(m-1)!}=\sum_{m=0}^{\infty}(\hat{u}^{[m+1]}, \hat{\Theta}^{[m+1]})(k)\frac{p^{m}}{m!}\\ \nonumber
(\hat{W}, \hat{Q})(k,p)&=\sum_{m=1}^{\infty}(\hat{v}^{[m]}, \hat{B}^{[m]})(k)\frac{p^{m-1}}{(m-1)!}=\sum_{m=0}^{\infty}(\hat{v}^{[m+1]}, \hat{B}^{[m+1]})(k)\frac{p^{m}}{m!}
\end{align}
\noindent has a finite radius of convergence depending on the size of the initial data and forcing. However, in the special case when the initial data and forcing have only a finite number of Fourier modes the radius of convergence is in fact independent of the size of the initial data or $f$. The argument allows forcing to be time dependent. 

\noindent{\bf Proof of Theorem \ref{Borel summability} ii)} We show the Boussinesq case the other begin similar. For small time 
\begin{align*}
(u, \Theta)(x,t)&=(u^{[0]}, \Theta^{[0]})(x)+\sum_{m=1}^{\infty}(u^{[m]}, \Theta^{[m]})(x)t^m\\ 
\hat{f}(k,t)&=\hat{f}^{[0]}+\sum_{m=1}^{\infty}\hat{f}^{[m]}(k)t^m,
\end{align*}
\noindent where by $(\ref{FB2})$ for $m\geq 0$
\begin{align}\label{4.21}
\hat{u}^{[m+1]}&=\frac{1}{m+1}\left[\hat{f}^{[m]}-\nu|k|^2\hat{u}^{[m]}-i k_j P_k\left(\sum_{l=0}^m\hat{u}_j^{[l]}\hat{*}\hat{u}^{[m-l]}\right)+a P_k (e_2\hat{\Theta}^{[m]})\right]\\ \nonumber
\hat{\Theta}^{[m+1]}&=\frac{1}{m+1}\left[-\mu|k|^2\hat{\Theta}^{[m]}-i k_j\left(\sum_{l=0}^m\hat{u}_j^{[l]}\hat{*}\hat{\Theta}^{[m-l]}\right)\right].
\end{align}
Suppose the initial data and forcing have a finite number of Fourier modes. Let $K_1=\max(\sup_{k\in supp(\hat{u}^{[0]}, \hat{\Theta}^{[0]})}|k|,\, \sup_{k\in supp(\hat{f})}|k|)$. Then by induction on $k$ we have $\sup_{k\in supp(\hat{u}^{[m]}, \hat{\Theta}^{[m]})}|k|\leq (m+1)K_1$. Taking the $||\cdot||_{\gamma,\beta}$ norm of both sides of (\ref{4.21}) with respect to $k$ and writing
\begin{equation}\nonumber
a_m=||(\hat{u}^{[m]}, \hat{\Theta}^{[m]})||_{\gamma,\beta},\qquad b_m=||\hat{f}^{[m]}||_{\gamma,\beta},
\end{equation}
we obtain
\begin{align}\nonumber
a_{m+1}\leq&\frac{1}{m+1}\left[b_m+\max(\nu,\mu)\left\||k|^2|(\hat{u}^{[m]},\hat{\Theta}^{[m]})|\right\|_{\gamma,\beta}\right.\\ \nonumber
&\left. \qquad \qquad +\sum_{l=0}^m\left\| |k||\hat{u}^{[l]}|\hat{*}|(\hat{u}^{[m-l]},\hat{\Theta}^{[m-l]})|\right\|_{\gamma,\beta}+a a_m\right]\\ \nonumber
\leq&\frac{1}{m+1}\left[b_m+\max(\nu,\mu)K_1^2(m+1)^2a_m+K_1C_0(m+2)\sum_{l=0}^m a_la_{m-l}+a a_m\right]\\ \nonumber
\leq& \frac{b_m}{m+1}+\frac{a a_m}{m+1}+K_1^2\max(\nu,\mu)(m+1)a_m+2K_1C_0\sum_{l=0}^m a_la_{m-l}.
\end{align}
\noindent Now, consider the formal power series
\begin{equation}\nonumber
y_0(t):=\sum_{m=1}^{\infty}\tilde{a}_m t^m,
\end{equation}
where
\begin{align}\label {4.23}
\tilde{a}_0&=a_0\\ \nonumber
\tilde{a}_{m+1}&=\frac{b_m}{m+1}+\frac{a\tilde{a}_m}{m+1}+K_1^2\max(\nu,\mu)(m+1)\tilde{a}_m+2K_1C_0\sum_{l=0}^m \tilde{a}_l\tilde{a}_{m-l}.
\end{align}
\noindent Clearly, $a_m\leq\tilde{a}_m$, so $y_0(t)$ majorizes $||(\hat{u}, \hat{\Theta})(\cdot,t)||_{\gamma,\beta}$. If we multiply both sides of (\ref{4.23}) by $t^m$ and sum over $m$, then
\begin{align*}
\sum_{m=0}^{\infty}\tilde{a}_{m+1}t^m=\sum_{m=0}^{\infty}\frac{b_m+a \tilde{a}_m}{m+1}t^m +K_1^2&\max(\nu,\mu)\sum_{m=0}^{\infty}(m+1)\tilde{a}_mt^m\\
&+2K_1C_0\sum_{m=0}^{\infty}\sum_{l=0}^m \tilde{a}_l\tilde{a}_{m-l}t^m.
\end{align*}
\noindent In other words, $y_0(t)$ is a formal power series solution to 
\begin{equation}\nonumber
\frac{1}{t}(y-\tilde{a}_0)=w+\frac{a}{t}\int_0^t y(\tau)d\tau+K_1^2\max(\nu,\mu)(ty)'+2K_1C_0 y^2, 
\end{equation}
\noindent where $w(t)=\sum_{m=0}^{\infty}\frac{b_m}{m+1}t^m $. With the change of variables $s=1/t$, we have
\begin{align*}
-K_1^2\max(\nu,\mu)y'+2K_1C_0s^{-1}y^2+(K_1^2&\max(\nu,\mu)s^{-1}-1)y\\ 
&+(s^{-1}w+\tilde{a}_0)+a s\int_0^{1/s}y(\tau)d\tau=0.
\end{align*}
\noindent A singularity of $B(y(s))$ in the Borel plane exhibits itself as an exponential small correction to $y_0$. So, we let $y=y_0+\delta$ and construct the equation for $\delta$:
\begin{align*}
-K_1^2\max(\nu,\mu)\delta'+2K_1C_0s^{-1}(\delta^2+2y_0\delta)+(K_1^2&\max(\nu,\mu)s^{-1}-1)\delta\\
&+a s\int_0^{1/s}\delta(\tau)d\tau=0.
\end{align*}
\noindent If we assume $\delta$ is exponentially small, then to leading order the equation is
\begin{equation}\nonumber
-K_1^2\max(\nu,\mu)\delta'+\left[(4K_1C_0s^{-1}\tilde{a}_0+(K_1^2\max(\nu,\mu))s^{-1}-1\right]\delta=0,
\end{equation}
\noindent which yields
\begin{equation}\nonumber
\delta\sim e^{-K_1^{-2}\max(\nu,\mu)^{-1}s}s^{4\tilde{a}_0C_0 K_1^{-1}\max(\nu,\mu)^{-1}+1}.
\end{equation}
So, the radius of convergence of $B(y)$ is at least $K_1^{-2}\max(\nu,\mu)^{-1}$ which is independent of the size of initial data as claimed. As $y$ majorizes our solution $(\hat{u}, \hat{\Theta})(k,t)$ the radius of convergence of $(\ref{4.20})$ is independent of the size of initial data or forcing as well.

\subsection{Improved growth estimates based on knowledge of the solution to \ref{1-2.18} in $[0,p_0]$.}

Let $(\hat{H},\hat{S})(k,p)$ be the solution to \ref{1-2.18} provided by Theorem \ref{existence}. Define
\begin{equation}\label{l3.9}
(\hat{H}, \hat{S})^{(a)}(k,p)=\begin{cases}(\hat{H}, \hat{S})(k,p) \mbox{ for } p\in (0,p_0]\subset \mathbb{R}^{+}\\
0 \mbox{ otherwise}\end{cases}
\end{equation}
\noindent and
\begin{multline}\nonumber
\hat{H}^{(s)}(k,p)=\frac{i k_j\pi}{2|k|\sqrt{\nu p}}\int_0^{\min(p,2p_0)} \mathcal{G}(z,z')\hat{G}_j^{1,(a)}(k,p')dp'+2\hat u_1(k)\frac{J_1(2|k|\sqrt{\nu p})}{2|k|\sqrt{\nu p}}\\[8pt]\nonumber
\quad+\frac{a\pi}{2|k|\sqrt{\nu p}}\int_0^{\min(p,p_0)} \mathcal{G}(z,z')P_k[e_2\hat{S}^{(a)}(k,p')]dp'\\[8pt]
\hat{S}^{(s)}(k,p)=\frac{i k_j\pi}{2|k|\sqrt{\mu p}}\int_0^{\min(p,2p_0)} \mathcal{G}(\zeta ,\zeta ')\hat{G}_j^{2,(a)}(k,p')dp'+2\hat \Theta_1(k)\frac{J_1(2|k|\sqrt{\mu p})}{2|k|\sqrt{\mu p}},
\end{multline}
\noindent where 
\begin{align}\nonumber
\hat{G}_j^{[1],(a)}(k,p)&=-P_k[\hat{u}_{0,j}\hat{*}\hat{H}^{(a)}+\hat{H}_j^{(a)}\hat{*}\hat{u}_0+\hat{H}_j^{(a)}\, ^{\ast}_{\ast}\hat{H}^{(a)}]\\ \nonumber
\hat{G}_j^{[2],(a)}(k,p)&=-[\hat{u}_{0,j}\hat{*}\hat{S}^{(a)}+\hat{H}_j^{(a)}\hat{*}\hat{\Theta}_0+\hat{H}_j^{(a)}\, ^{\ast}_{\ast}\hat{S}^{(a)}]
\end{align}
\noindent are known functions depending on $(\hat{H}, \hat{S})^{(a)}(k,p)$. Using these 
definitions, we introduce the following functionals dependent on the initial condition, forcing, and $(\hat{H},\hat{S})^{(a)}$.
Further, 
for any chosen $\omega_0 \ge 0$, define
\begin{equation}\label{l3.10}
b=\omega_0 \int_{p_0}^{\infty}e^{-\omega_0 p}||(\hat{H}, \hat{S})^{(s)}(\cdot,p)||_{\gamma,\beta}dp
\end{equation}
\begin{equation}\label{l3.11}
\epsilon_1=\mathcal{B}_1+\mathcal{B}_4+\int_0^{p_0}e^{-\omega_0 p}\mathcal{B}_2(p)dp,
\end{equation}
\noindent where
\begin{multline}\nonumber
\mathcal{B}_0(k)=C_0\sup_{p_0\leq p'\leq p}|\mathcal{G}(z,z')/z|,\hspace{.5 in} \mathcal{B}_1=2\sup_{k\in\mathbb{R}^d}|k|\mathcal{B}_0(k)||(\hat{u}_0, \hat{\Theta}_0)||_{N},\\ \nonumber
\mathcal{B}_2=2\sup_{k\in\mathbb{R}^d}|k|\mathcal{B}_0(k)||(\hat{H}, \hat{S})^{(a)}(\cdot,p)||_{N},\,\, \mathcal{B}_3=\sup_{k\in\mathbb{R}^d}|k|\mathcal{B}_0(k), \,\,\mathcal{B}_4=a\sup_{k\in\mathbb{R}^d}\mathcal{B}_0(k).
\end{multline}
\noindent Now, let $(\hat{H}, \hat{S})^{(b)}=(\hat{H}, \hat{S})-(\hat{H}, \hat{S})^{(a)}$. It is convenient to write the integral equation for $(\hat{H}, \hat{S})^{(b)}$ for $p>p_0$,
\begin{align}\label{l8.63}
\hat{H}^{(b)}(k,p)=\frac{\pi}{2|k|\sqrt{\nu p}}\int_{p_0}^p \mathcal{G}(z,z')\left(i k_j\right.\hat{G}_j^{[1],(b)}(k,p')&+P_k[e_2\hat{s}^{(b)}(k,p'\left.)]\right)dp'\\ \nonumber
&+\hat{H}^{(s)}(k,p)\\ \nonumber
\hat{S}^{(b)}(k,p)=\frac{i k_j\pi}{2|k|\sqrt{\mu p}}\int_{p_0}^p \mathcal{G}(\zeta ,\zeta ')\hat{G}_j^{[2],(b)}(k,p')dp'&+\hat{S}^{(s)}(k,p),
\end{align}
\noindent where 
\begin{align}\nonumber
\hat{G}_j^{[1],(b)}(k,p)=-P_k[\hat{u}_{0,j}\hat{*}\hat{H}^{(b)}+\hat{H}_j^{(b)}\hat{*}\hat{u}_0+\hat{H}_j^{(a)}\, ^{\ast}_{\ast}\hat{H}^{(b)}+\hat{H}_j^{(b)}\, ^{\ast}_{\ast}\hat{H}^{(a)}+\hat{H}_j^{(b)}\, ^{\ast}_{\ast}\hat{H}^{(b)}]\\ \nonumber
\hat{G}_j^{[2],(b)}(k,p)=-[\hat{u}_{0,j}\hat{*}\hat{S}^{(b)}+\hat{H}_j^{(b)}\hat{*}\hat{\Theta}_0+\hat{H}_j^{(a)}\, ^{\ast}_{\ast}\hat{S}^{(b)}+\hat{H}_j^{(b)}\, ^{\ast}_{\ast}\hat{S}^{(a)}+\hat{H}_j^{(b)}\, ^{\ast}_{\ast}\hat{S}^{(b)}].
\end{align}
\noindent We also define 
\begin{equation}
\hat{R}^{(b)}(k,p)=i k_j(\hat{G}_j^{[1]}, \hat{G}_j^{[2]})^{(b)}(k,p)+a P_k[e_2\hat{S}^{(b)}(k,p)].
\end{equation}

\noindent{\bf Proof of Theorem \ref{improved existence}}
We note that
\begin{multline}\nonumber
|R^{(b)}(k,p)|\leq \left(|k|\left[|\hat{u}_0|\hat{*}|(\hat{H}, \hat{S})^{(b)}|+|\hat{H}^{(b)}|\hat{*}|(\hat{u}_0, \hat{\Theta}_0)|+2|(\hat{H}, \hat{S})^{(a)}|\, ^{\ast}_{\ast}|(\hat{H}, \hat{S})^{(b)}|\right.\right.\\ \nonumber
\left.\left.+|\hat{H}^{(b)}|\, ^{\ast}_{\ast}|(\hat{H}, \hat{S})^{(b)}|\right]+a|\hat{H}^{(b)}|\right)(k,p),
\end{multline}
\noindent where $|\cdot|$ is the usual euclidean norm. Let $\psi(p)=||(\hat{H}, \hat{S})^{(b)}(\cdot,p)||_{\gamma, \beta}$. Then
\begin{multline}\nonumber
\left\|\left(\frac{\mathcal{G}(z,z')}{z}(i k_j(\hat{G}_j^{[1]})^{(b)}(k,p)+a P_k[e_2\hat{S}^{(b)}(k,p)]), \frac{\mathcal{G}(\zeta,\zeta')}{\zeta}i k_j(\hat{G}_j^{[2]})^{(b)}(k,p)\right)\right\|_{\gamma,\beta}\leq \mathcal{B}_0(k)\cdot \hspace{.5 in}\\ \nonumber
\left(|k|\left[||\hat{u}_0||_{\gamma,
\beta}\psi(p)+\psi(p)||(\hat{u}_0, \hat{\Theta}_0)||_{\gamma,\beta}+ 2||(\hat{H}, \hat{S})^{(a)}||_{\gamma,\beta}*\psi(p)+\psi(p)*\psi(p)\right]+a\psi(p)\right)(k,p)\\ \nonumber
= \left(\mathcal{B}_1\psi+\mathcal{B}_2*\psi+\mathcal{B}_3\psi*\psi+\mathcal{B}_4\psi\right)(p).
\end{multline}
\noindent Taking the $(\gamma,\beta)$ norm in $k$ on both sides of (\ref{l8.63}) and multiplying by $e^{-\omega p}$ for $\omega\geq\omega_0\geq0$ and integrating from $p_0$ to $M$ gives
\begin{multline}\nonumber
L_{p_0,M}:=\int_{p_0}^Me^{-\omega p}\psi(p)dp\leq \int_{p_0}^Me^{-\omega p}\int_{p_0}^p\left(\mathcal{B}_1\psi+ \mathcal{B}_2*\psi+\mathcal{B}_3\psi*\psi+\mathcal{B}_4\psi\right)(p')dp'dp\\ \nonumber +\int_{p_0}^Me^{-\omega p}\psi^{(s)}(p)dp \leq \int_{p_0}^M\int_{p_0}^{p'}e^{-\omega (p-p')}e^{-\omega p'}\left(\mathcal{B}_1\psi+ \mathcal{B}_2*\psi+\mathcal{B}_3\psi*\psi+\mathcal{B}_4\psi\right)(p')dp dp'\\ \nonumber 
+\int_{p_0}^Me^{-\omega p}\psi^{(s)}(p)dp\leq \frac{1}{\omega}\int_{p_0}^Me^{-\omega p'}\left(\mathcal{B}_1\psi+ \mathcal{B}_2*\psi+\mathcal{B}_3\psi*\psi+\mathcal{B}_4\psi\right)(p')dp'\\ \nonumber
+\int_{p_0}^Me^{-\omega p}\psi^{(s)}(p)dp,
\end{multline}
\noindent where $\psi^{(s)}=||(\hat{H}, \hat{S})^{(s)}(\cdot,p)||_{\gamma,\beta}$. Recalling that $\psi=0$ on $[0,p_0]$, we note that for any $u$
\begin{align*}
\int_{p_0}^Me^{-\omega p}(\psi *u)(p)dp&=\int_{p_0}^M\int_{p_0}^p e^{-\omega p}\psi(s)u(p-s)ds dp\\
&=\int_{p_0}^M\psi(s)e^{-\omega s}\int_{0}^{M-s}e^{-\omega p}u(p)dp ds.
\end{align*}
\noindent Using this, we obtain
\begin{multline}\nonumber
L_{p_0,M}\leq\frac{1}{\omega}\left\{(\mathcal{B}_1+\int_0^{M-p_0}e^{-\omega p}\mathcal{B}_2(p)dp)L_{p_0,M}+\mathcal{B}_3L_{p_0,M}^2+\mathcal{B}_4L_{p_0,M}\right\}+b\omega^{-1}\\ \nonumber
\leq \omega ^{-1}\left\{\epsilon_1L_{p_0,M}+\mathcal{B}_3L_{p_0,M}^2\right\}+b\omega^{-1}.
\end{multline}
\noindent For 
\begin{equation}\nonumber
\epsilon_1<\omega\quad \mbox{and}\quad(\epsilon_1-\omega)^2>4\mathcal{B}_3b,
\end{equation}
\noindent we get an estimate for $L_{p_0,M}$ that is independent of $M$. Namely,
\begin{equation*}
L_{p_0,M}\leq\frac{1}{2\mathcal{B}_3}\left[\omega-\epsilon_1-\sqrt{(\epsilon_1-\omega)^2-4\mathcal{B}_3b}\right].
\end{equation*}

So, $||(\hat{H}, \hat{S})(\cdot,p)||_{\gamma,\beta}\in L^1(e^{-\omega p}dp)$, and the solution to the Boussinesq exists for $t\in(0,\omega^{-1})$ for $\omega$ sufficiently large so that 
\begin{equation}\nonumber
\omega\geq \omega_0 \quad\mbox{and}\quad \omega>\epsilon_1+2\sqrt{\mathcal{B}_3b}.
\end{equation}
Equivalently, we could choose our original $\omega_0$ large enough so that $ \omega_0>\epsilon_1+2\sqrt{\mathcal{B}_3b}$. This completes the proof of Theorem \ref{improved existence}.

\section{Appendix}\label{sec8}

\begin{Lemma}\label{kernel} The kernel $\mathcal{G}(z,z')$ given by
\begin{equation}\nonumber
\mathcal{G}(z,z')=z'(-J_1(z)Y_1(z')+Y_1(z)J_1(z')),\textnormal{where } z=2|k|\sqrt{\nu p} \textnormal{ and } z'=2|k|\sqrt{\nu p'}
\end{equation}
satisfies $\frac{\pi}{z}\mathcal{G}(z,z')=\mathcal{H}^{(\nu)}(p,p',k)$ with
\begin{align}\nonumber
\mathcal{H}^{(\nu)}(p,p',k)=&\int_{p'/p}^1\left\{\frac{1}{2\pi i}\int_{c-i\infty}^{c+i\infty}\tau^{-1}exp[-\nu|k|^2\tau^{-1}(1-s)+(p-p's^{-1})\tau]d\tau\right\}ds\\ \nonumber
=&\frac{p'}{p}\int_1^{p/p'}F(\eta)ds,
\end{align}
 where
\begin{equation}\nonumber
\eta=\nu |k|^2p\left(1-\frac{sp'}{p}\right)\left(1-\frac{1}{s}\right),\quad F(\eta)=\frac{1}{2\pi i}\int_{C}\zeta^{-1}e^{\zeta-\eta\zeta^{-1}}d\zeta,
\end{equation}
and $C$ is the contour starting and $\infty e^{-\pi i}$ turning around the origin in counterclockwise direction and ending at $\infty e^{\pi i}$.
\end{Lemma}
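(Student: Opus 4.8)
The plan is to pass from the Bromwich (inverse-Laplace) double integral defining $\mathcal{H}^{(\nu)}$ to a single integral of a Bessel function, and then to identify the result with the cross-product $\frac{\pi}{z}\mathcal{G}(z,z')$ by showing that both are the unique solution of one and the same initial value problem for the operator $p\partial_{pp}+2\partial_{p}+\nu|k|^2$.

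First I would evaluate the inner $\tau$-integral. Writing $a=\nu|k|^2(1-s)$ and $b=p-p's^{-1}$, the bracketed Bromwich integral is $\frac{1}{2\pi i}\int_{c-i\infty}^{c+i\infty}\tau^{-1}e^{-a\tau^{-1}+b\tau}\,d\tau$. For $s>p'/p$ one has $b>0$ (this is exactly why the outer integral starts at $p'/p$), so the substitution $\zeta=b\tau$ turns the vertical Bromwich line into a contour that may be collapsed, by the decay of $e^{\zeta-ab\zeta^{-1}}$, onto the Hankel loop $C$; the integral becomes $F(ab)$ with $F(\eta)=\frac{1}{2\pi i}\int_{C}\zeta^{-1}e^{\zeta-\eta\zeta^{-1}}\,d\zeta$ and $ab=\eta_0(s):=\nu|k|^2(1-s)(p-p's^{-1})$. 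A change of variables $s\mapsto 1/s$, under which $\eta_0(s)$ becomes the stated $\eta$, then produces the single-integral representation of $\mathcal{H}^{(\nu)}$. Next, expanding $e^{-\eta\zeta^{-1}}$ in powers of $\zeta^{-1}$ and using Hankel's loop formula $\frac{1}{2\pi i}\int_{C}\zeta^{-m-1}e^{\zeta}\,d\zeta=1/\Gamma(m+1)$ gives $F(\eta)=\sum_{m\ge0}(-\eta)^m/(m!)^2=J_0(2\sqrt{\eta})$.

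With $\mathcal{H}^{(\nu)}$ now a single integral of $J_0(2\sqrt{\eta})$, I would finish by the Green's-function/uniqueness route. On the Bessel side, the substitution $z=2|k|\sqrt{\nu p}$ converts $p\partial_{pp}+2\partial_p+\nu|k|^2$, up to the factor $\nu|k|^2$, into $\partial_{zz}+3z^{-1}\partial_z+1$, whose independent solutions are $z^{-1}J_1(z)$ and $z^{-1}Y_1(z)$; hence $\frac{\pi}{z}\mathcal{G}(z,z')=\pi z'\big(z^{-1}Y_1(z)J_1(z')-z^{-1}J_1(z)Y_1(z')\big)$ is annihilated by the operator in $p$ for fixed $p'$, and the Wronskian $J_1(z)Y_1'(z)-J_1'(z)Y_1(z)=2/(\pi z)$ yields the matching data $\frac{\pi}{z}\mathcal{G}\to0$ and $\partial_p(\frac{\pi}{z}\mathcal{G})\to 1/p$ as $p'\to p^-$. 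I would then verify that the $J_0$-integral $\mathcal{H}^{(\nu)}$ satisfies the same homogeneous equation and the same two boundary conditions. The boundary data are immediate: the integrand is bounded and the integration range collapses as $p'\to p^-$, so $\mathcal{H}^{(\nu)}\to0$, while differentiating in $p$ picks up the moving-endpoint term, which evaluates to $p'/p^2\to 1/p$ since $\eta$ vanishes at the upper limit and there $J_0(2\sqrt{\eta})=1$. By uniqueness of the solution to this initial value problem the two objects coincide.

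The main obstacle is the verification that $\mathcal{H}^{(\nu)}$ is annihilated by $p\partial_{pp}+2\partial_p+\nu|k|^2$: differentiating the single integral twice in $p$ requires Leibniz's rule with a $p$-dependent endpoint $p/p'$ together with differentiation of $J_0(2\sqrt{\eta})$ through $\eta$, and one must check that the interior integrand assembles into a total derivative in the integration variable whose boundary contributions cancel the explicit endpoint terms. An appealing alternative that avoids the endpoint bookkeeping is to identify the single $J_0$-integral directly with the Bessel cross-product via a classical Lommel-type integral identity (as in Watson's treatise), or to match the two power series in $\nu|k|^2$ term by term; either way the delicate point is controlling the contribution of the moving limit of integration.
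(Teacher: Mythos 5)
Your proposal follows the same structural route as the paper: both sides are identified as solutions of the initial value problem $(p\partial_{pp}+2\partial_p+\nu|k|^2)y=0$ with $y\to0$ and $y_p\to 1/p$ as $p'\to p^-$, and equality follows by uniqueness. Two of your ingredients are correct additions worth keeping: the explicit verification of the Bessel side --- the substitution $z=2|k|\sqrt{\nu p}$ turning the operator into $\nu|k|^2\left(\partial_{zz}+3z^{-1}\partial_z+1\right)$ with solutions $z^{-1}J_1(z)$, $z^{-1}Y_1(z)$, and the Wronskian $J_1Y_1'-J_1'Y_1=2/(\pi z)$ giving the matching data --- which the paper merely asserts ``by construction''; and the closed form $F(\eta)=J_0(2\sqrt{\eta})$ from Hankel's loop formula, a sharper version of the properties ($F$ entire, $F(0)=1$, $\eta F''+F'+F=0$) that the paper imports from \cite{longtime}.

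The gap is that the step you defer as ``the main obstacle'' --- showing that the single-integral representation of $\mathcal{H}^{(\nu)}$ is annihilated by $p\partial_{pp}+2\partial_p+\nu|k|^2$ --- is exactly the substance of the paper's proof, and your proposal neither executes it nor develops either named alternative (Lommel-type identity, term-by-term series matching). The computation is not routine endpoint bookkeeping; it hinges on two specific facts. First, $\frac{d\eta}{dp}=\nu|k|^2(1-s^{-1})$ does not depend on $p$, so Leibniz gives $p\mathcal{H}^{(\nu)}_p=-\mathcal{H}^{(\nu)}+F(0)+p'\int_1^{p/p'}F'(\eta)\frac{d\eta}{dp}\,ds$ and a clean second differentiation. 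Second, the algebraic identity $\left(\frac{d\eta}{dp}\right)^2=\frac{\eta\nu|k|^2}{p}-\frac{\nu|k|^2(s-1)}{p}\frac{d\eta}{ds}$ converts the $F''(\eta)\left(\frac{d\eta}{dp}\right)^2$ term into $\eta F''$ plus a total $s$-derivative; integrating by parts and using $\eta F''+F'+F=0$ then collapses $(p\mathcal{H}^{(\nu)}_p)_p+\mathcal{H}^{(\nu)}_p$ to $-\nu|k|^2\mathcal{H}^{(\nu)}$, the boundary term cancelling against the $F'(0)$ endpoint contribution. Without this, uniqueness has nothing to act on, since only the initial data have been checked for $\mathcal{H}^{(\nu)}$. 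A minor separate point: $s\mapsto 1/s$ alone produces $\int_1^{p/p'}F(\eta(s))s^{-2}\,ds$, not the stated $\frac{p'}{p}\int_1^{p/p'}F(\eta(s))\,ds$; either substitute $s\mapsto (p'/p)s$ directly or invoke the symmetry $\eta(s)=\eta\bigl(p/(p's)\bigr)$ to reconcile the two (your endpoint value $p'/p^2\to 1/p$ shows you were consistently using the $s^{-2}$ form, so this is a slip of bookkeeping rather than of substance).
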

\textit{Proof.} We will show that $\mathcal{H}^{(\nu)}(p,p',k)$ solves $(p\partial_{pp}+2\partial_{p}+\nu|k|^2)\mathcal{H}^{(\nu)}=0$ for $0<p'<p$ with the condition that $\mathcal{H}^{(\nu)}(p,p',k)\rightarrow 0$ and $\mathcal{H}^{(\nu)}_p(p,p',k)\rightarrow \frac{1}{p}$ as $p'$ approaches p from below.

In the appendix of \cite{longtime}, it is shown that $F$ is entire, $F(0)=1$, and $F$ satisfies $\eta F''(\eta)+F'(\eta)+F(\eta)=0$. We will use these facts as given. As $F$ is continuous and the interval of integration shrinks to length zero, $\mathcal{H}^{(\nu)}(p,p',k)\rightarrow 0$ as $p'$ tends to $p$ from below. For $p> p'$, $\mathcal{H}^{(\nu)}$ is twice differentiable in $p$ as $F$ is twice continuously differentiable. Moreover, we have
\begin{align}\nonumber
\mathcal{H}^{(\nu)}_p(p,p',k)&=-\frac{1}{p}\mathcal{H}^{(\nu)}(p,p',k)+\frac{1}{p}F(0)+\frac{p'}{p}\int_1^{p/p'}F'(\eta)\frac{d\eta}{dp}ds,\\ \nonumber
(p\mathcal{H}^{(\nu)}_p)_p&=-\mathcal{H}^{(\nu)}_p+F'(0)\nu|k|^2(1-\frac{p'}{p})+p'\int_1^{p/p'}F''(\eta)\left(\frac{d\eta}{dp}\right)^2ds,
\end{align}
where the second equality uses that $\frac{d\eta}{dp}=\nu|k|^2\left(1-\frac{1}{s}\right)$ is $p$ independent. Thus, as $F(0)=1$, we have $\mathcal{H}^{(\nu)}_p(p,p',k)\rightarrow \frac{1}{p}$ as $p'$ tends to $p$ from below.
We notice that 
\begin{multline}\nonumber
\frac{d\eta}{dp}=\nu|k|^2\left(1-\frac{1}{s}\right)=\frac{\eta}{p-sp'}, \, -\frac{d\eta}{ds}\frac{s}{p}=\nu|k|^2\frac{(p's^2-p)}{ps}, \textnormal{ and}\hspace{1.5 in}\\ \nonumber
\left(\frac{d\eta}{dp}\right)^2=\frac{\eta\nu|k|^2}{p}\left(1+\frac{p's^2-p}{s(p-sp')}\right)=\frac{\eta\nu|k|^2}{p}-\frac{\eta s}{p(p-sp')}\frac{d\eta}{ds}=\frac{\eta\nu|k|^2}{p}-\frac{\nu|k|^2(s-1)}{p}\frac{d\eta}{ds}.
\end{multline}
So, integrating by parts and using $\eta F''(\eta)+F'(\eta)+F(\eta)=0$, we have
\begin{align}\nonumber
(p\mathcal{H}^{(\nu)}_p)_p+\mathcal{H}^{(\nu)}_p=&F'(0)\nu|k|^2(1-\frac{p'}{p})+p'\int_1^{p/p'}F''(\eta)\left(\frac{\eta\nu|k|^2}{p}\right)ds\\ \nonumber
&\qquad \qquad-p'\int_1^{p/p'}\frac{d}{ds}(F'(\eta))\frac{\nu|k|^2(s-1)}{p}ds\\ \nonumber
=&\frac{\nu|k|^2p'}{p}\int_1^{p/p'}\eta F''(\eta)ds+\frac{p'\nu|k|^2}{p}\int_1^{p/p'}F'(\eta)ds=-\nu|k|^2\mathcal{H}^{(\nu)}.
\end{align}
In other words, $p\mathcal{H}^{(\nu)}_{pp}+2\mathcal{H}^{(\nu)}_p+\nu|k|^2\mathcal{H}^{(\nu)}=0$, and the Lemma is proved.

\begin{Lemma}\label{U_0} We also have the representation in terms of Bessel functions
\begin{equation}\nonumber
\mathcal{L}^{-1}\left(\frac{1-e^{-\nu|k|^2\tau^{-1}}}{\nu|k|^2}\right)(p)=\frac{2J_1(z)}{z}.
\end{equation}
\end{Lemma}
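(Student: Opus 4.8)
The plan is to verify the equivalent \emph{forward} identity
\begin{equation}\nonumber
\mathcal{L}\left[\frac{2J_1(z)}{z}\right](\tau)=\frac{1-e^{-\nu|k|^2\tau^{-1}}}{\nu|k|^2},
\end{equation}
where $z=2|k|\sqrt{\nu p}$ and the Laplace transform is taken in $p$ with dual variable $\tau$; the assertion of the Lemma then follows by the Laplace inversion theorem, since $p\mapsto 2J_1(z)/z$ is continuous on $[0,\infty)$ and bounded (indeed $|J_1(z)/z|\le 1/2$ for $z\in\mathbb{R}^+$, as recorded from \cite{handbook}), hence is uniquely determined by its transform. Writing $a=\nu|k|^2$, we have $z=2\sqrt{ap}$ and $(z/2)^2=ap$, and the boundedness of $2J_1(z)/z$ guarantees that its Laplace transform converges for $\Re\tau>0$.

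First I would expand $2J_1(z)/z$ as a power series in $p$. From $J_1(w)=\sum_{m=0}^{\infty}\frac{(-1)^m}{m!\,(m+1)!}(w/2)^{2m+1}$ one obtains, after cancelling the factor $z/2$,
\begin{equation}\nonumber
\frac{2J_1(z)}{z}=\sum_{m=0}^{\infty}\frac{(-1)^m}{m!\,(m+1)!}(ap)^m.
\end{equation}
This series has infinite radius of convergence, and together with the elementary transform $\mathcal{L}[p^m](\tau)=m!\,\tau^{-m-1}$ the factorial denominators permit termwise integration against $e^{-\tau p}$ for $\Re\tau>0$:
\begin{equation}\nonumber
\mathcal{L}\left[\frac{2J_1(z)}{z}\right](\tau)=\sum_{m=0}^{\infty}\frac{(-1)^m a^m}{m!\,(m+1)!}\cdot\frac{m!}{\tau^{m+1}}=\sum_{m=0}^{\infty}\frac{(-1)^m a^m}{(m+1)!\,\tau^{m+1}}.
\end{equation}

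Finally I would reindex. Factoring out $a^{-1}$ and setting $j=m+1$,
\begin{equation}\nonumber
\sum_{m=0}^{\infty}\frac{(-1)^m a^m}{(m+1)!\,\tau^{m+1}}=-\frac{1}{a}\sum_{j=1}^{\infty}\frac{(-a/\tau)^{j}}{j!}=-\frac{1}{a}\left(e^{-a/\tau}-1\right)=\frac{1-e^{-a/\tau}}{a},
\end{equation}
which is precisely $\frac{1-e^{-\nu|k|^2\tau^{-1}}}{\nu|k|^2}$, completing the argument. The only non-formal point is the interchange of summation and integration, and this is the step I expect to require the most care; however it is routine here, since the coefficients decay like $1/(m!\,(m+1)!)$, so dominated convergence applies directly (equivalently, the termwise transform is valid for large $\Re\tau$ and extends to all $\Re\tau>0$ by analytic continuation). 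Should one prefer to argue directly on the Bromwich contour defining $\mathcal{L}^{-1}$, the same series expansion of $e^{-a/\tau}$ together with $\mathcal{L}^{-1}[\tau^{-n}](p)=p^{n-1}/(n-1)!$ reproduces the identical power series in $p$, and the two routes agree.
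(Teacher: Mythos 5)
Your proposal is correct, but it proceeds differently from the paper. The paper evaluates the Bromwich integral directly: the constant term $\frac{1}{\nu|k|^2}$ contributes zero by deforming the contour to the right, and the substitution $w=\frac{\tau\sqrt{p}}{|k|\sqrt{\nu}}$ converts the remaining integral into the Schl\"afli (generating-function) contour representation of the Bessel function, namely $\frac{1}{2\pi i}\int e^{\frac{z}{2}(w-w^{-1})}\,dw=J_{-1}(z)=-J_1(z)$, which yields $2J_1(z)/z$ in two lines. You instead verify the forward identity $\mathcal{L}\left[2J_1(z)/z\right](\tau)=\frac{1-e^{-\nu|k|^2\tau^{-1}}}{\nu|k|^2}$ by expanding $2J_1(z)/z=\sum_{m\ge 0}\frac{(-1)^m(ap)^m}{m!\,(m+1)!}$, integrating termwise (the interchange is justified by Tonelli, since $\sum_m\frac{a^m}{m!\,(m+1)!}\int_0^\infty e^{-\Re\tau\,p}p^m\,dp=\sum_m\frac{a^m}{(m+1)!(\Re\tau)^{m+1}}<\infty$ for $\Re\tau>0$), and resumming; you then pass back to the stated inverse-transform identity via the Fourier--Mellin inversion theorem, which applies because $p\mapsto 2J_1(z)/z$ is entire in $p$ and bounded on $[0,\infty)$. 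The trade-off: the paper's argument is shorter and stays within the contour-integral framework already set up for $\mathcal{H}^{(\nu)}$ in the preceding lemma, but presumes familiarity with the Bessel contour representation; your argument is more elementary (only the Maclaurin series of $J_1$ and elementary transforms) at the cost of an extra appeal to inversion/uniqueness. One small caveat: your alternative closing remark about termwise inversion of $\tau^{-n}$ on the Bromwich contour is the one step that is not absolutely convergent term-by-term (the $j=1$ term behaves like $1/\tau$), so if you pursue that route you should deform to a Hankel-type contour first; your primary route does not suffer from this issue.
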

\begin{proof}
Notice that by contour deformation the integral of $\frac{1}{\nu|k|^2}$ is zero. Factoring out $|k|\sqrt{\nu p}$ in the exponent and using the change of variables $\frac{\tau\sqrt{p}}{|k|\sqrt{\nu}}\rightarrow w$, we have
\begin{align*}
\mathcal{L}^{-1}\left(\frac{1-e^{-\nu|k|^2\tau^{-1}}}{\nu|k|^2}\right)(p)&=\frac{-1}{2\pi i}\int_{c-i\infty}^{c+i\infty}\frac{e^{-\nu|k|^2\tau^{-1}+p\tau}}{\nu|k|^2}d\tau\\
&=\frac{-1}{2\pi i}\int_{c-i\infty}^{c+i\infty}\frac{e^{|k|\sqrt{\nu p}(w-w^{-1})}}{|k|\sqrt{\nu p}}dw
=2\frac{J_1(z)}{z}.
\end{align*}
\end{proof}

\begin{flushleft}
\bf{Fourier Inequalities in two dimensions}
\end{flushleft}

In the appendix of \cite{smalltime}, Fourier inequalities are developed in $\mathbb{R}^3$. We present the counterparts to those inequalities in $\mathbb{R}^2$ here. Where a Lemma is referenced from this section, we use either the $\mathbb{R}^2$ version or $\mathbb{R}^3$ version
as appropriate for our two problems. The basic idea is that in 2-d Lemma \ref{6.6.} below differs by a constant from 3-d case. All other lemmas are basically the same for $\mathbb{R}^2$ or $\mathbb{R}^3$ once the change in Lemma \ref{6.6.} is taken into account.
\begin{Definition}\label{d6.1.} Define the polynomial
\begin{equation}\nonumber
P_n(z)=\sum_{j=0}^n\frac{n!}{j!}z^j.
\end{equation}
\end{Definition}
\begin{Remark}\label{r6.2.} Integration by parts gives 
\begin{equation}\nonumber
\int_0^z e^{-\tau}\tau ^n d\tau=-e^{-z}P_n(z)+n!.
\end{equation}
\end{Remark}
\begin{Lemma}\label{6.3.} For all $y\geq 0$ and nonnegative integers $m$, $n$ we have
\begin{equation}\nonumber
y^{m+1}\int_0^1\rho ^m P_n(y(1-\rho))d\rho=m!n!\sum_{j=0}^n\frac{y^{m+j+1}}{(m+j+1)!}.
\end{equation}
\end{Lemma}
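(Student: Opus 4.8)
The plan is to prove the identity by expanding $P_n$ term by term and reducing everything to a single family of elementary Beta-type integrals. Using Definition \ref{d6.1.}, I would first write $P_n(y(1-\rho)) = \sum_{j=0}^n \frac{n!}{j!}\, y^j (1-\rho)^j$. Since this is a \emph{finite} sum, summation and integration may be interchanged with no convergence concern, giving
\[
\int_0^1 \rho^m P_n(y(1-\rho))\,d\rho = \sum_{j=0}^n \frac{n!}{j!}\, y^j \int_0^1 \rho^m (1-\rho)^j\,d\rho.
\]

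The only nontrivial ingredient is the inner integral. I would establish
\[
\int_0^1 \rho^m (1-\rho)^j\,d\rho = \frac{m!\,j!}{(m+j+1)!}
\]
by induction on $j$ with $m$ fixed: the base case $j=0$ is $\int_0^1 \rho^m\,d\rho = \frac{1}{m+1}$, and the inductive step follows from a single integration by parts, differentiating $(1-\rho)^j$ and integrating $\rho^m$ — precisely the mechanism already exploited in Remark \ref{r6.2.}. This is the computational heart of the argument, though it is entirely routine.

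Substituting this value back, the factor $j!$ from the Beta integral cancels the $1/j!$ coming from $P_n$, leaving
\[
\int_0^1 \rho^m P_n(y(1-\rho))\,d\rho = \sum_{j=0}^n \frac{n!\,m!}{(m+j+1)!}\, y^j .
\]
Multiplying through by $y^{m+1}$ and pulling the constants $m!\,n!$ out of the sum yields the claimed identity
\[
y^{m+1}\int_0^1 \rho^m P_n(y(1-\rho))\,d\rho = m!\,n!\sum_{j=0}^n \frac{y^{m+j+1}}{(m+j+1)!}.
\]
I anticipate no genuine obstacle here: once the Beta integral is in hand the statement is purely algebraic, and the finiteness of the sum removes every analytic subtlety. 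The only point requiring a moment's care is the factorial bookkeeping that makes the $1/j!$ and $j!$ cancel exactly; everything else is direct substitution.
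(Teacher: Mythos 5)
Your proposal is correct and follows essentially the same route as the paper: the paper's proof likewise rests on the Beta integral $\int_0^1 \rho^m(1-\rho)^j\,d\rho = \frac{m!\,j!}{(m+j+1)!}$ (obtained by integration by parts) followed by a direct calculation using the definition of $P_n$. Your write-up merely makes explicit the term-by-term expansion and factorial cancellation that the paper leaves as ``direct calculation.''
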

\begin{proof} Integration by parts gives
\begin{equation}\nonumber
\int_0^1(1-\rho)^j\rho^m d\rho=\frac{m!j!}{(m+j+1)!}.
\end{equation}
\noindent The result now follows by a direct calculation using the definition of $P_n$ given by Definition \ref{d6.1.}.
\end{proof}

\begin{Lemma}\label{6.4.} For all $y\geq 0$ and integers $n\geq m \geq 0$, we have 
\begin{equation}\nonumber
y^{m+1}\int_1^{\infty}e^{-2y(\rho -1)}\rho ^m P_n(y(\rho-1))d\rho\leq 2^{-m} (m+n)!\sum_{j=0}^m\frac{y^{j}}{j!}.
\end{equation}
\end{Lemma}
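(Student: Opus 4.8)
The plan is to prove the bound
\begin{equation}\nonumber
y^{m+1}\int_1^{\infty}e^{-2y(\rho -1)}\rho ^m P_n(y(\rho-1))d\rho\leq 2^{-m} (m+n)!\sum_{j=0}^m\frac{y^{j}}{j!}
\end{equation}
by first performing the obvious substitution $\sigma = \rho - 1$, which shifts the integral to $[0,\infty)$ and exposes the exponential decay in a clean form. After substituting, the integrand becomes $e^{-2y\sigma}(1+\sigma)^m P_n(y\sigma)$, and I would expand the binomial $(1+\sigma)^m = \sum_{i=0}^m \binom{m}{i}\sigma^i$ together with the definition $P_n(y\sigma)=\sum_{j=0}^n \frac{n!}{j!}(y\sigma)^j$ from Definition \ref{d6.1.}. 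This reduces everything to a double sum of elementary moment integrals $\int_0^\infty e^{-2y\sigma}\sigma^{i+j}\,d\sigma = (i+j)!/(2y)^{i+j+1}$, so the entire left-hand side becomes an explicit (if unwieldy) double sum in powers of $y$.

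The key computational step is then to organize this double sum and compare it term-by-term, as a polynomial in $y$, against the target $2^{-m}(m+n)!\sum_{j=0}^m y^j/j!$. First I would collect the coefficient of each power $y^{p}$ (with $0\le p\le m$, since the $y^{m+1}$ prefactor and the $1/(2y)^{i+j+1}$ factors combine to produce only nonnegative powers bounded by $m$). The heart of the matter is a combinatorial inequality bounding each coefficient: one must show that for each fixed output power, the accumulated factor of the form $\sum_{i,j} \binom{m}{i}\frac{n!}{j!}(i+j)!\,2^{-(i+j+1)}$ restricted to the appropriate diagonal is dominated by $2^{-m}(m+n)!/p!$. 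I expect this to follow from the standard factorial estimate $(i+j)! \le (m+n)!/\big((m-i)!(n-j)!\big)\cdot(\text{something})$, or more directly from the Vandermonde-type identity $\sum_i \binom{m}{i}\binom{n}{j}\dots$, together with the elementary bound $\binom{m+n}{m}\le 2^{m+n}$ that supplies the factor $2^{-m}$ on the right.

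The main obstacle will be the bookkeeping in this combinatorial comparison: matching the half-integer powers of $2$ coming from the moment integrals against the single clean factor $2^{-m}$ on the right, and verifying that the factorial products collapse in the claimed way without losing constants. An alternative, and probably cleaner, route that sidesteps much of this is to compare directly with Lemma \ref{6.3.}: the integral here is the $[1,\infty)$ analogue of the $[0,1]$ integral handled there, and the exponential weight $e^{-2y(\rho-1)}$ plays the role of the truncation. Concretely, I would bound $\rho^m \le (1+\sigma)^m$ and use $e^{-2y\sigma}P_n(y\sigma)\le e^{-y\sigma}\cdot e^{-y\sigma}P_n(y\sigma)$, recognizing via Remark \ref{r6.2.} that $\int_0^\infty e^{-y\sigma}P_n(y\sigma)\,d\sigma$-type integrals reproduce $P$-polynomials evaluated appropriately; the surplus factor $e^{-y\sigma}$ then generates exactly the $2^{-m}$ saving after rescaling. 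Whichever route is taken, the final step is to confirm that the resulting polynomial in $y$ has all coefficients no larger than those of $2^{-m}(m+n)!\sum_{j=0}^m y^j/j!$, which gives the stated inequality for all $y\ge 0$.
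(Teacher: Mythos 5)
Your first route---substituting $\sigma=\rho-1$, expanding $(1+\sigma)^m$ and $P_n(y\sigma)$, evaluating the moment integrals $\int_0^\infty e^{-2y\sigma}\sigma^{i+j}\,d\sigma=(i+j)!/(2y)^{i+j+1}$, and comparing coefficients of $y^p$ for $0\le p\le m$---is the right skeleton, and it is essentially the ``direct calculation'' that the paper's one-line proof defers to \cite{smalltime}. The reduction is correct: writing $q=m-p$, the coefficient of $y^p$ on the left equals $\frac{m!\,n!}{p!\,q!}\sum_{j=0}^{n}\frac{(q+j)!}{j!}\,2^{-q-j-1}$, so the lemma is equivalent to the family of inequalities
\[
\sum_{j=0}^{n}\binom{q+j}{j}\,2^{-q-j-1}\;\le\;2^{-m}\binom{m+n}{m},\qquad 0\le q\le m.
\]

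But at exactly this step---the one you yourself call the heart of the matter---your proposal has a genuine gap, and the tool you name would fail. You propose to get the factor $2^{-m}$ from the bound $\binom{m+n}{m}\le 2^{m+n}$; this is in the wrong direction, since $2^{-m}\binom{m+n}{m}$ sits on the \emph{large} side of the inequality and therefore needs a lower bound. What actually closes the argument is (i) the negative binomial identity $\sum_{j=0}^{\infty}\binom{q+j}{j}2^{-j}=2^{q+1}$, which shows the displayed left-hand side is strictly less than $1$ for every $q\ge 0$, and (ii) the lower bound $\binom{m+n}{m}\ge\binom{2m}{m}\ge 2^m$, which makes the right-hand side at least $1$. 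Point (ii) is precisely where the hypothesis $n\ge m$ must enter, and your write-up never invokes that hypothesis anywhere; the statement is in fact false without it. For instance, with $m=2$, $n=0$ the left side of the lemma is $\frac14+\frac{y}{2}+\frac{y^2}{2}$ while the right side is $\frac12+\frac{y}{2}+\frac{y^2}{4}$, so the claimed bound fails for all $y>1$. Your alternative route via Lemma \ref{6.3.} and Remark \ref{r6.2.} suffers from the same defect: the claim that ``the surplus factor $e^{-y\sigma}$ generates exactly the $2^{-m}$ saving after rescaling'' is an expectation rather than a mechanism, and since it nowhere uses $n\ge m$, it cannot be correct as stated.
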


\begin{proof} This again follows from direct calculation and is the same as in \cite{smalltime}. 
\end{proof}
\begin{Lemma}\label{6.5.} For all $y\geq 0$ and integers $n\geq m \geq 0$, we have 
\begin{equation*}
y^{m+1}\int_0^{\infty}e^{-y(\rho -1)[1+sgn(\rho -1)]}\rho ^m P_n(y|1-\rho|)d\rho\leq m!n!\mathcal{Q}_{m+n+1}(y).
\end{equation*}
\end{Lemma}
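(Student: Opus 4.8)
The plan is to split the integral at $\rho=1$, where the integrand changes character, and then feed the two halves into Lemma \ref{6.3.} and Lemma \ref{6.4.}. First I would evaluate the sign function explicitly: for $\rho<1$ one has $1+\mathrm{sgn}(\rho-1)=0$, so the exponential factor is identically $1$ and $|1-\rho|=1-\rho$; for $\rho>1$ one has $1+\mathrm{sgn}(\rho-1)=2$, so the exponential factor is $e^{-2y(\rho-1)}$ and $|1-\rho|=\rho-1$. Hence the left-hand side decomposes exactly as
\begin{equation*}
y^{m+1}\int_0^1\rho^m P_n(y(1-\rho))\,d\rho + y^{m+1}\int_1^\infty e^{-2y(\rho-1)}\rho^m P_n(y(\rho-1))\,d\rho,
\end{equation*}
which are precisely the two quantities controlled by Lemma \ref{6.3.} and Lemma \ref{6.4.} respectively.

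Next I would insert those two bounds. Lemma \ref{6.3.} evaluates the first term exactly as $m!n!\sum_{j=0}^n y^{m+j+1}/(m+j+1)!$; reindexing by $i=m+j+1$ rewrites this as $m!n!\sum_{i=m+1}^{m+n+1} y^i/i!$. Lemma \ref{6.4.} bounds the second term by $2^{-m}(m+n)!\sum_{j=0}^m y^j/j!$. The key structural observation is that the surviving powers are complementary: the first contribution carries monomials $y^i$ with $m+1\le i\le m+n+1$, while the second carries $y^j$ with $0\le j\le m$, so together they exhaust the index range $0,\dots,m+n+1$ of $\mathcal{Q}_{m+n+1}$.

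The remaining work is a term-by-term comparison against $m!n!\,\mathcal{Q}_{m+n+1}(y)=m!n!\sum_{j=0}^{m+n+1}2^{m+n+1-j}y^j/j!$. For the high powers ($i$ from $m+1$ to $m+n+1$) the estimate is immediate, since $2^{m+n+1-i}\ge 1$ there, whence $m!n!\,y^i/i!\le m!n!\,2^{m+n+1-i}y^i/i!$. For the low powers the claim reduces to showing $2^{-m}(m+n)!\le m!n!\,2^{m+n+1-j}$ for $0\le j\le m$, that is $\binom{m+n}{m}\le 2^{2m+n+1-j}$. I expect this elementary numerical estimate to be the only real content of the argument: since $2m+n+1-j\ge m+n+1$ whenever $j\le m$, it suffices to note $\binom{m+n}{m}\le 2^{m+n}$, which holds because $\binom{m+n}{m}$ is a single summand in the binomial expansion $2^{m+n}=\sum_k\binom{m+n}{k}$. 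Combining the two index ranges then reproduces exactly $m!n!\,\mathcal{Q}_{m+n+1}(y)$, completing the proof.
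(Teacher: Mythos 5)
Your proof is correct and follows exactly the route the paper intends: split the integral at $\rho=1$, apply Lemma \ref{6.3.} to the piece on $[0,1]$ and Lemma \ref{6.4.} to the piece on $[1,\infty)$, and combine. The paper's proof is just this one-line statement, so your write-up simply supplies the details (the complementary index ranges and the binomial estimate $\binom{m+n}{m}\le 2^{m+n}$) that the authors left implicit.
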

\noindent \textit{Proof.} This is a combination of the previous two lemmas after splitting the integral at $1$.

\begin{Proposition}\label{2-dbound} Let n be an integer no less than $0$ and $r\geq 0$ and $\rho \geq0$ fixed. Then
\begin{equation}\nonumber
\int_0^{2\pi}e^{-|\rho-re^{i \theta}|}|\rho-re^{i \theta}|^n d\theta\leq 6\pi e^{-|\rho-r|}P_n(|r-\rho|).
\end{equation}
\end{Proposition}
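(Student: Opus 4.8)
The plan is to reduce the right-hand side to a tail integral and then dominate the integrand pointwise in $\theta$. First I would set $a=|\rho-r|$ and observe that, geometrically, $re^{i\theta}$ runs over the circle of radius $r$ about the origin while $\rho$ sits on the positive real axis, so $|\rho-re^{i\theta}|=\sqrt{\rho^2+r^2-2r\rho\cos\theta}\ge|\rho-r|=a$ for every $\theta$, with equality at $\theta=0$. Next, using Remark \ref{r6.2.} in the form $\int_a^\infty e^{-\tau}\tau^n\,d\tau=e^{-a}P_n(a)$ (which follows by rearranging $\int_0^z e^{-\tau}\tau^n\,d\tau=-e^{-z}P_n(z)+n!$ with $z=a$), the target bound becomes $\int_0^{2\pi}e^{-R(\theta)}R(\theta)^n\,d\theta\le 6\pi\int_a^\infty e^{-\tau}\tau^n\,d\tau$, where $R(\theta):=|\rho-re^{i\theta}|$.

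The heart of the argument is the pointwise estimate $\sup_{R\ge a}e^{-R}R^n\le e^{-a}P_n(a)$. I would prove this by analyzing $h(R):=e^{-R}R^n$, whose derivative $h'(R)=e^{-R}R^{n-1}(n-R)$ shows that $h$ increases on $[0,n]$ and decreases on $[n,\infty)$. If $a\ge n$ then $h$ is nonincreasing on $[a,\infty)$, so $\sup_{R\ge a}h=h(a)=e^{-a}a^n\le e^{-a}P_n(a)$, since $a^n$ is exactly the $j=n$ term of $P_n(a)$. If instead $a<n$ the supremum is attained in the interior at $R=n$, giving $\sup_{R\ge a}h=e^{-n}n^n$; here I would compare this interior maximum to the tail integral via $e^{-a}P_n(a)=\int_a^\infty e^{-\tau}\tau^n\,d\tau\ge\int_n^\infty e^{-\tau}\tau^n\,d\tau=e^{-n}P_n(n)\ge e^{-n}n^n$, the last step again using that $n^n$ is one term of $P_n(n)$.

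With this in hand the proof closes immediately: $e^{-R(\theta)}R(\theta)^n\le e^{-a}P_n(a)$ for all $\theta$, so $\int_0^{2\pi}e^{-R(\theta)}R(\theta)^n\,d\theta\le 2\pi e^{-a}P_n(a)\le 6\pi e^{-a}P_n(a)$, which is the claim (in fact with the sharper constant $2\pi$). I expect the only delicate point to be the case $a<n$, where the maximum of the integrand is taken in the interior rather than at an endpoint and so must be bounded against the tail of the Gamma-type integral rather than against its value at $a$; everything else is monotonicity together with the explicit form of $P_n$ from Definition \ref{d6.1.}. An alternative route, closer to the three-dimensional treatment, is to substitute $R=|\rho-re^{i\theta}|$ and rewrite the angular integral as $4\int_a^b e^{-R}R^{n+1}\bigl[(R^2-a^2)(b^2-R^2)\bigr]^{-1/2}\,dR$ with $b=\rho+r$, then control the two integrable square-root singularities at $R=a$ and $R=b$ using Lemmas \ref{6.3.}--\ref{6.5.}; this is presumably where the factor $6\pi$ originates, but the pointwise argument above is both shorter and tighter.
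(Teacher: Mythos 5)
Your proof is correct, and it in fact yields the sharper constant $2\pi$ in place of $6\pi$. It shares the paper's outer skeleton: both arguments bound the integrand pointwise in $\theta$ by locating the maximum of $h(R)=e^{-R}R^{n}$ for $R\geq a:=|\rho-r|$, and both split at the same threshold, since the paper's cases $r\leq \rho-n$ or $r\geq \rho+n$ versus $\rho-n<r<\rho+n$ are precisely your cases $a\geq n$ and $a<n$. The genuine difference is how the case $a<n$ is closed. The paper compares the interior maximum $e^{-n}n^{n}$ with $e^{-a}P_{n}(a)$ via the ad hoc chain $e^{a}\leq P_{n}(a)/n!+e^{n}/(n+1)!$ together with $e^{-n}n^{n}\leq n!$ and $n^{n}/(n+1)!\leq 2P_{n}(a)$, which costs a pointwise factor of $3$ and produces $6\pi$ after integrating over $[0,2\pi]$; your closing speculation that the $6\pi$ originates from controlling square-root singularities after the substitution $R=|\rho-re^{i\theta}|$ is not what the paper does. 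You instead identify $e^{-a}P_{n}(a)$ with the tail integral $\int_{a}^{\infty}e^{-\tau}\tau^{n}\,d\tau$ (Remark \ref{r6.2.} plus $\int_{0}^{\infty}e^{-\tau}\tau^{n}\,d\tau=n!$) and use monotonicity of the tail: for $a<n$ one has $e^{-a}P_{n}(a)\geq \int_{n}^{\infty}e^{-\tau}\tau^{n}\,d\tau=e^{-n}P_{n}(n)\geq e^{-n}n^{n}$, giving the pointwise bound with constant $1$. A minor additional virtue of your formulation: by taking the supremum of $h$ over the whole half-line $[a,\infty)$ rather than over the actual range of $|\rho-re^{i\theta}|$, you avoid the paper's slightly imprecise assertion that the maximum is attained where $|\rho-re^{i\theta}|=n$ (that level need not be reached when $n>\rho+r$, although the paper's upper bound remains valid). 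Since the proposition feeds into the rest of the paper only through Lemma \ref{6.6.}, where its constant is absorbed into $C_{7}(2)$, your sharper constant is entirely compatible with everything downstream and could only decrease the constants there.
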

\begin{proof} Let $f(\theta)=e^{-|\rho-re^{i \theta}|}|\rho-re^{i \theta}|^n$. Then notice that $f'(\theta)=
e^{-|\rho-re^{i \theta}|}|\rho-re^{i \theta}|^{n-2}\rho r \sin(\theta)(-|\rho-re^{i\theta}|+n)$. We want to maximize $f(\theta)$, so we split into two cases. 

\noindent Case 1. Suppose $r\leq\rho-n$ or $r\geq\rho+n$. As $|\rho-re^{i\theta}|\geq n$, $f(\theta)$ reaches its maximum at $\theta=0$. Thus, $|f(\theta)|\leq e^{-|\rho-r| }|\rho-r|^n\leq e^{-|\rho-r|}P_n(|\rho-r|)$. If $n=0$ this is the only case to consider. For $n\geq1$ we have a second case.

\noindent Case 2. Suppose $\rho-n<r<\rho+n$. Now, $f(\theta)$ is maximized for $\theta$ such that $|\rho-re^{i\theta}|=n$. Hence, $|f(\theta)|\leq e^{-n}n^n$. Now, we use the fact that for $r\in(\rho-n,\rho+n)$
\begin{equation}\nonumber 
e^{|\rho-r|}\leq\sum_{j=0}^n\frac{|\rho-r|^j}{j!}+\frac{e^n}{(n+1)!}=\frac{P_n(|\rho-r|)}{n!}+\frac{e^n}{(n+1)!}.
\end{equation}
\noindent So, 
\begin{equation}\nonumber
|f(\theta)|\leq e^{-n}n^n\leq e^{-|\rho-r|}\left(\frac{P_n(|\rho-r|)e^{-n}n^n}{n!}+\frac{n^n}{(n+1)!}\right)\leq 3e^{-|\rho-r|}P_n(|\rho-r|),
\end{equation}
where the last inequality uses $e^{-n}n^n\leq n!$ and $\frac{n^n}{(n+1)!}\leq \frac{e^n}{n+1}\leq2 P_n(|\rho-r|)$. Putting these two cases together bounds the integrand by $3e^{-|\rho-r|}P_n(|\rho-r|)$ and the proposition follows.
\end{proof}

\begin{Lemma}\label{6.6.} If $m$ and $n$ are integers no less than $-1$, then
\begin{equation}\nonumber
|q|\int_{q'\in \mathbb{R}^d} e^{|q|-|q'|-|q-q'|}|q'|^m|q-q'|^n dq'\leq C_7(d) \pi (m+1)!(n+1)!\mathcal{Q}_{m+n+3}(|q|),
\end{equation}
\noindent where $C_7(2)=18$ and $C_7(3)=2$.
\end{Lemma}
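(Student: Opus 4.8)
The plan is to collapse the two-dimensional integral to a one-dimensional radial estimate by performing the angular integration first, the new analytic input being Proposition \ref{2-dbound}. The $\mathbb{R}^3$ version, which yields $C_7(3)=2$, proceeds exactly as in \cite{smalltime}: there the spherical angular integral is evaluated in closed form via the substitution $w=|q-q'|$ together with Remark \ref{r6.2.}, so I would only cite that computation. I therefore concentrate on $d=2$. Since the left-hand side is invariant under a simultaneous rotation of $q$ and $q'$ and under the substitution $q'\mapsto q-q'$ (which interchanges the exponents $m$ and $n$), I may take $q=\rho:=|q|$ real and nonnegative and, without loss of generality, assume $m\le n$.

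Identifying $\mathbb{R}^2$ with $\mathbb{C}$ and writing $q'=re^{i\theta}$, I factor out $e^{\rho}$ and integrate in $\theta$ first:
\[
|q|\int_{\mathbb{R}^2}e^{|q|-|q'|-|q-q'|}|q'|^m|q-q'|^n\,dq'=\rho e^{\rho}\int_0^\infty e^{-r}r^{m+1}\Big(\int_0^{2\pi}e^{-|\rho-re^{i\theta}|}|\rho-re^{i\theta}|^n\,d\theta\Big)dr.
\]
By Proposition \ref{2-dbound} the inner integral is at most $6\pi e^{-|\rho-r|}P_n(|\rho-r|)$. After the rescaling $r=\rho t$ the exponent becomes $-\rho t-\rho|1-t|$, which equals $-\rho$ for $t<1$ and $-\rho-2\rho(t-1)$ for $t>1$; pulling out the common $e^{-\rho}$ reproduces exactly the weight $e^{-\rho(t-1)[1+\mathrm{sgn}(t-1)]}$ of Lemma \ref{6.5.}, now with radial power $m+1$ in place of $m$. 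Applying Lemma \ref{6.5.} (equivalently Lemmas \ref{6.3.} and \ref{6.4.} on the ranges $t<1$ and $t>1$) bounds the radial integral by $6\pi e^{-\rho}(m+1)!\,n!\,\mathcal{Q}_{m+n+2}(\rho)$, so that the factors $e^{\rho}e^{-\rho}$ cancel and
\[
|q|\int_{\mathbb{R}^2}(\cdots)\,dq'\le 6\pi\,(m+1)!\,n!\,\rho\,\mathcal{Q}_{m+n+2}(\rho).
\]

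To finish I would use the elementary inequality $y\,\mathcal{Q}_{N}(y)\le (N+1)\,\mathcal{Q}_{N+1}(y)$, which follows at once from the definition of $\mathcal{Q}$ by comparing coefficients of $y^i$ term by term, taken with $N=m+n+2$. The standing hypothesis $m\le n$ then gives $m+n+3\le 2n+3\le 3(n+1)$, hence $(m+n+3)\,n!\le 3\,(n+1)!$. Combining these two facts turns the degree-$(m+n+2)$ bound above into $18\pi\,(m+1)!\,(n+1)!\,\mathcal{Q}_{m+n+3}(|q|)$, i.e.\ exactly $C_7(2)=18$.

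The main obstacle is bookkeeping rather than genuine difficulty. Lemma \ref{6.4.}, and therefore Lemma \ref{6.5.}, as stated requires the degree of $P$ to dominate the radial power, i.e.\ $n\ge m+1$, so the boundary case $m=n$ is not covered by the step above. I would dispose of it using the monotonicity estimate $P_n\le \tfrac{1}{n+1}P_{n+1}$ (immediate from $\tfrac{(n+1)!}{j!}\ge\tfrac{n!}{j!}$) applied only in the $t>1$ piece, which lets me invoke Lemma \ref{6.4.} with $P$-degree $n+1\ge m+1$; the spurious factor $\tfrac1{n+1}$ is comfortably absorbed by the generous constant $18$. Finally, the endpoint exponents $m,n=-1$ require a short separate check: when $m=-1\le n$ with $n\ge0$ the argument is unchanged because the radial power is then simply $0$, while the fully singular case $m=n=-1$ is handled by the integrable-singularity form of the angular estimate.
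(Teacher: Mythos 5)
Your main case ($\max(m,n)\ge 0$) is exactly the paper's argument: polar coordinates, the angular bound of Proposition \ref{2-dbound}, the rescaling $r=\rho t$ producing the weight of Lemma \ref{6.5.} with radial power $m+1$, and then $y\,\mathcal{Q}_{m+n+2}(y)\le(m+n+3)\,\mathcal{Q}_{m+n+3}(y)$ with $m+n+3\le 3(n+1)$ to reach $6\pi\cdot 3=18\pi$. Your observation that the appeal to Lemma \ref{6.5.} is illegitimate at the boundary case $m=n$ (after shifting the radial power the hypothesis becomes $n\ge m+1$) is a genuine catch --- the paper glosses over this --- and your patch via $P_n\le\frac{1}{n+1}P_{n+1}$ does work. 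One caveat: your stated justification, that the loss is ``absorbed by the generous constant $18$,'' is not right as written, since $18=6\cdot 3$ is attained with no slack at that stage. The patch in fact costs nothing for a different reason: the extra factor $\frac{m+n+2}{n+1}\le 2$ (valid since $m\le n$) is absorbed inside the Lemma \ref{6.5.}-type estimate by the slack in the binomial bound $(m+n+1)!\le 2^{m+n+1}(m+1)!\,n!$, so the conclusion $(m+1)!\,n!\,\mathcal{Q}_{m+n+2}$ survives with constant one.

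The genuine gap is the case $m=n=-1$. There is no ``integrable-singularity form of the angular estimate'': Proposition \ref{2-dbound} requires $n\ge 0$, the polynomial $P_{-1}$ is not defined, and for exponent $-1$ the angular integral admits no bound of that shape at all. Indeed, for $r$ near $\rho$ one has $|\rho-re^{i\theta}|^2\approx(\rho-r)^2+\rho r\theta^2$, so $\int_0^{2\pi}e^{-|\rho-re^{i\theta}|}\,|\rho-re^{i\theta}|^{-1}\,d\theta$ grows like $(\rho r)^{-1/2}\log\frac{1}{|\rho-r|}$ and is $+\infty$ at $r=\rho$; no estimate of the form $C\,e^{-|\rho-r|}\times(\text{polynomial in }|\rho-r|)$ can hold. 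The singularity of the full two-dimensional integral is integrable, but after performing the angular integration first the radial integrand acquires a logarithmic factor that lies outside the $P_n$/$\mathcal{Q}_n$ calculus your proof runs on, so the angular-first strategy breaks down precisely here. The paper disposes of this case by a separate, purely two-dimensional argument: split $\mathbb{R}^2$ at $|q'|=3|q|/2$; on the complement, $|q-q'|\ge|q|/2$ yields the bound $4\pi e^{-|q|}$; on the ball, drop the exponential (which is $\le 1$ by the triangle inequality) and show $\int_{|q'|\le 3|q|/2}\frac{dq'}{|q'|\,|q-q'|}\le 13\pi$ by splitting into the two disks of radius $|q|/2$ centered at $0$ and at $q$ plus the remainder; combining gives $13\pi|q|+4\pi\le 18\pi\,\mathcal{Q}_{1}(|q|)$, which is, incidentally, where the constant $C_7(2)=18$ is actually pinned down. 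You would need to supply an argument of this kind, or explicitly integrate the logarithmic radial singularity, for your proof to cover the full range $m,n\ge -1$ claimed in the statement.
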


\begin{proof} We note that we may assume without loss of generality that $m\leq n$ since a change of variables $q'\rightarrow q-q'$ switches the roles of $m$ and $n$. Write $q=\rho e^{i \phi}$, $q'=re^{i\varphi}$ and $\theta=\varphi-\phi$. Let I be the integral on the left hand side. Then switching to polar coordinates gives
\begin{equation}\nonumber
I=\rho\int_0^{\infty}\int_0^{2\pi}e^{\rho-r-|\rho-re^{i\theta}|}r^m|\rho-re^{i\theta}|^n r dr d\theta.
\end{equation}
For $n\geq 0$, using Proposition \ref{2-dbound} above gives,
\begin{equation}\nonumber
I\leq 6\pi\rho\int_0^{\infty}e^{\rho-r}r^{m+1}|\rho-re^{i\theta}|^n e^{|\rho-r|}P_n(|\rho-r|)dr.
\end{equation}
\noindent Now, we let $\tilde{\rho}=\frac{r}{\rho}$. Then $d\tilde{\rho}=\frac{dr}{\rho}$ and $-|\rho-r|=-\rho(\tilde{\rho}-1)sgn(\tilde{\rho}-1)$, so
\begin{equation}\nonumber
I\leq 6\pi\rho^{m+3}\int_0^{\infty}e^{-\rho(\tilde{\rho}-1) (1+sgn(\tilde{\rho}-1))}\tilde{\rho}^{m+1}P_n(\rho|\tilde{\rho}-1|)d\tilde{\rho}.
\end{equation}
\noindent Applying Lemma \ref{6.5.} with $m=m+1$ and $n=n$ gives
\begin{equation}\nonumber
I\leq 6\pi\rho(m+1)!n!\mathcal{Q}_{m+n+2}(\rho)\leq 18\pi(m+1)!(n+1)!\mathcal{Q}_{m+n+3}(\rho),
\end{equation}
\noindent where the last inequality follows as $m\leq n$, so 
\begin{align*}
\rho\sum_{j=0}^{m+n+2}\frac{2^{m+n+2-j}\rho^{j}}{j!}&\leq\sum_{j=1}^{m+n+3}\frac{2^{m+n+3-j}\rho^j}{(j-1)!}\\
&\leq \mathcal{Q}_{m+n+3}(\rho)(m+n+3)\leq3(n+1)\mathcal{Q}_{m+n+3}(\rho).
\end{align*} 
For $n=m=-1$, we use a slightly different approach. Assuming $q$ is not zero, we split the integral over two regions, a ball of radius $3|q|/2$ centered at zero and its compliment. For the compliment region we have $|q-q'|\geq|q|/2$, so
\begin{align*}
|q|\int_{|q'|\geq 3|q|/2}e^{|q|-|q'|-|q-q'|}&\frac{1}{|q'||q-q'|}dq'\\
&\leq 2e^{|q|/2}\int_0^{2\pi}\int_{3|q|/2}^{\infty}e^{-r} drd\theta=4\pi e^{-|q|}\leq4\pi.
\end{align*}
For the interior region we have 
\begin{equation*}
|q|\int_{|q'|\leq 3|q|/2}e^{|q|-|q'|-|q-q'|}\frac{1}{|q'||q-q'|}dq'\leq |q|\int_{|q'|\leq 3|q|/2}\frac{1}{|q'||q-q'|}dq'.
\end{equation*}
We now note that $\int_{|q'|\leq 3|q|/2}\frac{1}{|q'||q-q'|}dq'$ is bounded. Without trying to be precise we can bound the integral by $13\pi$ by spitting the region into two disks of radius $|q|/2$ centered at $0$ and $q$ and the compliment, call the compliment $D$. We have
\begin{equation*}
\int_{|q'|\leq|q|/2}\frac{1}{|q'||q-q'|}dq'\leq\frac{2}{|q|}\int_{|q'|\leq|q|/2}\frac{1}{|q'|}dq'\leq 2\pi.
\end{equation*}
Similarly,
\begin{equation*}
\int_{|q'-q|\leq|q|/2}\frac{1}{|q'||q-q'|}dq'\leq 2\pi.
\end{equation*}
Finally,
\begin{equation*}
\int_{D}\frac{1}{|q'||q-q'|}dq'\leq\frac{4}{|q|^2}\int_{D}dq'\leq \frac{4}{|q|^2}\int_{|q'|\leq 3|q|/2}dq'\leq9\pi.
\end{equation*}
Thus,
\begin{equation*}
|q|\int_e^{|q|-|q'|-|q-q'|}\frac{1}{|q'||q-q'|}dq'\leq 13\pi|q|+4\pi\leq 18(|q|+2)=18Q_1(|q|)
\end{equation*}
for all nonzero $q$. Hence, the lemma is proved with $C_7(2)=18$.
\end{proof}

\begin{Lemma}\label{6.7.} For any $\gamma \geq 1$ and nonnegative integers $m$ and $n$, we have
\begin{align}\nonumber
|k|\int_{k'\in \mathbb{R}^d} &\frac{e^{-\beta(|k'|+|k-k'|)}}{(1+|k'|)^{\gamma}(1+|k-k'|)^{\gamma}}(\beta|k'|)^m(\beta|k-k'|)^n dk'\\ \nonumber
&\quad \leq \frac{C_7 \pi 2^{\gamma}e^{-\beta|k|}m!n!}{\beta^d(1+|k|)^{\gamma}}(m+n+2)\mathcal{Q}_{m+n+2}(\beta|k|).
\end{align}
\end{Lemma}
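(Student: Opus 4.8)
The plan is to peel off the algebraic weights first and then recognize what remains as the pure exponential kernel already controlled by Lemma~\ref{6.6.}. Denote the left-hand side by $I$. First I would split $\mathbb{R}^d$ into $\{|k'|\le|k|/2\}$ and its complement, exactly as in the subalgebra estimate preceding Corollary~\ref{2.2.}. On the first region $|k-k'|\ge|k|/2$, so $(1+|k-k'|)^{-\gamma}\le 2^{\gamma}(1+|k|)^{-\gamma}$; on the complement the same bound holds with the roles of $k'$ and $k-k'$ reversed. Bounding the surviving algebraic factor by $1$ and recombining the two pieces (the integrand is nonnegative) gives
\begin{equation*}
I\ \le\ \frac{2^{\gamma}}{(1+|k|)^{\gamma}}\,|k|\int_{\mathbb{R}^d}e^{-\beta(|k'|+|k-k'|)}(\beta|k'|)^m(\beta|k-k'|)^n\,dk'.
\end{equation*}

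Next I would rescale by $u=\beta k'$, $q=\beta k$. Since $dk'=\beta^{-d}\,du$, $\beta(|k'|+|k-k'|)=|u|+|q-u|$, $\beta|k'|=|u|$ and $\beta|k-k'|=|q-u|$, the measure supplies a factor $\beta^{-d}$ and the residual integral is precisely of the shape estimated in Lemma~\ref{6.6.}. Factoring $e^{-(|u|+|q-u|)}=e^{-|q|}e^{|q|-|u|-|q-u|}$ and applying Lemma~\ref{6.6.} in the relevant dimension (so that the constant $C_7(d)$, equal to $18$ for $d=2$ and $2$ for $d=3$, enters) yields the factors $e^{-|q|}=e^{-\beta|k|}$ and a polynomial $\mathcal{Q}(\beta|k|)$. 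This is the only place the dimension intervenes, which is exactly why the $\mathbb{R}^2$ and $\mathbb{R}^3$ arguments are identical once Lemma~\ref{6.6.} is in hand; the chain Proposition~\ref{2-dbound}$\to$Lemma~\ref{6.5.}$\to$Lemma~\ref{6.6.} is what makes the $\mathbb{R}^2$ version available.

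The crux, and the step I expect to demand the most care, is the factorial and degree bookkeeping that converts the natural output of Lemma~\ref{6.6.}, phrased with $(m+1)!(n+1)!\,\mathcal{Q}_{m+n+3}$, into the stated $m!\,n!\,(m+n+2)\,\mathcal{Q}_{m+n+2}$ with the precise power of $\beta$. Rather than invoke Lemma~\ref{6.6.} as a black box I would reuse the intermediate estimate established inside its proof, namely $\int_{\mathbb{R}^d}e^{|q|-|u|-|q-u|}|u|^m|q-u|^n\,du\le C\,(m+1)!\,n!\,\mathcal{Q}_{m+n+2}(|q|)$, together with the monotonicity relation $y\,\mathcal{Q}_{N}(y)\le(N+1)\,\mathcal{Q}_{N+1}(y)$ for $y\ge0$ used at the end of that proof. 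The delicate point is that a naive scaling leaves behind the external factor $|k|=|q|/\beta$, and absorbing it crudely both raises the index of $\mathcal{Q}$ and costs a power of $\beta$; matching the stated exponent instead hinges on not discarding the algebraic decay of the $(1+\cdot)^{-\gamma}$ factors on the near-diagonal region before passing to Lemma~\ref{6.6.}. Verifying that the rescaling and this absorption combine to give exactly the claimed exponent of $\beta$ and index $m+n+2$, rather than a neighbouring one, is purely a matter of tracking constants, and requires no idea beyond Lemma~\ref{6.6.}, Proposition~\ref{2-dbound}, and Lemma~\ref{6.5.}.
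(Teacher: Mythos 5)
Your executed derivation (paragraphs 1--2) has a genuine gap: by ``bounding the surviving algebraic factor by $1$'' and recombining the two regions into a single integral, you discard exactly the ingredient that produces the stated constants. What your steps actually yield, after the rescaling $u=\beta k'$, $q=\beta k$ and an application of Lemma~\ref{6.6.} with exponents $(m,n)$, is
\begin{equation*}
\frac{C_7\pi 2^{\gamma}e^{-\beta|k|}}{\beta^{d+1}(1+|k|)^{\gamma}}\,(m+1)!\,(n+1)!\;\mathcal{Q}_{m+n+3}(\beta|k|),
\end{equation*}
which differs from the claim in all three respects: a stray $\beta^{-1}$ (coming from the prefactor $|k|=|q|/\beta$), factorials $(m+1)!(n+1)!$ instead of $m!\,n!\,(m+n+2)$, and polynomial index $m+n+3$ instead of $m+n+2$. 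Your fallback route --- reusing the intermediate estimate inside the proof of Lemma~\ref{6.6.} together with $y\,\mathcal{Q}_N(y)\le(N+1)\,\mathcal{Q}_{N+1}(y)$ --- repairs none of these: the external $|q|$ still costs the extra $\beta^{-1}$ and still pushes the index up to $m+n+3$. Since the lemma is later fed into inductions (e.g.\ Lemmas~\ref{3.5.} and \ref{4.6.}) whose bookkeeping depends on the exact form $m!\,n!\,(m+n+2)\,\mathcal{Q}_{m+n+2}\,\beta^{-d}$, the weaker bound is not an acceptable substitute.

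The missing idea --- which is the entire content of the paper's proof, not ``purely a matter of tracking constants'' --- is an index shift performed asymmetrically on the two regions. On the ball $|k'|\le|k|/2$, keep the factor you discarded and use $\gamma\ge1$ to write $(1+|k'|)^{-\gamma}\le 1/|k'|=\beta/(\beta|k'|)$, so the integrand's $(\beta|k'|)^m$ becomes $\beta\,(\beta|k'|)^{m-1}$; the extra $\beta$ combines with $|k|\,\beta^{-d}\,dk'$ to leave exactly $|q|\,\beta^{-d}$, and Lemma~\ref{6.6.} is invoked with $m$ replaced by $m-1$ (this is precisely why that lemma is stated for exponents $\ge -1$). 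Symmetrically, on the complement $(1+|k-k'|)^{-\gamma}\le\beta/(\beta|k-k'|)$ shifts $n\to n-1$. The two regions then contribute $C_7\pi\,m!\,(n+1)!\,\mathcal{Q}_{m+n+2}$ and $C_7\pi\,(m+1)!\,n!\,\mathcal{Q}_{m+n+2}$ respectively, and their sum is $C_7\pi\,m!\,n!\,(m+n+2)\,\mathcal{Q}_{m+n+2}$ with the clean $\beta^{-d}$ --- exactly the statement. Note that keeping the regions separate is essential, since each uses a different one of the two algebraic factors; your recombination into one integral over $\mathbb{R}^d$ makes this impossible. You did sense in your final paragraph that the $(1+\cdot)^{-\gamma}$ decay must not be thrown away, but you never specified how it enters, and without this one-line trade of algebraic decay for a lowered exponent your argument cannot close.
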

\begin{proof} The proof is exactly as in \cite{smalltime} using our new bound in Lemma \ref{6.6.}. The idea is to split into two regions $|k'|\leq |k|/2$ and its compliment. In the ball, we have
\begin{equation}\nonumber
\frac{1}{(1+|k-k'|)^{\gamma}(1+|k'|)^{\gamma}}\leq\frac{\beta}{(1+|k|/2)^{\gamma}|\beta k'|},
\end{equation}
and we use Lemma \ref{6.6.} with $m$ replaced by $m-1$. In the compliment, we have
\begin{equation}\nonumber
\frac{1}{(1+|k-k'|)^{\gamma}(1+|k'|)^{\gamma}}\leq\frac{\beta}{(1+|k|/2)^{\gamma}|\beta(k- k')|},
\end{equation}
and we use Lemma \ref{6.6.} with $n$ replaced by $n-1$.
\end{proof}

\begin{Lemma}\label{6.8.} For any $\gamma \geq 2$ and $n\in \mathbb{N}-0$, we have 
\begin{align}\nonumber
|k|\int_{k'\in \mathbb{R}^d} &\frac{e^{-\beta(|k'|+|k-k'|)}}{(1+|k'|)^{\gamma}(1+|k-k'|)^{\gamma}}|\beta(k-k')|^n dk'\\ \nonumber
&\leq \frac{C_7\pi 2^{\gamma}e^{-\beta|k|}}{\beta^{d-1}(1+|k|)^{\gamma}}\left\{(n-1)!\mathcal{Q}_{n+1}(|q|)+\frac{3(n+1)!|q|^{2/3}}{2\beta ^{2/3}}\sum_{j=0}^{n+1}\frac{|q|^j}{j!}\right\}.
\end{align}
\end{Lemma}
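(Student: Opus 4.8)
The plan is to establish Lemma \ref{6.8.} in the same spirit as its symmetric companion Lemma \ref{6.7.}, specializing to the one-sided weight $|\beta(k-k')|^n$ and exploiting the extra decay available because $\gamma\ge 2$. Writing $q=\beta k$ so that $|q|=\beta|k|$ and $\mathcal{Q}_m(|q|)=\mathcal{Q}_m(\beta|k|)$, I would split the domain of integration as $\mathbb{R}^d=\{|k'|\le |k|/2\}\cup\{|k'|>|k|/2\}$. On each piece one of the two denominators is bounded below by a multiple of $1+|k|$: on the ball, $|k-k'|\ge |k|/2$ gives $(1+|k-k'|)^{-\gamma}\le 2^\gamma(1+|k|)^{-\gamma}$, while on the complement $(1+|k'|)^{-\gamma}\le 2^\gamma(1+|k|)^{-\gamma}$; in either case the factor $2^\gamma(1+|k|)^{-\gamma}$ is pulled out front, and, using $|k'|+|k-k'|\ge |k|$ to extract $e^{-\beta|k|}$, one matches the prefactor of the claimed bound and is left with a weighted exponential convolution in the remaining variable.

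For the principal (polynomial) contribution I would reduce the remaining integral to the master estimate Lemma \ref{6.6.}, used in the correct dimension (so that $C_7=C_7(2)=18$ or $C_7(3)=2$). Spending the retained weight $(1+\,\cdot\,)^{-\gamma}$ against the monomial $|\beta(k-k')|^n$ exactly as in the proof of Lemma \ref{6.7.}, this step is designed to produce the first term, controlled by $(n-1)!\,\mathcal{Q}_{n+1}(|q|)$; the one-dimensional radial integrals arising here are evaluated with Remark \ref{r6.2.} and Lemmas \ref{6.3.}--\ref{6.5.}, just as in the derivation of Lemma \ref{6.6.}.

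The main obstacle is the second, genuinely fractional term $\tfrac{3(n+1)!\,|q|^{2/3}}{2\beta^{2/3}}\sum_{j=0}^{n+1}|q|^j/j!$. Because its power of $|q|$ is non-integer, it cannot be produced directly by Lemma \ref{6.6.}, whose output is always an integer-degree polynomial $\mathcal{Q}$; it must instead come from a separate sharp estimate of the near-diagonal part of the integral, where $k'\approx k$ and $|k-k'|$ is small. In this regime one keeps the full weight $(1+|k-k'|)^{-\gamma}$ (this is presumably where the scaling $\beta^{-(d-1)}$, one power of $\beta$ better than in Lemma \ref{6.7.}, originates) and balances the polynomial growth $|\beta(k-k')|^n$ against the exponential and weight decay over the radial variable. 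Carrying out this optimization so that the constants collapse into the stated clean form --- in particular producing the $2/3$ exponent, the numerical factor $3/2$, and the truncation of the tail at $j=n+1$ --- is the delicate step, and I expect it to consume most of the work; the elementary bound $e^{-x}x^n\le n!$ together with the integration-by-parts identities of Remark \ref{r6.2.} and Lemma \ref{6.3.} are the natural tools.

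Finally I would add the two contributions. Each is dominated by $\dfrac{C_7\pi 2^\gamma e^{-\beta|k|}}{\beta^{d-1}(1+|k|)^\gamma}$ times, respectively, $(n-1)!\,\mathcal{Q}_{n+1}(|q|)$ and the fractional expression above, so that their sum is precisely the right-hand side of Lemma \ref{6.8.}, completing the argument.
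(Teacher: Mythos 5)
Your domain split and your plan for the polynomial term are close to the paper's: on the outer region $|k'|\ge|k|/2$ one pulls out $(1+|k'|)^{-\gamma}\le 2^{\gamma}(1+|k|)^{-\gamma}$, spends the remaining weight as $(1+|k-k'|)^{-\gamma}\le|k-k'|^{-2}$ (this is precisely where $\gamma\ge 2$ enters, and the gained factor $\beta^{2}$ is what produces $\beta^{-(d-1)}$ rather than $\beta^{-d}$), and applies Lemma \ref{6.6.} with exponents $0$ and $n-2\ge-1$, giving exactly $C_7\pi(n-1)!\,\mathcal{Q}_{n+1}(|q|)$. Note this is \emph{not} ``exactly as in the proof of Lemma \ref{6.7.}'': there only one power of the weight is spent (lowering the monomial degree by one), which yields $\beta^{-d}$ and $n!\,\mathcal{Q}_{n+2}$, not the claimed scaling; two full powers must be spent here.

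The genuine gap is your account of the fractional term: it is mislocated, and the key mechanism is missing. You place it in the near-diagonal region $k'\approx k$; but that region lies inside $|k'|\ge|k|/2$ and is already disposed of by Lemma \ref{6.6.} (there the monomial $|\beta(k-k')|^n$ is \emph{small}, and the worst case $n=1$ merely produces the integrable singularity $|q-q'|^{-1}$). In the paper the fractional term comes from the opposite end: the inner region $|k'|\le|k|/2$, where $k'$ is near the \emph{origin} and the monomial is at its largest, $\approx|\beta k|^n$. There the only weight left to spend is $(1+|k'|)^{-\gamma}$, and one cannot spend two full powers as in the outer region because $|q'|^{-2}$ is not integrable at $q'=0$ when $d=2$. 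The paper's key move is the fractional bound $(1+|k'|)^{-\gamma}\le|k'|^{-2+2/3}$, which gains $\beta^{4/3}$ (hence the residual $\beta^{-2/3}$ in the second term) while keeping the radial singularity $r^{-4/3}$ integrable; then polar coordinates, the angular estimate of Proposition \ref{2-dbound}, and the elementary bound $\int_0^1\tilde r^{-1/3}(1-\tilde r)^j\,d\tilde r\le 3/2$ produce exactly the exponent $2/3$, the factor $3/2$, and the sum $\sum_{j=0}^{n+1}|q|^j/j!$. Your proposal explicitly defers ``producing the $2/3$ exponent'' to an unspecified optimization; since that optimization is aimed at the wrong region (where no fractional contribution arises), the plan as written would not close.
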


\begin{proof} We again break into two integrals $\int_{|k'|\leq |k|/2}+\int_{|k'|\geq |k|/2}$. In the outer region, we have $(1+|k'|)^{-\gamma}\leq2^{\gamma}(1+|k|)^{-\gamma}$, and in the inner, we have $(1+|k-k'|)^{-\gamma}\leq 2^{\gamma}(1+|k|)^{-\gamma}$. We use this and $\gamma \geq 2$ for the first inequality and Lemma \ref{6.6.} for the second to get a bound for the outer region
\begin{align}\nonumber
|k|\int_{|k'|\geq |k|/2}& \frac{e^{-\beta(|k'|+|k-k'|)}}{(1+|k'|)^{\gamma}(1+|k-k'|)^{\gamma}}|\beta(k-k')|^n dk'\hspace{2.5 in}\\ \nonumber
&\leq \frac{2^{\gamma}e^{-\beta|k|}}{\beta^{d-1}(1+|k|)^{\gamma}}|q|\int_{q'\in \mathbb{R}^d}e^{|q|-|q'|-|q-q'|}|q-q'|^{n-2} dq'\\ \nonumber
&\leq\frac{C_7 \pi 2^{\gamma }e^{-\beta|k|}}{\beta^{d-1}(1+|k|)^{\gamma}}(n-1)!\mathcal{Q}_{n+1}(|q|).
\end{align}
\noindent In the inner region, we also use $(1+|k'|)^{-\gamma}\leq (|k'|)^{-2+2/3}$, a change to polar coordinates as in the proof of Lemma \ref{6.6.}, and integration by parts to get
\begin{align}\nonumber
|k|&\int_{|k'|\leq|k|/2}\frac{e^{-\beta(|k'|+|k-k'|)}}{(1+|k'|)^{\gamma}(1+|k-k'|)^{\gamma}}|\beta(k-k')|^n dk'\\ \nonumber
&\leq \frac{2^{\gamma}e^{-\beta|k|}}{\beta^{d-1+2/3}(1+|k|)^{\gamma}}|q|\int_{|q'|\leq|q|/2}e^{|q|-|q'|-|q-q'|}|q'|^{-2+2/3}|q-q'|^{n}dq'\\ \nonumber
&=\frac{  2^{\gamma}e^{-\beta|k|}}{\beta^{d-1+2/3}(1+|k|)^{\gamma}}\rho\int_0^{\rho/2}\int_0^{2\pi}e^{\rho-r-|\rho-re^{i\theta}|}|\rho-re^{i\theta}|^n r^{-2+2/3}rd\theta dr\\ \nonumber
&\leq \frac{2^{\gamma}e^{-\beta|k|}}{\beta^{d-1+2/3}(1+|k|)^{\gamma}}6\pi \rho\int_0^{\rho/2}r^{-1+2/3}P_n(|\rho-r|)dr\\ \nonumber
&\leq \frac{2^{\gamma}e^{-\beta|k|}}{\beta^{d-1+2/3}(1+|k|)^{\gamma}} 6\pi n!\rho^{1+2/3}\sum_{j=0}^n\frac{\rho^{j}}{j!}\int_0^{1}\tilde{r}^{-1+2/3}(1-\tilde{r})^j d\tilde{r}\\ \nonumber
&\leq \frac{2^{\gamma}e^{-\beta|k|}}{\beta^{d-1+2/3}(1+|k|)^{\gamma}} \frac{18}{2}\pi n!\rho^{2/3}\sum_{j=0}^n\frac{\rho^{j+1}}{j!}.
\end{align}
\end{proof}

\begin{Lemma}\label{6.9.} For any $\gamma \geq 1$ and nonnegative integers $l_1, l_2 \geq 0$, we have 
\begin{multline}\nonumber
|k|\int_{k'\in \mathbb{R}^d} \frac{e^{\beta(|k|-|k'|-|k-k'|)}}{(1+|k'|)^{\gamma}(1+|k-k'|)^{\gamma}}\mathcal{Q}_{2l_1}(\beta|k'|)\mathcal{Q}_{2l_2}(\beta|k-k'|)dk'\hspace{1.5 in}\\ \nonumber
\leq \frac{C_7 \pi 2^{\gamma}e^{-\beta|k|}}{3\beta^d(1+|k|)^{\gamma}}(2l_1+2l_2+1)(2l_1+2l_2+2)(2l_1+2l_2+3)\mathcal{Q}_{2l_1+2l_2+2}(\beta|k|).
\end{multline}
\end{Lemma}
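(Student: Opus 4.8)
The plan is to reduce Lemma \ref{6.9.} to the single-monomial estimate of Lemma \ref{6.7.} by expanding the two polynomials $\mathcal{Q}_{2l_1}$ and $\mathcal{Q}_{2l_2}$ into the monomials from their definition. Writing
\[
\mathcal{Q}_{2l_1}(\beta|k'|)=\sum_{m=0}^{2l_1}2^{2l_1-m}\frac{(\beta|k'|)^m}{m!},\qquad
\mathcal{Q}_{2l_2}(\beta|k-k'|)=\sum_{n=0}^{2l_2}2^{2l_2-n}\frac{(\beta|k-k'|)^n}{n!},
\]
the integrand becomes a finite double sum whose $(m,n)$-term carries exactly the exponential weight $e^{-\beta(|k'|+|k-k'|)}$ and the factors $(1+|k'|)^{-\gamma}(1+|k-k'|)^{-\gamma}(\beta|k'|)^m(\beta|k-k'|)^n$ controlled by Lemma \ref{6.7.}. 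Applying that lemma termwise, the $m!\,n!$ it produces cancels the $1/(m!\,n!)$ from the expansion, yielding
\[
\mathrm{LHS}\le \frac{C_7\pi 2^{\gamma}e^{-\beta|k|}}{\beta^d(1+|k|)^{\gamma}}\sum_{m=0}^{2l_1}\sum_{n=0}^{2l_2}2^{2l_1-m}2^{2l_2-n}(m+n+2)\,\mathcal{Q}_{m+n+2}(\beta|k|).
\]

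The second step replaces every $\mathcal{Q}_{m+n+2}$ by the single largest polynomial $\mathcal{Q}_{2l_1+2l_2+2}$. The structural fact I would use is the one-step doubling identity $\mathcal{Q}_{j+1}(y)=2\mathcal{Q}_j(y)+\frac{y^{j+1}}{(j+1)!}\ge 2\mathcal{Q}_j(y)$, which iterates to
\[
\mathcal{Q}_{m+n+2}(y)\le 2^{-(2l_1+2l_2-m-n)}\,\mathcal{Q}_{2l_1+2l_2+2}(y),\qquad m\le 2l_1,\ n\le 2l_2.
\]
Substituting this, the powers of two cancel \emph{exactly}, since $2^{2l_1-m}2^{2l_2-n}\cdot 2^{-(2l_1+2l_2-m-n)}=1$. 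Thus the entire sum collapses to $\mathcal{Q}_{2l_1+2l_2+2}(\beta|k|)\sum_{m=0}^{2l_1}\sum_{n=0}^{2l_2}(m+n+2)$, with no remaining dependence on $k'$ beyond the one surviving factor $\mathcal{Q}_{2l_1+2l_2+2}(\beta|k|)$.

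The final step is the purely arithmetic bound
\[
\sum_{m=0}^{2l_1}\sum_{n=0}^{2l_2}(m+n+2)\le \tfrac13(2l_1+2l_2+1)(2l_1+2l_2+2)(2l_1+2l_2+3).
\]
Setting $M=2l_1$, $N=2l_2$, $S=M+N$, the left side evaluates in closed form to $\frac{(M+1)(N+1)}{2}(S+4)$; bounding $(M+1)(N+1)\le \frac{(S+2)^2}{4}$ by AM--GM and clearing denominators reduces the claim to $5S^2+14S\ge 0$, immediate for $S\ge 0$. Chaining the three steps gives the stated inequality. I expect the main obstacle to be not any individual step but the bookkeeping in the second one: one must use the doubling inequality, rather than mere monotonicity of $n\mapsto\mathcal{Q}_n$, precisely so that the accumulated factors $2^{2l_1-m}2^{2l_2-n}$ from the polynomial expansion cancel perfectly against the $2^{-(2l_1+2l_2-m-n)}$ deficit; a cruder bound would leave an unwanted power of two and break the constant $1/3$ in the final estimate.
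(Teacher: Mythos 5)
Your proposal is correct and follows essentially the same route as the paper: expand $\mathcal{Q}_{2l_1}$ and $\mathcal{Q}_{2l_2}$ into their monomials, apply Lemma \ref{6.7.} termwise so the $m!\,n!$ factors cancel, and absorb the leftover powers of two through the doubling bound $2^{(2l_1+2l_2)-(m+n)}\mathcal{Q}_{m+n+2}\leq \mathcal{Q}_{2l_1+2l_2+2}$. The only (immaterial) difference is the final counting: the paper groups terms by $j=m+n$, overcounts the pairs by $(j+1)$, and uses the exact identity $\sum_{j=0}^{2l_1+2l_2}(j+1)(j+2)=\tfrac13(2l_1+2l_2+1)(2l_1+2l_2+2)(2l_1+2l_2+3)$, whereas you evaluate the double sum $\sum_{m,n}(m+n+2)$ in closed form and finish with AM--GM, both yielding the same constant.
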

\noindent The proof is exactly the same as in \cite{smalltime} with $K=\frac{C_7 \pi 2^{\gamma}e^{-\beta|k|}}{\beta^d(1+|k|)^{\gamma}}$. The idea of the proof is to use the definition of $\mathcal{Q}_{2l_1}$ and $\mathcal{Q}_{2l_2}$ with Lemma \ref{6.7.} to bound the left hand side by
\begin{align*}
K\sum_{j=0}^{2l_1+2l_2}2^{2l_1+2l_2+2-(j+2)}&(j+2)(j+1)\mathcal{Q}_{j+2}(|q|)\\
&\leq K\mathcal{Q}_{2l_1+2l_2+2}(|q|)\sum_{j=0}^{2l_1+2l_2}(j+1)(j+2)
\end{align*}
from which the result follows.

\begin{Lemma}\label{6.10.} If $\gamma\geq 2$ and $l\geq 0$, then
\begin{align}\nonumber
\frac{|k|}{(l+1)^{2/3}}\int_{k'\in\mathbb{R}^d}&\frac{e^{-\beta(|k'|+|k-k'|)}}{(1+|k'|)^{\gamma}(1+|k-k'|)^{\gamma}}\mathcal{Q}_{2l}(|\beta(k-k')|)dk'\\
&\leq \frac{C_1e^{-\beta|k|}}{(1+|k|)^{\gamma}}(2l+1)\mathcal{Q}_{2l+2}(\beta|k|),
\end{align}
\noindent where
\begin{equation}\nonumber
C_1=C_1(d)=6C_7\pi2^{\gamma}\beta^{-d+1/3}+C_7\pi2^{\gamma}\beta^{-d+1}+\frac{1}{2}C_0\beta^{-1}.
\end{equation}
\end{Lemma}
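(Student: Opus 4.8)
The plan is to expand $\mathcal{Q}_{2l}(\beta|k-k'|)=\sum_{n=0}^{2l}\frac{2^{2l-n}}{n!}(\beta|k-k'|)^n$, interchange the finite sum with the integral, and bound each resulting convolution integral separately; the three summands of $C_1$ will correspond to three distinct contributions. Write $y=\beta|k|$ throughout. First I would dispose of the $n=0$ monomial, which is the pure convolution $|k|\int (1+|k'|)^{-\gamma}(1+|k-k'|)^{-\gamma}e^{-\beta(|k'|+|k-k'|)}\,dk'$. Using $e^{-\beta(|k'|+|k-k'|)}\le e^{-\beta|k|}$ together with the subalgebra bound of Corollary \ref{2.2.} gives a bound $C_0 e^{-y}(1+|k|)^{-\gamma}|k|$, and since the coefficient of $(\beta|k|)^1$ in $\mathcal{Q}_{2l+2}$ is $2^{2l+1}$ one has $2^{2l}|k|\le\frac{1}{2\beta}\mathcal{Q}_{2l+2}(y)$. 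This produces exactly the $\tfrac12 C_0\beta^{-1}$ term, with room to spare for the harmless factor $(2l+1)(l+1)^{-2/3}\ge 1$.

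For $n\ge1$ I would apply Lemma \ref{6.8.} to each monomial. Its outer-region part contributes like $(n-1)!\,\mathcal{Q}_{n+1}(y)$ and its inner-region part like $n!\,y^{5/3}\sum_{j=0}^n y^j/j!$; here it is essential to use the sharper intermediate estimate from the proof of Lemma \ref{6.8.}, whose top degree is $n+\tfrac53$ rather than the $n+2$ produced by Lemma \ref{6.7.}. Multiplying by the expansion coefficient $2^{2l-n}/n!$ and summing over $n$, the crucial manoeuvre is to interchange the $n$- and $j$-summations so that the geometric series $\sum_n 2^{2l-n}$ is summed first; this removes a spurious factor of $l$ and collapses the two parts to $\lesssim y\,\mathcal{Q}_{2l}(y)$ and $\lesssim y^{5/3}\mathcal{Q}_{2l}(y)$ respectively.

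It then remains to absorb these into $(2l+1)\mathcal{Q}_{2l+2}(y)$. For the outer part the integer inequality $y\,\mathcal{Q}_{2l}(y)\le (2l+1)\mathcal{Q}_{2l+1}(y)\le(2l+1)\mathcal{Q}_{2l+2}(y)$, of the type already used in Lemma \ref{3.4.}, gives the $C_7\pi 2^\gamma\beta^{-d+1}$ term carrying the factor $(2l+1)$. The inner part is the heart of the matter: I would establish the fractional estimate $y^{2/3}\mathcal{Q}_{m}(y)\le C\,m^{2/3}\mathcal{Q}_{m+1}(y)$ by comparing $y^{m+2/3}/m!$ against the top two monomials of $\mathcal{Q}_{m+1}$ and optimizing over the crossover $y\sim m$. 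Combined with the integer step this yields $y^{5/3}\mathcal{Q}_{2l}(y)\le C(l+1)^{2/3}(2l+1)\mathcal{Q}_{2l+2}(y)$, i.e. the $6C_7\pi 2^\gamma\beta^{-d+1/3}$ term. Dividing through by $(l+1)^{2/3}$ places all three pieces on $(2l+1)\mathcal{Q}_{2l+2}(y)$, and summing the constants reproduces $C_1$.

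The main obstacle is precisely this fractional polynomial inequality together with the accompanying power-counting in $l$. A naive integer-power bound (equivalently, invoking Lemma \ref{6.7.} in place of the $2/3$-sharpened Lemma \ref{6.8.}) yields growth $O(l^2)$ before the division and hence $O(l^{4/3})$, which overshoots the claimed $(2l+1)$. It is exactly the non-integer exponent $2/3$ arising from the $(1+|k'|)^{-\gamma}\le |k'|^{-2+2/3}$ estimate in the inner region — mirrored by the $(l+1)^{2/3}$ on the left-hand side — that, through the crossover interpolation, brings the growth down to the required $(l+1)^{2/3}(2l+1)$.
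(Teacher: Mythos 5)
Your proposal follows essentially the same route as the paper's own proof: the paper likewise splits off the constant ($n=0$) term of $\mathcal{Q}_{2l}$ and bounds it via the convolution estimate to get the $\tfrac{1}{2}C_0\beta^{-1}$ piece, then applies Lemma \ref{6.8.} monomial-by-monomial to $\mathcal{Q}_{2l}(\beta|k-k'|)-2^{2l}$ and ``over bounds the remaining sums,'' which is precisely your interchange-and-interpolate step, with the three contributions matching the three summands of $C_1$ exactly as you describe. The details you supply --- summing the geometric series $\sum_n 2^{2l-n}$ first so the inner-region part collapses to $\lesssim y^{5/3}\mathcal{Q}_{2l}(y)$, and the crossover inequality $y^{2/3}\mathcal{Q}_m(y)\le (m+1)^{2/3}\mathcal{Q}_{m+1}(y)$ --- are exactly what the paper leaves implicit by deferring to \cite{smalltime}, and they are correct (note only that the outer-region sum needs no interchange, since $2^{2l-n}\mathcal{Q}_{n+1}\le\mathcal{Q}_{2l+1}$ already gives the factor $2l$ there).
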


\begin{proof} The proof is again the same as in \cite{smalltime} except when Lemma 6.8. is invoked in \cite{smalltime} we use our Lemma \ref{6.8.}. The idea is to split into a few cases. When $l=0$, the claim holds with $C_1=\frac{1}{2}C_0\beta^{-1}$. For $l\geq 1$, we separate the constant term
\begin{equation}\nonumber
|k|\int_{k'\in\mathbb{R}^d}\frac{e^{-\beta(|k'|+|k-k'|)}}{(1+|k'|)^{\gamma}(1+|k-k'|)^{\gamma}}2^{2l}dk'
\leq\frac{C_0e^{-\beta|k|}}{2\beta(1+|k|)^{\gamma}}\mathcal{Q}_{2l+2}(\beta|k|).
\end{equation}
Then, we use Lemma \ref{6.8.} to bound the terms of 
\begin{equation}\nonumber
|k|\int_{k'\in\mathbb{R}^d}\frac{e^{-\beta(|k'|+|k-k'|)}}{(1+|k'|)^{\gamma}(1+|k-k'|)^{\gamma}}(\mathcal{Q}_{2l}(\beta|k|)-2^{2l})dk'
\end{equation}
and over bound the remaining sums to get the rest of the terms appearing in $C_1(d)$.
\end{proof}



\begin{thebibliography}{999}
\bibitem{handbook} M Abramowitz and I A Stegun, \textit{Handbook of mathematical functions with formulas, graphs, and mathematical tables} New York: Wiley-Interscience (1970). (see Formula 9.3.35-9.3.38 on page 365).

\bibitem{Cannon} Cannon, J.R. and E. Dibenedetto, ``The Initial Value Problem for the Boussinesq Equation with Data in $L^p$,'' \textit{Lecture Notes in Mathematics: Approximation Methods for Navier-Stokes Problems.} Vol 771 129-144 (1979)

\bibitem{reg-of-weak-solution4} Cao, Chongsheng and Jiahong Wu, ``Two regularity criteria for the 3D MHD equations,'' \textit{ J. Differential Equations} 248, 2263-2274 (2010)

\bibitem{Chandra} S. Chandrasekhar, \textit{Hydrodynamic and Hydromagnetic Stability}, Dover, New York (1970)

\bibitem{smalltime} O. Costin and S. Tanveer, ``Short Time Existence and Borel Summability in the Navier-Stokes Equation in $\mathbb{R}^3$,'' \textit{Communications in Paritial Differential Equations}, Vol. 34, Issue 8, 785-817 (2009)

\bibitem{CT1} O. Costin and S. Tanveer, ``Nonlinear evolution PDEs in $\mathbb{R}^+ \times \mathbb{C}^d$: existence and uniqueness of solutions, asymptotic and Borel summability properties,''
\textit{Annales De L Institute Henri Poincare-Analyse Non Lineare}, {\bf 24} (5), 795-823 (2007) 

\bibitem{scripta} O. Costin, G. Luo, and S. Tanveer, ``An Integral Equation Approach To Smooth 3-D Navier-Stokes Solution,'' \textit{Physica Scritpa},  Vol. T132, No. 014040 (2008)

\bibitem{longtime} O. Costin, G. Luo, and S. Tanveer, ``Integral Formulation of 3-D Navier-Stokes and Longer Time Existence of Smooth Solutions,'' \textit{Comm. Contemp. Math} 13 (3), 407-462 (2011)

\bibitem{DurautandLions} Duraut, G. and JL Lions, ``In$\acute{e}$quations en thermo$\acute{e}$lasticit$\acute{e}$ e magn$\acute{e}$to-hydrodynamique,'' \textit{Archive for Rational Mechanics and Analysis.} 46, 241-279 (1972)

\bibitem{reg-of-weak-solution1} He, C. and Yun Wang, ``Remark on the regularity of weak solutions to the magnetohydrodynamic equations,'' \textit{ Math. Meth. Appl. Sci.} 31, 1667-1684 (2008)

\bibitem{reg-of-weak-solution2} He, C. and Zhouping Xin, ``On the regularity for weak solutions to the magnetohydrodynamic equations,'' \textit{ J. Differential Equations} 213, 235-254 (2005)

\bibitem{Boussinesq} Hou, Thomas Y. and Congming Li, ``Global Well-Posedness of the Viscous Boussinesq Equations,'' \textit{Discrete and Continuous Dynamical Systems.} Volume 12, Number 1 (Jan. 2005)

\bibitem{MagneticBenard}  Ishimura N. and M. Nakamura, ``Uniqueness for unbounded classical solutions of the MHD equations,'' \textit{Mathematical Methods in the Applied Sciences.} Vol. 2,0 Issue 7, 617-623 (May 10 1997)

\bibitem{smalldata} Miao, C. B. Yuan and B. Zhang, ``Well-posedness for the incompressible magneto-hydrodynamic system,'' \textit{Math. Meth. Appl. Sci.} 30, 961-976 (2007)

\bibitem{Thesis} Rosenblatt, Heather, Ph.D. Thesis, The Ohio State University, ``Boussinesq and Magnetic B$\acute{\textnormal{e}}$nard Equations- Existence, Uniqueness, and Borel-Summability'' (under prep.) 

\bibitem{TemamMHD} Sermange M and R. Temam, ``Some Mathematical Questions Related to the MHD Equations,'' \textit{Communications on Pure and Applied Mathematics.} 36, 635-664 (1983)

\bibitem{Temam} Temam, R. \textit{Navier-Stokes Equations: Theory and Numerical Analysis}. AMS Chelsea Publishing (2000)

\bibitem{reg-of-weak-solution3} Zhou, Yong, ``Regularity criteria for the generalized viscous MHD equations,'' \textit{ Ann. I. H. Poincare} AN-24, 491-505 (2007)

\end{thebibliography}


\end{document}